\documentclass[a4paper,12pt]{article} 
\title{Logarithmic double ramification cycles}
\usepackage{amsmath, amssymb, mathrsfs, amsthm, shorttoc, geometry, stmaryrd}
\usepackage{mathtools} 
\usepackage{hyperref}
\usepackage{xcolor}
\hypersetup{
colorlinks,
linkcolor={black},
citecolor={blue!50!black},
urlcolor={blue!80!black}
}
\usepackage{listings}

\usepackage{graphicx}
\makeatletter
\DeclareRobustCommand{\cev}[1]{%
  {\mathpalette\do@cev{#1}}%
}
\newcommand{\do@cev}[2]{%
  \vbox{\offinterlineskip
    \sbox\z@{$\m@th#1 x$}%
    \ialign{##\cr
      \hidewidth\reflectbox{$\m@th#1\vec{}\mkern4mu$}\hidewidth\cr
      \noalign{\kern-\ht\z@}
      $\m@th#1#2$\cr
    }%
  }%
}
\makeatother

\usepackage[all]{xy}

\usepackage{tabularx}
\usepackage{longtable}

\usepackage{enumitem}

\usepackage[capitalise]{cleveref}
\crefformat{equation}{(#2#1#3)}


\usepackage{pgf,tikz}
\usetikzlibrary{matrix, calc, arrows,decorations.markings,decorations.pathmorphing}

\usepackage{tikz-cd} 


\makeatletter
\newcommand*{\doublerightarrow}[2]{\mathrel{
  \settowidth{\@tempdima}{$\scriptstyle#1$}
  \settowidth{\@tempdimb}{$\scriptstyle#2$}
  \ifdim\@tempdimb>\@tempdima \@tempdima=\@tempdimb\fi
  \mathop{\vcenter{
    \offinterlineskip\ialign{\hbox to\dimexpr\@tempdima+1em{##}\cr
    \rightarrowfill\cr\noalign{\kern.5ex}
    \rightarrowfill\cr}}}\limits^{\!#1}_{\!#2}}}
\newcommand*{\triplerightarrow}[1]{\mathrel{
  \settowidth{\@tempdima}{$\scriptstyle#1$}
  \mathop{\vcenter{
    \offinterlineskip\ialign{\hbox to\dimexpr\@tempdima+1em{##}\cr
    \rightarrowfill\cr\noalign{\kern.5ex}
    \rightarrowfill\cr\noalign{\kern.5ex}
    \rightarrowfill\cr}}}\limits^{\!#1}}}
\makeatother

\newcommand{\on}[1]{\operatorname{#1}}
\newcommand{\bb}[1]{{\mathbb{#1}}}

\newcommand{\ca}[1]{{\mathcal{#1}}}
\newcommand{\bd}[1]{{\mathbf{#1}}}

\newcommand{\ul}[1]{{\underline{#1}}}


\def\log{\mathrm{log}}

\def\virt{\mathrm{vir}}

\def\DR{\mathsf{DR}}

\def\CP{{{\mathbb {CP}}}}

\def\cO{\mathcal{O}}
\def\oM{\overline{\mathcal{M}}}
\def\cM{{\mathcal{M}}}
\def\C{{\mathcal{C}}}

\def\C{\mathbb{C}}

\def\Aut{{\rm Aut}}
\def\F{{\mathsf{F}}}
\def\E{\mathrm{E}}

\def\V{\mathrm{V}}

\def\G{\mathsf{G}}

\def\Pic{\mathcal{P}ic \,}

\newcommand{\mdeg}{\ul{\mathrm{de}}\mathrm{g}}

\DeclareMathAlphabet{\mymathbb}{U}{BOONDOX-ds}{m}{n}


\newcommand{\Span}[1]{\left<#1\right>}




\newcommand{\hra}{\hookrightarrow}

\newcommand{\sPPoly}[1]{\mathsf{sPP}(#1)}
\newcommand{\PPoly}[1]{\mathsf{PP}(#1)}
\newcommand{\Flow}{\mathrm{Flow}}

\newcommand{\newmarkedtheorem}[1]{%
  \newenvironment{#1}
    {\pushQED{\hfill $\lozenge$}\csname inner@#1\endcsname}
    {\popQED\csname endinner@#1\endcsname}%
  \newtheorem{inner@#1}%
}

\theoremstyle{definition}
\newtheorem{situation}{Situation}
\newmarkedtheorem{definition}[situation]{Definition} 

\newmarkedtheorem{problem}[situation]{Problem}

\newmarkedtheorem{remark}[situation]{Remark}

\newmarkedtheorem{example}[situation]{Example}
\newtheorem*{convention}{Convention}

\theoremstyle{plain}

\newtheorem{proposition}[situation]{Proposition}
\newtheorem{lemma}[situation]{Lemma}
\newtheorem{theorem}[situation]{Theorem}
\newtheorem*{theorem*}{Theorem}

\newtheorem{introtheorem}{Theorem}



\usepackage{letltxmacro}
\LetLtxMacro{\phiorig}{\phi}
\renewcommand{\phi}{\varphi}





\newcommand{\edits}[1]{#1}
\author{D. Holmes, S. Molcho, R. Pandharipande, A. Pixton, J. Schmitt}
\date{\edits{June 2024}}

\newcounter{nootje}
\setcounter{nootje}{1}



\newcommand{\beq}{\begin{equation}}
\newcommand{\eeq}{\end{equation}}
\newcommand{\beqs}{\begin{equation*}}
\newcommand{\eeqs}{\end{equation*}}

\renewcommand{\k}{k}

\newcommand{\DRpp}{\mathfrak{P}}
\newcommand{\DRpl}{\mathfrak{L}}

\tikzset{
  symbol/.style={
    draw=none,
    every to/.append style={
      edge node={node [sloped, allow upside down, auto=false]{$#1$}}}
  }
}
\tikzset{
    labl/.style={anchor=south, rotate=-90, inner sep=.5mm}
}

\begin{document}
\maketitle
\begin{abstract} 
Let $A=(a_1,\ldots, a_n)$ be a vector
of integers which sum to $k(2g-2+n)$.
The double ramification cycle
$\mathsf{DR}_{g,A}\in \mathsf{CH}^g(\oM_{g,n})$ 
on the moduli space of curves
is the virtual class of an Abel-Jacobi locus of
pointed curves $(C,x_1,\ldots,x_n)$
satisfying
$$\cO_C\Big(\sum_{i=1}^n a_i x_i\Big) 
\, \simeq\, \big(\omega^{\mathsf{log}}_{C}\big)^k\, .$$
The Abel-Jacobi construction
requires log 
blow-ups of $\oM_{g,n}$ to resolve
the indeterminacies of the Abel-Jacobi map.
Holmes \cite{Holmes2017Extending-the-d} has shown that 
 $\mathsf{DR}_{g,A}$
admits a canonical lift  
$\mathsf{logDR}_{g,A} \in \mathsf{logCH}^g(\oM_{g,n})$
to the logarithmic Chow ring, which
is the limit of  the intersection theories
of all such blow-ups.

The main result of the paper is
an explicit formula for $\mathsf{logDR}_{g,A}$ which lifts
Pixton's formula for $\mathsf{DR}_{g,A}$. The central idea
is to study the universal 
Jacobian over the moduli space of curves
(following Caporaso \cite{Caporaso1994A-compactificat},
Kass-Pagani \cite{Kass2017The-stability-s}, and Abreu-Pacini \cite{AbreuPacini})  for certain stability conditions.  Using the criterion of
Holmes-Schwarz \cite{HolmesSchwarz}, the universal
double ramification theory
of Bae-Holmes-Pandharipande-Schmitt-Schwarz \cite{BHPSS} applied to the
universal line bundle determines the logarithmic double ramification cycle. The resulting formula, written in the
language of piecewise polynomials,
depends upon the stability condition (and admits a wall-crossing study). 
Several examples of logarithmic and higher double ramification cycles are computed.


\end{abstract}


\tableofcontents

\newcommand{\Mtildes}{ \widetilde{\ca M}^\Sigma}
\newcommand{\sch}[1]{\textcolor{blue}{#1}}

\newcommand{\Mbar}{\overline{\ca M}}
\newcommand{\MD}{\ca M^\blacklozenge}
\newcommand{\Md}{\ca M^\lozenge}
\newcommand{\DRL}{\operatorname{DRL}}
\newcommand{\DRC}{\operatorname{DRC}}
\newcommand{\isom}{\stackrel{\sim}{\longrightarrow}}
\newcommand{\Ann}[1]{\on{Ann}(#1)}
\newcommand{\fm}{\mathfrak m}
\newcommand{\Mdk}{\Mbar^{\m, 1/\k}}
\newcommand{\field}{K}
\newcommand{\Mdm}{\Mbar^\m}
\newcommand{\m}{{\bd m}}
\newcommand{\cat}[1]{\mathbf{#1}}
\newcommand{\targetmap}{\ell}
\newcommand{\sourcemap}{\ell'}
\newcommand{\rel}{\mathsf{rel}}
\newcommand{\pre}{\mathsf{pre}}

\newcommand{\M}{\mathsf{M}}
\newcommand{\ghost}{\overline{\mathsf{M}}}
\newcommand{\gp}{\mathsf{gp}}
\newcommand{\fib}{\mathsf{cat}}
\newcommand{\et}{\mathsf{\acute{e}t}}
\renewcommand{\sf}[1]{\mathsf{#1}}

\newcommand{\Sym}{\on{Sym}}

\newcommand{\Spec}{\on{Spec}}

\newcommand{\divCHop}{\on{divCH}}

\newcommand{\Picabs}{\mathfrak{Pic}}
\newcommand{\Picrel}{\mathfrak{Pic}^{\mathrm{rel}}}
\newcommand{\Chow}{\mathsf{CH}}
\newcommand{\CHop}{\Chow_{\mathsf{op}}}

\newcommand{\LogChow}{\mathsf{logCH}}
\newcommand{\divLogChow}{\mathsf{divLogCH}}
\newcommand{\LogDR}{\sf{logDR}}
\newcommand{\Pictdz}{\mathfrak{Jac}}
\newcommand{\Jac}{\Pictdz}
\newcommand{\rJac}{\mathsf{Jac}}
\newcommand{\rPic}{\mathsf{Pic}}
\newcommand{\LogPic}{\mathfrak{LogPic}}
\newcommand{\rLogPic}{\mathsf{LogPic}}
\newcommand{\LogJac}{\mathfrak{LogJac}}
\newcommand{\rLogJac}{\mathsf{LogJac}}
\newcommand{\TroPic}{\mathfrak{TroPic}}
\newcommand{\rTroPic}{\mathsf{TroPic}}
\newcommand{\TroJac}{\mathfrak{TroJac}}
\newcommand{\rTroJac}{\mathsf{TroJac}}
\renewcommand{\log}{\sf{log}}
\newcommand{\trop}{\sf{trop}}
\newcommand{\op}{\sf{op}}
\newcommand{\SSS}{{\widehat{S}}}
\newcommand{\CCC}{{\widehat{C}}}
\newcommand{\LLL}{{\widehat{\mathcal{L}}}}
\newcommand{\GL}{\on{GL}}
\newcommand{\J}{\sf{J}}

\section{Introduction}
\label{intro111}
\subsection{Double ramification cycles}
\label{Ssec:DRclassic}

Let  $\cM_{g,n}$ be the moduli space of 
nonsingular curves of genus $g$ with $n$ distinct marked points over $\mathbb{C}$.
Given a vector of integers $A = (a_1, \dots, a_n)$ satisfying  $$\sum_{i=1}^n a_i = 0\,, $$ we can define a substack of $\cM_{g,n}$ 
by
\begin{equation}\label{g445}
\left\{ (C, x_1, \dots, x_n) \in \cM_{g,n} \ \rule[-1.0em]{0.05em}{2.5em} \ \cO_C\Big(\sum_{i=1}^n a_i x_i\Big) \simeq \cO_C \right\}\, . 
\end{equation}
From the point of view of relative Gromov-Witten theory, 
the most natural compactification of the substack \eqref{g445} 
is the space $\oM^{\sim}_{g,A}$ of stable maps to
{\em rubber} \cite{Graber2005Relative-virtua, Li2001Stable-morphism, Li2002A-degeneration-}: stable maps to
 $\CP^1$ relative to $0$ and $\infty$ modulo the 
 $\C^*$-action on $\CP^1$.

 The rubber moduli space carries a natural virtual fundamental class $\left[\oM^{\sim}_{g,A}\right]^\virt$ of dimension $2g-3+n$. The pushforward 
via the canonical morphism
$$ \epsilon:\oM^{\sim}_{g,A} \rightarrow  \oM_{g,n}$$
is the {\em double ramification cycle} on the moduli space of stable curves, 
\begin{equation}\label{relggww}
\epsilon_*\left[\oM^{\sim}_{g,A}\right]^\virt\, =\, \mathsf{DR}_{g,A}\ \in \mathsf{CH}^g(\oM_{g,n})
\, .
\end{equation}
The double ramification cycle 
$\mathsf{DR}_{g,A}$
can also be defined via log stable maps (and was
motivated in part by 
Symplectic Field Theory \cite{EGH}). 

The classical approach to the locus \eqref{g445} in 
$\cM_{g,n}$
is via Abel-Jacobi theory
for the universal curve. However, the Abel-Jacobi map does not extend over  
the boundary 
$$\partial\oM_{g,n} =\oM_{g,n} \setminus \cM_{g,n}$$ of the moduli space of stable
curves.  Approaches by Marcus-Wise \cite{Marcus2017Logarithmic-com} and
Holmes \cite{Holmes2017Extending-the-d}, motivated by log geometry, provide a partial
resolution of the Abel-Jacobi map which is sufficient to define a double
ramification cycle. 

We fix for the remainder of the paper an integer $k$ and a vector of integers $A=(a_1,\ldots,a_n)$
satisfying $$\sum_{i=1}^n a_i =k(2g-2+ n)\, .$$
The Abel-Jacobi construction in fact
yields a more general {\em $k$-twisted double ramification cycle} associated to the vector $A$, 
\begin{equation}\label{qtt35}
\mathsf{DR}_{g,A}\ \in \mathsf{CH}^g(\oM_{g,n})
\, , 
\end{equation}
and related
to the substack
\begin{equation*}\label{g445k}
\left\{ (C, x_1, \dots, x_n) \in \cM_{g,n} \; \rule[-0.8em]{0.05em}{2em} \; \cO_C\Big(\sum_{i=1}^n a_i x_i\Big) \simeq (\omega_C^{\mathrm{log}})^{\otimes k} \right\}\, ,
\end{equation*}
where $\omega_C^{\mathrm{log}} = \omega_C\left(\sum_{i=1}^n x_i\right)$ is the \emph{log canonical line bundle} of $C$.
The cycle \eqref{qtt35} agrees with  definition \eqref{relggww} in the $k=0$ case.


Eliashberg posed the question of computing $\mathsf{DR}_{g,A}$ in 2001. 
A complete formula for  $\mathsf{DR}_{g,A}$ in the
tautological ring of $\oM_{g,n}$
was conjectured by
Pixton in 2014 and proven in \cite{JPPZ} for $k=0$  and in  \cite{BHPSS} for general $k$. Pixton's formula expresses $\mathsf{DR}_{g,A}$
directly as a sum over stable graphs $\Gamma$  indexing the
boundary strata of $\oM_{g,n}$. The contribution of each stable
graph $\Gamma$ is the constant term  of a polynomial naturally associated
to the combinatorics of $\Gamma$ and $A$.

We refer the
reader to \cite[Section 0]{BHPSS},
\cite[Section 0]{JPPZ}, 
and \cite[Section 5]{RPSLC} for more
leisurely introductions to the subject of double
ramification cycles.
For a sampling of the 
development and application of the theory in a  variety of
directions, see \cite{B, BHPSS,BLS24,BGR, BR21, BSSZ,
Cav, CGJZ, cj, CSS,FanWu,Holmes2017Jacobian-extens,HO21,Holmes2017Multiplicativit, JPPZ,Janda2018Double-ramifica,
Molcho2021-Hodgebundle,Molcho2021-Case-Study,
ObPix,Pix18,Schmitt2016Dimension-theor,   vIOP20}.

\subsection{Logarithmic double ramification cycles}
The definition of the double ramification cycle
by Holmes \cite{Holmes2017Extending-the-d} yields  cycle classes on iterated blow-ups of boundary strata of the moduli space $\oM_{g,n}$ which push forward to
$\mathsf{DR}_{g,A}$ on $\oM_{g,n}$. 
The \emph{logarithmic Chow ring} $\mathsf{logCH}^g(\oM_{g,n})$
 describes the intersection theory on all suitable blow-ups of $\oM_{g,n}$, and Holmes' construction naturally yields a {\em logarithmic double ramification cycle} 
\begin{equation*} 
\mathsf{logDR}_{g,A}\ \in \mathsf{logCH}^g(\oM_{g,n})
\, .
\end{equation*}
The definition of $\mathsf{logCH}^g(\oM_{g,n})$ is reviewed in Sections \ref{aaa111}--\ref{subsec: Logintersectiontheory} below.
The class $\mathsf{logDR}_{g,A}$,
a refinement of  $\mathsf{DR}_{g,A}$,
plays
a fundamental role in logarithmic Gromov-Witten theory \edits{(and
has recently been applied in \cite{CMR22} to the study of double Hurwitz numbers and in \cite{RanganathanKumaran} to the log Gromov-Witten
theory of toric varieties)}.

The starting point of our paper is the following question:\,{\em can Pixton's formula for $\mathsf{DR}_{g,A}$ be lifted to
a formula for $\mathsf{logDR}_{g,A}$ in the logarithmic cycle theory of
the moduli space of curves?} Our main result is 
a formula for such a lift
obtained by applying the universal theory of $\cite{BHPSS}$ to 
the universal 
Jacobian over the moduli space of curves with respect to 
certain stability conditions (using
the criterion of \cite{HolmesSchwarz}).

The difficulties which arise in
 computing 
$\mathsf{logDR}_{g,A}$
via the original
definition of Holmes are explained
in Section \ref{resAJm}. Our new approach 
using
stability conditions for line
bundles on nodal curves
is presented in
 Section \ref{mlbsc}.
The main results about the
logarithmic double ramification 
cycle
are given in
an abstract form (Theorem \ref{111ooo}) in Section \ref{mlbsc}
and in an explicit form (Theorem \ref{222ttt}) in Section \ref{ffldr}. 
Pixton's double ramification cycle relations are lifted to
$\mathsf{logCH}^g(\oM_{g,n})$ by Theorem \ref{333ttt} presented
in Section \ref{ppplog}.
 
 We introduce
 the main constructions, ideas,
 and results
 of the paper in Sections \ref{aaa111}-\ref{ffldr}. The reader should
 be able to obtain a full overview
 of our argument
 by reading these sections.
 The body of 
 the paper contains a detailed presentation with complete
 proofs.



\subsection{Log modifications and cone stacks}
\label{aaa111}
We begin by discussing \emph{log modifications} of $\oM_{g,n}$, 
$$\widetilde{\cM} \to \oM_{g,n}\, ,$$
which are birational 
morphisms
used in the definition of the logarithmic Chow ring.
Log modifications generalize the notion of an iterated blow-up of boundary strata. Some background in log geometry is presented in Sections
\ref{logGeom} and \ref{sec:Logcurves}.

The most convenient way to describe a log modification of $\oM_{g,n}$ is via the \emph{cone stack} 
$\Sigma_{\oM_{g,n}}$
associated to the pair $(\oM_{g,n},\partial\oM_{g,n})$. 
As in the case of toric geometry, where every normal toric variety $X$ has an associated fan $\Sigma_X$, the cone stack $\Sigma_{\oM_{g,n}}$ is essentially
a cone complex describing the combinatorics of the boundary stratification of $\oM_{g,n}$. However, as the name suggests, in contrast to the toric case, the presence of automorphisms on the stack $\oM_{g,n}$ forces us to work with a {cone stack} (in the sense of \cite{Cavalieri2020-Conestack}) instead of a usual cone complex.\footnote{Some authors use the term {\em stacky fan} for the cone stack to emphasize the
  analogy to toric geometry, see \cite{StackyFan1,StackyFan3,StackyFan2}.
}
Just as toric modifications $\widehat{X} \to X$ of a toric variety $X$ are in bijective correspondence with subdivisions $\widetilde{\Sigma} \to \Sigma_X$ of the associated fan, log modifications of $\oM_{g,n}$ are in bijective correspondence with subdivisions $\widetilde{\Sigma} \to \Sigma_{\oM_{g,n}}$ of the cone stack.

The boundary strata of $\oM_{g,n}$ are indexed by
\emph{stable graphs} $\Gamma$: decorated graphs (with possible loops and multi-edges) describing the topological type of the generic
stable curve $(C, x_1, \ldots, x_n)$ parameterized by the strata. 
For the precise definition of a stable graph, we refer the reader to \cite[Appendix A]{GraberPandharipande}. See Figure \ref{fig:stablegraph} for an illustration.

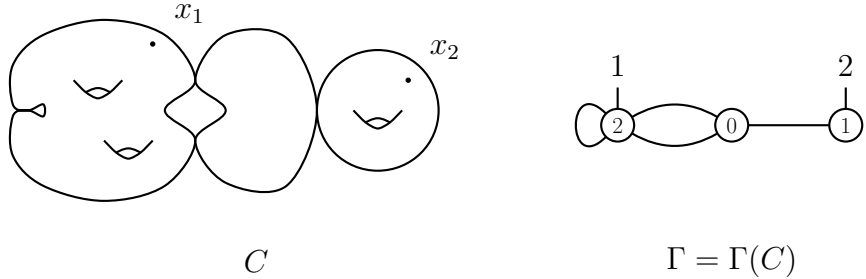
\begin{figure}[htb]
\[
\begin{tikzpicture}[baseline=1cm, scale=0.8]
\draw[thick] plot [smooth cycle, tension = 0.6, xshift=0 cm, yshift = 0cm] coordinates { (0,3) (1,2.75) (1.5,2) (1,1.5) (1.5,1) (1,0.25) (0,0) (-1,0.25) (-1.5,0.75) (-1.5,1.4) (-1.25,1.5) (-1,1.4) (-1,1.6) (-1.25,1.5) (-1.5,1.6) (-1.5,2.25) (-1,2.75) };
\draw[thick] plot [smooth cycle, tension = 0.6, xshift=0 cm, yshift = 0cm] coordinates {(3,2.7) (2,2.75) (1.5,2) (2,1.5) (1.5,1) (2,0.3) (3,0.25) (3.5,1.5) };
\draw[thick] plot [smooth cycle, tension = 1, xshift=0 cm, yshift = 0cm] coordinates { (3.5,1.5) (4.5,0.5) (5.5,1.5) (4.5,2.5) };

\draw[thick] plot [smooth, tension = 0.5, xshift=-1cm, yshift=3cm] coordinates {(1,-2) (1.2,-2.2) (1.4,-2.3) (1.6,-2.2) (1.8,-2)};
\draw[thick] plot [smooth, tension = 0.8, , xshift=-1cm, yshift=3cm] coordinates { (1.2,-2.2) (1.4,-2.13) (1.6,-2.2)};
\draw[thick] plot [smooth, tension = 0.5, xshift=-1.5cm, yshift=4cm] coordinates {(1,-2) (1.2,-2.2) (1.4,-2.3) (1.6,-2.2) (1.8,-2)};
\draw[thick] plot [smooth, tension = 0.8, , xshift=-1.5cm, yshift=4cm] coordinates { (1.2,-2.2) (1.4,-2.13) (1.6,-2.2)};
\draw[thick] plot [smooth, tension = 0.5, xshift=3.1cm, yshift=3.5cm] coordinates {(1,-2) (1.2,-2.2) (1.4,-2.3) (1.6,-2.2) (1.8,-2)};
\draw[thick] plot [smooth, tension = 0.8, , xshift=3.1cm, yshift=3.5cm] coordinates { (1.2,-2.2) (1.4,-2.13) (1.6,-2.2)};

\filldraw[black] (0.8,2.6) circle (1pt) node[above right = 4]{$x_1$};
\filldraw[black] (5,2) circle (1pt) node[above right = 4]{$x_2$};
\draw (2.5,-1) node {$C$} ;
\end{tikzpicture}
\quad \quad \quad
\begin{tikzpicture}[baseline=0pt, vertex/.style={circle,draw,font=\Large,scale=0.5, thick}]
\node[vertex] (A) at (-1.5,0) {2};
\node[vertex] (B) at (0,0) {0};
\node[vertex] (C) at (1.5,0) {1};
\draw[thick] (A) to[bend left] (B);
\draw[thick] (A) to[bend right] (B);
\draw[thick] (A) to[out=135, in=225, looseness = 6] (A);
\draw[thick] (B) to (C);
\draw[thick] (A) to (-1.5, 0.5) node[above] {$1$};
\draw[thick] (C) to (1.5, 0.5) node[above] {$2$};
\draw (0,-1.8) node {$\Gamma = \Gamma(C)$};
\end{tikzpicture}
\]    
    \caption{A curve $(C,x_1, x_2) \in \oM_{5,2}$ and the associated stable graph $\Gamma$}
    \label{fig:stablegraph}
\end{figure}

The cone stack  $\Sigma_{\oM_{g,n}}$ is
constructed from cones\footnote{We follow the definition of \cite[\S 2.1]{ACP}: cones are always strongly convex, rational polyhedral, and generate the vector space spanned by their ambient lattice. See Section \ref{conedefs} for a careful discussion.}
associated to stable graphs.
For \edits{a} stable graph $\Gamma$ with edge set $E(\Gamma)$, the associated cone $\sigma_\Gamma$ is defined by
\begin{equation*} 
    \sigma_\Gamma = (\mathbb{R}_{\geq 0})^{E(\Gamma)} = \{\ell: E(\Gamma) \to \mathbb{R}_{\geq 0}\}\,.
\end{equation*}
An element of $\sigma_\Gamma$ is an assignment of nonnegative lengths $\ell(e)$ to the edges $e$ of the graph $\Gamma$. 

If a stable graph $\Gamma'$ is obtained from $\Gamma$ by {\em contracting} a subset $E_0 \subset E(\Gamma)$ of the edges of $\Gamma$, we view the corresponding cone $\sigma_{\Gamma'}$ as the face of $\sigma_\Gamma$ defined by the conditions that $\ell(e_0)=0$ for $e_0 \in E_0$.
To make the face construction more flexible, recall the notion of a \emph{morphism of stable graphs} $$\varphi: \Gamma \to \Gamma'\,,$$
encoding a particular
way that $\Gamma'$ is obtained from $\Gamma$ by contracting a subset
of edges. \edits{See \cite[Appendix A]{GraberPandharipande} for the definition of these morphisms (which are called $\Gamma'$-structures on $\Gamma$ in \cite{GraberPandharipande}).}\footnote{\edits{See also \cite[Definition 2.5]{Schmitt2020-Intersections} for another exposition of stable graphs and morphisms.}}  Part of the data of $\phi$ is an injective map $\phi_E : E(\Gamma') \to E(\Gamma)$ identifying the edges of $\Gamma'$ as edges of $\Gamma$ not contracted by $\phi$. We then obtain a natural map of cones
\begin{equation*}
    \iota_\varphi : \sigma_{\Gamma'} \to \sigma_\Gamma\, , \ \ \ \ell' \mapsto \left(\ell : e \mapsto 
    \begin{cases}
    \ell'(\phi_E^{-1}(e)) & \text{for }e \in \phi_E(E(\Gamma')).\\
    0 & \text{otherwise.}
    \end{cases}
    \right)
\end{equation*}
representing $\sigma_{\Gamma'}$ as a face of $\sigma_\Gamma$.

The cone stack $\Sigma_{\oM_{g,n}}$ is defined as the direct limit \edits{(in the 2-categorical sense)}
\begin{equation}
\label{styfandef}
    \Sigma_{\oM_{g,n}} = \varinjlim_{\Gamma \in \mathcal{G}_{g,n}} \sigma_\Gamma
\end{equation}
over the category $\mathcal{G}_{g,n}$ of stable graphs (with morphisms $\varphi: \Gamma \to \Gamma'$ as above and associated morphisms $\iota_\varphi: \sigma_{\Gamma'} \to \sigma_\Gamma$ of the corresponding cones). 

The limit
\eqref{styfandef}
is formally defined as a cone stack in the sense of \cite{Cavalieri2020-Conestack} \edits{where 
    $\Sigma_{\oM_{g,n}}$
is denoted $\ca M_{g,n}^{\trop}$}, see in particular \cite[Sections 3.3--3.4]{Cavalieri2020-Conestack}.{\footnote{\edits{ 
$\Sigma_{\oM_{g,n}}$
is not the same as the generalised cone complex of \cite{ACP}, which can be seen as a kind of coarse moduli space for
$\Sigma_{\oM_{g,n}}$.}}} However, all the additional data that we will require  (subdivisions and piecewise polynomial functions on fans) will be defined on the individual cones $\sigma_\Gamma$.
It will {\em not} be necessary for the reader to recall the machinery of cone stacks to follow the remainder of Section
\ref{intro111}.

A \emph{subdivision} $\widetilde{\Sigma}
\rightarrow \Sigma_{\oM_{g,n}}$
 is a cone stack specified by a collection $(\widetilde{\Sigma}_\Gamma)_{\Gamma \in \mathcal{G}_{g,n}}$ of fans satisfying:
\begin{itemize}
    \item[(i)] each $\widetilde{\Sigma}_\Gamma $ is a collection of finitely many rational polyhedral cones
    in  $(\mathbb{R}_{\geq 0})^{E(\Gamma)}$, and has total support  
    equal to $\sigma_\Gamma
    = (\mathbb{R}_{\geq 0})^{E(\Gamma)}$,
    \item[(ii)] the fans $\widetilde{\Sigma}_\Gamma$ are compatible with morphisms $\varphi: \Gamma \to \Gamma'$ in the sense that $$\iota_\varphi^{-1}(\widetilde{\Sigma}_\Gamma) = \widetilde{\Sigma}_{\Gamma'}\, .$$
\end{itemize}
The subdivision 
$\widetilde{\Sigma}\to \Sigma_{\oM_{g,n}}$ determines 
a proper birational morphism $\widetilde{\cM} \to \oM_{g,n}$,
which we call a
{\em log modification}.

The essential idea is that the fan $\widetilde{\Sigma}_\Gamma$ determines, in \'etale local coordinates, the toric modifications near the boundary stratum associated to $\Gamma$. 
 The compatibility of the subdivisions $\widetilde{\Sigma}_\Gamma$ with face maps ensures that these modifications glue to a global birational morphism. A detailed definition is given in Section \ref{sec:subdivisions}.
We illustrate a subdivision of the cone stack $\Sigma_{\oM_{1,2}}$ and the associated log modification, in Figure \ref{fig:M12logblow-up}.

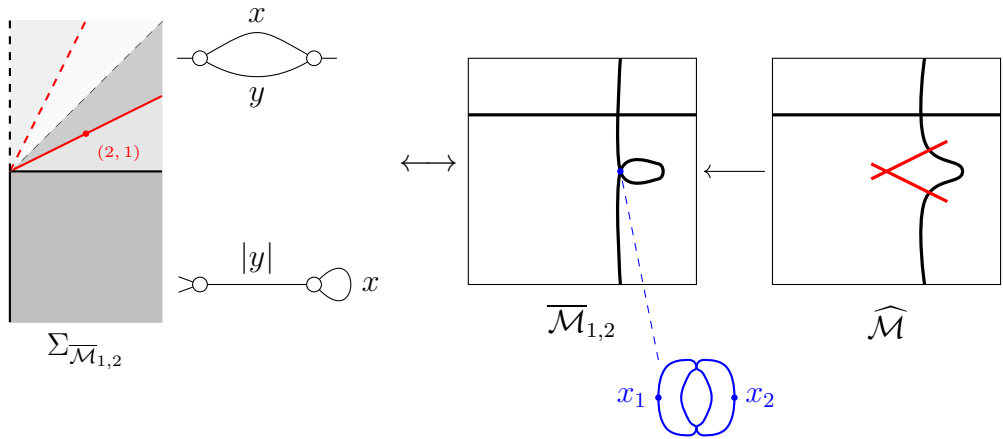
\begin{figure}[htb]
\[
\begin{tikzpicture}[baseline=0pt, vertex/.style={circle,draw,font=\Large,scale=0.5}]
\filldraw[color=black!20] (0,0)--(2,1)--(2,2)--(0,0);
\filldraw[color=black!10] (0,0)--(2,0)--(2,1)--(0,0);
\filldraw[color=black!25] (0,0)--(2,0)--(2,-2)-- (0,-2)--(0,0);

\draw[thick, dashed] (0,0) -- (2,2);
\draw[thick, red] (0,0) -- (2,1);
\draw[thick] (0,0) -- (2,0);
\draw[thick] (0,0) -- (0,-2);

\node[vertex] (A) at (2.5,1.5) {};
\node[vertex] (B) at  (4,1.5) {};
\draw (A) to[bend left, looseness = 1.5] node[midway, above] {$x$} (B)  ;
\draw (A) to[bend right] node[midway, below] {$y$} (B);
\draw (A) -- ++(180:0.3cm); 
\draw (B) -- ++(0:0.3cm);

\node[vertex] (C) at (2.5,-1.5) {};
\node[vertex] (D) at  (4,-1.5) {};
\draw (C) to node[midway, above] {$|y|$} (D)  ;
\draw (D) to[out = 45, in = -45, looseness=14] node[midway, right] {$x$} (D);
\draw (C) -- ++(160:0.3cm);
\draw (C) -- ++(200:0.3cm);

\draw (1,-2) node[below] {$\Sigma_{\oM_{1,2}}$};
\filldraw[red] (1,0.5) circle (1pt) node[below right]{\tiny $(2,1)$};

\filldraw[color=black!2] (0,0)--(1,2)--(2,2)--(0,0);
\filldraw[color=black!6] (0,0)--(0,2)--(1,2)--(0,0);

\draw[thick, red, dashed] (0,0) -- (1,2);
\draw[thick, dashed] (0,0) -- (0,2);

\end{tikzpicture}
\longleftrightarrow
\begin{tikzpicture}[baseline=0pt]
\draw (-3.5,-1.5) rectangle (-0.5,1.5); 
\draw (-2,-2)  node {$\oM_{1,2}$};
\draw (0.5,-1.5) rectangle (3.5,1.5);
\draw (2,-2)  node {$\widehat \cM$};
\draw[->] (0.4,0) -- (-0.4, 0);

\draw[very thick] (-3.5,0.75) -- (-0.5,0.75);
\draw[very thick] plot [smooth, tension = 0.6, xshift=0.5 cm] coordinates {(-2,1.5) (-2,0) (-1.5,-0.1) (-1.5,0.1) (-2,0) (-2,-1.5)  };
\draw[blue, dashed] (-1,-2.5) -- (-1.5,0);
\draw[thick, blue] plot [smooth cycle, tension = 0.7, xshift=-0.5 cm, yshift = -3cm] coordinates { (-0.1,0.5) (0,0.4) (-0.1,0.3) (-0.2,0) (-0.1,-0.3) (0,-0.4) (-0.1,-0.5) (-0.4,-0.4) (-0.5,0) (-0.4,0.4) };
\draw[thick, blue, xscale=-1] plot [smooth cycle, tension = 0.7, xshift=0.5 cm, yshift = -3cm] coordinates { (-0.1,0.5) (0,0.4) (-0.1,0.3) (-0.2,0) (-0.1,-0.3) (0,-0.4) (-0.1,-0.5) (-0.4,-0.4) (-0.5,0) (-0.4,0.4) };
\filldraw[blue] (-1.5,0) circle (1pt);
\filldraw[blue] (-1,-3) circle (1pt) node[left]{$x_1$};
\filldraw[blue] (0,-3) circle (1pt) node[right]{$x_2$};

\draw[very thick, xshift=4 cm] (-3.5,0.75) -- (-0.5,0.75);
\draw[very thick, xshift=4cm] plot [smooth, tension = 0.8, xshift=0.5 cm] coordinates {(-2,1.5) (-2,0.4) (-1.5,0) (-2,-0.4) (-2,-1.5)  };
\draw[very thick, red, xshift=0.3cm] (1.5,-0.1) -- (2.5,0.4);
\draw[very thick, red, xshift=0.3cm] (1.5,0.1) -- (2.5,-0.4);
\end{tikzpicture}
\]    
    \caption{The cone stack $\Sigma_{\oM_{1,2}}$ with a subdivision (in red) and the corresponding log modification $\widehat{\cM}$ of $\oM_{1,2}$, which replaces the self-intersection of the boundary divisor of irreducible curves with a chain of rational curves of length $2$. For the cone stack (on the left), we draw the double cover of the upper (stacky) cone, and correspondingly, the local pictures around the self-intersection of $\delta_\text{irr}$ on the right represent an \'etale double cover of the neighbourhood of this point. \edits{If $X$ and $Y$ are the \'etale local coordinates around the node corresponding to $x$ and $y$, then the three affine patches of the subdivision are given by 
    ${\mathbb{C}}[X,Y,t]/(X - tY^2)$, ${\mathbb{C}}[X,Y,u,v]/(Y^2-uX, X^2-vY, XY-uv)$, and ${\mathbb{C}}[X,Y,s]/(Y - sX^2)$ respectively}. 
    }
    \label{fig:M12logblow-up}
\end{figure}

There is an equivalence of categories between log modifications and the subdivisions of $\Sigma_{\oM_{g,n}}$. Under the equivalence, the blow-up of $\oM_{g,n}$ along a normal closed stratum corresponds to the star subdivision along the barycenter of the corresponding cone. The full iterated boundary blow-up of $\oM_{g,n}$, called the explosion in \cite[Section 5]{Molcho2021-Hodgebundle}, precisely corresponds to the full iterated star subdivision of $\Sigma_{\oM_{g,n}}$. It is however not straightforward in general to describe the log modification corresponding to a given subdivision in terms of more familiar algebro-geometric operations.

\subsection{Log intersection theory}
    \label{subsec: Logintersectiontheory}

We can now define the logarithmic Chow ring of $\oM_{g,n}$:
\begin{equation}\label{lim99}
    \mathsf{logCH}^*(\oM_{g,n}) = \varinjlim_{\widetilde{\cM}} \mathsf{CH}^*(\widetilde{\cM})\,.
\end{equation}
The direct limit is taken over those log modifications $\widetilde{\cM} \to \oM_{g,n}$  where the domain $\widetilde{\cM}$ is a nonsingular Deligne-Mumford stack. 
There exists a morphism $$\widetilde{\cM} \to \widetilde{\cM}'$$ precisely if the associated subdivision $\widetilde{\Sigma}$ refines the subdivision $\widetilde{\Sigma}'$. Given such a morphism, there is a pullback map $\mathsf{CH}^*(\widetilde{\cM}') \to \mathsf{CH}^*(\widetilde{\cM})$ 
which is used to define the above direct limit.

A singular Deligne-Mumford
stack $\cM$ has an operational Chow ring $\mathsf{CH}^*_{\mathsf{op}}(\cM)$, as defined in \cite[Section 5]{Vistoli1989Intersection-th}.
Since operational classes admit pullbacks under arbitrary maps, the limit \eqref{lim99} can  be taken over {\em all} log modifications 
$\widetilde{\cM} \to \oM_{g,n}$ using 
operational Chow theory:
\begin{equation}\label{lim99!}
    \mathsf{logCH}^*(\oM_{g,n}) = \varinjlim_{\text{all\, } \widetilde{\cM}} \mathsf{CH}^*_{\mathsf{op}}(\widetilde{\cM})\,.
\end{equation}
Definitions \eqref{lim99} and
\eqref{lim99!} agree since
every log modification can be further modified to desingularize the domain and since operational and classical Chow groups agree for nonsingular Deligne-Mumford stacks \cite[Proposition 5.6]{Vistoli1989Intersection-th}.

Viewing $\oM_{g,n}$ as the trivial log modification of itself, there exists a canonical algebra morphism, $$\mathsf{CH}^*(\oM_{g,n}) \to \mathsf{logCH}^*(\oM_{g,n})\,,$$
which is injective, since an inverse map of $\mathbb{Q}$-vector spaces
$$\mathsf{logCH}^*(\oM_{g,n}) \to \mathsf{CH}^*(\oM_{g,n})$$
is given by proper pushforward under the maps $\widetilde{\cM}\to \oM_{g,n}$. 
The second map fails to be a ring morphism in general.
For more details on logarithmic Chow rings and intersection theory see 
\cite{Barrott2019Logarithmic-Cho,HolmesSchwarz, Molcho2021-Hodgebundle, Molcho2021-Case-Study}.

\subsection{Resolving the Abel-Jacobi map}
\label{resAJm}

We describe now Holmes' construction \cite{Holmes2017Extending-the-d} of the log double ramification cycle in $\mathsf{logCH}^*(\oM_{g,n})$.
Let  
\begin{equation*} 
\mu:{\mathcal{J}}_{g,n} \rightarrow \oM_{g,n}
\end{equation*}
be the (noncompact) 
universal Jacobian of line bundles of multidegree $0$ on stable curves of genus $g$ with $n$ marked points.
Recalling that $A=(a_1,\ldots,a_n)$ is a vector of
 integers with sum $k(2g-2+n)$, the Abel-Jacobi map $\mathsf{aj}_A$ is the rational map defined by
\begin{equation}\label{cc55g}
\mathsf{aj}_A: \oM_{g,n} \dashrightarrow {\mathcal{J}}_{g,n}\,, \ \ \ \mathsf{aj}_A([C,x_1,\ldots,x_n])=
(\omega_C^\mathrm{log})^{\otimes k} \Big(-\sum_{i=1}^n a_i x_i\Big)\, .
\end{equation}
In \cite{Holmes2017Extending-the-d}, Holmes constructs a universal birational map $\mathcal{U}^\diamond_{g,A} \to \oM_{g,n}$ on which the Abel-Jacobi map \eqref{cc55g} can be extended. The space $\mathcal{U}^\diamond_{g,A}$ sits as an open substack in a 
\emph{noncanonical}
log modification 
\begin{equation} \label{vrk554}
\rho:\oM_{g,A}^\diamond \rightarrow\oM_{g,n}\,,
\end{equation}
so we have a diagram
\tikzset{
  symbol/.style={
    draw=none,
    every to/.append style={
      edge node={node [sloped, allow upside down, auto=false]{$#1$}}}
  }
}
\[
\begin{tikzcd}
&\rho ^*\mathcal{J}_{g,n} \arrow[d] \arrow[r] & \mathcal{J}_{g,n} \arrow[d]\\
\mathcal{U}^\diamond_{g,A} \arrow[ur,"\mathsf{aj}_A"] \arrow[r, symbol=\subseteq] & \oM_{g,A}^\diamond \arrow[r,"\rho"] & \oM_{g,n} \arrow[u, bend right, dashed, swap, "\mathsf{aj}_A"]
\end{tikzcd}\,.
\]
Denoting by $e \subseteq \rho^* \mathcal{J}_{g,n}$ the preimage of the zero section of the universal Jacobian, 
the inverse image
$\mathsf{aj}_A^{-1}(e)
\subset \mathcal{U}^\diamond_{g,A}$
is proper (compact).
The refined intersection product $\mathsf{aj}_A^*([e])$
then defines a cycle class in $\mathsf{CH}^{g}(\oM_{g,n}^\diamond)$
which represents $\mathsf{logDR}_{g,A}$.

While the construction of the blow-up 
\eqref{vrk554}
is  {not}
canonical nor even explicit\footnote{An abstract resolution of singularities is required.}, the resulting logarithmic cycle class 
\begin{equation*} 
\mathsf{logDR}_{g,A}\ \in \mathsf{logCH}^g(\oM_{g,n})
\, ,
\end{equation*}
is well-defined by \cite[Theorem 1.2]{Holmes2017Extending-the-d}.
The most basic properties are:

\begin{enumerate}
\item[$\bullet$] If $n=0$ and $A=\emptyset$, then
$\mathsf{logDR}_{g,\emptyset} = (-1)^g \lambda_g$,
where the top Chern class of the Hodge bundle $\lambda_g$ is
pulled back from $\mathsf{CH}^g(\oM_g)$, see \cite{Molcho2021-Case-Study}.

\item[$\bullet$] If $k=0$, the class $\mathsf{logDR}_{g,A}$
pushes forward to the standard double ramification
cycle
$\mathsf{DR}_{g,A}\in \mathsf{CH}^g(\oM_{g,n})$
defined via the moduli space of rubber maps by \cite[Theorem 1.3]{Holmes2017Extending-the-d}.
\end{enumerate}
Since the $n=0$ case is solved, we will always assume $n\geq 1$.

In order to calculate $\mathsf{logDR}_{g,A}$ using the above construction via Abel-Jacobi theory, several
difficulties must be overcome: 
\begin{enumerate}
    \item [(i)] Since the construction of the blow-up $\rho:\oM_{g,A}^\diamond \rightarrow \oM_{g,n}$ in \cite{Holmes2017Extending-the-d} is only implicit  (depending upon noncanonical choices),
    a direct study of $\oM_{g,A}^\diamond$ is difficult.
    \item[(ii)] The class $\mathsf{aj}_A^*([e])$ arises from
    the geometry of the Abel-Jacobi map on the
    open set $\mathcal{U}^\diamond_{g,A}$, not the global
    geometry of $\oM_{g,A}^\diamond$, and so is
    not directly accessible via intersection theory on $\oM_{g,A}^\diamond$.
    \item[(iii)] Even if the above issues
    could be overcome, in what language would
    the answer be expressed?
\end{enumerate}

Our solution to both (i) and (ii) is to find geometrically 
meaningful models for $\oM_{g,A}^\diamond$ via the moduli spaces of
line bundles on quasi-stable curves. The noncanonical
aspect of $\oM_{g,A}^\diamond$ is not completely lost. There is still a choice of stability
condition needed to define the moduli space of line bundles, but the stability
condition is the only choice. By applying the main result of
\cite{HolmesSchwarz} together with the formula of
 \cite{BHPSS} 
for
the universal double ramification
cycle, we can calculate  $\mathsf{logDR}_{g,A}$.
For (iii),
the answer is expressed in the subring of tautological classes in
$\mathsf{logCH}^\star(\oM_{g,n})$,
suggested by D. Ranganathan{\footnote{In the lecture by D. Ranganathan in
the {\em Algebraic Geometry and Moduli Zoominar} at ETH Z\"urich in April 2020.}} and
developed in \cite{HolmesSchwarz,Molcho2021-Hodgebundle, Molcho2021-Case-Study}. Tautological classes include
$\kappa_1$, the cotangent line classes $\psi_i$,
and classes coming from the algebra of piecewise polynomials on the
cone stack associated to $(\oM_{g,n},\partial \oM_{g,n})$.

As discussed above, the formula for $\mathsf{logDR}_{g,A}$
depends upon the choice of an appropriate{\footnote{The formula is
well-defined for all {\em nondegenerate} stability conditions, but calculates
$\mathsf{logDR}_{g,A}$ only for {\em small} stability conditions. The precise definitions
are given in Section \ref{sec:stability_conditions}.}} stability condition.
The dependence
can be useful: special stability conditions can be selected
to simplify the formula depending upon the properties of 
the vector $A$. The wall-crossing study of the formula
leads to relations in $\mathsf{logCH}^g(\oM_{g,n})$
when different stability conditions calculate the
same class.

\subsection{Moduli of line bundles on stable curves} \label{Sect:intromodulilinebundles}
\label{mlbsc}

A basic difficulty in writing a formula for the logarithmic double ramification cycle is the noncompactness of the universal Jacobian $\mathcal{J}_{g,n}$ of
multidegree 0 line bundles on stable curves. As a consequence, the universal space $\mathcal{U}_{g,A}^\diamond$ is also not compact.
Our approach here is to view the rational map $$\mathsf{aj}_A: \oM_{g,n} 
 \dashrightarrow \mathcal{J}_{g,n}$$ as taking values in a \emph{compactified} Jacobian and to resolve the indeterminacies of $\mathsf{aj}_A$. Such resolutions of indeterminancies turn out to be proper and provide modular compactifications of $\mathcal{U}_{g,A}^\diamond$
of precisely the type needed
to compute  $\mathsf{logDR}_{g,A}$.

The construction of compactifications of the moduli spaces of line bundles on stable
curves goes back at least to \cite{Caporaso1994A-compactificat,Pandharipande}. In the past decades, there
has been a continuous study of these spaces, see \cite{AbreuPaciniUniversal, BiniFontanariViviani, CaporasoChrist, C-MKV, Esteves2001Compactifying-t, EstevesPacini, Kass2017The-stability-s,Melo2015Compactificatio, Melo2019Universal}. 
A short summary of what we need for our approach to the logarithmic
double ramification cycle is presented here.

Since  $\mathcal{J}_{g,n}$ is not compact, we can find 1-parameter families of nonsingular curves carrying
line bundles of degree $0$ 
 degenerating to a  stable nodal curve where
 the line bundle fails to degenerate to a line bundle of degree $0$ on every irreducible component of the nodal curve. The issue is not simply about  multidegrees. Searching
 for limits among all line bundles of total degree $0$ does not suffice and, in 
 fact, further complicates the geometry: the Picard scheme $\Pic(\mathcal{C}_{g,n}/\oM_{g,n})$ of 
 line bundles of all multidegrees on stable curves
 $$\mathcal{C}_{g,n} \to \oM_{g,n}$$
 is neither separated nor universally closed.  

In order to obtain a compactification of 
$\mathcal{J}_{g,n}$, we
must include limits that are
not line bundles on stable
curves. There are two  equivalent approaches.
The first is to consider the moduli space of all rank $1$ torsion free sheaves on stable curves. The second, which we will follow, is to
consider the moduli space of \emph{admissible} line bundles on \emph{quasi-stable} curves. 

A flat family of nodal curves $\mathcal{C} \to \mathcal{S}$ is {\em quasi-stable} if the
relative dualizing sheaf $\omega_{\mathcal{C}/\mathcal{S}}$
is nef and chains
of unstable components have length at most 1.
A line bundle 
$\mathcal{L}$ on 
$\mathcal{C}$ is {\em admissible} if $\mathcal{L}$ has degree $1$ on each unstable component of $\mathcal{C}$. The data of an admissible line bundle on a quasi-stable curve is equivalent to the data of a rank $1$ torsion free sheaf on the associated stabilization, 
$$\mathsf{st}:\mathcal{C} \to \mathcal{C}^{\textup{st}}\, ,$$
by assigning to an admissible line bundle $\mathcal{L}$ the rank $1$ torsion free sheaf $\mathsf{st}_*\mathcal{L}$, \edits{see \cite{EstevesPacini}}. 

\begin{figure}
\centering
\begin{tikzpicture}
\draw[thick, domain= 100:260] plot({3+cos \x},{2*sin \x});
\draw[thick, domain= -80:80] plot({2+cos \x}, {2*sin \x});

\draw[thick, domain = 290:365] plot({-3+cos \x},{1.75+4*sin \x}); 
\draw[thick, domain = 175:250] plot({-2+cos \x},{1.75+4*sin \x});
\draw[thick, red, domain = -3.5:-1.5] plot({\x},{1.75});
\node[draw,circle,inner sep=1.5pt, red, fill] at (2.5,1.75){};
\node[above] at (-2.5,2){$\mathcal{L}|_{\mathbb{P}^1} \cong \mathcal{O}(1)$};
\node[below] at (-2.5,-2){$C' = C_1 \cup \mathbb{P}^1 \cup C_2$};
\node[below] at (2.5,-2){$C = C_1 \cup C_2$};
\draw[-stealth] (-1,0)--(1,0);

\end{tikzpicture}
\caption{A quasi-stable curve $C'$ with stabilization $C$, and an admissible line bundle $\mathcal{L}$ on it.}
\end{figure}
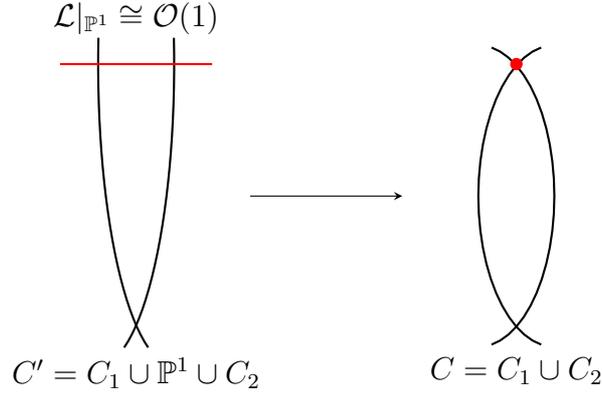

The resulting moduli space parametrizing 
either admissible line bundles on quasi-stable curves
or
rank $1$ torsion free sheaves on stable curves is
the {\em universal Jacobian} which
is
universally
closed 
over the moduli
space of stable curves, but  
 {not} separated.

 The failure of separation of the universal Jacobian occurs because of the possibility of twisting $\mathcal{L}$
by divisors  $\mathcal{T}\subset  \mathcal{C}$ supported
over nodal curves.
Twisting defines an equivalence relation on the \emph{universal} Jacobian by declaring 
$$
(C,\mathcal{L}) \sim (C,\mathcal{L} \otimes \mathcal{O}(\mathcal{T}))\, .
$$
 To find compactifications of the Jacobian $\mathcal{J}_{g,n}$
 which are
 proper and separated, we
 must choose a unique representative in each equivalence class of twistings. Stability conditions exactly
 make such choices.


Following \cite{Kass2017The-stability-s}, a {\em stability condition} $\theta$ of type $(g,n)$ and degree $d$
is a rule which assigns a 
rational number to every irreducible component of
every stable curve
$(C,x_1,\ldots,x_n)$
of genus $g$ with $n$ marked points and satisfies two properties:
\begin{enumerate}
    \item [(i)]
    the sum of the   
    values of $\theta$ over the 
    irreducible components of $C$
    equals  $d$,
    \item[(ii)]
 $\theta$ is additive under all partial smoothings:
\end{enumerate}

\[
\begin{tikzpicture}[scale=0.75]
\coordinate(a1) at (4,2){};
\node[right] at (a1){$C$};
\coordinate(a2) at (3,0.5){};
\coordinate(a3) at (2.5,1){};
\coordinate(a4) at (2,0){};
\coordinate(a5) at (2.5,-1){};
\coordinate(a6) at (3,-0.5){};
\coordinate(a7) at (4,-2){};

\coordinate (b1) at (0,2){};
\node[right] at (b1){$C_1$};
\coordinate (b2) at (-0.5,0){};
\node[draw,circle,inner sep=1pt,fill] at (b2){};
\coordinate (b3) at (-1,-0.5){};
\coordinate (b4) at (-1,0.5){}; 
\coordinate (b5) at (0,-2){};
\node[right] at (b5){$C_2$};
\node[below] at (2, -3){$\theta(C_1) +\theta(C_2) = \theta(C)$};

\coordinate(c1) at (-1,-4){};
\node[draw,circle,inner sep=1pt,fill] at (0,-4){};
\coordinate(c2) at (5,-4){};
\node[below] at (c2){$\Spec \bb{C}[[t]]$};

\draw plot[smooth] coordinates {(a1) (a2) (a3) (a4) (a5) (a6) (a7)};
\draw plot[smooth] coordinates {(b1) (b2) (b3)};
\draw plot[smooth] coordinates {(b5) (b2) (b4)};
\draw plot[smooth] coordinates {(c1) (c2)};
\end{tikzpicture}
\] 

\noindent We may also view a stability condition $\theta$ in combinatorial terms:
 $\theta$ is an assignment of a rational value to every vertex of every stable graph $G$ of type $(g,n)$ which sums to $d$ and
  which is additive with respect to contractions of edges.

There is
a {\em trivial stability condition} $\theta^{\mathsf{tr}}$ of degree $0$  given by
$$
\theta^{\mathsf{tr}}(D) = 0
$$
for every irreducible
component $D\subset C$.
A more interesting example 
\edits{(introduced in \cite{Caporaso1994A-compactificat,Melo2015Compactificatio})} 
is the {\em canonical stability condition} $\theta^{K}$ of degree $2g-2+n$ defined by
$$\theta^K(D) = \mathsf{deg}\left[ \, \omega_C\Big(\sum_{i=1}^n x_i\Big)\Big|_D\right]\,  $$
for an irreducible component $D\subset C$.
The value of $\theta^K$
depends on the genus
of $D$ and the number nodes and markings 
of $C$ which lie on $D$.

Fix a stability condition $\theta$ of degree $d$. An admissible line bundle $\mathcal{L}$ of degree $d$ on a quasi-stable curve $C$ is   {\em $\theta$-stable} (respectively, {\em $\theta$-semistable}) if and only if,
for every proper subcurve\footnote{We will later phrase these inequalities in combinatorial terms using the dual graph of $C$.}
$\emptyset \subsetneq\ul C \subsetneq C$ with neither $\ul C$ nor its complement consisting entirely of unstable components, we have
$$  \theta(\ul C) -\frac{E(\ul C,\ul C^c)}{2}  \ < \, (\leq)\ \  \mathsf{deg}(\mathcal{L}|_{\ul C})\ \   < \, (\leq) \ \  \
\theta(\ul C) +
\frac{E(\ul C,\ul C^c)}{2} \, ,$$
where $E(\ul C,\ul C^c)$ is the number of intersection points of $\ul C$ with the complementary subcurve
$\ul C^c\subset C$ and $$\theta(\ul C) = \sum_{D \subset \ul C} \theta(D)$$ is the sum of the values of  $\theta$ over all irreducible components $D \subset \ul C$.\footnote{The stability condition $\theta$ extends to quasi-stable curves by assigning weight $0$ to all unstable components.}
In other words, an admissible line bundle
$\mathcal{L}$ is $\theta$-semistable if the degree of $\mathcal{L}$ restricted to each subcurve $\ul C \subset C$ is close enough to the assigned
value $\theta(\ul C)$.

The connection with the above discussion about compactifications of the Jacobian can be seen as follows.
A nontrivial
twist by $\mathcal{O}(\mathcal{T})$ in
families changes 
the degree on $\ul C$ by at least $E(\ul C,\ul C^c)$ for some
subcurve $\ul C\subset C$.
Thus, there can be 
at most one representative of the equivalence class of a line bundle which is $\theta$-stable. 
A stability condition $\theta$ is {\em nondegenerate} if every $\theta$-semistable line bundle on every quasi-stable curve $C$ is stable. 

Let $\theta$ be a nondegenerate stability condition of type $(g,n)$.
By a result of \cite{Kass2017The-stability-s} (\edits{see also \cite{Melo2019Universal} for an earlier, alternative perspective}), there exists a moduli stack 
$\mathcal{P}^{\theta}_{g,n}$
of
$\theta$-stable admissible line bundles on quasi-stable curves, satisfying the following properties:
\begin{enumerate}
    \item [(i)] The stack $\mathcal{P}^{\theta}_{g,n}$ is proper, nonsingular,
    and of dimension $4g-3+n$.
    \item [(ii)] There is natural morphism $\mu:\mathcal{P}^{\theta}_{g,n}\rightarrow \oM_{g,n}$.
    \item[(iii)] The stack $\mathcal{P}^{\theta}_{g,n}$ carries a universal quasi-stable curve with a universal admissible line 
    bundle.{\footnote{The condition $n\geq 1$ is used for the existence of the universal line bundle.}}
\end{enumerate}
In general, neither the
trivial stability 
condition $\theta^{\mathrm{tr}}$
nor
the canonical stability condition $\theta^K$
is nondegenerate.
However, in case $n \ge 1$,
explicit perturbations can be easily constructed in both cases and seen
to be nondegenerate (as explained in \cite{Kass2017The-stability-s} and reviewed in Section \ref{sec:stability_conditions}).

A stability condition $\theta$ of degree 0 is {\em small} 
if line bundles of multidegree $0$ on the universal curve $\mathcal{C}_{g,n}$ are $\theta$-stable. 
 Nondegenerate perturbations of $\theta^{\mathsf{tr}}$
sufficiently close
to $\theta^{\mathsf{tr}}$ are small.
For our formula
for the logarithmic double ramification cycle, we will
require $\theta$ to be a small nondegenerate stability condition of degree 0.

Recall that $A=(a_1,\ldots, a_n)$ is a vector of integers summing to $k(2g - 2 + n)$. Given a small
nondegenerate stability condition $\theta$ of degree 0 and type $(g,n)$, there is a rational Abel-Jacobi section of
the morphism $\mu$,
$$\mathsf{aj}_A: \oM_{g,n} \dashrightarrow \mathcal{P}^\theta_{g,n}\, ,$$
defined (as before) by 
$$\mathsf{aj}_A([C,x_1,\ldots,x_n]) = (\omega_C^\mathrm{log})^{\otimes k}\Big(-\sum_{i=1}^n a_i x_i\Big)\, .$$
As the stability condition $\theta$ is nondegenerate, the space $\mathcal{P}_{g,n}^\theta$
is proper. 
The stability condition $\theta$ then canonically{\footnote{The subdivision $\widetilde{\Sigma}^\theta \rightarrow \Sigma_{\oM_{g,n}}$ which determines the log
 modification $\rho$ is defined in Section \ref{sasc}.
 The resolution of the Abel-Jacobi map 
 $\mathsf{aj}: \oM_{g,A}^\theta \rightarrow \mathcal{P}^\theta_{g,n}$
 is constructed in Section \ref{sec:stability_conditions},  see Definition \ref{def:Mgntheta_detailed}. }} 
determines a log modification
\begin{equation}\label{eq:intro_resolve_aj}
    \rho: \oM_{g,A}^\theta \rightarrow \oM_{g,n}
\end{equation}
 which resolves the Abel-Jacobi map, 
$$\mathsf{aj}: \oM_{g,A}^\theta \rightarrow \mathcal{P}^\theta_{g,n}\, , 
 \ \ \ \ \ \mathsf{aj}= \mathsf{aj}_A \circ \rho\, .$$
Over $\oM^\theta_{g,A}$, we have a universal family 
$$\pi: \mathcal{C}^\theta \rightarrow \oM^\theta_{g,A}$$
which is a quasi-stable model of the pullback $\oM_{g,A}^\theta \times_{\oM_{g,n}} \mathcal{C}$
of the 
universal stable curve, 
and an admissible line bundle $$\mathcal{L}^\theta
\rightarrow \mathcal{C}^{\theta}$$ obtained from the universal admissible line bundle on the moduli stack of $\theta$-stable sheaves.
Since $\theta$ is small, $\mathcal{J}_{g,n}
\subset \mathcal{P}_{g,n}^\theta$
is an open substack.
In fact,  $\mathcal{U}_{g,A}^\diamond$ is an open substack of $\oM_{g,A}^\theta$, and $\oM_{g,A}^\theta$ can play the role of $\oM_{g,A}^\diamond$ in the definition of \cite{Holmes2017Extending-the-d}. 
The class $$\mathsf{aj}_A^{*}[e] \in \mathsf{CH}^g_{\mathsf{op}}(\oM_{g,A}^\theta)$$  therefore represents $\mathsf{logDR}_{g,A}$.   

The geometry here
is much more favorable than for the abstractly defined spaces $\oM_{g,A}^\diamond$. After the small nondegenerate stability condition $\theta$
has been chosen, there are {\em no} further choices, and we have complete understanding of the points of  $\oM_{g,A}^\theta$.
A modular
interpretation of $\oM_{g,A}^\theta$ is 
described in Definition \ref{def:S_theta} of Section \ref{ndsc}. Furthermore, $\oM_{g,A}^\theta$ naturally \edits{carries a universal double ramification cycle class 
$$
\mathsf{DR}_{g, \mathcal L^\theta}^\mathsf{op} = \varphi_{\mathcal L^\theta}^* \mathsf{DR}_{g, \emptyset}^\mathsf{op} \in \mathsf{CH}^g_\mathsf{op}(\oM_{g,A}^\theta)\, ,
$$
where $\mathfrak{Pic}_g$ is the universal Picard stack (parameterizing nodal genus $g$ curves together with a line bundle), the map   $$\varphi_{\mathcal L^\theta} : \oM_{g,A}^\theta \to \mathfrak{Pic}_g$$ is defined by the family $(\mathcal{C}^\theta, \mathcal{L}^\theta)$ over $\oM_{g,A}^\theta$, and $\mathsf{DR}_{g, \emptyset}^\mathsf{op} \in \mathsf{CH}^g_\mathsf{op}(\mathfrak{Pic}_g)$ is the universal double ramification cycle of \cite{BHPSS}.\footnote{The notation $\emptyset$ in $\mathsf{DR}_{g, \emptyset}^\mathsf{op}$ indicates the absence of marked points.}} 
By applying
the theory of almost-twistable families developed in \cite{HolmesSchwarz},
we obtain
the first form of our main result (which simply states that the two natural classes on 
$\oM_{g,A}^\theta$
are equal).


\begin{introtheorem} \label{111ooo}
Let $\theta$ be a small nondegenerate  stability condition. 
The universal double ramification cycle associated to
the line bundle $\mathcal{L}^\theta$
on $\mathcal{C}^\theta
\to
\oM_{g,A}^\theta$,
\begin{equation*}
\mathsf{DR}^{\mathsf{op}}_{g,\emptyset, \mathcal{L}^\theta} 
\in \mathsf{CH}^g_{\mathsf{op}}( \oM_{g,A}^\theta)\, ,
\end{equation*}
provides a representative for
$\mathsf{logDR}_{g,A}$.
\end{introtheorem}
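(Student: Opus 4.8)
The plan is to prove the theorem by matching two operational classes on $\oM_{g,A}^\theta$, namely by showing
$$\mathsf{DR}^{\mathsf{op}}_{g,\emptyset,\mathcal{L}^\theta} \;=\; \mathsf{aj}_A^*[e] \;\in\; \mathsf{CH}^g_{\mathsf{op}}(\oM_{g,A}^\theta)\,.$$
The right-hand class has already been identified as a representative of $\mathsf{logDR}_{g,A}$ in the discussion preceding the statement: since $\theta$ is small and nondegenerate, $\mathcal{U}_{g,A}^\diamond$ is an open substack of $\oM_{g,A}^\theta$, so $\oM_{g,A}^\theta$ may be used in place of $\oM_{g,A}^\diamond$ in Holmes' construction, and $\mathsf{aj}_A^*[e]$ represents $\mathsf{logDR}_{g,A}$. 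Hence the entire content of the theorem is the displayed equality, which reconciles Holmes' Abel--Jacobi definition with the \cite{BHPSS} universal double ramification cycle of the single line bundle $\mathcal{L}^\theta$.

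The bridge between the two classes is the theory of almost-twistable families of \cite{HolmesSchwarz}. I would first record that $\mathcal{L}^\theta$ has relative total degree $k(2g-2+n)-\sum_i a_i = 0$, so its Abel--Jacobi section lands in the degree-$0$ part and the class $\mathsf{DR}^{\mathsf{op}}_{g,\emptyset,\mathcal{L}^\theta}$ is defined; the empty twist vector reflects that all twisting has been absorbed into $\mathcal{L}^\theta$ itself. The comparison result of \cite{HolmesSchwarz} asserts that for a log smooth family of prestable curves carrying a line bundle of relative total degree $0$, once the Abel--Jacobi section extends to a morphism into a proper separated model of the relative Jacobian, the universal double ramification cycle of that line bundle is represented by the pullback of the zero section. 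The strategy is therefore to verify that $(\pi:\mathcal{C}^\theta \to \oM_{g,A}^\theta,\ \mathcal{L}^\theta)$ is precisely such an almost-twistable family and then to invoke this comparison verbatim.

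The hypotheses are checked as follows. By construction the log modification $\rho:\oM_{g,A}^\theta \to \oM_{g,n}$ is tailored so that $\mathsf{aj}_A$ resolves to an honest morphism $\mathsf{aj}:\oM_{g,A}^\theta \to \mathcal{P}^\theta_{g,n}$; because $\theta$ is nondegenerate, $\mathcal{P}^\theta_{g,n}$ is proper and separated, supplying the target demanded by the criterion. The universal curve $\mathcal{C}^\theta$ is the quasi-stable model of the pulled-back universal curve, and $\mathcal{L}^\theta$ is pulled back from the universal admissible line bundle on $\mathcal{P}^\theta_{g,n}$, so $\mathsf{aj}$ is literally the classifying map of $\mathcal{L}^\theta$ and $e$ is the preimage of the zero section; thus $\mathsf{aj}^*[e]$ coincides with the class $\mathsf{aj}_A^*[e]$ of Holmes' construction. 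Smallness of $\theta$ guarantees that $\mathcal{J}_{g,n}\subset\mathcal{P}^\theta_{g,n}$ is open, so the locus where $\mathcal{L}^\theta$ genuinely has multidegree $0$ is the expected open dense part, which is exactly the almost-twistability input: every vertical instability of $\mathcal{L}^\theta$ over the boundary is resolved, up to a fiberwise twist, inside the quasi-stable model.

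The main obstacle I anticipate is this last verification --- confirming that $(\mathcal{C}^\theta,\mathcal{L}^\theta)$ meets the technical almost-twistability hypothesis of \cite{HolmesSchwarz} at \emph{every} boundary stratum. This requires translating the combinatorics of the subdivision $\widetilde{\Sigma}^\theta$ defining $\oM_{g,A}^\theta$ and the component-wise degrees prescribed by $\theta$-stability into the vertical-twist data controlled by the criterion, and checking that no further log blow-up is needed before the section extends. Once almost-twistability is established, the comparison theorem of \cite{HolmesSchwarz} applies directly and yields $\mathsf{DR}^{\mathsf{op}}_{g,\emptyset,\mathcal{L}^\theta} = \mathsf{aj}_A^*[e]$, which by the reduction of the first paragraph represents $\mathsf{logDR}_{g,A}$, completing the proof.
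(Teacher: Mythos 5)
Your overall architecture is the one the paper uses: reduce Theorem~\ref{111ooo} to the almost-twistability criterion of \cite{HolmesSchwarz}, with smallness supplying the multidegree-$\underline{0}$ input and nondegeneracy supplying properness/separatedness. But the proposal stops exactly where the paper's proof begins: you flag the verification of almost-twistability as ``the main obstacle I anticipate'' and never carry it out, and that verification is the mathematical content of the paper's Section~\ref{sec:conditions_for_almost_twistability}. Moreover, the method you anticipate (a stratum-by-stratum translation of the combinatorics of $\widetilde{\Sigma}^\theta$ into vertical-twist data) is not how it is done. The paper proves a general statement, Proposition~\ref{prop:almost_twistable_condition}: any \emph{separated} open substack $X \subseteq \cat{Twist}(\oM_{g,n})$ containing $\cat{Twist}^{\underline{0}}(\oM_{g,n},\mathcal{L})$ as a dense open yields an almost-twistable family. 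Its proof is a valuative-criterion argument whose key input is Lemma~\ref{lem:extend_iso}: over a trait with maximal log structure, a PL function $\alpha$ and an isomorphism $\mathcal{L}_\eta(\alpha)\cong\mathcal{F}'_\eta$ on the generic fibre extend over the whole trait (via the sequence $\pi_*\ghost_C^\gp \to R^1\pi_*\mathcal{O}_C \to R^1\pi_*\M_C^\gp$ and \cite{Molcho2018The-logarithmic}). One then identifies $\oM_{g,A}^\theta$ with the open substack of $\theta$-stable twists; smallness enters precisely so that multidegree-$\underline{0}$ twists are $\theta$-stable (hence the extended limit stays in $X$), and nondegeneracy enters through Theorem~\ref{thm:is_subdivis} to give separatedness of $X$. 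I will add that the ingredients you list could in fact be assembled into a shortcut you do not take: the resolved $\mathsf{aj}$ is a section of the separated space $\rho^*\mathcal{P}^\theta_{g,n}\to\oM_{g,A}^\theta$, hence a closed immersion, and restricting over the open $\rho^*\mathcal{J}_{g,n}$ gives condition (ii) of Definition~\ref{def:almost_twistable} directly (using the parenthetical ``image is closed'' formulation to handle rigidification). But since you neither make this argument nor the paper's, the proof as written is missing its core step.

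A second, smaller problem: your bridge statement misquotes what the comparison of \cite{HolmesSchwarz} delivers. Proposition~\ref{lem:invariance_7_b} yields $\phi_{\mathcal{L}^\theta}^*\mathsf{DR} = \phi_{\mathcal{L}^\theta}^*\LogDR$, i.e.\ it equates the operational DR class of $\mathcal{L}^\theta$ with the pullback of the \emph{universal} log class evaluated on $\mathcal{L}^\theta$ --- not with Holmes' class $\mathsf{aj}_A^*[e]$ attached to $\mathcal{L}$. The paper in fact never passes through $\mathsf{aj}_A^*[e]$: its final step shows $\phi_{\mathcal{L}^\theta}^*\LogDR = \mathsf{logDR}_{g,A}$ using the invariances of $\LogDR$ under subdivision of the curve, under twisting by the PL function (recall $\mathcal{L}^\theta = \mathcal{L}(\alpha^\theta)$), and under log modification of the base. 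Your route via $\mathsf{aj}_A^*[e]$ can be made to work, since the introduction's claim that $\oM_{g,A}^\theta$ can play the role of $\oM_{g,A}^\diamond$ is available; but to identify the output of the \cite{HolmesSchwarz} comparison with Holmes' class you still need exactly these invariance properties (the bundle in the comparison is the twisted $\mathcal{L}^\theta$ on a modified base, not $\mathcal{L}$ on $\oM_{g,n}$), and they appear nowhere in the proposal.
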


\edits{Though} the claim of
Theorem \ref{111ooo} 
is conceptual,
the statement can be transformed
into an explicit formula for $\mathsf{logDR}_{g,A}$ since the universal double ramification cycle has been explicitly computed in \cite{BHPSS}. 
Theorem \ref{111ooo} is proven in Section \ref{sec:conditions_for_almost_twistability}.


\subsection{Formula for \texorpdfstring{$\log \mathsf{DR}$}{logDR}} \label{ffldr}
The final step is to translate Theorem \ref{111ooo}
into an explicit class in the logarithmic
Chow ring using the formula for the universal
double ramification cycle in \cite{BHPSS}.

\subsubsection{Tautological classes in the logarithmic Chow ring}
\label{Sect:logtautclasses}
As we have seen,
 the logarithmic modifications $\widetilde{\cM} \to \oM_{g,n}$ appearing in the definition of the logarithmic Chow ring are described by subdivisions $\widetilde{\Sigma}\to \Sigma_{\oM_{g,n}}$
of the cone stack. Following \cite{HolmesSchwarz, Molcho2021-Hodgebundle, Molcho2021-Case-Study}, we can use the same convex-geometric language to describe natural cycle classes on $\widetilde{\cM}$ which define elements of
$\mathsf{logCH}^*(\oM_{g,n})$.\footnote{The language of log structures and ghost sheaves is used in \cite{HolmesSchwarz}. 
Our presentation avoids ghosts and follows instead 
the language of fans and cones from toric geometry. The comparison of terminology
is straightforward and discussed in more detail in Remark \ref{rem:compare_PP_defs}.}


Let $\Sigma$ be a fan in $\mathbb{R}^m$ with  maximal cones all of dimension $m$ (the dimension of the ambient vector space). A \emph{strict piecewise polynomial} $f \in \sPPoly{\Sigma}$ is a continuous function $f : |\Sigma| \to \mathbb{R}$ on the support $|\Sigma| \subseteq \mathbb{R}^m$ of $\Sigma$ which is given by a polynomial with rational coefficients on each maximal cone of $\Sigma$. A \emph{piecewise polynomial function} $f \in \PPoly{\Sigma}$ is a continuous function $f: |\Sigma| \to \mathbb{R}$ which is a strict piecewise polynomial on \emph{some} subdivision $\Sigma'$ of $\Sigma$.

Given a subdivision $\widetilde{\Sigma} = (\widetilde{\Sigma}_\Gamma)_{\Gamma \in \mathcal{G}_{g,n}}$ of the cone stack $\Sigma_{\oM_{g,n}}$, a {strict piecewise polynomial function} 
$$f = (f_\Gamma)_{\Gamma \in \mathcal{G}_{g,n}} \in \sPPoly{\widetilde{\Sigma}}$$
is a collection of strict piecewise polynomials $f_\Gamma$ on the fans $\widetilde{\Sigma}_\Gamma$, which are compatible with the face maps $\iota_\varphi$. More precisely, for $\varphi: \Gamma \to \Gamma'$, we have $f_\Gamma \circ \iota_\varphi = f_{\Gamma'}$. 
A piecewise polynomial on $\widetilde{\Sigma}$ is then similarly a collection $f = (f_\Gamma)_{\Gamma}$ of piecewise polynomials $f_\Gamma$ satisfying the same compatibility.

The (strict) piecewise polynomials on $\widetilde{\Sigma}$ form a $\mathbb{Q}$-algebra. For 
the log modification $\widetilde{\cM}\to \oM_{g,n}$ associated to $\widetilde{\Sigma}$, 
there are natural ring morphisms
\begin{equation}\label{eq:Phi_def}
    \Phi: \sPPoly{\widetilde{\Sigma}} \to \mathsf{CH}^*_{\mathrm{op}}(\widetilde{\cM})\,
\end{equation}
constructed in \cite[Section 3.3]{HolmesSchwarz}.\footnote{Following our analogy with toric varieties, the equivariant Chow ring of a toric variety is isomorphic to the ring of strict piecewise polynomials on the associated fan, see \cite{Payne2006EquivariantChow}.}
When $\widetilde \Sigma$ is the cone over a simplicial complex the construction proceeds in two steps:
\begin{enumerate}
\item[(i)] There exists a natural map from strict piecewise \emph{linear} functions on $\widetilde{\Sigma}$ to divisor classes on $\widetilde{\cM}$ using the fact that rays of $\widetilde{\Sigma}$ correspond to boundary divisors of  $\widetilde{\cM}$. 
\item[(ii)]
The strict piecewise linear functions generate $\sPPoly{\widetilde{\Sigma}}$ as a $\mathbb{Q}$-algebra.
The unique extension of (i) defines $\Phi$.
\end{enumerate}
In general, we can subdivide  $\widetilde \Sigma$ until it is the cone over a simplicial complex and then push forward the resulting class.

The maps $\Phi$ are compatible under subdivision of $\widetilde{\Sigma}$ and the associated birational modification of $\widetilde{\cM}$, and thus give rise to an algebra morphism
\begin{equation} \label{eqn:Phimapintro}
    \Phi: \PPoly{{\Sigma}_{\oM_{g,n}}} \to \mathsf{logCH}^*({\oM}_{g,n})\,.
\end{equation}
\edits{An abstract construction of $\Phi$ can be found in \cite{Molcho2021-Hodgebundle, Molcho2021-Case-Study}. A more explicit
description of $\Phi$ is presented in
Section \ref{excor}.}

\begin{definition} \cite[Definition 3.18]{HolmesSchwarz}\, 
The \emph{logarithmic tautological ring} $$\mathsf{logR}^*(\oM_{g,n}) \subset \mathsf{logCH}^*(\oM_{g,n})$$
is the subring generated by the image of $\Phi$ and the usual tautological ring\footnote{We refer the reader to \cite{RPSLC} for an introduction to the tautological ring of $\oM_{g,n}$. } $$\mathsf{R}^*(\oM_{g,n}) \subseteq \mathsf{CH}^*(\oM_{g,n}) \subseteq \mathsf{logCH}^*(\oM_{g,n})\, .$$
\end{definition}
In particular, the above definition gives a natural surjection
\begin{equation} \label{eqn:logRQQtensor}
   \mathsf{R}^*(\oM_{g,n}) \otimes_{\mathbb{Q}}  \PPoly{{\Sigma}_{\oM_{g,n}}} \to \mathsf{logR}^*(\oM_{g,n})\,.
\end{equation}
By the results of \cite{HolmesSchwarz, Molcho2021-Case-Study}, we know
$\mathsf{logDR}_{g,A} \in
\mathsf{logR}^*(\oM_{g,n})$.

Our goal is to describe an explicit element of the left side of \eqref{eqn:logRQQtensor} which maps to $\mathsf{logDR}_{g,A}$.
As we will see in Section \ref{Sect:logDRformulaintro}, the formulas describing the element naturally involve taking exponentials of certain piecewise linear functions. Exponentiation leaves the realm of piecewise polynomials and yields  \emph{piecewise power series}. To make sense of the formulas below, we simply observe that the map $\Phi$ above naturally extends to such piecewise power series by first truncating the power series to degree (at most) $3g-3+n$ to obtain a piecewise polynomial.


\subsubsection{The subdivision associated to a stability condition}
\label{sasc}
Let $\theta$ be a nondegenerate stability
condition of type $(g,n)$ and degree 0 (as defined in Section \ref{mlbsc}).
The data of the vector $A = (a_1, \dots, a_n)$ and $\theta$ together
determine
 a canonical subdivision $$\widetilde{\Sigma}^\theta\to \Sigma_{\oM_{g,n}}$$
 which we construct here.
 The subdivision $\widetilde{\Sigma}^\theta$
 defines the
 log modification
$$\rho: \oM_{g,A}^\theta \rightarrow \oM_{g,n}$$
from \eqref{eq:intro_resolve_aj}. 
 We will require the following notation for the construction:

\vspace{8pt}
\noindent $\bullet$ For a 
quasi-stable graph $\Gamma$,
let $\vec E(\Gamma)$ be the set of \emph{oriented edges} of $\Gamma$. The natural map $$\vec E(\Gamma) \to E(\Gamma)$$ has fibers $\{\vec e, \cev e\}$ of size two, the two orientations on a given edge $e$. For $\vec e \in \vec E(\Gamma)$, we write $$\vec e \to v \in V(\Gamma)$$ 
if $\vec{e}$ is incident to and points \emph{towards} $v$. A \emph{cycle} $\gamma$ on $\Gamma$ is a collection of oriented edges forming a cycle without self-intersection. 

Given a family $(C,\mathcal{L})$ of curves and line bundles, the change of the multidegree of $\mathcal{L}$ on a special fiber $C_0$ by twisting with a vertical divisor $\mathcal{T}$ (as in Section \ref{Sect:intromodulilinebundles}) is described by an acyclic flow on $\Gamma_{C_0}$. Acyclic flows appear under the name of \emph{twists} in \cite{Farkas2016The-moduli-spac}. 




\vspace{8pt}
\noindent \label{flowdef} $\bullet$ A \emph{flow} on $\Gamma$ is a function $f : \vec E(\Gamma) \to \mathbb{Z}$ satisfying $f(\cev e) = - f(\vec e)$ for all edges $e \in E(\Gamma)$.
\edits{The group of flows on $\Gamma$ is denoted $\Flow(\Gamma)$}.{\footnote{\edits{In the literature, flows are also known as \emph{1-cocycles}, and the group is denoted $C^1(\Gamma, \bb Z)$.}}} By choosing some fixed orientation on $\Gamma$, we can {\edits{noncanonically}} identify the group $\Flow(\Gamma)$ with $\mathbb{Z}^{E(\Gamma)}$. A flow $f$ is called \emph{acyclic} if there exists no cycle $\gamma$ of $\Gamma$ satisfying
the following conditions: $$\forall \vec e \in \gamma\, ,
\  f(\vec e) \geq 0
\ \ \ \text{and} \ \ \ 
\exists \vec e \in \gamma\, ,
\  f(\vec e) > 0\, .$$ 

\vspace{8pt}
\noindent $\bullet$ A \emph{divisor} on $\Gamma$ is an element of $\bb Z^{V(\Gamma)}$. For $f \in \Flow(\Gamma)$, we write
\begin{equation}\label{eq:div_of_flow}
    \mathrm{div}(f) = \left( \sum_{\vec e \to v} f(\vec e) \right)_{v \in V(\Gamma)} \in \mathbb{Z}^{V(\Gamma)}
\end{equation}
for the divisor of $f$. The first homology group $H_1(\Gamma)$ is precisely the kernel of the group homomorphism $\mathrm{div} \colon \Flow(\Gamma) \to \mathbb{Z}^{V(\Gamma)}$. 
There is  a straightforward generalization of flows and divisors allowing values in an arbitrary abelian group 
instead of $\mathbb{Z}$.

\vspace{8pt}
\noindent $\bullet$ Given a length assignment $\ell : E(\Gamma) \to \mathbb{R}_{\geq 0}$, we have a \emph{pairing}{\footnote{The importance of the pairing will be seen in Lemma \ref{lem:flow_as_slopes}: a flow arises as the slopes of a {\em piecewise linear function} (Definition \ref{def:comb_PL}) on the metric graph $(\Gamma, \ell)$ if and only if the pairing with every element of $H_1(\Gamma)$ vanishes.}}
\begin{equation} \label{eqn:ellpairing}
    \langle f, g \rangle_\ell = \frac{1}{2} \sum_{\vec e \in \vec E(\Gamma)} \ell(e) f(\vec e) g(\vec e)\, 
\end{equation}
for $f,g \in \Flow(\Gamma)$.


\vspace{8pt}
\noindent $\bullet$ 
The multidegree of the line bundle $(\omega_C^{\mathrm{log}})^{\otimes k}(-\sum_i a_i x_i)$ on a quasi-stable curve $C$ with  graph $\Gamma$ is
\begin{equation*}
    \mdeg_{k,A} = \left(k(2g(v)-2+n(v))-\sum_{i\text{ at }v} a_i \right)_{v \in V(\Gamma)} \in \mathbb{Z}^{V(\Gamma)}\, , 
\end{equation*}
where $n(v)$ is the number of half-edges attached to $v$ (the number of markings at $v$ plus the number of ends of edges at $v$). 

\vspace{8pt}

We can now describe the subdivision $\widetilde{\Sigma}^\theta = \{ \widetilde{\Sigma}^{\theta}_{\Gamma}\}_{\Gamma \in \mathcal{G}_{g,n}}$ associated to the nondegenerate stability condition $\theta$.
Fix a stable graph $\Gamma$ with associated cone $$\sigma_{\Gamma} = (\mathbb{R}_{\geq 0})^{E(\Gamma)} \ \  \text{in}\ \  \Sigma_{\oM_{g,n}}\,. $$  The fan $\widetilde{\Sigma}^{\theta}_ {\Gamma}$ of the subdivision has interior cones\footnote{\label{footnote:interior}An \emph{interior cone} of $\widetilde{\Sigma}^{\theta}_{\Gamma}$ is a cone of the fan intersecting the interior $(\mathbb{R}_{>0})^{E(\Gamma)}$ of  $\sigma_\Gamma$. The interior cones uniquely determine the fan $\widetilde{\Sigma}^{\theta}_{\Gamma}$: the fan is  the collection of these interior cones together with  all of their faces.} in bijective
correspondence with tuples $(\widehat{\Gamma}, D, I)$ where
\begin{itemize}
    \item[(i)] $\widehat{\Gamma}$ is a quasi-stable graph with stabilization $\widehat{\Gamma}^s = \Gamma$, 
    \item[(ii)] $D \in \mathbb{Z}^{V(\widehat{\Gamma})}$ is a $\theta$-stable multidegree, 
    \item[(iii)] $I$ is an acyclic flow on  $\widehat{\Gamma}$ satisfying
    \begin{equation}
        \mathrm{div}(I) = \mdeg_{k,A} - D\,.
    \end{equation}
\end{itemize}

For each tuple $(\widehat{\Gamma}, D, I)$ as above, we will define a cone $\sigma_{\widehat{\Gamma},I} \subseteq \sigma_\Gamma$. The collection of cones \{$\sigma_{\widehat{\Gamma},I}\} $ together with their faces defines the subdivision $\widetilde{\Sigma}^{\theta}_ {\Gamma}$.

Let $\sigma_{\widehat{\Gamma}} = (\mathbb{R}_{\geq 0})^{E(\widehat{\Gamma})}$ be the cone associated to the quasi-stable graph $\widehat{\Gamma}$. There is
a natural map
$${\mathsf{pr}}: \sigma_{\widehat{\Gamma}} \to \sigma_\Gamma\, ,\ \ \  \widehat{\ell} \, \mapsto\,  \ell : e \mapsto 
\begin{cases}
\widehat{\ell}(e_1)+\widehat{\ell}(e_2) &\text{if $\widehat \Gamma$ subdivides edge $e$ into $e_1, e_2$},\\
\widehat{\ell}(e) & \text{otherwise}.
\end{cases}
$$
We define a subcone $\tau_{\widehat{\Gamma},I} \subseteq \sigma_{\widehat \Gamma}$ by the condition
\begin{equation*}
    \tau_{\widehat{\Gamma},I} = \left\{\widehat \ell \in \sigma_{\widehat \Gamma} \ \Big| \  \langle \gamma, I \rangle_{\widehat \ell} = 0 \text{ for all }\gamma \in H_1(\widehat{\Gamma}) \right\}\,,
\end{equation*}
where the pairing is  \eqref{eqn:ellpairing} and $H_1(\widehat{\Gamma})$ is identified with a subgroup of $\Flow(\widehat{\Gamma})$ as explained above.
The following crucial claim is proven in Lemma \ref{Dcomment-I-should-try-to-prove-these-claims} of Section 
\ref{sec:stability_conditions}.

\vspace{8pt}
\noindent{\bf Claim 1.}
{\em The map ${\mathsf{pr}}\colon \sigma_{\widehat{\Gamma}} \to \sigma_\Gamma$ induces an isomorphism from the cone $\tau_{\widehat{\Gamma},I}$ to the
image $$\sigma_{\widehat{\Gamma},I} = {\mathsf{pr}}(\tau_{\widehat{\Gamma},I}) \subseteq \sigma_\Gamma\, .$$
Moreover,  the collection of cones $\{ \sigma_{\widehat{\Gamma},I}\} $ together with their faces forms a fan $\widetilde{\Sigma}^{\theta} _{\Gamma}$ with support $\sigma_\Gamma$.}

\vspace{8pt}

We record an important consequence of Claim 1. Let $\ell_e, \widehat{\ell}_{\widehat e}$ be the coordinate functions on $\sigma_\Gamma~\edits{=(\mathbb{R}_{\geq 0})^{E(\Gamma)}}, \sigma_{\widehat{\Gamma}}~\edits{=(\mathbb{R}_{\geq 0})^{E(\widehat{\Gamma})}}$ giving the lengths of edges $e~\edits{\in E(\Gamma)}$, $\widehat{e}~ \edits{\in E(\widehat \Gamma)}$, viewed now as polynomials on these cones.

\vspace{8pt}
\noindent{\bf Claim 2.} {\em On the cone $\sigma_{\widehat{\Gamma},I}$, there exist unique linear functions $\widehat{\ell}_{\widehat{e}} = \widehat{\ell}_{\widehat{e}}(\ell)$ in the variables $\ell_e$ 
which define a section of the map ${\mathsf{pr}}: \tau_{\widehat{\Gamma},I} \to \sigma_{\widehat{\Gamma},I}$.}

The above definition of the cones $\sigma_{\widehat \Gamma, I}$ is rather formal. Geometric intuition for the construction  can be found in the theory developed in Sections \ref{sec:Logcurves} and \ref{sec:stability_conditions}, see Remark \ref{rem:cone_intuition}. \edits{A detailed example is worked out in Section \ref{eg:sam_big_example}. }

\subsubsection{The formula for \texorpdfstring{$\mathsf{logDR}$}{logDR} } \label{Sect:logDRformulaintro}
Let $\theta$ be a small nondegenerate stability
condition. 
We present here an explicit formula for the cycle 
$$\mathsf{logDR}_{g,A}\in \mathsf{logCH}^g(\oM_{g,n})\, .$$

To write the formula,
we must first define two special strict piecewise power series
$\DRpp, \DRpl$ on ${\widetilde{\Sigma}^{\theta}}$.
The functions $\DRpp, \DRpl$ are uniquely determined
by their restriction to the interior cones of $\widetilde{\Sigma}^{\theta}$. As discussed in  Section
\ref{sasc}, the interior cones correspond to a 
stable graph $\Gamma \in \mathcal{G}_{g,n}$ together with a tuple $(\widehat{\Gamma}, D, I)$. We will define the functions $\DRpp, \DRpl$ on 
the associated cone $\sigma_{\widehat{\Gamma},I} \in \widetilde{\Sigma}^{\theta}_ {\Gamma}$.

\vspace{8pt}

\noindent $\bullet$ 
The definition of $\DRpp$ requires a sum over weightings:
for a positive integer $r$, an \emph{admissible weighting mod $r$} on $\widehat{\Gamma}$ is a flow $w$ with values in $\mathbb{Z}/r\mathbb{Z}$ 
such that \edits{$\mathrm{div}(w)$ is equal modulo $r$ to $D$:}
\[
\mathrm{div}(w) = \edits{(D\text{ reduced mod $r$})} \in (\mathbb{Z}/r\mathbb{Z})^{V(\widehat{\Gamma})}\, .
\]
We define
\begin{equation*}
\mathrm{Cont}_{(\widehat{\Gamma}, D, I)}^r = \sum_{w} 
r^{-h^1(\widehat{\Gamma})}\prod_{e \in E(\widehat{\Gamma})} \exp\left(\frac{\overline{w}(\vec e) \cdot \overline{w}(\cev e)}{2} \widehat{\ell}_e\right) \ \in \mathbb{Q}[[\widehat{\ell}_e : e \in E(\widehat{\Gamma})]]\, ,
\end{equation*}
where the sum runs over admissible weightings $w$ mod $r$. 
Inside the exponential, 
$\overline{w}(\vec e)$ and $\overline{w}(\cev e)$
denote the unique representative of $w(\vec e)  \in \mathbb{Z}/r\mathbb{Z}$
and $w(\cev e)  \in \mathbb{Z}/r\mathbb{Z}$
in $\{0, \ldots, r-1\}$.

As in \cite[Appendix]{JPPZ}, for each fixed degree in the variables $\widehat{\ell}_e$, the element
$\mathrm{Cont}_{(\widehat{\Gamma}, D, I)}^r$ 
is polynomial in $r$ for sufficiently large $r$. We denote by $\mathrm{Cont}_{(\widehat{\Gamma}, D, I)}$ the polynomial in the variables $\widehat{\ell}_e$ obtained by substituting $r=0$ into the polynomial
expression for $\mathrm{Cont}_{(\widehat{\Gamma}, D, I)}^r$.
We define
\begin{equation*}
    \DRpp\big|_{\sigma_{\widehat{\Gamma},I}} = \mathrm{Cont}_{(\widehat{\Gamma}, D, I)} \big|_{\widehat{\ell} = \widehat{\ell}(\ell)} \in \mathbb{Q}[[{\ell}_e : e \in E({\Gamma})]]\,,
\end{equation*}
where we use the variable substitution $\widehat{\ell} = \widehat{\ell}(\ell)$ associated to $\sigma_{\widehat{\Gamma},I}$ from Claim 2. 
We claim that these functions fit together to give a well-defined strict piecewise power series $\DRpp$ on $\widetilde{\Sigma}^{\theta}$.

\vspace{8pt} 
 \noindent $\bullet$ To define $\DRpl$ on $\widetilde{\Sigma}^{\theta}$, we fix a vertex $v_0 \in V(\widehat{\Gamma})$. For every length assignment $\widehat{\ell}$ in the cone $\tau_{\widehat{\Gamma},I}$  and any vertex $v \in V(\widehat{\Gamma})$, let $\gamma_{v_0 \to v}$ be a path from $v_0$ to $v$ in $\widehat{\Gamma}$. We define
\begin{equation} \label{xsse3}
    \alpha(v) = \sum_{\vec e \in \gamma_{v_0 \to v}} I(\vec e) \cdot \widehat{\ell}_e\,,
\end{equation}
where the sum is over the oriented edges $\vec e$ constituting the path $\gamma_{v_0 \to v}$. The defining equations of $\tau_{\widehat{\Gamma},I}$ imply that for $\widehat{\ell} \in \tau_{\widehat{\Gamma},I}$ the expression  \eqref{xsse3}
is independent of the chosen path $\gamma_{v_0 \to v}$. We define
\begin{equation}\label{eq:L_formula}
\DRpl = \sum_{v \in V(\widehat{\Gamma})} (D + \mdeg_{k,A})(v) \cdot \alpha(v) \big|_{\widehat{\ell} = \widehat{\ell}(\ell)} \in \mathbb{Q}[{\ell}_e : e \in E({\Gamma})]\,\, .
\end{equation}
The substitution of variables $\widehat{\ell} = \widehat{\ell}(\ell)$, which give the inverse of the isomorphism $\tau_{\widehat{\Gamma},I} \to \sigma_{\widehat{\Gamma},I}$ and thus have image in $\tau_{\widehat{\Gamma},I}$, ensure that the expression is independent of the choice of the paths $\gamma_{v_0 \to v}$. 
The expression is also independent of the base vertex $v_0$, which follows from the fact that the divisor $D + \mdeg_{k,A}$ has total degree $0$ on $\widehat{\Gamma}$. 

\vspace{8pt}

For the  $\mathsf{logDR}_{g,A}$ formula,
in addition to
$\DRpp$ and $\DRpl$, we will also require
the tautological class
\begin{equation} \label{eqn:etadefinition}
\eta = k^2 \kappa_1 - \sum_{i=1}^n a_i^2 \psi_i \in \mathsf{R}^*(\oM_{g,n})\, .
\end{equation}
Define the mixed degree logarithmic class
\begin{equation} \label{eqn:logDRmixed}
    {\mathbf{P}}_{g,A}^\theta = \exp\left(-\frac{1}{2}(\eta + \Phi(\DRpl)) \right) \cdot \Phi(\DRpp) \in \mathsf{logR}^*(\oM_{g,n})\,,
\end{equation}
where $\Phi$ is the extension of the map \eqref{eqn:Phimapintro} to piecewise power series as described at the end of Section \ref{Sect:logtautclasses}.

\begin{introtheorem}
\label{222ttt} Let $\theta$ be a small nondegenerate  stability condition. 
The log double ramification cycle is  the degree $g$ part
of ${\mathbf{P}}^\theta_{g,A}$,
\begin{equation*} 
    \mathsf{logDR}_{g,A} = {\mathbf{P}}_{g,A}^{g,\theta} \in \mathsf{logR}^g(\oM_{g,n})\, .
\end{equation*}
\end{introtheorem}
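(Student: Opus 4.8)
The plan is to deduce Theorem \ref{222ttt} from the conceptual statement of Theorem \ref{111ooo} by unwinding the explicit formula of \cite{BHPSS} for the universal double ramification cycle on the modified space $\oM_{g,A}^\theta$. By Theorem \ref{111ooo}, the class $\mathsf{DR}^{\mathsf{op}}_{g,\emptyset,\mathcal{L}^\theta} \in \mathsf{CH}^g_{\mathsf{op}}(\oM_{g,A}^\theta)$ represents $\mathsf{logDR}_{g,A}$, so everything reduces to computing this universal cycle concretely and matching it against $\mathbf{P}^{g,\theta}_{g,A}$. First I would recall the precise shape of the \cite{BHPSS} formula: it is a sum over stable graphs of the target, with each term an exponential of the form $\exp(-\tfrac12 \eta - \tfrac12(\text{boundary correction}))$ times a weighted sum over $r$-torsion weightings, followed by the $r=0$ polynomial extraction. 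The task is to show that, after pulling back along $\rho: \oM_{g,A}^\theta \to \oM_{g,n}$ and expressing boundary divisors of $\oM_{g,A}^\theta$ via the subdivision $\widetilde{\Sigma}^\theta$, this formula reorganizes into $\Phi(\DRpp)$ times $\exp(-\tfrac12(\eta + \Phi(\DRpl)))$.

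The core translation proceeds cone by cone. For each interior cone $\sigma_{\widehat\Gamma,I}$ of $\widetilde{\Sigma}^\theta_\Gamma$, the quasi-stable graph $\widehat\Gamma$ records exactly the boundary stratum of the modification, and the line bundle $\mathcal{L}^\theta$ restricts to the $\theta$-stable multidegree $D$ there. I would identify the weighting sum in the \cite{BHPSS} formula with the sum over admissible weightings $w$ mod $r$ satisfying $\mathrm{div}(w) = D$, which is precisely the content of $\mathrm{Cont}^r_{(\widehat\Gamma,D,I)}$; the factor $r^{-h^1(\widehat\Gamma)}$ and the edge exponentials $\exp(\tfrac12 \bar w(\vec e)\bar w(\cev e)\,\widehat\ell_e)$ come from the self-intersection contributions of the node classes, with the edge-length variables $\widehat\ell_e$ realized as the images under $\Phi$ of the piecewise-linear coordinate functions on the subdivided cone. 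The $r=0$ polynomiality argument is identical to \cite[Appendix]{JPPZ}. The substitution $\widehat\ell = \widehat\ell(\ell)$ from Claim 2 is what converts the naturally $\widehat\Gamma$-indexed expression into a genuine strict piecewise power series on $\widetilde{\Sigma}^\theta$, and I would verify that these pieces glue compatibly across faces (so that $\DRpp$ is well-defined) using condition (ii) of the subdivision together with the face-compatibility of the \cite{BHPSS} contributions.

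The linear term $\DRpl$ requires separate care: it encodes the difference between the canonical class on $\oM_{g,n}$ and its pullback to the quasi-stable model, i.e.\ the correction to $\eta$ coming from the fact that $\mathcal{L}^\theta$ is not $(\omega^{\log})^k(-\sum a_i x_i)$ on the nose but its twist by the flow $I$. I would compute the first Chern class comparison $\mathsf{aj}_A^*c_1$ versus $\eta$, showing that the discrepancy is $\sum_v (D+\mdeg_{k,A})(v)\cdot\alpha(v)$ where $\alpha(v)$ integrates the flow $I$ along paths, as in \eqref{xsse3}. The defining equations of $\tau_{\widehat\Gamma,I}$ (the pairing vanishing on $H_1$) guarantee path-independence, and total degree $0$ of $D+\mdeg_{k,A}$ gives base-point independence — both already asserted in the excerpt. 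Assembling the quadratic $\eta$, the linear $\Phi(\DRpl)$, and the weighting factor $\Phi(\DRpp)$ into a single exponential yields $\mathbf{P}^\theta_{g,A}$, and extracting the degree $g$ part gives $\mathsf{logDR}_{g,A}$.

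The main obstacle I anticipate is the bookkeeping of the variable substitution $\widehat\ell = \widehat\ell(\ell)$ and its compatibility with the operational pullback $\Phi$: one must check that the intersection-theoretic node classes on $\oM_{g,A}^\theta$ really correspond to the functions $\widehat\ell_e$ under $\Phi$, and that the projection $\mathsf{pr}: \sigma_{\widehat\Gamma}\to\sigma_\Gamma$ (collapsing an edge subdivided by $\widehat\Gamma$) matches how the subdivided boundary divisors push forward. A secondary subtlety is confirming that the \cite{BHPSS} formula, which is a priori stated for a fixed line bundle on a fixed family, applies verbatim to $\mathcal{L}^\theta$ on the universal quasi-stable family $\mathcal{C}^\theta \to \oM_{g,A}^\theta$ and produces operational classes compatible with the $\mathsf{logCH}$ limit; here I would lean on the almost-twistability input from \cite{HolmesSchwarz} already invoked in the proof of Theorem \ref{111ooo}. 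Once these dictionary entries are nailed down, the remaining identification is a direct if lengthy comparison of the two explicit expressions.
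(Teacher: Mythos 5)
Your proposal is correct and follows essentially the same route as the paper: reduction via Theorem \ref{111ooo}, translation of the graph sum of \cite{BHPSS} into piecewise-polynomial language (your ``dictionary'' between node classes and the variables $\widehat\ell_e$ under $\Phi$ is exactly the paper's Proposition \ref{lem:comparing_graph_sum_and_PP}, proved there via Artin fans and \'etale descent), and the identification of $\Phi(\DRpl)$ with the discrepancy $\pi_*\left(c_1(\mathcal{L}^\theta)^2\right) - \eta$ caused by the twist $\mathcal{L}^\theta = \mathcal{L}(\alpha^\theta)$. The only organizational difference is that the paper performs the translation once, universally on $\Picabs$ (Theorem \ref{thm:regularDR}) and in a general family setting (Theorem \ref{thm:DRapprox_formula}), so that the statement on $\oM_{g,A}^\theta$ follows by functoriality of $\Phi$ under pullback rather than by your cone-by-cone matching on the modification.
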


As a consequence of Theorem \ref{222ttt}, the class
$${\mathbf{P}}_{g,A}^{g,\theta} \in \mathsf{logR}^g(\oM_{g,n})$$ is
{\em independent} of $\theta$ (for all
small nondegenerate stability conditions
$\theta$). The definition of ${\mathbf{P}}_{g,A}^{g,\theta}$, however,
only requires nondegeneracy of $\theta$. 
The interpretation and wall-crossing
analysis of ${\mathbf{P}}_{g,A}^{g,\theta}$
for large stability conditions $\theta$
is an interesting direction of study. \edits{A separate question is whether a formula 
for $\mathsf{logDR}_{g,A}$ can be written using degenerate stability conditions (such as the trivial one). Our approach requires nondegeneracy (for the existence of
a universal sheaf), and the formula given in Theorem \ref{222ttt} is not well-defined for degenerate $\theta$.}

The calculation of $\mathsf{logDR}_{g,A}$ plays a fundamental role in the log Gromov-Witten theory of toric
varieties: by an elegant argument of Ranganathan and Urundolil Kumaran \cite{RanganathanKumaran}, the log Gromov-Witten theory
of all nonsingular projective
toric varieties (with log structure
determined by the full toric boundary) can be reduced
to $\mathsf{logDR}_{g,A}$. 

After
a discussion of Pixton's formula in the language of
piecewise polynomials in Section \ref{PPsclass}, Theorem \ref{222ttt}
is proven in Section \ref{pprr22}.

\subsection{Example in genus 1}

 We illustrate Theorem \ref{222ttt} for $\mathsf{logDR}_{1,(3,-3)}$ in Figure \ref{fig:logDR133} (giving a similar illustration of the regular DR cycle $\mathsf{DR}_{1,(3,-3)}$ for comparison). The general genus 1 computation is worked out in Section \ref{sec:genus_1}.  A genus 2 calculation using the Sage package \href{https://gitlab.com/jo314schmitt/admcycles/-/tree/LogDR}{\texttt{logtaut}}  
\edits{(part of \texttt{admcycles} \cite{admcycles})}
  is presented in
Section \ref{sagesage}.

\begin{figure}[htb]
\[
\mathsf{DR}_{1,(3,-3)} = \frac{9}{2}(\psi_1 + \psi_2) + \Phi\left(
\begin{tikzpicture}[baseline=0pt]
\filldraw[color=black!10] (0,0)--(2,0)--(2,2)--(0,0);
\filldraw[color=black!25] (0,0)--(2,0)--(2,-2)-- (0,-2)--(0,0);

\draw[thick, dashed] (0,0) -- (2,2);
\draw[thick] (0,0) -- (2,0);
\draw[thick] (0,0) -- (0,-2);

\draw (2,1) node[right] {$-\frac{1}{12}x-\frac{1}{12}y$};
\draw (2,-1) node[right] {$-\frac{1}{12}x$};
\end{tikzpicture}
\right) \in \mathsf{R}^1(\oM_{1,2})\,,
\]
\[
\mathsf{logDR}_{1,(3,-3)} = \frac{9}{2}(\psi_1 + \psi_2) + \Phi\left(
\begin{tikzpicture}[baseline=0pt]
\filldraw[color=black!20] (0,0)--(2,1)--(2,2)--(0,0);
\filldraw[color=black!10] (0,0)--(2,0)--(2,1)--(0,0);
\filldraw[color=black!25] (0,0)--(2,0)--(2,-2)-- (0,-2)--(0,0);

\draw[thick, dashed] (0,0) -- (2,2);
\draw[thick, red] (0,0) -- (2,1);
\draw[thick] (0,0) -- (2,0);
\draw[thick] (0,0) -- (0,-2);

\draw (2,1.5) node[right] {$-\frac{13}{12}x-\frac{13}{12}y$};
\draw (2,0.5) node[right] {$-\frac{1}{12}x-\frac{37}{12}y$};
\draw (2,-1) node[right] {$-\frac{1}{12}x$};
\end{tikzpicture}
\right) \in \mathsf{logR}^1(\oM_{1,2})\,.
\]
    \caption{Formulas for the cycles $\mathsf{DR}_{1,(3,-3)} = \frac{9}{2}(\psi_1+\psi_2)-\frac{1}{12}\delta_0$ and $\mathsf{logDR}_{1,(3,-3)}$, each consisting of a linear combination of classes $\psi_1, \psi_2$ from $\oM_{1,2}$ and a contribution from a piecewise linear function on a subdivision $\widetilde{\Sigma}$ of $\Sigma_{\oM_{1,2}}$ (the trivial subdivision in the case of the regular DR cycle).
    See Figure \ref{fig:M12logblow-up} for an explanation of the coordinates $x,y$ on the cone stack, where we now only draw the coarse space of the stacky cone. }
    \label{fig:logDR133}
\end{figure}

\subsection{Relations in \texorpdfstring{$\mathsf{logR}^*(\oM_{g,n})$}{logR*(Mbar\_gn)} }
The logarithmic double ramification cycle provides
two nontrivial paths to tautological relations in 
$\mathsf{logR}^*(\oM_{g,n})$.
The first path uses the choice of stability condition
in Theorem \ref{222ttt}, and
the second path is via Pixton's relations for the universal double ramification cycle
\cite{BHPSS}.

\label{ppplog}

\begin{introtheorem}
\label{333ttt}
The following two constructions yield
relations in $\mathsf{logR}^*(\oM_{g,n})$:
\begin{enumerate}
    \item [(i)] Let $\theta$ and $\widehat{\theta}$ be two small
    nondegenerate stability conditions of type $(g,n)$. Then,
    $$ {\mathbf{P}}_{g,A}^{g,\theta} =
    {\mathbf{P}}_{g,A}^{g,\widehat{\theta}}\, \in
    \mathsf{logR}^g(\oM_{g,n}) .$$
    \item[(ii)] For every nondegenerate condition
    $\theta$ of type $(g,n)$, 
    $$ {\mathbf{P}}_{g,A}^{h,\theta} = 0 \, \in
    \mathsf{logR}^h(\oM_{g,n}) $$
    for all $h>g$.
\end{enumerate}
\end{introtheorem}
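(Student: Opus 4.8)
The plan is to derive both relations as formal consequences of Theorem~\ref{222ttt}, which already identifies $\mathbf{P}_{g,A}^{g,\theta}$ with the geometrically-defined class $\mathsf{logDR}_{g,A}$. For part (i), the key observation is that $\mathsf{logDR}_{g,A}$ is defined intrinsically via Holmes' Abel--Jacobi construction and depends only on the pair $(g,A)$, not on any auxiliary stability condition. Therefore, if $\theta$ and $\widehat{\theta}$ are both small nondegenerate stability conditions, Theorem~\ref{222ttt} gives
\[
\mathbf{P}_{g,A}^{g,\theta} = \mathsf{logDR}_{g,A} = \mathbf{P}_{g,A}^{g,\widehat{\theta}}
\]
as elements of $\mathsf{logR}^g(\oM_{g,n})$, so their difference is the desired relation. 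The content is entirely concentrated in the smallness hypothesis: it is precisely smallness of $\theta$ (as flagged in the footnote of Section~\ref{resAJm}) that guarantees the formula computes $\mathsf{logDR}_{g,A}$ rather than some other class, and this is what forces two a priori different combinatorial expressions to coincide.

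For part (ii), the strategy is to trace through where the degree restriction to $g$ enters in the proof of Theorem~\ref{222ttt}. By Theorem~\ref{111ooo}, the class $\mathsf{logDR}_{g,A}$ is represented on $\oM_{g,A}^\theta$ by the universal double ramification cycle $\mathsf{DR}^{\mathsf{op}}_{g,\emptyset,\mathcal{L}^\theta} \in \mathsf{CH}^g_{\mathsf{op}}(\oM_{g,A}^\theta)$, and the translation of Section~\ref{ffldr} expresses the full mixed-degree class $\mathbf{P}_{g,A}^\theta$ as the image under $\Phi$ of the pullback of the Bae--Holmes--Pandharipande--Schmitt--Schwarz formula. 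The point is that the formula of \cite{BHPSS} for the universal double ramification cycle vanishes identically in cohomological degrees exceeding $g$; this is the analogue, in the universal setting, of Pixton's vanishing result that $\mathsf{P}_{g,A}^h = 0$ for $h > g$ (the statement that Pixton's polynomial-in-$r$ class has no terms of degree above $g$). The plan is to observe that the degree-$h$ part of $\mathbf{P}_{g,A}^\theta$ is exactly the image under $\Phi$ of the degree-$h$ part of the pulled-back universal formula, and since the latter vanishes for $h>g$, so does the former.

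The main obstacle I expect is part (ii), and specifically the need to confirm that the vanishing is genuinely \emph{formal} --- valid for all nondegenerate $\theta$, not merely small ones. This matters because the hypothesis in (ii) is only nondegeneracy, which is weaker than the smallness required for the geometric identification in Theorem~\ref{222ttt}. The resolution should be that $\mathbf{P}_{g,A}^\theta$ is \emph{defined} by the explicit formula \eqref{eqn:logDRmixed} for any nondegenerate $\theta$ (as the remark following Theorem~\ref{222ttt} emphasizes), and the vanishing of its degree-$h$ parts for $h>g$ is an algebraic property of the Bae--Holmes--Pandharipande--Schmitt--Schwarz formula that holds cone-by-cone on $\widetilde{\Sigma}^\theta$, independent of whether the total class equals $\mathsf{logDR}_{g,A}$. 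Concretely, I would argue that on each interior cone $\sigma_{\widehat{\Gamma},I}$ the local contribution is built from the same $r$-polynomial construction whose $r=0$ specialization carries no terms above degree $g$; applying $\Phi$ preserves degrees, so the degree-$h$ piece of $\mathbf{P}_{g,A}^\theta$ is identically zero. The remaining care is to check that the homogeneity in the polynomial degree of the $\widehat{\ell}_e$ variables correctly tracks the cohomological degree under $\Phi$, so that the universal-formula vanishing transfers cleanly.

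Thus the overall structure is: (i) is immediate from Theorem~\ref{222ttt} together with the $\theta$-independence of the intrinsic class $\mathsf{logDR}_{g,A}$; (ii) reduces to the degree vanishing of the \cite{BHPSS} universal formula, established cone-by-cone and valid for all nondegenerate $\theta$ precisely because $\mathbf{P}_{g,A}^\theta$ is given by an explicit formula whose higher-degree vanishing is algebraic rather than geometric.
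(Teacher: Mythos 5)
Part (i) of your proposal is exactly the paper's argument: by Theorem \ref{222ttt} both ${\mathbf{P}}_{g,A}^{g,\theta}$ and ${\mathbf{P}}_{g,A}^{g,\widehat{\theta}}$ compute the intrinsic class $\mathsf{logDR}_{g,A}$, which depends only on $(g,A)$, so nothing more needs to be said there. For part (ii), your high-level skeleton also matches the paper: one identifies ${\mathbf{P}}_{g,A}^{\theta}$ with the pullback $\phi_{\mathcal{L}^\theta}^*(\mathsf{P}_{g,\emptyset,0})$ of the universal mixed-degree class on $\Picabs$ --- this is equality \eqref{eqn:Pgapprox} of Theorem \ref{thm:DRapprox_formula}, which indeed requires only nondegeneracy of $\theta$ and not smallness, so your resolution of the ``nondegenerate versus small'' concern is pointed in the right direction --- and then one uses the vanishing $\mathsf{P}^h_{g,\emptyset,0}=0$ in $\mathsf{CH}^h(\Picabs)$ for $h>g$ and pulls back zero.

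The genuine gap is in how you justify that vanishing. You assert it is ``algebraic rather than geometric,'' can be ``established cone-by-cone,'' and that ``the $r=0$ specialization carries no terms above degree $g$.'' None of this is true. As a formal expression, the class $\exp\bigl(-\tfrac12(\eta+\Phi(\DRpl))\bigr)\cdot\Phi(\DRpp)$ has nonzero components in every degree up to $\dim \oM_{g,n}$: the exponential of the degree-one class $\eta$ and the exponentials of linear functions inside $\DRpp$ produce terms of arbitrarily high degree, and the same is true of the universal formula for $\mathsf{P}_{g,\emptyset,0}$ on $\Picabs$. What vanishes for $h>g$ is not the formal piecewise-polynomial/tautological expression but its \emph{class in the Chow ring}, and this is a deep theorem --- Pixton's double ramification cycle relations, proven by Clader--Janda \cite{cj} and, in the universal setting needed here, \cite[Theorem 8]{BHPSS}. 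If the vanishing were a formal, cone-by-cone identity, Pixton's conjecture would have been a triviality, and $\Phi$ preserving degrees would be beside the point since the input to $\Phi$ in degree $h>g$ is not zero. The repair is to delete the formal argument entirely and cite \cite[Theorem 8]{BHPSS} for the Chow-theoretic vanishing on $\Picabs$; since that vanishing lives upstairs on the universal Picard stack it is independent of $\theta$ altogether, and part (ii) follows for every nondegenerate $\theta$ by pulling back the zero class along $\phi_{\mathcal{L}^\theta}$ via \eqref{eqn:Pgapprox}, which is precisely the paper's proof.
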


\noindent Theorem \ref{333ttt} is proven together with
Theorem \ref{222ttt}  in Section \ref{pprr22}. 

Higher double ramification cycles are discussed in Section \ref{sagesage1}.
A third
construction of relations in $\mathsf{logR}^*(\Mbar_{g,n})$ 
is presented in Section \ref{sagesage3} via the
{\em double-double ramification cycle}.
The general study of $\mathsf{logR}^*(\oM_{g,n})$ will be taken
up in a future paper.

\edits{
\subsection*{Conventions}
Throughout the paper, we work over the field $\mathbb{C}$ of complex numbers. All Chow groups are taken with $\mathbb{Q}$-coefficients. Furthermore, our monoids and log structures will always be fine and saturated (in the sense of \cite{Kato1989Logarithmic-str,ogus}), unless otherwise mentioned.
}

\subsection*{Acknowledgements}

We are very grateful to
T. Graber, L. Herr, D. Ranganathan, and J. Wise for discussions about
the log Chow ring, log Gromov-Witten theory, and  the
possibility of a formula for the log double ramification cycle.
We have also benefited from related conversations with A. Abreu, Y. Bae, L. Caporaso,
C. Faber, N. Pagani, R. Schwarz, P. Spelier, and D. Zvonkine. 
We thank A. Szenes for organizing the
{\em Helvetic Algebraic Geometry Seminar} in Geneva in August 2021
where the 
results of the paper were 
first presented. \edits{We thank the referees for many improvements.}

D.H. was supported by NWO grants VI.Vidi.193.006 and 613.009.103. 
S.M. was supported by ERC-2017-AdG-786580-MACI.
R.P. was supported by SNF-200020-182181,  
\edits{SNF-200020-219369},
ERC-2017-AdG-786580-MACI, and SwissMAP.  
A.P.  was supported by NSF grant 1807079.
\edits{J.S. was supported by SNF Early Postdoc Mobility grant 184245 
held at 
the Max Planck Institute for Mathematics in Bonn, SNF-184613 
 at the University of Z\"urich, 
{SNF-200020-219369}, and SwissMAP.}
 This project has received funding
from the European Research Council (ERC) under the European Union Horizon 2020 research and innovation program (grant agreement No 786580).

\section{Logarithmic Geometry}\label{logGeom}


\subsection{Overview}
We recall here constructions from log geometry which will be needed later: tropicalization, Artin fans, and subdivisions. \edits{The discussion here is standard, but included for the convenience of the reader. Further details can be found in \cite{ACP}, \cite{ACMUW}, \cite{AbramovichWiseBirational}, \cite{Cavalieri2020-Conestack}, \cite{Olsson2001Log-algebraic-s}.} 
The main
case of interest for us is the moduli space 
$\oM_{g,n}$
 equipped with the divisorial log structure 
obtained from $\partial\oM_{g,n}$.

\subsection{Artin fans \edits{and cone stacks}}
\label{conedefs}

\edits{
\begin{definition}
    A {\em cone} is a pair $(\sigma,N)$ consisting of a lattice $N$ and a full dimensional strongly convex rational polyhedral cone $\sigma \subseteq N_\mathbb{R}$. The intersection $\sigma \cap N$ is an {\em integral structure} on $\sigma$. A {\em morphism} of cones $$(\sigma,N) \to (\sigma',N')$$ is a homomorphism of lattices $N \to N'$ such that the induced map of vector spaces $N_{\mathbb{R}} \to N'_{\mathbb{R}}$ maps $\sigma$ into $\sigma'$. 
 \end{definition}
 
 Let $\textbf{RPC}$ be the category of cones. The category of cones is dual to the category of \emph{fine and saturated sharp monoids}. A cone $(\sigma,N)$ corresponds to the monoid 
\[
\overline{M}_\sigma = \mathsf{Hom}_{\textup{Mon}}(\sigma \cap N, \mathbb{N})\, .
\]
Conversely, a monoid $P$ corresponds to the cone 
\[
\left(\mathsf{Hom}_{\textup{Mon}}(P, \mathbb{R}_{\ge 0})\, ,\  \mathsf{Hom}_{\textup{Gp}}(P^\gp,\mathbb{Z})\right). 
\]
}
\edits{A \emph{face morphism} $(\sigma,N) \to (\sigma',N')$ is a map of cones that induces an isomorphism of $\sigma$ with a (non-necessarily proper) face of $\sigma'$. We write $\textbf{RPC}^f$ for the category of cones with face morphisms between them. Following \cite{Cavalieri2020-Conestack}, a {\em cone stack} is a category fibered in groupoids over $\textbf{RPC}^f$.}

 \edits{An \emph{Artin cone} is an algebraic stack of the form
\[
\mathcal{A}_P= [\on{Spec}\mathbb{C}[P]/\on{Spec}\mathbb{C}[P^\gp]]
\]
for a fine and saturated sharp monoid $P$. Artin cones carry natural logarithmic structures, obtained from the toric logarithmic structure on $\on{Spec}\mathbb{C}[P]$ by descent. }

\edits{A map of logarithmic stacks is {\em strict} if the induced map on log structures is an isomorphism.}
\edits{\begin{definition}[\cite{Cavalieri2020-Conestack}]
    An Artin fan $\mathcal{A}$ is an algebraic stack with a logarithmic structure that has a strict \'etale cover by a disjoint union of Artin cones. 
\end{definition}
}

\edits{A cone $(\sigma,N)$ determines an Artin cone by 
\[
\mathcal{A}_{(\sigma,N)} = \mathcal{A}_{\overline{M}_\sigma} \, .
\]
Moreover, we have \cite{AbramovichWiseBirational,Olsson2001Log-algebraic-s}
\[
\mathsf{Hom}_{\textbf{RPC}}((\sigma,N),(\sigma',N')) = \mathsf{Hom}_{\textup{LogStacks}}(\mathcal{A}_{(\sigma,N)},\mathcal{A}_{(\sigma',N')})\, .
\]
In other words, the category of cones and the category of Artin cones are equivalent. In fact, this equivalence of categories extends to an equivalence of 2-categories between cone stacks and Artin fans \cite[Theorem 3]{Cavalieri2020-Conestack}. For a cone stack $\Sigma$, we will write $\mathcal{A}_\Sigma$ for the corresponding Artin fan.}

\begin{convention}
It is cumbersome to carry both the cones and the integral structures around everywhere in the notation. Thus, we will simply denote a cone by $\sigma$, which should be always understood as coming with an implicit integral structure. 
\end{convention}

\subsection{Tropicalization}
\label{subsection:Tropicalization}

Let $X = (X, \M_X)$ be a log smooth algebraic stack with a logarithmic structure. 
To $X$, we can assign a combinatorial shadow of the \emph{characteristic monoid} $$\ghost_{X} = \M_X/\mathcal{O}_X^*\, .$$ 

\edits{
\begin{definition}
    A {\em tropicalization} of $X$ is an Artin fan $\mathcal{A}_X$ with a strict map $$t_X: X \to \mathcal{A}_X$$ which has connected nonempty fibers. 
\end{definition}
We will write $\Sigma_X$ for the cone stack corresponding to $\mathcal{A}_X$ and refer to $\Sigma_X$  as a tropicalization for $X$ as well. Tropicalizations 
are not unique.}




\edits{A} tropicalization is easy to \edits{construct} when $X$ is ``sufficiently local". Local here 
is with respect to the log structure. The most local case as far as the characteristic monoid $\ghost_X$ is concerned is when $X$ is an \emph{atomic} log scheme: $X$ has a unique closed stratum, and for every point $x$ in the closed stratum, 
$$
\Gamma(X,\ghost_X) = \ghost_{X,x}\, . 
$$ 
Atomic neighborhoods exist on a log smooth algebraic stack with a log structure smooth-locally. 
In the atomic case, $\Sigma_X$ is simply the rational polyhedral cone 
$$
\sigma_{X,x} = (\mathrm{Hom}(\ghost_{X,x},\mathbb{R}_{\ge 0}),\mathrm{Hom}(\ghost_{X,x},\mathbb{N}))\,
$$
\edits{for $x$ a point in the closed stratum. A tropicalization map is constructed by taking a local chart $U_y \to \on{Spec}\mathbb{C}[\overline{M}_{X,y}]$, and composing the natural map 
\[
U_y \to \mathcal{A}_{\ghost_{X,y}} 
\]
with the map $\mathcal{A}_{\ghost_{X,y}} \to \mathcal{A}_{\ghost_{X,x}}$. The assumptions imply that the maps $U_y \to \mathcal{A}_{\ghost_{X,x}}$ descend to $X$ and have connected nonempty fibers.}  

In general, we \edits{can construct a tropicalization by presenting} $X$ as a \edits{(2-categorical)} colimit 
\begin{equation} \label{eqn:Xicolimit}
\varinjlim X_i \cong X
\end{equation}
with each $X_i$ atomic, and we define 
\begin{equation}\label{ccoolimit}
\Sigma_X = \varinjlim \Sigma_{X_i}\, . 
\end{equation}
For a map $f: X_i \to X_j$ in the colimit \eqref{eqn:Xicolimit}, the associated map $\Sigma_{X_i} \to \Sigma_{X_j}$ is induced by the map
\[
\ghost_{X_j, x_j} \cong \Gamma(X_j, \ghost_{X_j}) \xrightarrow{f^*} \Gamma(X_i, \ghost_{X_i}) \cong \ghost_{X_i, x_i}
\]
of monoids. \edits{Equivalently, 
\[
\mathcal{A}_X = \varinjlim \mathcal{A}_{X_i}. 
\]
The corresponding tropicalization map $X \to \mathcal{A}_X$ is obtained by observing that the composed maps 
\[
X_i \to \mathcal{A}_{X_i} \to \mathcal{A}_X
\]
descend to $X$.}
The colimit  \eqref{ccoolimit} is taken in the 2-category of stacks over rational polyhedral cones, \edits{and the corresponding colimit of Artin fans is taken in the 2-category of stacks with logarithmic structure}. 

\begin{remark} 
\edits{The above construction in general depends on the choice of atomic cover of $X$ and does not yield a canonical tropicalization. A canonical 
tropicalization 
$X\to \mathcal{A}_X$
is constructed in
\cite{ACMw17,AbramovichWiseBirational}. The construction of the canonical Artin fan is however \emph{not} functorial: a non-strict map $X \to Y$ does not in general induce a morphism $\mathcal{A}_X \to \mathcal{A}_Y$ between the canonical Artin fans of $X$ and $Y$. However, given a tropicalization $Y \to \mathcal{A}_Y$, the substitute for functoriality of \cite{AbramovichWiseBirational} says that we can find some tropicalization $X \to \mathcal{A}_X$ such that the diagram} 
\[
\begin{tikzcd}
    X \ar[r] \ar[d] & \mathcal{A}_X \ar[d] \\ 
    Y \ar[r] & \mathcal{A}_Y
\end{tikzcd}
\]
\edits{commutes.  From
the point of view of functoriality, working with the flexibility of an arbitrary tropicalization is more natural than just studying the canonical tropicalization.}
\end{remark}

\begin{remark}
    \edits{Suppose $X$ is a log smooth algebraic stack. Then every tropicalization $\Sigma_X$ of $X$ has the same cones as the canonical tropicalization, but with potentially different automorphism groups. In fact, both the cones of $\Sigma_X$ and the cones of the canonical tropicalization are in bijection with the strata of $X$.}
\end{remark}


To understand the above constructions, there are two examples to keep in mind. The first is when $X$ is a toric variety. Then, the canonical tropicalization $\Sigma_X$ is the fan of $X$ (though $\Sigma_X$ is not embedded in the cocharacter lattice of the torus\footnote{For example, if the toric variety has torus factors, the cocharacter lattice cannot be recovered from $\Sigma_X$. }). 

The second example is when $X$ is the log scheme associated to a nonsingular variety $V$ with a normal crossings divisor $D$. When $D$ is normal crossings in the Zariski topology, \edits{the canonical tropicalization} $\Sigma_X$ is the cone over the dual intersection complex of the irreducible components $D_i$ of $D$. \edits{The construction is explained in \cite{KKMSD}.} Concretely, each stratum of $X$, which corresponds to a connected component of an intersection 
$$
D_{i_1} \cap D_{i_2} \cdots \cap D_{i_n}
$$
of irreducible components, contributes the cone
$$
(\mathbb{R}_{\ge 0}^n, \mathbb{Z}^n)
$$
in $\Sigma_X$. These rational polyhedral cones
are glued together in the obvious way: a cone becomes a face of every
cone corresponding to a deeper stratum (further nonempty intersections of the $D_i$).  
In the Zariski normal crossings case, $\Sigma_X$ does not have nontrivial stack structure.

In general, $D$ may only be normal crossings \'etale locally: some $D_i$ may self-intersect. Then, for integers $k>1$, there can be strata which \'etale locally are the intersections of $k$ distinct divisors 
$D_{i,1},\ldots, D_{i,k}$
which are globally the $k$ branches of a single divisor $D_i$ coming together. Thus, intersections of the form 
\begin{equation} \label{ccvvb}
\bigcap_{j=1}^n\, 
D_{i_j,1} \cap \cdots \cap  D_{i_j,k_j} 
\end{equation}
 can also be strata of $X$. 


\edits{\begin{definition} \cite[\S 6.2]{ACP}
The {\em monodromy group} $G$ 
of a stratum of $X$
is the 
image of the fundamental group of the stratum in the
permutation group 
of the branches of the divisors cutting out the stratum.
\label{monodef}
\end{definition}}

For the stratum \eqref{ccvvb},
the monodromy group $G$ acts on 
$$
\mathbb{R}_{\ge 0}^{k_1 + \cdots + k_n}\, ,
$$ 
\edits{The interior -- that is, the complement of its proper faces -- of the corresponding stacky cone in $\Sigma_X$ coincides with the stack quotient
\[
[\mathbb{R}_{> 0}^{k_1 + \cdots +k_n}/G]. 
\]
The details of the construction are explained in \cite[\S 6.2]{ACP}.}
\edits{\begin{example} \label{Exa:Mbar_monodromy_first}
    Let $X=(\oM_{g,n},\partial\oM_{g,n})$. Then the cone stack 
    \[
    \Sigma_{\oM_{g,n}} = \varinjlim_{\Gamma \in \mathcal{G}_{g,n}} \sigma_\Gamma
    \]
    in equation \eqref{styfandef} of the Introduction is a tropicalization for $X$. A tropicalization map 
    \[
    \oM_{g,n} \to \varinjlim_{\Gamma \in \mathcal{G}_{g,n}} \mathcal{A}_{\sigma_\Gamma}
    \]
    is constructed in \cite[Theorem 4]{Cavalieri2020-Conestack}. The above tropicalization does \emph{not} agree with the canonical Artin fan of \cite{ACMw17, AbramovichWiseBirational}. 
    
    By the definition of $\Sigma_{\oM_{g,n}}$, the automorphism group of the stacky cone $\sigma_\Gamma$ is given by the group $\Aut_\Gamma$ of automorphisms of the graph $\Gamma$ itself. However, $\Aut_\Gamma$ is different from the monodromy group $G_\Gamma$ of the stratum $\mathcal{M}^\Gamma \subseteq \oM_{g,n}$ associated to $\Gamma$. Indeed, the branches of the boundary divisor of $\oM_{g,n}$ cutting out $\mathcal{M}^\Gamma$ precisely correspond to the edges $E(\Gamma)$. The monodromy group $G_\Gamma \subseteq \mathrm{Sym}(E(\Gamma))$ of the stratum $\mathcal{M}^\Gamma$ is the image of $\Aut_\Gamma \to \mathrm{Sym}(E(\Gamma))$ and
     fits into an exact sequence
    \[
    \{\mathrm{id}_\Gamma\} \to \on{Aut}_\Gamma^\textup{loop} \to \on{Aut}_\Gamma \to G_\Gamma \to \{[\mathrm{id}_{E(\Gamma)}]\}\,,
    \]
    where $\Aut_\Gamma^\textup{loop} \subseteq \Aut_\Gamma$ is the subgroup of automorphisms of $\Gamma$ acting as the identity on $E(\Gamma)$. The group of such automorphisms is the free abelian group generated by the involutions which interchange two half-edges forming a loop.
    
    Since the automorphism groups $\Aut_\Gamma$ and $G_\Gamma$ of the stacky cones in $\Sigma_{\oM_{g,n}}$ and the cone stack associated to the canonical tropicalization of $\oM_{g,n}$ are distinct in general, we see that these two tropicalizations do not agree.
\end{example}}


\subsection{Subdivisions}\label{sec:subdivisions}

\edits{Let $X$ be a logarithmic algebraic stack. A choice of strict map from $X$ to an Artin fan $\mathcal{A}_X$} provides access to one of the most important operations in the subject.

\begin{definition}
A \emph{subdivision} \edits{(sometimes called a \emph{proper subdivision} in the literature)} of a rational polyhedral cone $\sigma$ is a fan $F \subseteq \sigma$ whose support $\mathsf{Supp}(F)$ is equal to that of $\sigma$. 
\end{definition}

\begin{definition}
A \emph{subdivision} of a cone stack $\Sigma_X$ is a map $\widetilde{\Sigma}_X \to \Sigma_X$ from a cone stack $\widetilde{\Sigma}_X$ such that for every map $\sigma \to \Sigma_X$ from a rational polyhedral cone $\sigma$, the fiber product $\widetilde{\Sigma}_X \times_{\Sigma_X} \sigma \to \sigma$ is a subdivision. 
\end{definition}

\begin{remark}
Alternatively, a subdivision of a cone stack $\Sigma_X$ can be defined
by descent. The stack $\Sigma_X$ \edits{is a combinatorial cone stack (in the sense of \cite{Cavalieri2020-Conestack}): $\Sigma_X$ has a presentation as a colimit
$$
\Sigma_{X} = \varinjlim_{i \in \mathcal{C}} \Sigma_{X_i}
$$
of a diagram of rationally polyhedral cones $\Sigma_{X_i}$, where all maps $\Sigma_{X_i} \to \Sigma_{X_j}$ in the colimit are isomorphisms to a (not necessarily proper) face}. Every map from a cone $\sigma \to \Sigma_X$ necessarily factors through one of the $\Sigma_{X_i}$, and so subdivisions of $\Sigma_X$ are the same as subdivisions of all $\Sigma_{X_i}$ which respect all maps in the diagram. In particular, subdivisions respect identifications of faces and automorphisms in $\Sigma_X$. 
\end{remark}

A subdivision $\widetilde{\Sigma}_{X} \to \Sigma_X$ can be pulled back to $X$ via the map from $X$ to the Artin fan $\ca{A}_X$. Under the equivalence of categories between cone stacks and Artin fans,
the map $\widetilde{\Sigma}_X \to \Sigma_X$ induces a proper, representable, and birational map of Artin fans $\widetilde{\mathcal{A}}_X \to \mathcal{A}_X$ which we can pull back to $X$:
$$
\widetilde{X}  = X \times_{\mathcal{A}_X} \widetilde{\mathcal{A}}_X \to X\, .
$$    
The map $\widetilde{X}  \to X$  is proper and representable. When $X$ is log smooth, the map $X \to \mathcal{A}_X$ is smooth, and thus $\widetilde{X} \to X$ is also birational.  

\begin{definition}
A map $\widetilde{X} \to X$ of the form $X \times_{\mathcal{A}_X} \widetilde{\mathcal{A}}_X \to X$ is a {\em subdivision} or
a
{\em log modification} of $X$. 
\end{definition}

 The functor of points of the
log modification is characterized by the following result, which is essentially standard \cite{Kato1994Toric-singulari,NakayamaLogetale}. As we could not find a precise reference covering the required level of generality, we include a proof for completeness. 

\begin{lemma}
\label{lem:subfunctorofpoints}
Let $p:\widetilde{X} \to X$ be a log modification. Then, the following two properties hold:
\begin{enumerate}
\item[(i)] $p$ is a representable monomorphism of logarithmic algebraic stacks, 
\item[(ii)] a log map $S \to X$ lifts to $\widetilde{X}$ if and only if, \'etale locally on $S$, the map\footnote{The map $\Sigma_S \to \Sigma_X$ may 
only exist \'etale locally on $S$. } $\Sigma_S \to \Sigma_X$ lifts through $\widetilde{\Sigma}_X$.  
\end{enumerate}
\end{lemma}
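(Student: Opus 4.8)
The plan is to reduce both statements to properties of the map of Artin fans $q\colon \widetilde{\mathcal{A}}_X \to \mathcal{A}_X$, exploiting that $p$ is by construction the base change
\[
\widetilde{X} = X \times_{\mathcal{A}_X} \widetilde{\mathcal{A}}_X \to X
\]
along the canonical map $X \to \mathcal{A}_X$. Properness and representability of $p$ have already been recorded above (they follow from the corresponding properties of $q$ by base change), so the genuinely new content is the monomorphism assertion in (i) and the lifting criterion in (ii). Since monomorphisms are stable under base change, for (i) it suffices to show that $q$ is a monomorphism; and since the formation of $\Sigma_S$ and the universal property of Artin fans are compatible with étale localization, (ii) can be checked after replacing $S$ by an étale atlas on which $\Sigma_S$ and the map $\Sigma_S \to \Sigma_X$ are globally defined.

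For (ii), I would argue as follows. By the universal property of the fibre product, a lift of a log map $S \to X$ to $\widetilde{X}$ is the same datum as a map $S \to \widetilde{\mathcal{A}}_X$ whose composite with $q$ agrees with $S \to X \to \mathcal{A}_X$. Now $\widetilde{\mathcal{A}}_X$ is, by construction, the Artin fan attached to the cone stack $\widetilde{\Sigma}_X$, so its defining universal property identifies maps $S \to \widetilde{\mathcal{A}}_X$ (étale-locally on $S$) with maps of cone stacks $\Sigma_S \to \widetilde{\Sigma}_X$, and likewise maps $S \to \mathcal{A}_X$ with $\Sigma_S \to \Sigma_X$. Under these identifications, the condition that the composite $S \to \widetilde{\mathcal{A}}_X \to \mathcal{A}_X$ equal the structure map becomes exactly the condition that $\Sigma_S \to \widetilde{\Sigma}_X \to \Sigma_X$ equal the tropicalization of $S \to X$. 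Hence a lift exists, étale-locally on $S$, precisely when the tropical map $\Sigma_S \to \Sigma_X$ lifts through the subdivision $\widetilde{\Sigma}_X \to \Sigma_X$, which is the assertion of (ii).

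For (i), I would establish that $q$ is a monomorphism by showing that tropical lifts through a subdivision are \emph{unique} whenever they exist. Checking on an atomic chart reduces to a single cone $\sigma$ of $\Sigma_X$ together with its subdivision: a morphism of cone stacks $\Sigma_S \to \widetilde{\Sigma}_X$ lifting a fixed $\Sigma_S \to \Sigma_X$ is determined on underlying supports because the subdivision map $|\widetilde{\Sigma}_X| \to |\Sigma_X|$ is a homeomorphism (a subdivision has, by definition, the same support), and the integral and piecewise-linear structure then pins down the morphism of cone stacks itself. This uniqueness says precisely that the diagonal $\widetilde{\mathcal{A}}_X \to \widetilde{\mathcal{A}}_X \times_{\mathcal{A}_X} \widetilde{\mathcal{A}}_X$ is an isomorphism, i.e.\ that $q$ is a monomorphism; hence so is its base change $p$.

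The main obstacle I anticipate is bookkeeping the étale-local and $2$-categorical subtleties rather than any deep geometric input. The tropicalization $\Sigma_S$ and the map $\Sigma_S \to \Sigma_X$ are only defined étale-locally on $S$ (owing to monodromy in the branches, as in the discussion of $\Sigma_X$ above), so the universal property of the Artin fan must be applied after strict-smooth localization and the local lifts glued; the étale-local formulation in the statement of (ii) is exactly what makes this harmless. Moreover, the uniqueness argument behind (i) must be phrased at the level of cone \emph{stacks}, respecting automorphisms, so that it yields a genuine isomorphism of diagonals rather than merely a bijection on geometric points.
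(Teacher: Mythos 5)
Your proposal is correct and follows essentially the same route as the paper: reduce everything to the map of Artin fans $q\colon \widetilde{\mathcal{A}}_X \to \mathcal{A}_X$, pass to atomic charts where algebraic maps are identified with maps of cones, and deduce the monomorphism property from the uniqueness of lifts through a subdivision (the paper phrases this uniqueness via the open cover of $\widetilde{\mathcal{A}}_X$ by the Artin cones $[V(\sigma_i,N)/T]$ and the fact that the interiors of the $\sigma_i$ partition $\sigma$, while you phrase it via the homeomorphism of supports -- the same combinatorial fact). The one point to tighten is logical order: the global lift asserted in (ii) is obtained by gluing the étale-local lifts, and that gluing rests on the uniqueness furnished by (i), so (i) should be established before (ii) rather than after -- exactly as the paper arranges it.
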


\begin{proof}
Since $\widetilde{X} = X \times_{\ca{A}_X} \widetilde{\ca{A}}_X$, it suffices to prove the properties for 
$$q: \widetilde{\ca{A}}_{X} \to \ca{A}_X\,$$
instead of $p$. \edits{We fix a map 
$f:S \to \ca{A}_X$. 

For property (i), we must check that any two lifts of $f$ to maps $S \to \widetilde{\ca{A}}_{X}$ are equal. 
We can work \'etale locally on $S$.} In particular, we may assume throughout that $S$ and $X$ are atomic, with global charts given by sharp fine saturated monoids $Q$ and $P$ respectively. Let
\begin{align*}
\sigma &= \mathsf{Hom}(P,\mathbb{R}_{\ge 0})\, ,\\
\tau  &= \mathsf{Hom}(Q,\mathbb{R}_{\ge 0})\, ,
\end{align*}
and let $N = \mathsf{Hom}(P,\mathbb{Z})$, $L = \mathsf{Hom}(Q,\mathbb{Z})$. 
Then, $\Sigma_S$ is the single cone $\tau$ in the lattice $L$, and $\Sigma_X$ 
is the single cone $\sigma$ in the lattice $N$. 
The toric variety associated to $\sigma$ in $N$ is
$\Spec k[P] = V(\sigma,N)$,
the Artin fan is
$$\mathcal{A}_X =[\Spec k[P]/\Spec k[P^\textup{gp}]] = [V(\sigma,N)/T]\, ,$$
the global quotient of $V(\sigma,N)$ by its dense torus, and 
$$\widetilde{\mathcal{A}}_{X} = [V(\widetilde{\sigma}_X,N)/T]$$ 
is the quotient of the toric variety defined by the fan $\widetilde{\Sigma}_X$ in $N$ by its dense torus. 
Logarithmic maps $S \to \mathcal{A}_X$ are then very simple, with  
$$
\mathsf{Hom}(S,\mathcal{A}_X) = \mathsf{Hom}_{\textup{Mon}}(P,Q) = \mathsf{Hom}_{\textup{Cones}}(\Sigma_S,\Sigma_X)\, . 
$$
Furthermore, $\widetilde{\mathcal{A}}_X$ has an open cover by the global quotients $$\widetilde{\mathcal{A}}_{i} = [V(\sigma_i,N)/T] \, ,$$ where the $\sigma_i$ are the cones of $\widetilde{\Sigma}_X$. If $\sigma_i < \sigma_j$ then $\widetilde{\mathcal{A}}_i \subset \widetilde{\mathcal{A}}_j$. 

Suppose now that we are given two maps $g,h: S \to \widetilde{\mathcal{A}}_X$ lifting $f$. As we are working locally on $S$, we can assume that $g$ factors through $\widetilde{\mathcal{A}}_i$ and $h$ through $\widetilde{\mathcal{A}}_j$ for two cones $\sigma_i,\sigma_j$. 

\edits{
Since 
$$
\mathsf{Hom}(S,\widetilde{\mathcal{A}}_i) = \mathsf{Hom}(\Sigma_S,\sigma_i)
$$  
and similarly for $\widetilde{\mathcal{A}}_j$, the only way $$q \circ g = q \circ h$$ can agree
is if the corresponding maps $\Sigma_S \to \sigma_i \to \sigma$ and $\Sigma_S \to \sigma_j \to \sigma$ agree. This can only happen if the maps factor through $\sigma_i \cap \sigma_j$, which is a common face $\sigma_k$ of both. But since $\sigma_k \to \sigma$ is a monomorphism, we must have $g = h$. Thus $q$ is a monomorphism.  
}


Once property (i) is established, to prove property (ii), we need only study the lifting of $f$ \'etale locally on $S$.
If $$g:S \to \widetilde{\mathcal{A}}_X$$ is a lift $f$, then $g$ will factor through one of the $\widetilde{\mathcal{A}}_i$, and so the map $\Sigma_S \to \Sigma_X$ will factor through $\widetilde{\Sigma}_{X}$. Conversely, suppose $\Sigma_S \to \Sigma_X$ factors through $\widetilde{\Sigma}_X$. The factorization determines maps $S \to \widetilde{\mathcal{A}}_i \to \widetilde{\mathcal{A}}_X$. 
\end{proof}

\begin{remark}
When $X = (\Mbar_{g,n},\partial \Mbar_{g,n})$, the notions discussed here precisely coincide with the discussion of Section \ref{aaa111} of the Introduction. In particular, the definition of log Chow rings there also generalizes in a straightforward way to an arbitrary log smooth $X$:
$$
\mathsf{logCH}(X) = \varinjlim_{X' \to X}\mathsf{CH}^*_{\textup{op}}(X')\, . 
$$
where $X' \to X$ ranges over all log modifications of $X$. 
\end{remark}

\subsection{Piecewise linear functions}\label{sec:PL}

\begin{definition}
A {\em \edits{strict} piecewise linear function} on a logarithmic algebraic stack $X$ is a global section 
$$\alpha \in H^0(X, \ghost_X^\gp) \edits{\otimes_{\mathbb{Z}} \mathbb{Q}}\, .$$
\end{definition}
\edits{For log smooth $X$}, there is a natural interpretation of $\alpha$ as a function on \edits{any tropicalization} $\Sigma_X$: for $x \in X$, restriction gives $\alpha_x \in \ghost_{X,x}^\gp \edits{\otimes_{\mathbb{Z}} \mathbb{Q}}$ which induces an evaluation map
\[
\mathsf{Hom}(\ghost_{X,x}, \mathbb{R}_{\geq 0}) \to \mathbb{R}\, , \ \ \varphi \mapsto \varphi(\alpha_x)\,,
\]
where $\varphi$ is extended in the obvious way from $\ghost_{X,x}$ to $\ghost_{X,x}^\gp$. Recall from subsection \ref{subsection:Tropicalization} that the cone $\mathsf{Hom}(\ghost_{X,x}, \mathbb{R}_{\geq 0})$ \edits{is a strict} cover of the cone $\sigma_x$ in $\Sigma_X$. \edits{Since the $\alpha_x$ come from a global section $\alpha$,} the above functions descend to a function on $\Sigma_X$. \edits{This function is strict piecewise linear, in the sense that it is continuous and linear on each cone of $\Sigma_X$. Furthermore, it takes rational values on the integral structure of $\Sigma_X$. We will refer to such a function as a strict piecewise linear function on $\Sigma_X$, and we will write $\mathsf{sPL}$ for the group of strict piecewise linear functions on a cone stack.} 

The construction above induces an isomorphism between the group of (combinatorial) \edits{strict} PL functions on $\Sigma_X$ and the group
$$H^0(X, \ghost_X^\gp)\otimes_{\bb Z} \bb Q\, .$$
\edits{of strict piecewise linear functions on $X$. In particular, the group $\mathsf{sPL}(\Sigma_X)$ does not depend on the choice of tropicalization of $X$.}

\begin{remark}\label{rem:compare_PP_defs}
In \cite{HolmesSchwarz}, the ring of strict piecewise polynomials on $X$ was defined as the ring of global sections of the sheaf $$\on{Sym}^\bullet_\bb Q \ghost_X^\gp\, . $$ In particular, the degree 1 part is given by 
$$H^0(X, \ghost_X^\gp)\otimes_{\bb Z} \bb Q\, ,$$
the same as the group of strict piecewise linear functions defined above. Locally on $X$ the ring $\on{Sym}^\bullet_\bb Q \ghost_X^\gp$ is just the ring of polynomials in linear functions, and the same holds for the combinatorially-defined ring of piecewise polynomial functions, so   the definition of \cite{HolmesSchwarz} coincides with that of the present paper in all degrees.  
\end{remark}

If $\alpha \in H^0(X, \ghost_X^\gp)$, we define the $\ca O_X^\times$-torsor $\ca O_X^\times(\alpha)$ to be the preimage of $\alpha$ under the natural map $\M_X^\gp \to \ghost_X^\gp$. The preimage is an $\ca O_X^\times$-torsor because of the exact sequence 
$$1 \to \ca O_X^\times \to \M_X^\gp \to \ghost_X^\gp \to 0\, . $$
\begin{definition}
\label{def: associatedbundle}
Let $\alpha \in H^0(X, \ghost_X^\gp)$. We define $\ca O_X(\alpha)$ to be the line bundle obtained from $\ca O_X^\times(\alpha)$ by glueing in the \emph{infinity} section.
\end{definition} 

If $\alpha \le \alpha'$, in the sense that $$\alpha' - \alpha \in H^0(X,\ghost_X) \subset H^0(X,\ghost_X^\gp)\, ,$$ then we have a natural map of line bundles
$$\ca O_X(\alpha) \to \ca O_X(\alpha')\, . $$
 More generally, if $\ca L$ is a line bundle on $X$, we define $\ca L(\alpha) = \ca L \otimes_{\ca O_X} \ca O_X(\alpha)$.



\begin{remark}
The induced homomorphism $$H^0(X, \ghost_X^\gp) \edits{\otimes_{\mathbb{Z}} \mathbb{Q}} \to \rPic(X) \edits{\otimes_{\mathbb{Z}} \mathbb{Q}} \to \mathsf{CH}^1(X)$$ coincides with the degree $1$ part of the homomorphism 
$$
\Phi\colon \mathsf{sPP}(\Sigma_X) \to \mathsf{CH}(X)
$$
from \eqref{eq:Phi_def}. 
\end{remark}

\section{Logarithmic Curves}
\label{sec:Logcurves}

\subsection{Definitions}
Up to now, we have been discussing  {\em absolute}  logarithmic geometry. We now turn our attention to the simplest relative situation: \emph{logarithmic curves}. 

\begin{definition} \label{logcur}
Let $S$ be a log scheme. A \emph{logarithmic curve} $C \to S$ over $S$ is a log smooth, proper, connected, integral, and vertical morphism with reduced geometric fibers which are of pure dimension $1$. 
\end{definition}

Briefly, the adjective {\em integral} concerns   the flatness of $C \to S$, and the adjective {\em vertical} means that we do not put additional log structure on the marked points of the family.

\begin{remark}
\edits{A warning to the reader about the notation: given a logarithmic curve $\pi: C \to S$ with sections $x_1, \ldots, x_n : S \to C$, later constructions (such as Definition \ref{def:Mgntheta_detailed}) will use the notation
\[
\omega^\log = \omega_\pi(\sum_{i=1}^n x_i)
\]
for the relative log canonical sheaf, which is \emph{not} the same as the sheaf of log differentials of $C/S$. Indeed, since we insist on the log-structure being vertical, the sheaf of log differentials is  $\omega_\pi$.}
\end{remark} 
We do not formally review the relevant terminology, and refer the reader to \cite{Kato1996Log-smooth-defo} for a discussion. We only will use the most basic consequences of 
Definition \ref{logcur}: the map of underlying schemes of every log curve is a prestable curve, and the log structures on $C$ and $S$ give us access to a family of tropical curves. 

More precisely, over every geometric point $s \in S$, the characteristic monoid $\ghost_{C_s}$ of the fiber $C_s$ satisfies:
\begin{itemize}
\item at the generic point $c$ of an irreducible component, $\ghost_{C_s,c} = \ghost_{S,s}$,
\item at a node $q$, there exists a unique (nonzero) element $\ell_q\in \ghost_{S, s}$ such that $$\ghost_{C_s,q} = \ghost_{S,s} \oplus_\mathbb{N} \mathbb{N}^2\, ,$$ where the homomorphism $\bb N \to \bb N^2$ is the diagonal, and the homomorphism $\mathbb{N} \to \ghost_{S,s}$ sends $1$ to $\ell_q$. 
\end{itemize}
\edits{Following \cite{Cavalieri2020-Conestack},} we encode the above data as a \emph{tropical curve over $S$}. 

\begin{definition}\label{tropcurdef}
Let $M$ be a sharp monoid. A \emph{tropical curve} metrized by $M$ is the data of a connected graph $\Gamma$ together with
elements $\ell(e) \in M-\{0\}$ for each edge of $e$. We call $\ell(e)$ the \emph{length} of $e$. 
\end{definition}

For every log curve $C/S$ and  geometric point $s \in S$,
we obtain a \emph{tropical curve metrized by $\ghost_{S,s}$} consisting of the dual graph $\Gamma_s$ of $C_s$  with each edge $e$ of $\Gamma_s$ corresponding to a node $q \in C_s$ assigned the {\em length} 
$$\ell(e) = \ell_q \in \ghost_{S,s}\, .$$
Since the above data comes from a log map $C \to S$, it is severely constrained. First of all, the tropical curve $(\Gamma_s,\ell(e))$ is \'etale locally constant along strata of $S$. 
Furthermore, for each \'etale specialization $\zeta: t \rightsquigarrow s$, there is an induced map $$p_{\zeta}^\sharp\colon \ghost_{S,s} \to \ghost_{S,t}\, $$ and an induced {\em edge contraction} map 
$$
f_{\zeta}:\Gamma_s \to \Gamma_t
$$
which is compatible with $p_{\zeta}^\sharp$.
If an edge $e$ of $\Gamma_s$ has length mapping to $0$ then the edge is contracted, otherwise it is sent to an edge with length $p_{s,t}^\sharp(\ell(e))$.

\begin{remark} \label{rmk:conesinterpretation}
\edits{When $S$ is log smooth}, the connection of Definition \ref{tropcurdef} with the constructions in  Section \ref{subsection:Tropicalization} is as follows. The strata of $S$ are in bijection with the cones of \edits{any tropicalization} $\Sigma_S$. For a geometric point $s \in S$ in a given stratum $O_{s}$, the corresponding stacky cone $\sigma_s$ in $\Sigma_S$ has a \edits{strict \'etale} cover by the rational polyhedral cone $\mathsf{Hom}(\ghost_{S,s}, \mathbb R_{\ge 0})$. The \'etale specializations $\zeta\colon t \rightsquigarrow s$ correspond to gluing relations and automorphisms that produce $\sigma_s$ out of the cover $\mathsf{Hom}(\ghost_{S,s}, \mathbb R_{\ge 0})$. 

The graphs $\Gamma$ also appear naturally in two ways: as the dual graphs of the various fibers of  $C \to S$ and as the fibers 
of the induced map of cone stacks $\Sigma_C \to \Sigma_S$.
The latter fibers have the same combinatorial type in the interior of each cone of $\Sigma_S$, but their edge lengths vary. The parameter $\ell(e) \in \ghost_{S,s}$ associated to an edge of a dual graph induces by evaluation a homomorphism $$\ell(e)\colon  \mathsf{Hom}(\ghost_{S,s}, \mathbb R_{\ge 0}) \to \mathbb{R}_{\ge 0}\, .$$ 
The compatibility of the $\ell(e)$ with \'etale specializations means that these homomorphisms descend to $\Sigma_S$, and under this interpretation, they measure the length of the edge in the fibers of $\Sigma_C \to \Sigma_S$ as $x$ moves in $\Sigma_S$.
\end{remark}

\subsection{Subdivisions of log curves}
\label{subsec:subsoflogcurves}
Let $C \to S$ be a logarithmic curve. As $C$ is a logarithmic scheme, we may perform subdivisions on $C$, but the composition of a subdivision $C' \to C$ with $C \to S$ may no longer be a logarithmic curve (as $C' \to S$ may fail to be integral or have nonreduced fibers\footnote{In good cases, when $S$ is log smooth, $C' \to C$ is essentially a blow-up, and integrality is equivalent to flatness. Then, the pathology can be rephrased as {\em even though $C \to S$ is flat, the blow-up $C'$ may fail to be flat over $S$}. Moreover, even if $C'\to S$
is flat, the fibers need not be reduced.}). 

The subdivisions $C' \to C$ which are logarithmic curves over $S$ are special. In terms of the tropicalizations $(\Gamma_s, \ell(e) \in \ghost_{S,s}, f_\zeta)$,
the subdivisons $C' \to C$ which are logarithmic curves over $S$ are exactly the subdivisions of the following form:
\begin{itemize}
\item (Subdivision of each fiber) For each geometric point $s \in S$, a subdivision of each edge $e_s$ of $\Gamma_s$ into a union of edges $e(i)_s$ of length $\ell(e(i)_s) \in \ghost_{S,s}-\{0\}$, with $$\ell(e_s) = \sum \ell(e(i)_s)\, .$$ 
\item (Compatibility with \edits{specialization}) For each \'etale specialization $\zeta: t \rightsquigarrow s$ and edge $e_t$ mapping to $e_s$ under $E(\Gamma_{t}) \to E(\Gamma_s)$, a bijection $\phi$ between the edges $e(i)_s$ for which $\ell(e(i)_s)$ does not map to $0$ under the map $p^\sharp_{\zeta}: \ghost_{S,s} \to \ghost_{S,t}$ and the edges $e(j)_t$ subdividing $e_t$, so that $p^\sharp_{\zeta}(\ell(e(i)_s))=\ell(\phi(e(i)_s)) \in \ghost_{S,t}$.
\end{itemize} 
Even though $C' \to S$ is not a log curve for every subdivision $C' \to C$, we have: 
\begin{itemize}
\item[(i)] $C \times_S S' \to S'$ is a log curve for every subdivision $S' \to S$ (as the
definition of a log curve is stable under base change with respect to any log map $S' \to S$).  
\item[(ii)] Subdivisions by log curves are in a sense cofinal among subdivisions of the total space $C$: for any subdivision $C' \to C$, there exists a log alteration (a log alteration is a composition of a log modification with a root stack along the log structure) $S' \to S$ and a further subdivision $C'' \to C' \times_S S'$ such that $C'' \to S'$ is a log curve. 
\end{itemize}
We will not use  (ii)  in the paper. \edits{The result follows by the formulation of the weak semistable reduction theorem in \cite{Molchosemistable}}.

A special class of subdivisions of log curves $C \to S$ will be central in what follows. 

\begin{definition}[Quasi-stable models]
\label{def: qstablemodel}
A quasi-stable model of a log curve $C \to S$
is a subdivision $\widehat{C} \to C$ such that
$\widehat{C} \to S$ is a log curve and, fiberwise, the exceptional locus of $\widehat{C} \to C$ consists of isolated rational curves (every connected component of the exceptional locus is a nonsingular rational curve). 
\end{definition}


In particular, on the level of the dual graphs $\Gamma_s$, a quasi-stable model has at most one exceptional vertex on each edge. We have chosen the terminology quasi-stable model because, when the underlying family $C \to S$ is a family of stable curves, the curve $\widehat{C} \to S$ is called quasi-stable in the literature.   

\subsection{Abel-Jacobi theory on a tropical curve}
\label{sec: TropicalAJ}



The main notions of classical Abel-Jacobi theory have tropical analogues\edits{, originating in \cite{baker2011metric,baker2007riemann,mikhalkin2008tropical}. We present here a logarithmic approach to the part of the theory that we need for the present work. }

Fix a tropical curve $\Gamma$ metrized by a sharp monoid $M$. \edits{We briefly recall the definitions of divisors and flows on $\Gamma$ from Section \ref{sasc}. A \emph{divisor} on a tropical curve $\Gamma$ is an element of $\bb Z^{V(\Gamma)}$, while a \emph{flow} on $\Gamma$ is a function $f \colon \vec E(\Gamma) \to \mathbb{Z}$ satisfying $f(\cev e) = - f(\vec e)$ for all edges $e \in E(\Gamma)$. As written in equation} \eqref{eq:div_of_flow}, a flow on $\Gamma$ induces a divisor by taking outgoing slopes,
$$
\mathrm{div}\colon \mathsf{Flow}(\Gamma) \to \bb Z^{V(\Gamma)}\, . 
$$





Flows and divisors are essentially combinatorial notions: they depend only on the graph $\Gamma$ and not the metric structure of $\Gamma$. On the other hand, the analogue of a rational function on a curve is a \emph{piecewise linear} function, which requires the metric structure.

\begin{definition}\label{def:comb_PL}
A \emph{\edits{strict} piecewise linear} function on $\Gamma$ is a function $$F \colon V(\Gamma) \to M^\gp$$ 
satisfying the following property: for every directed edge $\vec e$ from vertex $u$ to vertex $v$, 
there is an integer $s(\vec e)\in \mathbb{Z}$, the \emph{slope of $F$ along $\vec e$}, such that 
\begin{equation}\label{eq:slope}
F(v) - F(u) = s(\vec e)\ell(e).
\end{equation}
We will use the abbreviation \edits{sPL} for \edits{strict} piecewise linear, and we write 
$
\mathsf{sPL}(\Gamma)
$
for the group of sPL functions on $\Gamma$.
\end{definition}

By taking slopes, we obtain a homomorphism 
\[
\mathsf{sPL}(\Gamma) \to \mathsf{Flow}(\Gamma).
\]
Composing with the homomorphism $\mathrm{div}$ yields a homomorphism 
\[
\mathsf{sPL}(\Gamma) \to \mathsf{Div}(\Gamma)
\]
which we also denote by $\mathrm{div}$. Explicitly, if $s(\vec e)$ is the slope of $F$ along the oriented edge $\vec{e}$, we have 
\begin{equation*}
    \mathrm{div}(F) = \left( \sum_{\vec e \to v} s(\vec e) \right)_{v \in V(\Gamma)} \in \mathbb{Z}^{V(\Gamma)}\, .
\end{equation*}
We say a divisor is \emph{principal} if it is the divisor of some sPL function. Two divisors are {\em linearly equivalent} if their difference is principal. 
Whether a divisor is principal depends strongly on the monoid $M$ (via the edge lengths),
and \emph{not} just on the underlying graph $\Gamma$.  

If two sPL functions have the same slopes, then they differ by addition of a global constant. The next result determines when an (acyclic) flow arises as the slopes of a sPL function. \edits{To state the result, we require a slight generalization of the formula \eqref{eqn:ellpairing}. 

\begin{definition}
We define the pairing 
\begin{equation*}
    \langle-, -\rangle_\ell\colon \mathsf{Flow}(\Gamma) \times \mathsf{Flow}(\Gamma) \to \M^\gp
\end{equation*}
by the formula
\begin{equation}\label{gg79}
\langle f, g \rangle_\ell = 
\frac{1}{2} \sum_{\vec e \in \vec E(\Gamma)} \ell(e) f(\vec e) g(\vec e). 
\end{equation}
The prefactor $\frac{1}{2}$ is placed to count every edge contribution only once.
\end{definition}
Every cycle $\gamma\in H_1(\Gamma)$ induces a flow by identifying $H_1(\Gamma)$ with the kernel of $\mathrm{div}$ as in \eqref{eq:div_of_flow}.  Hence, we sometimes view the pairing as being between flows and cycles. 
}

\begin{lemma}\label{lem:flow_as_slopes}
Let $f$ be a flow on $\Gamma$. Then $f$ arises as the slopes of a sPL function if and only if, for every cycle $\gamma$ in $\Gamma$, we have 
$$
\langle \gamma, f \rangle_{\ell} = 0. 
$$
\end{lemma}
\begin{proof}
The statement is well-known. That the pairing vanishes on the slopes of a sPL function,
we leave to the reader. For the converse, start by choosing any value for $F$ at a
particular vertex. By  moving around the graph, 
the condition \eqref{eq:slope} determines the value of $F$ at every other vertex. 
The method can fail if two different ways of reaching a vertex were to yield different answers, which is precisely precluded by the vanishing of the pairing. 
\end{proof}

In particular, if a flow $f$ arises as the slopes of a sPL function, then $f$ must be acyclic: starting at a vertex $v$ in the cycle, the values $F(w)$ of a sPL function $F$ must strictly increase when traversing edges with positive slope, but must return to $F(v)$ when traversing the whole cycle. \edits{The order here on $M^\gp$ is induced by the inclusion $M \subset M^\gp$: given $x,y \in M^\gp$, we say $x \le y \in M^\gp$ if $y-x \in M \subset M^\gp$. }

\edits{
\begin{remark}
    In the tropical literature, a tropical curve is typically a metric graph, and a divisor is a finite sum 
    \[
    \sum a_p p
    \]
    with $a_p \in \mathbb{Z}$ and $p \in \Gamma$. For us, a tropical curve is a finite set of vertices and edges, with edges metrized by a monoid $M$. When the monoid $M$ is $\mathbb{R}_{\ge 0}$, our tropical curves can be regarded as metric graphs. But for general monoids $M$, we do not
    have metric graphs.
    Moreover,
    the notion of a point of an edge of $\Gamma$ does not have a clear meaning. 
    Therefore, we choose to use finite formal sums of vertices in our definition of divisors. If an edge can be subdivided over $M$ into two edges, it is reasonable to consider the new vertex of the subdivision a point of $\Gamma$, but we prefer to consider it as a divisor on the subdivision rather than on the original curve.  
\end{remark}
}
\subsection{Algebraizing tropical Abel-Jacobi theory}\label{sec:algebraizing_tropical_AJ}

Logarithmic geometry allows us to lift the combinatorics of tropical Abel-Jacobi theory
to algebraic geometry. 

A \emph{\edits{strict} piecewise linear} function on a log curve $C \to S$ is a global section of $\ghost_C^\gp$. Over a geometric point $s$ of $S$,
such a sPL function $$\alpha \in H^0(C, \ghost_C^\gp)$$ induces a combinatorial sPL function (in the sense of Definition \ref{def:comb_PL}) on the tropical curve, by sending a vertex $v$  of $\Gamma_s$ to the value of $\alpha$ at the generic point of the irreducible component of $C_s$ corresponding to $v$. 

When $S$ is a geometric point, this gives a bijection between combinatorial sPL functions and $H^0(C, \ghost_C^\gp)$. On the other hand, suppose that we are given, for each geometric point $s$ of $S$, a combinatorial sPL function 
$$F_s\colon V(\Gamma_S)\to \ghost_{S,s}^\gp\, . $$
Suppose, moreover, that the $F_s$ are compatible with \'etale specialization: whenever $\zeta\colon t \rightsquigarrow s$ is an \'etale specialization inducing
$$p_\zeta^\#\colon \ghost_{S,s} \to \ghost_{S,t}$$
and
$$f_\zeta\colon \Gamma_s \to \Gamma_t\, ,$$
we have 
$$p_\zeta^\#F_s(v) = F_t(f_\zeta(v))\, .$$
Then there is a unique global section 
$$\alpha \in H^0(C, \ghost_C^\gp)$$
inducing the combinatorial sPL functions $F_s$.

The geometric connection of the two constructions is the following: The sPL function $\alpha$ induces a line bundle $\mathcal{O}(\alpha)$ on $C$, as in Definition \ref{def: associatedbundle}. Then, the multidegree of $\mathcal{O}(\alpha)$ on $C_s$ is opposite to the tropical divisor associated to $F_s$: 
\begin{equation}
    \label{eq: multideg}
    \mdeg \mathcal{O}(\alpha)|_{C_s} = - \mathsf{div}(F_s)
\end{equation}
This equality is in particular responsible for our convention to glue the infinity rather than the zero section to the torsor $\mathcal{O}_{C}^\times(\alpha)$.


\section{Stability conditions}\label{sec:stability_conditions}

\subsection{Definitions} \label{Sect:Stab_cond_Defs}

The  definitions related to stability of line bundles
here are adapted from \cite{MMUV}. Let $$\pi: C \to S$$
be a logarithmic curve. 

\begin{definition} \label{def:stabilitycondition}
A \emph{stability condition $\theta$ of degree $d$ for $\pi$} consists of a function $$\theta:V(\Gamma_s) \to \edits{\mathbb{Q}}$$ for each geometric point $s \in S$, which
satisfies:
 \begin{enumerate}
    \item[(i)] For $s\in S$, we have    $\sum_{v \in V(\Gamma_s)} \theta(v) = d$.
 \item[(ii)] For every \'etale specialization $\zeta\colon t \rightsquigarrow s$, the stability condition $\theta$ respects the induced map $\Gamma_s \to \Gamma_t$, in the sense that 
$$
\theta(w) = \sum_{v_i} \theta(v_i)
$$
whenever $v_1,\cdots,v_n$ are the vertices of $\Gamma_s$ mapping to $w \in \Gamma_t$.\qedhere 
\end{enumerate}
\end{definition}
For the universal family over the moduli space of stable curves,
$$\pi : C \to S = \oM_{g,n}\,,$$
a stability condition of the form of Definition \ref{def:stabilitycondition} is exactly
a stability condition in the sense of Section \ref{Sect:intromodulilinebundles}.

If $T \to S$ is a logarithmic map, and $\widehat{C} \to C \times_S T = C_T$  is a quasi-stable model for $C_T/T$, then we can lift a degree-$d$ stability condition $\theta$ for $C/S$ canonically to a degree-$d$ stability condition $\widehat{\theta}$ for $\widehat{C}/T$ by setting the  value of $\widehat{\theta}$ to zero on every exceptional vertex of the dual graph of every fiber $\widehat{C}_{t}$ over every geometric point $t \in T$ (and setting the value of $\widehat{\theta}$  to equal the value of $\theta$ on all other vertices).


\begin{definition}
Let $T \to S$ be a logarithmic map. A line bundle $\ca L$ on a quasi-stable model $\widehat{C} \to C_T$ is  \emph{admissible} if for any geometric point $t \in T$ and exceptional component $E$ of $\widehat{C}_t \to (C_T)_t$, the degree of $\ca L$ on $E$ is $1$.  
\end{definition}

\edits{The line bundle $\ca L$ tropicalizes to a divisor on the tropicalization of $\widehat{C}\to C_T$ in each fiber}: for each geometric point $t \in T$, the divisor is given by the multidegree $$\mdeg(\ca L|_{\widehat{C}_t}) \in \mathbb{Z}^{V(\widehat{\Gamma}_t)} = \textup{Div}(\widehat{\Gamma}_t)\, .$$ 
A line bundle $\ca L$ is then admissible if and only if the
associated tropical divisor has degree $1$ on every exceptional vertex. 

\begin{definition} \label{defss}
Let $T \to S$ be a logarithmic map, and let $\ca L$ be an admissible line bundle on a quasi-stable model $\widehat{C} \to C_T$ of $C_T/T$. 
Let $\theta$ be a stability condition for $C/S$ with lift $\widehat{\theta}$ to $\widehat{C}/T$. Then, $\ca L$ is \emph{$\theta$-semistable} if, for every geometric point $t \in T$, 
the associated tropical divisor $\mdeg(\ca L|_{\widehat{C}_t})$ on the graph $\widehat{\Gamma}_t$ of $\widehat{C}_t/t$ satisfies the condition: 
$$
\widehat{\theta}(G)-\frac{\textup{val}(G)}{2} \le \mdeg(\ca L|_{\widehat{C}_t})(G) \le \widehat{\theta}(G) + \frac{\textup{val}(G)}{2} 
$$
for every subset $G \subseteq V(\widehat{\Gamma}_t)$, where 
$$\widehat{\theta}(G) = \sum_{v \in G} \widehat{\theta}(v)\, , \ \ \  \mdeg(\ca L|_{\widehat{C}_t})(G) = \sum_{v \in  G} \mdeg(\ca L|_{\widehat{C}_t})(v)\, $$ and $\textup{val}(G)$ is the valence of $G$, defined as the number of edges connecting $G$ and its complement.

An admissible line bundle $\ca L$ on $\widehat{C}/T$ is  \emph{$\theta$-stable} if the inequalities of
Definition \ref{defss} 
are strict except possibly when $G$ or its complement are unions of exceptional vertices. 
\end{definition}
In Section \ref{Sect:intromodulilinebundles}, we cast the stability condition in terms of sub-curves which correspond here exactly to subsets of the vertices.

\begin{definition}
A stability condition $\theta$ for $C/S$ is  
\begin{itemize}
    \item \emph{nondegenerate} if, for every logarithmic map $T \to S$ and quasi-stable model $\widehat{C} \to C_T$, every $\theta$-semistable line bundle $\ca L$
on $\widehat{C}/T$ is $\theta$-stable,
    \item \emph{small} if the trivial bundle is $\theta$-semistable on $C_T$. 
\end{itemize}
\end{definition}

The category fibered in groupoids over $\textbf{LogSch}/S$ which assigns to $T \to S$ the groupoid consisting of pairs of 
\begin{enumerate}
    \item [(i)] a quasi-stable model $\widehat{C} \to C_T$, 
    \item[(ii)] an admissible line bundle $\ca L$ on $\widehat{C} \to T$
\end{enumerate}
forms an algebraic stack\footnote{\edits{It is easy to see that $\mathbf{P}^\theta_\pi$ is a stack in the strict \'etale topology. We justify that this stack is algebraic. The Picard stack is algebraic \cite[Theorem 7.3]{Artin1969Algebraization-}, and admissibility is an open condition, so it suffices to check that the stack of subdivisions is algebraic. But to specify a subdivision of a log curve along an edge $e$ is to specify a monoid element that is strictly between $0$ and the length of $e$. The space of monoid elements is representable by $[\bb A^1/\bb G_m]$, and strict inequalities between monoid elements cut out closed substacks. }}. 
Imposing the condition that $\ca L$  is
$\theta$-semistable \edits{(for some nondegenerate $\theta$)} defines \edits{an} open substack which we denote by $\mathbf{P}_\pi^\theta$, or $\mathbf{P}^\theta$ when $\pi:C \to S$ is understood. The relative inertia of $\mathbf{P}_\pi^\theta$ over $S$ is given by $\bb G_m$. 
The associated rigidification over $S$, denoted by
$\mathcal{P}_\pi^\theta$,
is a proper algebraic space over $S$. 
Both $\mathbf{P}_\pi^\theta$ and $\mathcal{P}_\pi^\theta$ are log smooth over $S$, and their tropicalizations parametrize $\theta$-semistable tropical divisors on the tropicalization $\Sigma_C \to \Sigma_S$ (see \cite{MMUV} for details).

\begin{remark}
    According to the definition of $\theta$-semistability, the trivial bundle is \emph{not} semistable on quasi-stable models $\widehat{C} \to C_T$ that have exceptional components, as its degree fails to be $1$ on them. 
    Instability is forced here
    as otherwise there would be no chance of getting a separated space $\mathcal{P}_\pi^\theta$: over a discrete valuation ring $T$ with generic point $\eta$, both $(C_T,\mathcal{O}_{C_T})$ and $(\widehat{C},\mathcal{O}_{\widehat{C}})$ would be limits of $(C_\eta,\mathcal{O}_{C_\eta})$. 
\end{remark}

\subsection{Nondegenerate stability conditions}
\label{ndsc}
\subsubsection{Moduli of \texorpdfstring{$\theta$}{theta}-stable bundles}
Let $\pi: C \to S$ be a logarithmic curve together with a section $x_1$, \edits{where $S$ is an algebraic stack with log structure}. Let 
$$\mathcal{L} \to C$$
be a line bundle on $C$ of degree{\footnote{Degree here means the
degree of $\mathcal L$ on the fibers of $\pi$.}} $d$. Let $\theta$ 
be a nondegenerate stability condition of degree $d$ for $C/S$. We present here
a generalization of the construction of $\Mbar_{g,A}^\theta$ in Section \ref{sasc}. The main ideas are already contained in \cite{AbreuPacini}, but in a setting less general than we require. 

\begin{definition}\label{def:S_theta}
Let $S^\theta_\ca L$ be the fibered category over $\mathbf{LogSch}/S$ with objects tuples
\begin{equation}
    (T/S, \widehat C \to C_T, \alpha)
\end{equation}
where $T$ is a log scheme over $S$, $\widehat C \to C_T$ is a quasi-stable model of $C_T/T$, and $\alpha$ is \edits{a} sPL function on $\widehat C/T$ vanishing on \edits{the component through which $x_1$ passes}, and for which $\ca L_T(\alpha)$ is $\theta$-stable.  
\end{definition}

Standard arguments show that $S^\theta_\ca L$ is a stack in groupoids for the strict \'etale topology: a quasi-stable model of $C_T$ is associated to a subdivision of $\Sigma_{C_T}$, and $\alpha$ is a \edits{strict} piecewise linear function on $\Sigma_{C_T}$ (both of these are by definition \'etale local on $T$).  

We \edits{write $C_{S_\ca L^{\theta}} \to S_\ca L^{\theta}$ for the pullback of $C$ under the map $S_\ca L^{\theta} \to S$. Furthermore, we} denote \edits{its} universal quasi-stable model \edits{(written as $\widehat C$ in Definition \ref{def:S_theta} when taking $T=S_\ca L^{\theta}$)} by
$$C^\theta\to C_{S_\ca L^{\theta}}\, $$
and the universal sPL function
on $C^\theta$ by $\alpha^\theta$.

\begin{remark}\label{rem:aj_map_on_S^theta}
Let $\ca P^\theta_\pi$ be the moduli space of $\theta$-stable line bundles over $S$ trivialized along the section $x_1$ \edits{discussed at the end of Section \ref{Sect:Stab_cond_Defs}}. There is a natural Abel-Jacobi map 
\begin{equation}
  \mathsf{aj}\colon S^\theta_\ca L \to \ca 
  P^\theta_\pi
\end{equation}
sending $(T/S, \widehat C \to C_T, \alpha)$ to the \edits{rigidification of the} line bundle $\ca L_T(\alpha)$ on $\widehat C$ \edits{along the section $x_1$}. 
\end{remark}

The definition of $S_{\mathcal{L}}^\theta$ is natural from the point of view of logarithmic geometry. While standard arguments \edits{using \cite{Molcho2018The-logarithmic}} show that $S_{\mathcal{L}}^\theta$ is a stack over $\textbf{LogSch}/S$, what is not standard is 
whether $S_{\mathcal{L}}^\theta$ \emph{can be represented by an algebraic stack with a log structure}. Theorem \ref{thm:is_subdivis} gives an explicit description of $S_{\mathcal{L}}^\theta$ as a log modification of $S$, thus showing  $S_{\mathcal{L}}^\theta$ is an algebraic stack with log structure.  



\begin{theorem}\label{thm:is_subdivis} The structure morphism
$\rho: S^\theta_\ca L \to S$ is a log modification (obtained from a subdivision). In particular, $\rho$ is proper, log \'etale, and relatively representable by logarithmic schemes. If $S$ is log smooth then $\rho$ is also birational. 
\end{theorem}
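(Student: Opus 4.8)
The plan is to realize $S^\theta_{\mathcal L}$ as the log modification attached to an explicit subdivision $\widetilde\Sigma \to \Sigma_S$, and then to verify the claim by means of the functor-of-points criterion of Lemma \ref{lem:subfunctorofpoints}. Since the assertion is \'etale-local on $S$, I would first reduce to the atomic case, where $S$ has a single closed stratum with dual graph $\Gamma$ and $\Sigma_S$ is the single cone $\sigma_\Gamma = \mathrm{Hom}(\ghost_{S,s}, \mathbb{R}_{\ge 0})$. An object $(T/S, \widehat C \to C_T, \alpha)$ then tropicalizes to a quasi-stable model $\widehat\Gamma$ of $\Gamma$ together with the combinatorial PL function attached to $\alpha$, whose slopes form an acyclic flow $I$ on $\widehat\Gamma$ (acyclicity holds because $\ghost_{S,s}$ is sharp). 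By \eqref{eq: multideg}, the bundle $\mathcal L_T(\alpha)$ has multidegree $\mdeg(\mathcal L) - \mathrm{div}(I)$, and the requirement that this bundle be $\theta$-stable is exactly the system of linear inequalities of Definition \ref{defss}.

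This translates the problem into the combinatorics of Section \ref{sasc}. For each pair $(\widehat\Gamma, I)$ with $\mathrm{div}(I) = \mdeg(\mathcal L) - D$ for a $\theta$-stable multidegree $D$, Lemma \ref{lem:flow_as_slopes} shows that the metrics for which $I$ is realized by an honest PL function are exactly those in the subcone $\tau_{\widehat\Gamma, I} \subseteq \sigma_{\widehat\Gamma}$ cut out by $\langle \gamma, I \rangle_{\widehat\ell} = 0$ for all cycles $\gamma$; projecting by $\mathsf{pr}$ gives a cone $\sigma_{\widehat\Gamma, I} \subseteq \sigma_\Gamma$. I would take $\widetilde\Sigma_\Gamma$ to be the fan generated by these cones and their faces. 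That this is indeed a fan supported on all of $\sigma_\Gamma$ is precisely the content of Claim 1; the compatibility of the $\widetilde\Sigma_\Gamma$ with edge contractions, hence with the face maps of $\Sigma_S$, follows from property (ii) of a stability condition, so the $\widetilde\Sigma_\Gamma$ glue to a subdivision $\widetilde\Sigma \to \Sigma_S$.

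It then remains to match $S^\theta_{\mathcal L}$ with the log modification of $\widetilde\Sigma$ through Lemma \ref{lem:subfunctorofpoints}. For the monomorphism property, I would argue that two lifts over $T$ produce $\theta$-stable admissible bundles differing by a vertical twist; as recalled in Section \ref{Sect:intromodulilinebundles}, a nontrivial twist changes the degree on some subcurve by at least its valence, so by nondegeneracy at most one representative in each twist class is $\theta$-(semi)stable, forcing the two lifts to agree up to unique isomorphism. For the lifting criterion, a map $T \to S$ lifts to $S^\theta_{\mathcal L}$ exactly when, \'etale-locally, the prescribed metric admits a quasi-stable model and flow making the bundle $\theta$-semistable, and hence $\theta$-stable by nondegeneracy; this is to say that $\Sigma_T \to \Sigma_S$ factors through $\widetilde\Sigma$. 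The existence of such a reduction for every metric is again guaranteed by the full-support statement of Claim 1.

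Granting the identification with a log modification, the remaining assertions are formal: properness, log \'etaleness, and relative representability by logarithmic schemes hold for any log modification by the discussion of Section \ref{sec:subdivisions}, and when $S$ is log smooth the map $S \to \mathcal{A}_S$ is smooth, so $\rho$ is birational. The crux of the argument is the combinatorial Claim 1 — that the $\theta$-stability inequalities genuinely subdivide $\sigma_\Gamma$ into a fan of full support with no top-dimensional overlaps — since this simultaneously supplies the subdivision, the uniqueness needed for the monomorphism, and the existence needed for the lifting criterion. This is exactly where the nondegeneracy of $\theta$ is indispensable.
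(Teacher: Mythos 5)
Your proposal is correct and follows essentially the same route as the paper: the same subdivision indexed by triples $(\widehat{\Gamma}, D, I)$, the same reliance on Claim 1 (proved in the paper as Lemma \ref{Dcomment-I-should-try-to-prove-these-claims} by importing the Abreu--Pacini combinatorics, with nondegeneracy as the key hypothesis) and on property (ii) of stability conditions for descent to $\Sigma_S$, and the same identification of $S^\theta_{\ca L}$ with the resulting log modification via Lemma \ref{lem:subfunctorofpoints}, Lemma \ref{lem:flow_as_slopes}, and \eqref{eq: multideg}. The only organizational difference is that the paper builds the universal quasi-stable model and PL function over the log modification and then proves the induced map of stacks is an isomorphism (Lemma \ref{lem:isom_of_stacks}), whereas you compare functors of points directly, with your twisting argument for uniqueness playing the role that the Abreu--Pacini uniqueness statement plays in the paper's proof.
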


\begin{remark}
In Definition \ref{def:S_theta}, the notation $S_{\mathcal{L}}^\theta$ is used to highlight the
fact that $S_{\mathcal{L}}^\theta$ depends both on the stability condition $\theta$ and the line bundle $\mathcal{L}$. The proof of Theorem \ref{thm:is_subdivis} will however show that $S_{\mathcal{L}}^\theta$ only depends on  $\mathcal{L}$ through 
the associated multidegree $\mdeg(\mathcal{L})$, which we can view as a tropical divisor on $\Sigma_C$. We could have thus equally written $S_{\mdeg(\mathcal{L})}^\theta$.  
\end{remark}

Theorem \ref{thm:is_subdivis} generalizes the construction of
\begin{equation*}
    \rho: \oM_{g,A}^\theta \rightarrow \oM_{g,n}
\end{equation*}
promised in \eqref{eq:intro_resolve_aj}.
The following definition makes the latter claim precise. 
\begin{definition}\label{def:Mgntheta_detailed}
Let $S = \Mbar_{g,n}$ with universal curve
$\pi: C\to \Mbar_{g,n}$
carrying the standard
log structure. Let
\begin{equation*}
    \ca L = (\omega_C^\log)^{\otimes k}\left(- \sum_{i=1}^n a_i x_i\right)
\end{equation*}
be a line bundle
on $C$ with markings $x_1,\ldots,x_n$.
 For a nondegenerate stability condition $\theta$ for $\pi$,
we define
\begin{equation}
    \Mbar_{g, A}^\theta = (\Mbar_{g, n})_{\ca L}^\theta\, , 
\end{equation}
with Abel-Jacobi map 
\begin{equation}
  \mathsf{aj}\colon \Mbar_{g, A}^\theta \to \ca P_{g,n}^\theta
\end{equation}
as in Remark \ref{rem:aj_map_on_S^theta}. 
\end{definition}

\subsubsection{Proof of Theorem \ref{thm:is_subdivis}}\label{sec:proof_of_thm_is_subdivision}
\edits{To check that a map $S' \to S$ is a log modification, it suffices to work locally in $S$. We may thus assume $S$ has a strict map to an Artin fan $\mathcal{A}_S$.}
We present an explicit construction of the subdivision of \edits{the corresponding cone stack} $\Sigma_S$ which defines $\rho$.
In the special case 
of Definition \ref{def:Mgntheta_detailed},
where $S = \Mbar_{g,n}$, the subdivision 
reduces exactly to the construction in Section \ref{sasc}. 

Let $s$ be a geometric point of $S$ and let $\sigma_s = \mathsf{Hom}(\overline{M}_{S,s},\mathbb{R}_{\ge 0})$ be the corresponding cover of the stacky cone in $\Sigma_S$. By definition, a subdivision of $\Sigma_S$ is a subdivision of the various cones $\sigma_s$ as $s$ ranges through $S$, 
which is compatible with all identifications of faces and automorphisms required in $\Sigma_S$. Let $\Gamma = \Gamma_s$ be the dual graph of $C_s$. We will write $\sigma_{\Gamma}$ for $\sigma_s$ to highlight the connection of the constructions below with $\Gamma$. 

The logarithmic map $C \to S$ induces the structure of a tropical curve on $\Gamma$ metrized by $$\ghost_{\sigma_\Gamma} = \ghost_{S,s}$$
via the lengths $\ell(e) \in \ghost_{S,s}$, as in Section \ref{sec:Logcurves}. All together, the lengths $\ell(e)$ can be thought of as a homomorphism 
$$
\ell \colon \sigma_\Gamma \to \mathbb{R}_{\ge 0}^{E(\Gamma)}\, , 
$$
\edits{so for $p \in \sigma_\Gamma = \mathsf{Hom}(\ghost_{S,s}, \mathbb{R}_{\geq 0})$ we can write $(\ell(p))_e = \ell_e(p) = p(\ell(e))$.}
Let $\mdeg(\mathcal{L})$ be the multidegree of $\mathcal{L}$
viewed as a tropical divisor on $\Gamma$. The cones of our subdivision in $\sigma_\Gamma$ will be indexed by triples $(\widehat \Gamma, D, I)$ where
\begin{enumerate}
    \item[(i)] $\widehat \Gamma$ is a quasi-stable graph with stabilization $\Gamma$,
    \item[(ii)] $D$ is a $\theta$-stable divisor on $\widehat{\Gamma}$,
    \item[(iii)] $I$ an acyclic flow on $\widehat \Gamma$ satisfying $\on{div}(I) = \mdeg(\mathcal{L})-D$. 
\end{enumerate}

We define the cone $\sigma_{\widehat{\Gamma}}$ by
$$\sigma_{\widehat{\Gamma}} = \sigma_\Gamma \times_{\mathbb{R}_{\ge 0}^{E(\Gamma)}} \mathbb{R}_{\ge 0}^{E(\widehat{\Gamma})}\, .$$ 
Concretely, a point of the cone $\sigma_{\widehat{\Gamma}}$ is given by 
\begin{enumerate}
    \item[$\bullet$] a point $p \in \sigma_\Gamma$,
    \item[$\bullet$] for every edge $e$ of $\Gamma$ that is subdivided into edges $e_1$ and $e_2$ in $\widehat\Gamma$, a pair of nonnegative real numbers $\widehat{\ell}(e_1)$ and  $\widehat{\ell}(e_2)$ such that $\ell(p)_e = \widehat{\ell}(e_1) + \widehat{\ell}(e_2)$. 
\end{enumerate}
 The cone $\sigma_{\widehat{\Gamma}}$ has relative dimension over $\sigma_\Gamma$ equal to the number of exceptional vertices of $\widehat{\Gamma}$. 
 
 In the universal case, when $S = \oM_{g,n}$, the map $\ell$ is an isomorphism, $\sigma_{\widehat{\Gamma}} =\mathbb{R}_{\ge 0}^{E(\widehat{\Gamma})}$, and the projection $\sigma_{\widehat{\Gamma}} \to \sigma_\Gamma$ is the map
$${\mathsf{pr}}\colon \sigma_{\widehat{\Gamma}} \to \sigma_\Gamma\, ,\ \ \  \widehat{p} \, \mapsto\,  p : e \mapsto 
\begin{cases}
\widehat{p}(e_1)+\widehat{p}(e_2) &\text{if $\widehat \Gamma$ subdivides edge $e$ into $e_1, e_2$},\\
\widehat{p}(e) & \text{otherwise},
\end{cases}
$$
from Section \ref{sasc}. 



We define a subcone $\tau_{\widehat{\Gamma}, I} \subseteq \sigma_{\widehat \Gamma}$ by the condition
\begin{equation}\label{eq:tau_cone_def}
    \tau_{\widehat{\Gamma},I} = \left\{\widehat p \in \sigma_{\widehat \Gamma} \ \Big| \  \langle \gamma, I \rangle_{\widehat p} = 0 \text{ for all }\gamma \in H_1(\widehat{\Gamma}) \right\}\,,
\end{equation}
where the pairing is \eqref{gg79} evaluated at the point $\widehat p$. 


\begin{lemma}\label{Dcomment-I-should-try-to-prove-these-claims}
The map $\mathsf{pr}:\sigma_{\widehat{\Gamma}} \to \sigma_\Gamma$ induces an isomorphism from the cone $\tau_{\widehat{\Gamma},I}$ to the
image $$\sigma_{\widehat{\Gamma},I} = {\mathsf{pr}}(\tau_{\widehat{\Gamma},I}) \subseteq \sigma .$$
Moreover,  the collection of cones $\{ \sigma_{\widehat{\Gamma},I}\} $ together with their faces forms a fan $\widetilde{\Sigma}^{\theta}_{\Gamma}$ with support $\sigma_\Gamma$.
\end{lemma}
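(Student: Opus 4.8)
The plan is to interpret the defining condition of $\tau_{\widehat{\Gamma},I}$ through the tropical Abel--Jacobi theory of Section \ref{sec: TropicalAJ} and then to deduce both assertions from the existence and uniqueness of $\theta$-stable representatives. First I would record the one structural fact that drives everything: at each exceptional vertex $w$ of $\widehat{\Gamma}$, lying on a subdivided edge $e = e_1 \cup e_2$, we have $\mathrm{div}(I)(w) = \mdeg(\mathcal{L})(w) - D(w) = 0 - 1 = -1$, since $\mathcal{L}$ is pulled back from the stable model (hence has degree $0$ on the contracted rational component) while admissibility forces $D(w) = 1$. Equivalently, the two half-edges carry distinct slopes, $I(\vec e_1) - I(\vec e_2) = -1 \neq 0$. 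By Lemma \ref{lem:flow_as_slopes}, a point $\widehat{p} \in \sigma_{\widehat{\Gamma}}$ lies in $\tau_{\widehat{\Gamma},I}$ exactly when $I$ is the slope flow of a PL function on the metric graph $(\widehat{\Gamma}, \widehat{\ell})$, i.e. when the twist $\alpha$ of Definition \ref{def:S_theta} exists and makes $\mathcal{L}(\alpha)$ have multidegree $D$.

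For the isomorphism $\mathsf{pr}\colon \tau_{\widehat{\Gamma},I} \to \sigma_{\widehat{\Gamma},I}$, since $\mathsf{pr}$ is linear and $\sigma_{\widehat{\Gamma},I}$ is by definition its image, it suffices to prove that $\mathsf{pr}|_{\tau_{\widehat{\Gamma},I}}$ is injective, equivalently that $\ker(\mathsf{pr}) \cap \operatorname{span}(\tau_{\widehat{\Gamma},I}) = 0$. A vector $\delta \in \ker(\mathsf{pr})$ is supported on the subdivided edges with $\delta(e_1) = -\delta(e_2) =: t_e$, and a direct computation using $\mathrm{div}(I)(w_e) = -1$ gives $\langle \gamma, I \rangle_\delta = -\sum_e t_e\, \gamma(\vec e)$ for every cycle $\gamma$, the sum running over subdivided edges identified with edges of $\Gamma$ via $H_1(\widehat{\Gamma}) \cong H_1(\Gamma)$. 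Thus $\delta$ lies in $\operatorname{span}(\tau_{\widehat{\Gamma},I})$ precisely when the cochain $(t_e)$ pairs to zero with all cycles, i.e. is the coboundary of some $h\colon V(\Gamma) \to \mathbb{R}$ that is constant across unsubdivided edges. Here nondegeneracy enters: if $h$ were nonconstant, a level set $G = \{h > c\}$ would be crossed only by subdivided edges, and comparing the $\theta$-stability windows of the subcurves $\widehat{G}$ and $\widehat{G} \cup \{\text{exceptional vertices of the crossing edges}\}$ --- which shift $\mdeg(\mathcal{L}(\alpha))$ by the valence while leaving $\widehat{\theta}$ and the valence unchanged --- forces $D(\widehat{G})$ into two disjoint open intervals, contradicting $\theta$-stability of $D$. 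Hence $\delta = 0$. The inverse section of Claim 2 is then produced by solving, on each subdivided edge, the $2\times 2$ system $\widehat{\ell}(e_1) + \widehat{\ell}(e_2) = \ell_e$ and $I(\vec e_1)\widehat{\ell}(e_1) + I(\vec e_2)\widehat{\ell}(e_2) = \alpha(v) - \alpha(u)$, whose determinant $I(\vec e_2) - I(\vec e_1) = 1$ is nonzero, with the potentials $\alpha(v)$ pinned down globally by propagation along unsubdivided edges together with the cycle-consistency.

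Finally, to see that the cones $\{\sigma_{\widehat{\Gamma},I}\}$ and their faces assemble into a fan with support $\sigma_\Gamma$, I would argue entirely through $\theta$-stable representatives. For the covering, a point $p$ in the interior of $\sigma_\Gamma$ endows $\Gamma$ with a metric $\ell(p)$, and the tropical divisor $\mdeg(\mathcal{L})$ on this metric graph admits a $\theta$-stable representative because $\theta$ is nondegenerate; the associated data $(\widehat{\Gamma}, D, I)$ place $p$ in the corresponding cone $\sigma_{\widehat{\Gamma},I}$. For the fan property, uniqueness of the $\theta$-stable representative shows that the interiors of distinct cones are disjoint: an interior point of both $\sigma_{\widehat{\Gamma},I}$ and $\sigma_{\widehat{\Gamma}',I'}$ would carry two $\theta$-stable representatives of the same class, forcing $(\widehat{\Gamma}, D, I) = (\widehat{\Gamma}', D', I')$. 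Compatibility along faces comes from the $\theta$-semistable configurations appearing on the walls where an edge length degenerates.

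I expect the main obstacle to be precisely this existence-and-uniqueness of $\theta$-stable representatives of tropical divisor classes: it is the combinatorial shadow of the properness and separatedness of $\mathcal{P}^\theta_\pi$, and it is what simultaneously powers the injectivity step and the tiling step. I would establish it via the twisting estimate indicated in the introduction --- any nontrivial twist alters $\deg(\mathcal{L}|_{\underline{C}})$ on some subcurve by at least $\mathrm{val}(\underline{C})$, so at most one twist lands the degree inside every stability window --- combined with a finiteness argument guaranteeing that some admissible twist does.
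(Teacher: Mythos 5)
Your proposal takes a genuinely different route from the paper, whose proof is essentially citational: the paper translates notation into that of Abreu--Pacini and then invokes \cite[Proposition 3.7]{AbreuPacini} for the isomorphism, \cite[Theorem 3.9]{AbreuPacini} for the fan structure, and identifies the content of completeness as \cite[Theorem 5.6]{AbreuPaciniUniversal} (every degree $d$ tropical divisor is linearly equivalent to a unique $\theta$-stable divisor on a unique quasi-stable model). Your first half is a correct, self-contained replacement for the first citation, and it is sound as written: $\mathrm{div}(I)(w)=-1$ at exceptional vertices (since $\theta$-stable multidegrees are admissible), the identification of $\ker(\mathsf{pr})$ with cochains $(t_e)$ supported on subdivided edges, the computation $\langle \gamma, I\rangle_\delta = -\sum_e t_e\gamma(\vec e)$, the conclusion that the cycle conditions force $(t_e)$ to be a coboundary $dh$ with $h$ constant across unsubdivided edges, and the two-window contradiction (applying stability to $\widehat G$ and to $\widehat G$ together with the $m$ exceptional vertices of the crossing edges, which leaves $\widehat\theta$ and the valence unchanged while shifting the degree by $m$) all check out. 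This is a genuine proof where the paper only cites, and it makes visible exactly where nondegeneracy of $\theta$ enters.

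The second half, however, has a real gap. Your tiling argument rests entirely on existence and uniqueness of $\theta$-stable representatives of tropical divisor classes, which you rightly flag as ``the main obstacle'' but then only sketch. This is precisely the statement the paper outsources to \cite[Theorem 5.6]{AbreuPaciniUniversal}, and it is not a one-line consequence of the twisting estimate: uniqueness must compare stable pairs $(\widehat\Gamma, D)$ and $(\widehat\Gamma', D')$ living on \emph{different} quasi-stable models of the same metric graph (the relating PL function lives on a common refinement, where the window bookkeeping is more delicate), and the existence half --- your ``finiteness argument guaranteeing that some admissible twist does'' --- is not supplied at all. Moreover, even granting existence and uniqueness, ``covering plus disjoint interiors'' does not yield a fan: tilings of a cone by subcones need not be face-to-face, so one must separately verify that any two cones of the collection meet along a common face. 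The paper handles this via the explicit face relations (namely $\sigma_{\widehat\Gamma', I'}$ is a face of $\sigma_{\widehat\Gamma, I}$ exactly when $\widehat\Gamma'$ is obtained from $\widehat\Gamma$ by contracting edges and $I'$ is the restriction of $I$), following \cite[Theorem 3.9]{AbreuPacini}; your single sentence about ``$\theta$-semistable configurations appearing on the walls'' does not establish it, and the boundary behaviour is genuinely subtle because the stability definition relaxes strictness precisely for subcurves made of exceptional components, which is what allows a configuration to degenerate consistently onto a coarser quasi-stable model. To make your route complete, you would need to prove the tropical Jacobi inversion statement (existence--uniqueness) and the face relations, or else cite them as the paper does.
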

\begin{proof}
The claim is purely combinatorial and is proven by Abreu and Pacini \edits{\cite{AbreuPaciniUniversal,AbreuPacini}. The second paper \cite{AbreuPacini} works in a 
slightly more restricted setting (over the moduli stack of stable 1-marked curves with a particular stability condition), though the arguments go through essentially unchanged in our more general setting.
The heart of the proof is in the first paper \cite{AbreuPaciniUniversal}, which works with stability conditions \emph{more} general than ours.}

\edits{We explain how to translate the notation of \cite{AbreuPacini} to ours.}
The cones $C_{\Gamma, \ca E, \phi}$ of \cite[Definition 3.4]{AbreuPacini} correspond to our cones $\tau_{\widehat{\Gamma},I}$, and the cones $K_{\Gamma, \ca E, \phi}$ are our $\sigma_{\widehat{\Gamma},I}$. 
By \cite[Proposition 3.7]{AbreuPacini}, the map 
$${\mathsf{pr}} : \tau_{\widehat{\Gamma},I} \to  \sigma_{\widehat{\Gamma},I}$$ 
is an isomorphism. That these cones fit together to \edits{a fan which surjects onto $\sigma_\Gamma$} is proven in \cite[Theorem 3.9]{AbreuPacini}.
The content of the proof of \edits{surjectivity} is \cite[Theorem 5.6]{AbreuPaciniUniversal}: every degree
$d$ tropical divisor is linearly equivalent to a unique $\theta$-stable divisor on a unique quasi-stable subdivision. 
\end{proof}

To show that the construction of Lemma \ref{Dcomment-I-should-try-to-prove-these-claims}
defines a global subdivision $\widetilde{\Sigma}^{\theta}$ of $\Sigma_S$, we need furthermore to check that the cones $\sigma_{\widehat{\Gamma},I}$ descend to $\Sigma_S$: the cones must fit together along all identifications of faces and automorphisms of the various cones $\sigma_\Gamma$ which define $\Sigma_S$. The proof of \cite[Theorem 3.9]{AbreuPacini} again goes through unchanged in our setting to show that a cone $\sigma_{\widehat{\Gamma}',I'}$ is a face of $\sigma_{\widehat{\Gamma},I}$ whenever 
\begin{itemize}
    \item $\sigma_{\Gamma'}$ is a face of $\sigma_\Gamma$, 
    \item $\widehat{\Gamma}'$ is obtained from $\widehat{\Gamma}$ by contracting some edges, 
    \item $I'(e) = I(e)$ for all edges $e \in E(\widehat{\Gamma}') \subset E(\widehat{\Gamma})$.
\end{itemize}
Thus, the cones $\sigma_{\widehat{\Gamma},I}$ form a polyhedral complex and define a global subdivision of $\Sigma_S$.  For the gluing, it is necessary that the data $\widehat{\Gamma},I$ respects the monodromy and identifications of faces of $\sigma_\Gamma$. But this is precisely implied by condition (ii) in the
definition of a stability condition. 

\begin{remark}\label{rem:cone_intuition}
The intuition behind the construction of the cones $\sigma_{\widehat{\Gamma},I}$ is as follows.
Given $I$ on $\widehat{\Gamma}$ and $p$ in the interior
of $\sigma_\Gamma$, it is in general impossible to both
\begin{enumerate}
    \item[(i)] assign lengths to the edges of $\widehat{\Gamma}$ summing to the lengths on edges of $\Gamma$ associated to $\ell(p)$,
    \item[(ii)]find a strict piecewise linear function on $\widehat{\Gamma}$ whose slopes are given by $I$. 
\end{enumerate}
 Indeed, by Lemma \ref{lem:flow_as_slopes}, the existence of such a lifting imposes several conditions on the lengths of the edges of $\widehat{\Gamma}$. The cone $\sigma_{\widehat{\Gamma}}$ is the universal cone over which the pullback of $\Gamma$ admits a quasi-stable model of combinatorial type $\widehat{\Gamma}$. 
The subcones $\tau_{\widehat{\Gamma},I}$ are cut out inside $\sigma_{\widehat{\Gamma}}$ precisely by the linear conditions necessary to lift $I$ into a strict piecewise linear function. Thus, the content of Lemma \ref{Dcomment-I-should-try-to-prove-these-claims} is that the locus of points $p \in \sigma$ which support a quasi-stable model $\widehat{\Gamma}$ of $\Gamma$ with a sPL function with slopes $I$ is a cone $\sigma_{\widehat{\Gamma},I}$, isomorphic to $\tau_{\widehat{\Gamma},I}$.

Furthermore, the cones for various choices $\widehat{\Gamma},I$ that come from the stability condition $\theta$ subdivide $\sigma_\Gamma$: each $p$ in the interior of  $\sigma_{\Gamma}$ belongs to the interior of precisely one such cone. Points $p$ in the boundary of $\sigma_{\Gamma}$ similarly belong to the interior of cones $\sigma_{\widehat{\Gamma}',I'}$, where $\Gamma'$ is the contraction of $\Gamma$ lying over $p$.  
Thus, the property of supporting a quasi-stable model with strict piecewise linear function with slopes $I$ is cut out by inequalities on the lengths of the edges of $\Gamma$. 
\end{remark}

We will show below in Lemma \ref{lem:isom_of_stacks} that 
the log modification
$$\tilde{\rho}:\widetilde S^\theta_\ca L\to S$$
corresponding to the subdivision $\widetilde{\Sigma}^\theta$ of $\Sigma_S$
is isomorphic over $S$ to $S^\theta_\ca L$.

Let $f \colon T \to S$ be any logarithmic scheme over $S$, and let $t \in T$ be
a geometric point mapping to $s \in S$. The log curve $C \to S$ pulls back to a log curve $C_T \to T$. The induced tropical curve around $t$ has underlying graph $\Gamma= \Gamma_s$ metrized by the lengths $f^\sharp(\ell(e)) \in \ghost_{T,t}$, where $$f^\sharp:\ghost_{S,s} \to \ghost_{T,t}$$ is the induced homomorphism. Equivalently, the tropical curve is determined by the composition 
\[
\begin{tikzcd}
\tau \ar[r] & \sigma_\Gamma \ar[r,"\ell"] & \mathbb{R}_{\ge 0}^{E(\Gamma)}\, ,
\end{tikzcd}
\]
where $\tau$ is the cone dual to $\ghost_{T,t}$. As in Section \ref{subsec:subsoflogcurves}, to give a quasi-stable model $\widehat{C} \to C_T$, 
we must specify, for every geometric point $t \in T$, the structure of a quasi-stable model of $\Gamma$ metrized by $\ghost_{T,t}$, compatibly with \'etale specializations. In other words, to give a quasi-stable model with specified dual graph $\widehat{\Gamma}$, we must  lift the homomorphism $\tau \to \mathbb{R}_{\ge 0}^{E(\Gamma)}$ to a homomorphism
$$
\tau \to \mathbb{R}_{\ge 0}^{E(\widehat{\Gamma})}
$$
for the various cones $\tau$ dual to the $\ghost_{T,t}$, compatibly with \'etale specializations. By construction, the cone $\sigma_{\widehat{\Gamma}}$ is the universal cone over which the morphism $\ell$ lifts. Thus, $\widehat{\Gamma}$ inherits the structure of a tropical curve over $\sigma_{\widehat{\Gamma}}$, and also over its subcones $\tau_{\widehat{\Gamma},I}$. Since the $\sigma_{\widehat{\Gamma},I}$ are isomorphic to $\tau_{\widehat{\Gamma},I}$, we see that $\widehat{\Gamma}$ admits the structure of a tropical curve over $\sigma_{\widehat \Gamma,I}$. For any face $\sigma_{\Gamma'} < \sigma_{\Gamma}$, the tropical curve $\Gamma'$ over $\sigma_{\Gamma'}$ induced by $C \to S$ is obtained from $\Gamma$ by contracting some edges. If over $\sigma_{\Gamma'}$ an edge $e$ of $\Gamma$ is contracted, both corresponding edges of $\widehat{\Gamma}$ are contracted as well, since their lengths must add up to that of $e$. Thus $\widehat{\Gamma}$ naturally contracts to a quasi-stable model of $\Gamma'$ over $\sigma_{\Gamma'}$. Therefore, the various $\widehat{\Gamma}$ glue and, as described in Section \ref{subsec:subsoflogcurves}, determine a quasi-stable subdivision 
$$\widehat{C} \to C \times_S \widetilde{S}^\theta_{\ca L}\, .$$
Just as the cone $\sigma_{\widehat{\Gamma}}$ tautologically carries a quasi-stable subdivision of $\Gamma \times_{\sigma_\Gamma} \sigma_{\widehat{\Gamma}}$, the subcone $\tau_{\widehat{\Gamma},I}$ is the universal cone on which the flow $I$ can be realized as the slopes of a strict piecewise linear function unique up to pullback from the base, \edits{because it is defined as the set of points in $\sigma_{\widehat \Gamma}$ where the pairing \eqref{gg79} vanishes, and Lemma \ref{lem:flow_as_slopes} tells us that this is exactly the condition for the sPL function to exist}.  Hence  the curve $\widehat C$ carries a unique sPL function
\begin{equation}\label{eq:universal_alpha}
    \alpha \colon V(\widehat\Gamma) \to \ghost_{\sigma_{\widehat{\Gamma},I}}^\gp
\end{equation}
\edits{which vanishes along $x_1$ (see Definition \ref{def:S_theta}). }

 For each vertex $v \in \widehat{\Gamma}$, the sPL function $\alpha$ assigns an element of $\ghost_{\sigma_{\widehat{\Gamma},I}}^\gp$, or equivalently, a homomorphism 
 $$\alpha(v)\colon \sigma_{\widehat{\Gamma},I} \to \mathbb{R}\, . $$
 By construction, the function $\alpha$ has the following explicit description. Denote by $v_0$ the vertex in $\widehat{\Gamma}$ corresponding to the component that contains the marking $x_1$. Then, for any other $v \in V(\widehat{\Gamma})$, we have 
$$
\alpha(v) = \sum_{\vec{e} \in \gamma_{v_0 \to v}} I(\vec{e})\cdot \widehat{\ell}_e\, ,
$$
where $\gamma_{v_0 \to v}$ is \emph{any} oriented path from $v_0 \to v$, $I(\vec{e})$ is the integer slope of the flow $I$ along $\vec{e}$, and $$\widehat{\ell}_e: \sigma_{\widehat{\Gamma},I} \to \mathbb{R}$$ is the homomorphism determined by the length of the edge $e$. We thus obtain equation \eqref{xsse3} of Section \ref{Sect:logDRformulaintro}.

The definition of $\alpha(v)$ does not depend on the choice of a path between $v_0$ and $v$,
If $\gamma$ and $\gamma'$ are \edits{paths}
from $v_0 \to v$,  $\gamma-\gamma'$ is an oriented cycle, and
$$
\langle \gamma-\gamma', I \rangle_{\widehat \ell} = 0
$$
on $\sigma_{\widehat{\Gamma},I}$ by definition.\footnote{A similar argument is used in the proof of Lemma \ref{lem:flow_as_slopes}.} 
On the other hand, $\alpha$ does depend on the choice of $v_0$. 


We have defined a combinatorial sPL function on the tropical curve $\widehat{\Gamma}$. It is easy to see that these sPL functions over the various cones $\sigma_{\widehat{\Gamma},I}$ are compatible and so, as explained in Section \ref{sec:algebraizing_tropical_AJ}, they give rise to a global sPL function $\alpha$ on $\widehat{C}$.

The role of $\alpha$ is to twist the line bundle $\ca L$ into a $\theta$-stable bundle. By applying \eqref{eq: multideg} and part (iii) of our defining conditions for the tuples $(\widehat{\Gamma}, D, I)$,
we obtain
$$
\mdeg{\ca L(\alpha)} = \mdeg \ca L - \mathsf{div} (\alpha) = \mdeg \ca L - \mathsf{div} I = D\, .
$$
Therefore, the curve $\widehat C$ and the sPL function $\alpha$ together induce a map of stacks $$\phi\colon \widetilde S^\theta_\ca L \to S^\theta_\ca L\, $$
over $S$.


\begin{lemma}\label{lem:isom_of_stacks}
The map of stacks $\phi\colon \widetilde S^\theta_\ca L \to S^\theta_\ca L$ is an isomorphism. 
\end{lemma}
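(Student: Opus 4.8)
The plan is to construct an explicit inverse to $\phi$ by reversing the construction of the preceding pages, and to check that the two composites are the identity. The key observation is that both $\widetilde{S}^\theta_\ca L$ and $S^\theta_\ca L$ are stacks over $\textbf{LogSch}/S$, so by the Yoneda philosophy it suffices to produce a natural bijection on $T$-points for every log scheme $T \to S$ and verify it is compatible with the log modification structure. On the one hand, a $T$-point of $\widetilde{S}^\theta_\ca L$ is, by Theorem \ref{thm:is_subdivis} and Lemma \ref{lem:subfunctorofpoints}(ii), a lift of the tropical map $\Sigma_T \to \Sigma_S$ through the subdivision $\widetilde{\Sigma}^\theta$; \'etale-locally on $T$ this means a factorization through some interior cone $\sigma_{\widehat{\Gamma},I}$, i.e.\ the data of a triple $(\widehat{\Gamma}, D, I)$ together with a point of $\tau_{\widehat{\Gamma},I}$. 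On the other hand, a $T$-point of $S^\theta_\ca L$ is a tuple $(T/S, \widehat{C} \to C_T, \alpha)$ with $\ca L_T(\alpha)$ being $\theta$-stable, as in Definition \ref{def:S_theta}.

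First I would verify that $\phi$ is well-defined on $T$-points by exactly the computation already recorded above: the universal quasi-stable model over $\sigma_{\widehat{\Gamma},I}$ together with the PL function $\alpha$ of \eqref{eq:universal_alpha} satisfies $\mdeg \ca L(\alpha) = D$, which is $\theta$-stable by condition (ii) in the indexing of the cones. This sends a point of $\tau_{\widehat{\Gamma},I} \cong \sigma_{\widehat{\Gamma},I}$ to a genuine object of $S^\theta_\ca L$. To build the inverse, I would start from an arbitrary object $(T/S, \widehat{C} \to C_T, \alpha)$ of $S^\theta_\ca L$. The quasi-stable model $\widehat{C}$ determines, over each geometric point $t \in T$, a quasi-stable graph $\widehat{\Gamma}$ with stabilization $\Gamma_s$; the sPL function $\alpha$ has a well-defined flow of slopes $I := \mathrm{div}$-data, which is \emph{acyclic} because $\alpha$ takes values in a sharp monoid (using the remark after Definition \ref{def:comb_PL}); and the $\theta$-stability of $\ca L_T(\alpha)$ forces $D := \mdeg \ca L(\alpha)$ to be the unique $\theta$-stable multidegree with $\mathrm{div}(I) = \mdeg(\ca L) - D$. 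This recovers the triple $(\widehat{\Gamma}, D, I)$; moreover, Lemma \ref{lem:flow_as_slopes} shows that the edge lengths coming from $\widehat{C}/T$ satisfy the pairing conditions defining $\tau_{\widehat{\Gamma},I}$, so the tropical map factors through $\sigma_{\widehat{\Gamma},I}$, yielding a $T$-point of $\widetilde{S}^\theta_\ca L$.

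It remains to check that these two assignments are mutually inverse and natural in $T$. In one direction, starting from a point of $\tau_{\widehat{\Gamma},I}$, applying $\phi$, and then reading off $(\widehat{\Gamma}, D, I)$ recovers the same triple because the PL function $\alpha$ of \eqref{eq:universal_alpha} is built to have slopes exactly $I$ and $\mdeg \ca L(\alpha) = D$; the key point is \emph{uniqueness} of $\alpha$ up to pullback from the base, together with the normalization of vanishing along $x_1$, which pins $\alpha$ down completely. In the other direction, the content of Lemma \ref{Dcomment-I-should-try-to-prove-these-claims} (via \cite[Theorem 5.6]{AbreuPaciniUniversal}) guarantees that every degree-$d$ tropical divisor is linearly equivalent to a \emph{unique} $\theta$-stable divisor on a \emph{unique} quasi-stable subdivision, so the triple $(\widehat{\Gamma}, D, I)$ extracted from an object of $S^\theta_\ca L$ is unambiguous and reconstructs the original quasi-stable model and PL function. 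I expect the main obstacle to be the bookkeeping of naturality and descent: one must confirm that these fiberwise identifications glue along \'etale specializations and respect the monodromy and face identifications that assemble $\Sigma_S$ — but this is precisely guaranteed by the compatibility built into the stability condition (condition (ii) of Definition \ref{def:stabilitycondition}) and by the cofinality and gluing discussion of Section \ref{subsec:subsoflogcurves}, so the argument reduces to a careful but routine check rather than a new idea.
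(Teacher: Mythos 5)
Your proposal is correct and follows essentially the same route as the paper's proof: both reduce to a strict-\'etale-local comparison of functors of points, identify $T$-points of $\widetilde S^\theta_{\ca L}$ via Lemma \ref{lem:subfunctorofpoints} with factorizations through the cones $\tau_{\widehat{\Gamma},I}$, and pass between objects of $S^\theta_{\ca L}$ and such data by exchanging a PL function with its flow of slopes (Lemma \ref{lem:flow_as_slopes}), with $\theta$-stability pinning down the multidegree $D$. The only difference is presentational: you make explicit the mutual-inverse and descent checks (uniqueness of $\alpha$ normalized at $x_1$, uniqueness of the $\theta$-stable representative via Abreu--Pacini) that the paper's proof leaves implicit.
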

\begin{proof}
We may check the isomorphism strict-\'etale locally. Let $C/T$ be a nuclear log curve (in the sense of \cite[Definition 3.39]{HMOP}; this just means that $T$ is `small enough', and is automatic if for example $T$ is strictly henselian local) with a line bundle $\ca L$ of degree $d$. We must show the following two conditions are equivalent: 
\begin{enumerate}
    \item[(i)] There exists a quasi-stable subdivision $\widehat C \to C$ and a sPL function $\alpha$ on $\widehat C/T$ such that $\ca L(\alpha)$ is $\theta$-stable. 
    \item[(ii)] There exists a quasi-stable subdivision $\widehat \Gamma \to \Gamma$ and a flow $I$ on $\widehat{\Gamma}$ such that the condition $\Span{\gamma, I}_\ell = 0\in \ghost_T^\gp$ holds for every $\gamma \in H_1(\widehat\Gamma)$. 
\end{enumerate}
The condition (i) characterizes the functor of points of $S^\theta_\ca L$, and
condition (ii) characterizes the functor of points of $\widetilde{S}^\theta_\ca L$
by Lemma \ref{lem:subfunctorofpoints}.
The correspondence between subdivisions of the graph and of the curve is clear. If we start with a sPL function $\alpha$, we define the flow $I$ to be the associated slopes. Conversely given a flow $I$, we know $I$ can be realised as the slopes of a sPL function by Lemma \ref{lem:flow_as_slopes}. 
\end{proof}

\begin{remark}
During the course of the proof of Lemma \ref{Dcomment-I-should-try-to-prove-these-claims}, we have noted that while the subdivision of \cite{AbreuPacini} is not constructed in the level of generality that we
require 
(since the results of \cite{AbreuPacini} are for $S = \oM_{g,n}$ and a stability condition pulled back from $\oM_{g,1}$), a slight modification of their argument suffices to construct $\widetilde{S}_{\ca{L}}^\theta$ in general. In fact, more can be said.
From \cite{AbreuPaciniUniversal},  for their specific choice of stability condition, there is a cone stack $\Sigma_{\mathcal{P}_{g,n}^\theta}$ subdividing the universal tropical Jacobian $\mathrm{Tro}\ca{P}\mathrm{ic}_{g,n}$ (see for example \cite{Molcho2018The-logarithmic}), and a \emph{tropical Abel-Jacobi section} $$\Sigma_{\oM_{g,n}} \to \mathrm{Tro}\ca{P}\mathrm{ic}_{g,n}\, .$$
The fan $\widetilde{\Sigma}_{\Gamma}^\theta$ of \cite{AbreuPacini} (and of Lemma \ref{Dcomment-I-should-try-to-prove-these-claims} for 
their particular $\theta$) is nothing but the pullback of the subdivision 
\begin{equation}\label{lss4f}
\Sigma_{\mathcal{P}_{g,n}}^\theta \to \mathrm{Tro}\ca{P}\mathrm{ic}_{g,n}
\end{equation}
to the various cones $\sigma$ of $\Sigma_{\oM_{g,n}}$. 
In the proof of \cite{AbreuPaciniUniversal} that
\eqref{lss4f} 
is a subdivision, nothing about the specific $\theta$ is used, except nondegeneracy. Their proof essentially produces a subdivision \eqref{lss4f} for any stability condition, as discussed in \cite[Theorem 5.5]{MMUV}. By pulling-back the Abel-Jacobi section, we obtain the same subdivision $\widetilde{\Sigma}_{\oM_{g,n}}^\theta$
that we construct in Lemma \ref{Dcomment-I-should-try-to-prove-these-claims}
for the case $S = \oM_{g,n}$.

The choice of \cite{AbreuPacini} to work with stability conditions coming from 1-pointed curves is not about the subdivision $\oM_{g,A}^\theta$, but rather about the 
proof that the Abel-Jacobi section extends to a map $$\mathsf{aj}:\oM_{g,A}^\theta \to \ca{P}_{g,n}^\theta\, $$
The claim is proven 
in \cite{AbreuPacini} by writing a map in formal local coordinates and checking 
explicitly the required extension to
a well-defined global map. The argument uses an explicit description of formal local charts of $\ca{P}_{g,1}^\theta$ for their particular choice of stability condition. Our main observation here
is that in the category of log schemes, regardless of stability condition, the functor of points of $\widetilde{S}_{\mathcal{L}}^\theta$ is easy to describe and is represented by $S_{\ca L}^\theta$. The latter has an evident Abel-Jacobi section. So the formal local analysis is {\em not} needed to extend the Abel-Jacobi section. 

A more geometrical argument can also be given. The line bundle $\ca{L}$ always determines an Abel-Jacobi section $S$ to the logarithmic Jacobian $\mathrm{Log}\ca{P}\mathrm{ic}$ and a natural (logarithmic) fiber product diagram   
\begin{equation*}
\begin{tikzcd}
S^\theta_\ca L \arrow[r] \arrow[d] & S \arrow[d, "\ca L"]\\
\ca P^\theta \arrow[r] & \mathrm{Log}\ca{P}\mathrm{ic}, 
\end{tikzcd}
\end{equation*}
where the bottom map is a subdivision. Granting the theory of the logarithmic Jacobian, the above
diagram shows that $S^\theta_{\ca L}$ is a subdivision and that the Abel-Jacobi section extends automatically to $S^\theta_{\ca{L}} \to \ca{P}^\theta$. The fiber product description readily leads to the definition of the functor of points of $S_{\ca L}^\theta$. This perspective will be explained carefully in the sequel to \cite{MMUV}. 
\end{remark}

\subsection{A graph of genus 2}
\label{eg:sam_big_example}

A simple example in genus 2 is rich enough to contain most of the phenomena discussed. Let $C_0$ denote the {\em dollar sign} curve, 
the union of two rational curves $C_v$ and $C_w$
joined at $3$ nodes.
The dual graph $\Gamma$ of $C_0$ has two vertices $v$ and $w$ and three edges $e_1,e_2,e_3$:

\[
\begin{tikzpicture}
\centering

\draw[thick, domain=-1.5:1.5] plot({-3},{\x});
\draw[thick, domain=-240:240] plot ({-3-0.5*sin \x}, {\x/180});
\node[below] at (-3,-1.5){$C_v$};
\node[right] at (-2.5,1.3){$C_w$};
\node[draw, red, circle, inner sep=1pt, fill] at (-3,-0.5){};
\node[left] at (-3,-0.5){$x_1$};
\node[draw, red, circle, inner sep=1pt,fill] at (-3.5,0.5){};
\node[left] at (-3.5,0.5){$x_2$};
\draw[->,line join=round,decorate, decoration={zigzag,amplitude=1.5,post length=2pt}] (-2,0)--(0,0);

\coordinate(w1) at (3,0){};
\node[right] at (w1){$w$};
\node[above] at (2,1){$e_1$};
\node[above] at (2,0){$e_2$}; 
\node[above] at (2,-1){$e_3$};
\node[draw,circle,inner sep=1pt,fill] at (w1){};
\coordinate(w2) at (1,0){};
\node[left] at (w2){$v$};
\node[draw,circle,inner sep=1pt,fill] at (w2){};
 \draw [thick,domain=0:360] plot ({cos(\x)+2}, {sin(\x)});
\draw[-,thick](w1)--(w2);

\end{tikzpicture}
\]
We place two markings on $C_0$. The first marking is on the component corresponding to $v$, and the second is on the component corresponding to $w$. 
 
Let $(S,s)$ be a nonsingular 3-dimensional base of a versal deformation of $C_0$,
 $$\pi: C \to S\,, \ \ \ \pi^{-1}(s) = C_0\, ,$$
together with sections $x_1, x_2 : S \to C$ meeting the fiber $C_0$ as above.
Both $C$ and $S$ carry canonical log structures. 
 The tropicalization of $S$  is the cone 
 $$\Sigma_S = \mathbb{R}_{\ge 0}^3\, ,$$
 and the tropicalization of $C/S$ is the fibration $\Sigma_C \to \Sigma_S$, which, over a point $(\delta_1,\delta_2,\delta_3)\in \Sigma_S$, assigns the graph $\Gamma$ with edge $e_i$ having length $\delta_i$ (with the understanding that when $\delta_i = 0$, the edge $e_i$ is  contracted in the fiber).

 For every nondegenerate degree 0 stability condition $\theta$, we find that $\theta$-stable divisors $D$ must satisfy the inequalities 
$$ \theta(v)
-\frac{3}{2}  < D(v) < \theta(v) + \frac{3}{2}\, , \ \ \ \ \
\theta(w)
-\frac{3}{2}  < D(v) < \theta(w) + \frac{3}{2}\, .
$$
Therefore, for a small stability condition (close enough to $\theta = 0$), $\theta$-stable divisors must satisfy $-1\leq D(v),D(w)\le 1$. Picking such a $\theta$, we find that the list of all admissible $\theta$-stable divisors 
(up to isomorphism of the graph) is:

$$
\begin{tikzpicture}
\coordinate(u0) at (-7,0){};
\node[right] at (u0){$\textup{Depth } 0:$};
\coordinate(u1) at (-1,0){};
\node[left] at (u1){$0$};
\node[draw,circle,inner sep=1pt,fill] at (u1){};
\coordinate(u2) at (1,0){};
\node[right] at (u2){$0$};
\node[draw,circle,inner sep=1pt,fill] at (u2){};
 \draw [thick,domain=0:360] plot ({cos(\x)}, {sin(\x)});
\draw[-,thick](u1)--(u2);

\coordinate(v1) at (-4,0){};
\node[left] at (v1){$-1$};
\node[draw,circle,inner sep=1pt,fill] at (v1){};
\coordinate(v2) at (-2,0){};
\node[right] at (v2){$1$};
\node[draw,circle,inner sep=1pt,fill] at (v2){};
 \draw [thick,domain=0:360] plot ({cos(\x)-3}, {sin(\x)});
\draw[-,thick](v1)--(v2);

\coordinate(w1) at (4,0){};
\node[right] at (w1){$-1$};
\node[draw,circle,inner sep=1pt,fill] at (w1){};
\coordinate(w2) at (2,0){};
\node[left] at (w2){$1$};
\node[draw,circle,inner sep=1pt,fill] at (w2){};
 \draw [thick,domain=0:360] plot ({cos(\x)+3}, {sin(\x)});
\draw[-,thick](w1)--(w2);

\coordinate(x0) at (-7,-3){};
\node[right] at (x0){$\textup{Depth } 1:$};
\coordinate(x1) at (-2,-3){};
\node[left] at (x1){$-1$};
\node[draw,circle,inner sep=1pt,fill] at (x1){};
\coordinate(x2) at (0,-3){};
\node[right] at (x2){$0$};
\node[draw,circle,inner sep=1pt,fill] at (x2){};
 \draw [thick,domain=0:360] plot ({cos(\x)-1}, {sin(\x)-3});
\draw[-,thick](x1)--(x2);
\coordinate(x3) at (-1,-2){};
\node[above] at (x3){$1$};
\node[draw,circle,inner sep=1pt,fill] at (x3){};

\coordinate(y1) at (1,-3){};
\node[left] at (y1){$0$};
\node[draw,circle,inner sep=1pt,fill] at (y1){};
\coordinate(y2) at (3,-3){};
\node[right] at (y2){$-1$};
\node[draw,circle,inner sep=1pt,fill] at (y2){};
 \draw [thick,domain=0:360] plot ({cos(\x)+2}, {sin(\x)-3});
\draw[-,thick](y1)--(y2);
\coordinate(y3) at (2,-2){};
\node[above] at (y3){$1$};
\node[draw,circle,inner sep=1pt,fill] at (y3){};

\coordinate(w0) at (-7,-6){};
\node[right] at (w0){$\textup{Depth } 2:$};
\coordinate(w1) at (-1,-6){};
\node[left] at (w1){$-1$};
\node[draw,circle,inner sep=1pt,fill] at (w1){};
\coordinate(w2) at (1,-6){};
\node[right] at (w2){$-1$};
\node[draw,circle,inner sep=1pt,fill] at (w2){};
 \draw [thick,domain=0:360] plot ({cos(\x)}, {sin(\x)-6});
\draw[-,thick](w1)--(w2);
\coordinate(w3) at (0,-6){};
\node[above] at (w3){$1$};
\node[draw,circle,inner sep=1pt,fill] at (w3){};
\coordinate(w4) at (0,-5){};
\node[above] at (w4){$1$};
\node[draw,circle,inner sep=1pt,fill] at (w4){};

\end{tikzpicture}
$$
Each graph of depth 1 and 2 stands for three graphs (after including
those related by symmetry).
There are $12$ divisors in total on the list\edits{.}

Consider the vector $A=(-3,3)$ of Abel-Jacobi data for the
double ramification cycle $\mathsf{DR}_{2,A}$. 
The associated tropical divisor $\mdeg_{k=0,A}$ is given by $\mdeg_{0,A}=3v - 3w$, where in the following we denote divisors on graphs as $\mathbb{Z}$-linear combinations of their vertices.
To determine the subdivision $$\widetilde{\Sigma}^{\theta}_{S} \to \Sigma_S\, ,$$
we must solve the tropical problem 
$$
 \mathsf{div} (f)=
\mdeg_{0,A} - D\, , 
$$  
where $D$ is an admissible $\theta$-semistable divisor on the above list, 
and $f$ is a sPL function on $\Gamma$ (or on $\widehat{\Gamma}$ in the depth 1 and 2 cases).

\vspace{8pt}
\noindent $\bullet$ $\mathsf{Depth}$ 0 cases:
\vspace{8pt}

For the divisor $D=0v + 0w $, there is a unique ray of solutions in $\Sigma_S$. To build a strict piecewise linear function on $\Gamma$, we must choose $3$ slopes $s_1,s_2,s_3 \in \mathbb{Z}$ on the edges of $\Gamma$. If we orient all three edges from $w$ to $v$
(with the convention that $s_i$ is positive if the function increases from $w$ to $v$), then, in order to have a sPL function on $\Gamma$, the condition 
$$
s_1\delta_1 = s_2\delta_2 = s_3\delta_3
$$
on the lengths of the edges of $\Gamma$ must be satisfied. In particular, the $s_i$ must either all be $0$, all positive, or all negative. If, in addition, we demand $$\mathsf{div}(f) = (s_1+s_2+s_3)v +(-s_1-s_2-s_3)w = \mdeg_{0,A} - D = 3v- 3w\, ,$$ we find a unique possible solution 
$$
s_1=s_2=s_3=1
$$
along the ray
$
\delta_1 = \delta_2 = \delta_3\,,
$
and no other solutions.

The divisor $D=-v+w$ can be analyzed similarly. 
The equation 
$$\mathsf{div}(f)  =\mdeg_{0,A} - D = 4v-4w$$ yields three possible solutions $$(s_1,s_2,s_3) = (2,1,1)\,, \ (1,2,1)\, ,\  (1,1,2)$$ 
along the the three rays
\begin{eqnarray*}
\begin{alignedat}{2}
2\delta_1 &= \delta_2 &&= \delta_3\,, \\
\delta_1  &=2\delta_2 &&= \delta_3\,, \\
\delta_1  &= \delta_2 &&= 2\delta_3\, .
\end{alignedat}
\end{eqnarray*}

For the divisor $D=v-w$, there are
no solutions at all to 
$$\mathsf{div}(f)  =\mdeg_{0,A} - D = 2v-2w\, .$$


\vspace{8pt}
\noindent $\bullet$ $\mathsf{Depth}$ 1 cases:
\vspace{8pt}

The graph $\Gamma$ is replaced here with a corresponding
quasi-stable model $\widehat{\Gamma}$.
Instead of studying the sPL function (as we did in the depth 0 cases),
we simply specify the possible underlying slopes of such functions corresponding to acyclic flows $I$ with associated divisor $\mdeg_{0,A}-D$. 

The first depth 1 divisor $D$ on the list contributes three possible acyclic flows (with all edges oriented from right to left and slopes of $I$ in red): 
$$
\begin{tikzpicture}
\coordinate(x1) at (-6,0){};
\node[left] at (x1){$4 = 3-(-1)$};
\node[draw,circle,inner sep=1pt,fill] at (x1){};
\coordinate(x2) at (-4,0){};
\node[right] at (x2){$-3 = -3-0$};
\node[draw,circle,inner sep=1pt,fill] at (x2){};
 \draw [thick,domain=0:360] plot ({cos(\x)-5}, {sin(\x)});
\draw[-,thick](x1)--(x2);
\coordinate(x3) at (-5,1){};
\node[above] at (x3){$-1=0-1$};
\node[draw,circle,inner sep=1pt,fill] at (x3){};
\coordinate(x4) at (-5.8,0.6){};
\node[above,red] at (x4){$2$};
\coordinate(x5) at (-4.2,0.6){};
\node[above,red] at (x5){$1$};
\coordinate(x6) at (-5,0);
\node[above,red] at (x6){$1$};
\coordinate(x7) at (-5,-1);
\node[above,red] at (x7){$1$};
\coordinate(y1) at (-0.5,0){};
\node[left] at (y1){$4$};
\node[draw,circle,inner sep=1pt,fill] at (y1){};
\coordinate(y2) at (1.5,0){};
\node[right] at (y2){$-3$};
\node[draw,circle,inner sep=1pt,fill] at (y2){};
 \draw [thick,domain=0:360] plot ({cos(\x)+0.5}, {sin(\x)});
\draw[-,thick](y1)--(y2);
\coordinate(y3) at (0.5,1){};
\node[above] at (y3){$-1$};
\node[draw,circle,inner sep=1pt,fill] at (y3){};
\coordinate(y4) at (-0.3,0.6){};
\node[above,red] at (y4){$1$};
\coordinate(y5) at (1.3,0.6){};
\node[above,red] at (y5){$0$};
\coordinate(y6) at (0.5,0);
\node[above,red] at (y6){$2$};
\coordinate(y7) at (0.5,-1);
\node[above,red] at (y7){$1$};

\coordinate(z1) at (3.3,0){};
\node[left] at (z1){$4$};
\node[draw,circle,inner sep=1pt,fill] at (z1){};
\coordinate(z2) at (5.3,0){};
\node[right] at (z2){$-3$};
\node[draw,circle,inner sep=1pt,fill] at (z2){};
 \draw [thick,domain=0:360] plot ({cos(\x)+4.3}, {sin(\x)});
\draw[-,thick](z1)--(z2);
\coordinate(z3) at (4.3,1){};
\node[above] at (z3){$-1$};
\node[draw,circle,inner sep=1pt,fill] at (z3){};
\coordinate(z4) at (3.5,0.6){};
\node[above,red] at (z4){$1$};
\coordinate(z5) at (5.1,0.6){};
\node[above,red] at (z5){$0$};
\coordinate(z6) at (4.3,0);
\node[above,red] at (z6){$1$};
\coordinate(z7) at (4.3,-1);
\node[above,red] at (z7){$2$};

\end{tikzpicture}
$$

The first of these flows, called  $(\{2,1\},1,1)$ for brevity,
has two specializations obtained by letting the exceptional vertex specialize to either of the original vertices $v$ or $w$: 
$$
\begin{tikzpicture}
\coordinate(u1) at (-3,0){};
\node[left] at (u1){$3$};
\node[draw,circle,inner sep=1pt,fill] at (u1){};
\coordinate(u2) at (-1,0){};
\node[right] at (u2){$-3$};
\node[draw,circle,inner sep=1pt,fill] at (u2){};
 \draw [thick,domain=0:360] plot ({cos(\x)-2}, {sin(\x)});
\draw[-,thick](u1)--(u2);
\coordinate(u3) at (-2,1){};
\node[above,red] at (u3){$1$};
\coordinate(u4) at (-2,0){};
\node[above,red] at (u4){$1$};
\coordinate(u5) at (-2,-1){};
\node[above,red] at (u5){$1$};

\coordinate(v1) at (1,0){};
\node[left] at (v1){$4$};
\node[draw,circle,inner sep=1pt,fill] at (v1){};
\coordinate(v2) at (3,0){};
\node[right] at (v2){$-4$};
\node[draw,circle,inner sep=1pt,fill] at (v2){};
 \draw [thick,domain=0:360] plot ({cos(\x)+2}, {sin(\x)});
\draw[-,thick](v1)--(v2);
\coordinate(v3) at (2,1){};
\node[above,red] at (v3){$2$};
\coordinate(v4) at (2,0){};
\node[above,red] at (v4){$1$};
\coordinate(v5) at (2,-1){};
\node[above,red] at (v5){$1$};
\end{tikzpicture}
$$
The flow $(\{2,1\},1,1)$ contributes a 2-dimensional cone of solutions to 
$$\mathsf{div}(f)= \mdeg_{0,A} - D  \, $$ 
defined by the convex hull of the solutions of the two specializations (given by  the rays $\delta_1 = \delta_2 = \delta_3$ and $2\delta_1 = \delta_2 = \delta_3$). Taken together, the depth $1$ $\theta$-stable divisors contribute  $12$ two dimensional cones: $9$ from the three possible flows on the first divisor (and
the those related by symmetry) and $3$ from the second.

\vspace{8pt}
\noindent $\bullet$ $\mathsf{Depth}$ 2 cases:
\vspace{8pt}

A similar analysis can be applied to the three depth $2$ divisors. (which contain two exceptional vertices). Each divisor $D$ contributes three $3$-dimensional cones, for a total of $9$. 

\vspace{12pt}
\noindent $\bullet$ $\mathsf{Full}$  $\mathsf{subdivision}$: 
\vspace{8pt}

Altogether, the subdivision $\widetilde{\Sigma}^{\theta}_{S}\to \Sigma_S = \mathbb{R}_{\ge 0}^3$  has $4$ new rays
(corresponding to the depth $0$ divisors), $12$ new 2-dimensional cones (corresponding to depth $1$ divisors), and $9$ maximal cones  (corresponding to the depth $2$ divisors). 
The 2-dimensional intersection of 
the subdivision  $\widetilde{\Sigma}^{\theta}_{S}$
with the hyperplane $\delta_1 + \delta_2 + \delta_3 = 1$
can be drawn as:
$$
\begin{tikzpicture}[scale=0.65]
\coordinate (v1) at (5,0,0){};
    \node[right] at (v1){$(1,0,0)$};
    \node[draw,circle,inner sep=1pt,fill] at (v1){};
    \coordinate (v2) at (0,5,0){};
    \node[above] at (v2){$(0,1,0)$};
    \node[draw,circle,inner sep = 1pt,fill] at (v2){};
    \coordinate (v3) at (0,0,5){};
    \node[left] at (v3){$(0,0,1)$};  
    \node[draw,circle,inner sep=1pt,fill] at (v3){};
\draw[-,thick](v1)--(v2)--(v3)--(v1);

\coordinate(w1) at (5/3,5/3,5/3){};
\node[right] at (w1){};
    \node[draw,circle,inner sep=1.5pt,fill=green] at (w1){};
\coordinate(w2) at (1,2,2){};
\node[above] at (w2){};
\node[draw,circle,inner sep=1.5pt,fill=green] at (w2){};
\coordinate(w3) at (2,1,2){};
\node[right] at (w3){};
\node[draw,circle,inner sep=1.5pt,fill=green] at (w3){};
\coordinate(w4) at (2,2,1){};
\node[right] at (w4){};
\node[draw,circle,inner sep=1.5pt,fill=green] at (w4){};

\draw[-,thick,red](v1)--(w3)--(w1)--(w4)--(v1)--(w1);
\draw[-,thick,red](v2)--(w2)--(w1)--(w4)--(v2)--(w1);
\draw[-,thick,red](v3)--(w3)--(w1)--(w2)--(v3)--(w1);
\fill[blue!20, nearly transparent] (w1) -- (w3) -- (v1) -- (w1) -- cycle;
\fill[blue!20, nearly transparent] (w1) -- (w2) -- (v2) -- (w1) -- cycle;
\fill[blue!20, nearly transparent] (v1) -- (v2) -- (w4) -- cycle;
\fill[purple!40,nearly transparent](w1) -- (w3) -- (v3)--cycle;
\fill[purple!40,nearly transparent](v2)--(w4)--(w1)--cycle;
\fill[purple!40,nearly transparent](v2)--(v3)--(w2)--cycle;
\fill[yellow!20,nearly transparent](v1)--(v3)--(w3)--cycle;
\fill[yellow!20,nearly transparent](w1)--(w2)--(v3)--cycle;
\fill[yellow!20,nearly transparent](w1)--(w4)--(v1)--cycle;

\end{tikzpicture}
$$
The vertices in green correspond to the depth $0$ divisors, the red lines to the depth $1$ divisors. The maximal cells which correspond to different flows realizing the same underlying divisor 
are shaded in the same color.

\begin{remark}
The geometrical meaning of the subdivision is as follows.
There is the point $s \in S$ (of codimension $3$)  over which the curve $C \to S$ is a dollar sign curve. We blow-up $S$ at $s$, which replaces $s$ with  $\mathbb{P}^2$. We then blow-up the $3$ fixed points of $\mathbb{P}^2$ to obtain $S^\theta_\ca L$.  
\edits{The strata of $S^\theta_\ca L$ then correspond bijectively to the cones in the picture above. The Abel-Jacobi section at a point on the stratum corresponding to a cone lands in the part of the Jacobian parametrising line bundles whose multidegree is given exactly by the corresponding $\theta$-stable multidegree $D$. The pullback of the Abel-Jacobi image to the normalization of the curve $C$ is obtained by twisting the pullback of $\ca L$ to the normalization by multiples of the points lying over the nodes, where these multiples are determined by the flow $f$. The line bundle on $C$ itself (equivalently, the glueing data for reassembling the line bundle on $C$ from the line bundle on the normalization) varies as we move about in the stratum.  }
\end{remark}

\section{Theorem \ref{111ooo}} \label{sec:conditions_for_almost_twistability}


\subsection{Universal constructions}

Let $\mathfrak M_g$ be the stack of log curves, and let $\Picabs$ be the universal Picard stack of degree 0 line bundles on the universal curve over $\mathfrak M_g$, which we equip with the strict log structure. By the main result of \cite{BHPSS},
there is an operational class{\footnote{In Section \ref{intro111} and the reference
\cite{BHPSS}, the more precise notation
$\mathsf{CH}^*_{\mathsf{op}}$ is used for
operational Chow. We will drop the subscript ${\mathsf{op}}$
to simplify the notation (unless needed for emphasis).}}
\begin{equation}\label{ss33ff}
\mathsf{DR} \in \sf{CH}^g(\Picabs)\, .
\end{equation}
By \cite{HolmesSchwarz},
there is a natural lift of \eqref{ss33ff} to a logarithmic class $$\LogDR \in \LogChow^g(\Picabs)\, .$$ For a stack $S$, prestable curve $C/S$ of genus $g$, and line bundle $\ca L$ on $C$ of degree 0, we have a classifying map 
$$\phi_\ca L\colon S \to \Picabs\, .$$ The double ramification cycle is defined in \cite{BHPSS}
as an operational class on $S$ by
\begin{equation*}
    \sf{DR}(\ca L) = \phi_\ca L^*\sf{DR} \in \sf{CH}^g(S)\, . 
\end{equation*}
If $S$ is a log smooth log stack, $C/S$ a log curve of genus $g$, and $\ca L$ a line bundle on $C$ of degree 0,  we have a classifying map (of log stacks)
$$\phi_\ca L \colon S \to \Picabs\, ,$$ 
and we can pull back $\LogDR$. Following 
\cite{HolmesSchwarz}, we define
\begin{equation*}
    \LogDR(\ca L) = \phi_\ca L^*\LogDR \in \sf{logCH}^g(S).  
\end{equation*}

Recall that $A=(a_1,\ldots, a_n)$ is a vector
of integers summing to $k(2g - 2 + n)$. 
Let $C/S$ be the universal curve $\mathcal{C} \to \Mbar_{g,n}$
over the moduli space of stable curves
with markings $x_1,\ldots,x_n$, 
and let 
\begin{equation*}
    \ca L = (\omega_C^\log)^{\otimes k}\left(- \sum_{i=1}^n a_i x_i\right)\, .
\end{equation*}
By \cite{BHPSS} and \cite{HolmesSchwarz}, the above universal constructions are
{\em both} compatible with the previous definitions
of the
double ramification cycle:
\begin{eqnarray*}
    \sf{DR}(\ca L) &=& \sf{DR}_{g, A} \in \sf{CH}^g(\Mbar_{g,n})\, , \\
    \LogDR(\ca L) &=& \LogDR_{g, A} \in \sf{logCH}^g(\Mbar_{g,n})\, .  
\end{eqnarray*}

\subsection{Almost twistability} 

Let $\SSS$ be a log smooth log algebraic stack, let $\CCC/\SSS$ be a log curve of genus $g$, and let $\LLL$ be a line bundle on $\CCC$ of degree 0. Let 
$$\ca J_{\CCC/\SSS} \to \SSS$$ be the multidegree $\ul 0$ part of the relative Picard stack
of $\CCC/\SSS$.

\begin{definition}\label{def:almost_twistable}
The pair $(\CCC/\SSS, \LLL)$ is \emph{almost twistable} if there exists a dense open $i \colon U \hra \SSS$ which
satisfies the following two conditions:
\begin{enumerate}
\item[(i)]
the line bundle $\LLL$ has multidegree $\ul 0$ over $U$, 
\item[(ii)] the map $U \xrightarrow{\varphi_{\mathcal{L}}\circ i } {\mathcal{J}}_{\CCC/\SSS}$ is a closed immersion 
(or, equivalently, the image of
$\varphi_{\mathcal{L}}\circ i $
is closed in ${\mathcal{J}}_{\CCC/\SSS}$). 
\end{enumerate}
\end{definition}

Definition \ref{def:almost_twistable}
is the specialization of
\cite[Definition 4.10]{HolmesSchwarz} obtained
by setting the strict piecewise linear function $\alpha$ there
to be zero. \edits{Indeed, under this specialization, Condition (i) of \cite[Definition 4.11]{HolmesSchwarz} that $\alpha=0$ is a twisting function on $U$ just says that $\LLL$ already has multidegree $0$ there. On the other hand, the map $U \to {\mathcal{J}}_{\CCC/\SSS}$ is always a locally-closed immersion, and Condition (ii) of \cite[Definition 4.11]{HolmesSchwarz} is the valuative criterion for closedness}. The motivation for Definition \ref{def:almost_twistable} lies in the following result.

\begin{proposition}\label{lem:invariance_7_b}
Let $(\CCC/\SSS, \LLL)$ be almost twistable.  Let $\phi_\LLL\colon \SSS \to \Picabs$ be the map induced by $\ca L$. Then,
\begin{equation*}
\phi_{\LLL}^*\LogDR = \phi_{\LLL} ^* \sf{DR} 
\end{equation*} 
in $\LogChow^g(\SSS)$. 
\end{proposition}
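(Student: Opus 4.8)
The plan is to deduce the equality from the theory of almost-twistable families of \cite{HolmesSchwarz}, using only the formal structure of the logarithmic Chow ring recorded in Section \ref{subsec: Logintersectiontheory}. By construction both sides are pullbacks along the classifying map $\phi_\LLL\colon \SSS \to \Picabs$ of the universal classes $\sf{DR} \in \sf{CH}^g(\Picabs)$ and $\LogDR \in \LogChow^g(\Picabs)$, and the canonical algebra morphism $\sf{CH}^*(\SSS) \to \LogChow^*(\SSS)$ is compatible with these pullbacks; so the content is that, after restriction along $\phi_\LLL$, the universal log lift introduces no correction.

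I would organize the argument around the retraction structure of Section \ref{subsec: Logintersectiontheory}. Write $\iota\colon \sf{CH}^g(\SSS) \hookrightarrow \LogChow^g(\SSS)$ for the canonical injection and $p_*$ for the $\mathbb{Q}$-linear pushforward along the log modifications $\widetilde{\SSS} \to \SSS$, so that $p_* \circ \iota = \mathrm{id}$. Since $\LogDR$ is by definition a lift of $\sf{DR}$, the representative of $\phi_\LLL^*\LogDR$ on every log modification pushes forward to $\phi_\LLL^*\sf{DR}$, i.e. $p_*(\phi_\LLL^*\LogDR) = \sf{DR}(\LLL)$. Hence it is enough to prove the single statement that $\phi_\LLL^*\LogDR$ lies in the image of $\iota$: writing $\phi_\LLL^*\LogDR = \iota(\beta)$ and applying $p_*$ then forces $\beta = \sf{DR}(\LLL)$, which is the claim.

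To establish that $\phi_\LLL^*\LogDR$ is pulled back from $\SSS$, I would unwind the Abel-Jacobi construction of $\LogDR$ (Section \ref{resAJm}) over the dense open $i\colon U \hookrightarrow \SSS$ furnished by Definition \ref{def:almost_twistable}. Condition (i) makes $\phi_\LLL \circ i$ factor through the multidegree-$\ul 0$ Jacobian $\mathcal{J}_{\CCC/\SSS}$, and condition (ii) says this factorization is a closed immersion; consequently the refined intersection with the unit section $e$ is supported on a closed subset already proper over $\SSS$. The indeterminacy of the Abel-Jacobi map along $\SSS \setminus U$ still forces a nontrivial log modification in order to define the map, but the point to verify is that the resulting refined class descends: its support propagates from $U$ through the boundary in such a way that the cycle on each $\widetilde{\SSS}$ is the pullback of a fixed class on $\SSS$. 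This descent is exactly the conclusion attached to \cite[Definition 4.10]{HolmesSchwarz} in the case $\alpha = 0$, and matching the hypotheses of Definition \ref{def:almost_twistable} to theirs is what I would carry out.

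The main obstacle is precisely this descent step: controlling the logarithmic corrections along the boundary $\SSS \setminus U$, where $\LLL$ can fail to have multidegree $\ul 0$ and the naive Abel-Jacobi map is genuinely undefined. One must show that the closed-immersion hypothesis (ii), which is a condition purely over $U$, is strong enough to force the excess boundary contributions on every log modification to vanish — equivalently, that no new cycle class is created by resolving the indeterminacy. Everything else (the reduction to the universal classes, the retraction computation, the identification of $\beta$) is formal; the geometric weight of the proposition sits in transporting the almost-twistability invariance of \cite{HolmesSchwarz}, specialized to $\alpha = 0$, to the present formulation.
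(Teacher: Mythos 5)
Your argument ultimately rests on the same external input as the paper's own proof: the paper proves this proposition simply by citing \cite[Lemma 4.13]{HolmesSchwarz} (the invariance statement attached to their Definition 4.10, specialized to $\alpha = 0$), together with one observation discussed below. Your Step B defers the geometric descent to exactly that reference, which is legitimate. The problem is the formal scaffolding you wrap around it, which contains two genuine gaps.

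First, your Step A --- that $p_*(\phi_\LLL^*\LogDR) = \DR(\LLL)$ because ``$\LogDR$ is by definition a lift of $\DR$'' --- is not formal, and the general principle it invokes is false. Proper pushforward to the base does not commute with pullback along classifying maps in logarithmic Chow theory. Concretely: let $X = \mathbb{P}^2$ with its toric log structure, let $\widetilde{X} \to X$ be the blowup at a torus-fixed point $p_0$ with exceptional curve $E$, and let $S$ be a line through $p_0$ (transverse to both axes), equipped with the divisorial log structure at $p_0$ and the log map $S \to X$ whose ghost map at $p_0$ is $\mathbb{N}^2 \to \mathbb{N}$, $(a,b) \mapsto a+b$. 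The induced subdivision of $\Sigma_S = \mathbb{R}_{\geq 0}$ is trivial (the diagonal ray is a ray of the star subdivision), so the fs fiber product $S \times^{\mathrm{fs}}_X \widetilde{X}$ is $S$ itself, mapping to $\widetilde{X}$ as the strict transform. Then the class $[E] \in \mathsf{CH}^1(\widetilde{X})$, viewed in $\mathsf{logCH}^1(X)$, pushes forward to $0$ on $X$ (so pulls back to $0$ on $S$), while its log pullback to $S$ is the nonzero point class $[p_0] \in \mathsf{CH}^1(S)$. So the commutation you need cannot follow from ``the formal structure of the logarithmic Chow ring''; for $\LogDR$ it happens to hold, but only because of the compatible geometric constructions of the operational $\DR$ in \cite{BHPSS} and of $\LogDR$ in \cite{HolmesSchwarz} --- i.e.\ it is a theorem whose proof runs through the same Abel--Jacobi/properness geometry you are attempting to quarantine inside Step B.

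Second, your retraction argument ($p_* \circ \iota = \mathrm{id}$, so that membership of $\phi_\LLL^*\LogDR$ in the image of $\iota$ suffices) requires $\SSS$ to be smooth: converting operational classes on log modifications into cycle classes, pushing them forward, and reidentifying the result as an operational class on $\SSS$ uses Poincar\'e duality on $\SSS$. But the proposition is stated for $\SSS$ log smooth, hence possibly with toroidal singularities, and this generality is needed in the application: $\SSS = \oM_{g,A}^\theta$ is a log modification of $\oM_{g,n}$ and need not be smooth. This is precisely the one substantive point the paper's proof adds beyond the citation --- the argument of \cite[Lemma 4.13]{HolmesSchwarz} is purely operational, so its running smoothness hypothesis can be dropped --- and it is the point at which your framework breaks down.
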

\begin{proof}
When $\SSS$ is smooth, the claim is a special case of \cite[Lemma 4.13]{HolmesSchwarz}. However, the proof given there does not use smoothness as we work throughout with operational classes (the smoothness assumption is present in \cite{HolmesSchwarz} only as a running assumption which simplifies some other parts of the paper). 
\end{proof}



\subsection{Category of twists}

Let $S$ be a log smooth log algebraic stack, let $C/S$ \edits{be} a log curve of genus $g$, and let $\ca L$ be a line bundle on $C$ of degree 0.
We consider the category $\cat{Twist}(S)$ of tuples
\begin{equation*}
(T/S, \widehat{C} \to C_T, \alpha)
\end{equation*}
where $T/S$ is a log scheme, $\widehat{C} \to C_T$ is a quasi-stable model of $C_T/T$, and $\alpha$ is a sPL function on $\widehat{C}$. Let 
$$\cat{Twist}^\ul 0(S,\ca L) \hra \cat{Twist}(S)$$
be the open substack consisting of those objects where 
$$\widehat{C} \isom  C_T$$
and $\ca L_T(\alpha)$ has multidegree $\ul 0$ on $C_T/T$. 


If $T$ is a trait (the spectrum of a discrete valuation ring) with generic point $j\colon \eta \to T$, the log structure on $T$ is \emph{maximal} if the natural map 
$$\M_T \to \ca O_T \times_{j_*\ca O_\eta} j_*\M_\eta$$
is an isomorphism. 
For example, if $\M_{\eta} = \mathcal{O}_{\eta}^*$, then $\ghost_{T}(T) = \bb N$, and $\mathsf{sPL}(C)$ is canonically isomorphic to the group of Cartier divisors on $C$ supported over the closed point of $T$. \edits{
As explained in \cite[\S 3.5]{Marcus2017Logarithmic-com}, when checking the valuative criterion for properness in the logarithmic setting, test valuation rings should be restricted to those with maximal log structure. 
}
\begin{lemma}\label{lem:extend_iso}
Let $T$ be a trait with generic point $\eta$ and maximal log structure. Let $C/T$ be a log curve with line bundles $\ca F$, $\ca F'$ on $C$, and $\alpha \in \ghost_{C}^\gp(C_\eta)$ an sPL function, together with an isomorphism 
$$\psi\colon \ca F_\eta(\alpha) \to \ca F'_\eta\, .$$ 
Then there exists an sPL function $\bar\alpha \in \ghost_{C}^\gp(C)$ on $C$ restricting to $\alpha$ over $\eta$ and such that $\psi$ extends to an isomorphism $\ca F(\bar\alpha) \to \ca F'$. 
\end{lemma}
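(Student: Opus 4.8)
The plan is to reduce the statement to a single triviality claim about an auxiliary line bundle and then to realize a vertical Cartier divisor as an sPL function. Set $\ca N = \ca F' \otimes \ca F^{-1}$. The line bundle $\ca O_{C_\eta}(\alpha)$ carries a canonical rational section $\mathbf 1_\alpha$ coming from the torsor-with-infinity construction of Definition \ref{def: associatedbundle}, whose divisor on $C_\eta$ is the tropical divisor of $\alpha$ by \eqref{eq: multideg}. Through this section, the datum of $\psi$ is the same as an isomorphism $\ca N_\eta \isom \ca O_{C_\eta}(\alpha)$, and the assertion to prove becomes: there is an sPL function $\bar\alpha \in \ghost_C^\gp(C)$ with $\bar\alpha|_{C_\eta} = \alpha$ together with an isomorphism $\ca O_C(\bar\alpha) \isom \ca N$ restricting to the given one over $\eta$. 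Tensoring such an isomorphism with $\ca F$ then recovers the desired extension of $\psi$.

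Next I would produce a divisor on all of $C$. Transporting $\mathbf 1_\alpha$ through $\psi^{-1}$ gives a rational section $s$ of the invertible sheaf $\ca N$, defined and generating over $C_\eta$. As $\ca N$ is a line bundle, $s$ has a well-defined Cartier divisor $D = \on{div}(s)$ on $C$, together with a canonical isomorphism $\ca O_C(D) \isom \ca N$ carrying the tautological section of $\ca O_C(D)$ to $s$; no regularity of $C$ is needed here, since Cartier divisors of rational sections of invertible sheaves are automatically defined. By construction the restriction $D|_{C_\eta}$ is the tropical divisor of $\alpha$, a divisor coming from the log structure. In the special case $\M_\eta = \ca O_\eta^\times$ we have $\alpha = 0$, the section $s$ is a nowhere-vanishing regular section over $C_\eta$, and $D$ is therefore supported over the closed point of $T$.

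The crux is then to show that $D$ is the divisor associated to an sPL function $\bar\alpha$ restricting to $\alpha$ over $\eta$. When $\M_\eta$ is trivial this is immediate from the identification recorded just before Lemma \ref{lem:extend_iso}: because the log structure on $T$ is maximal, Cartier divisors supported over the closed point are exactly the sPL functions on $C$ that vanish on $C_\eta$, so $D$ corresponds to such a $\bar\alpha$ and $\ca O_C(\bar\alpha)\isom \ca N$. In general one argues that, $D$ agreeing over $\eta$ with the log divisor of $\alpha$ and $\ca N$ being a genuine line bundle, maximality forces the extension of $D$ across the special fiber to remain of log type, hence to be realized by an sPL function; equivalently, one first extends $\alpha$ to $\tilde\alpha \in \ghost_C^\gp(C)$ using the description of PL functions on log curves in Section \ref{sec:algebraizing_tropical_AJ} and then reduces to the case $\alpha = 0$ by replacing $\ca N$ with $\ca N \otimes \ca O_C(\tilde\alpha)^{-1}$. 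Either way, tracing the trivializations back through $\ca O_{C_\eta}(\alpha)$ shows the resulting isomorphism $\ca O_C(\bar\alpha)\isom \ca N$ restricts over $\eta$ to the one induced by $\psi$.

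The main obstacle I expect is precisely the step that $D$ (or, equivalently, the extension of $\alpha$) is of log type across the special fiber: this is the only place the maximality hypothesis on $T$ is genuinely used, and it is what lets the argument proceed on $T$ itself rather than after a log modification. Within the combinatorial translation, the most delicate bookkeeping is checking the realizability constraint of Lemma \ref{lem:flow_as_slopes} for the extended slopes on $\Gamma_0$, i.e. that the cycle pairings $\langle \gamma, s\rangle_\ell$ vanish in $\ghost_{T,0}^\gp$ and not merely after restriction to $\eta$; granting this, the remaining assembly of $\bar\alpha$ and verification of compatibility with $\psi$ are formal.
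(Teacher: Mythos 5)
Your argument is fine in the special case $\M_\eta = \ca O_\eta^*$ (where $\alpha = 0$, the trivialization of $\ca N_\eta = \ca F'_\eta \otimes \ca F_\eta^{-1}$ induced by $\psi$ gives a vertical Cartier divisor $D$, and the identification of such divisors with sPL functions stated just before the lemma finishes the proof). But the lemma is needed in Proposition \ref{prop:almost_twistable_condition} precisely for traits whose generic point carries a \emph{nontrivial} log structure, and there your proposal has a genuine gap, in two places. First, when $\ghost_\eta \neq 0$ the bundle $\ca O_{C_\eta}(\alpha)$ has no canonical rational section: a distinguished element of the torsor $\ca O^\times_{C_\eta}(\alpha)$ would be a distinguished lift of $\alpha$ along $\M_{C_\eta}^\gp \to \ghost_{C_\eta}^\gp$, which does not exist; worse, $\ca O_{C_\eta}(\alpha)$ is in general a nontrivial line bundle on $C_\eta$ (its multidegree is $-\mathsf{div}(\alpha) \neq 0$), so \emph{any} chosen rational section of $\ca N$ has a divisor with nonzero horizontal part depending on the choice, and your $D$ is then not of the vertical type that the sPL correspondence can produce. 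Second, the proposed repair --- first extend $\alpha$ to $\tilde\alpha \in \ghost_C^\gp(C)$ and reduce to $\alpha = 0$ --- is impossible in general: sPL functions on $C_\eta$ need not extend to $C$, even over a trait with maximal log structure. For instance, take $\ghost_\eta = \N a$, let $C_\eta$ be two rational curves joined at two nodes both of length $a$, and let $\alpha$ have slope $1$ along both edges. Over $T$ with maximal log structure one has $\ghost_{T,0}^\gp \cong \Z a \oplus \Z t$ (with $t$ the class of the uniformizer, killed by $\ghost_{T,0}^\gp \to \ghost_\eta^\gp$), and there is a log curve over $T$ whose special fibre has the same graph with edge lengths $a$ and $a+t$. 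Any extension of $\alpha$ must keep slope $1$ on both edges, so the cycle pairing of Lemma \ref{lem:flow_as_slopes} equals $a - (a+t) = -t \neq 0$ in $\ghost_{T,0}^\gp$, although it vanishes in $\ghost_\eta^\gp$: no extension exists. So the obstruction you flag at the end is real, and maximality alone --- which is all your argument invokes at the crux --- cannot kill it.

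What makes the lemma true is the hypothesis your proposal never brings to bear on this crux: that $\ca F$ and $\ca F'$ are line bundles on all of $C$. (In the counterexample above no such pair with $\ca F_\eta(\alpha) \cong \ca F'_\eta$ can exist, which is why the lemma is not contradicted.) The paper's proof exploits this by passing to the associated $\M_C^\gp$-torsors, for which twisting by $\alpha$ is invisible, so $\ca F^{\log}$ and $\ca F'^{\log}$ agree over $\eta$; the theorem of Molcho--Wise \cite[Theorem 4.10.1]{Molcho2018The-logarithmic} then forces them to agree over all of $T$, and the cohomology sequence of $1 \to \ca O_C^\times \to \M_C^\gp \to \ghost_C^\gp \to 0$ converts this into a global $\beta \in \ghost_C^\gp(C)$ with $\ca F(\beta) \isom \ca F'$. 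Only then is $\beta_\eta$ compared with $\alpha$: their difference is a constant $\pi^*\gamma$ with $\gamma \in \ghost_T^\gp(\eta)$, and lifting this constant is the sole place maximality is used. The step you defer as bookkeeping is therefore the entire content of the lemma, and it requires this torsor-theoretic input rather than any divisor-extension or maximality argument.
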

\begin{remark}
    \edits{The proof is an application of the separatedness of the logarithmic Picard stack of \cite{Molcho2018The-logarithmic}. To a first approximation, log line bundles can be seen as equivalence classes of line bundles under the action of twisting by piecewise linear functions. As such, separatedness of the moduli of log line bundles means that, if two line bundles are equivalent modulo piecewise-linear functions over $\eta$, then they are so over the whole of $T$. }
\end{remark}
\begin{proof}[\edits{Proof of Lemma \ref{lem:extend_iso}}] 
The line bundles $\ca F, \ca F'$ naturally correspond to $\mathcal{O}_C^*$-torsors on $C$. By extending the structure group, we obtain two $\M_C^\gp$-torsors $\ca F^{\textup{log}}, \ca F'^{\textup{log}}$. 
By the exact sequence
\begin{equation*}
\pi_*\ghost_C^\gp \to R^1\pi_*\ca O_C \to R^1\pi_*M_C^\gp
\end{equation*}
these agree on the generic point of $T$, and hence they agree on $T$ by \cite[Theorem 4.10.1]{Molcho2018The-logarithmic}. By another application of the same exact sequence
we see that the group of line bundles on $C$ whose associated $\M_C^\gp$ torsor is trivial is the image of $\ghost_{C}^\gp(C)$, hence there exists $\beta \in \ghost_{C}^\gp(C)$ and an isomorphism 
\begin{equation*}
    \ca F(\beta) \isom \ca F'. 
\end{equation*}
Then, $\ca O_{C_\eta}(\alpha - \beta_\eta)$ is trivial, hence $\alpha - \beta_\eta$ is constant, so there exists $\gamma \in \ghost_T^\gp(\eta)$ with $$\alpha - \beta_\eta = \pi^*\gamma\, . $$
By maximality of the log structure, there exists $\bar\gamma \in \ghost_T^\gp(T)$ restricting to $\gamma$. Setting $\bar\alpha = \beta + \pi^*\bar\gamma$ gives the result. 
\end{proof}


\begin{proposition}\label{prop:almost_twistable_condition}Let $X \hra \cat{Twist}(S)$ be an open substack containing $\cat{Twist}^\ul 0(S,\ca L)$ as a dense open. \edits{Write $C_X$ for the pullback of $C/S$ under $X \to S$, and write $\widehat{C}$ and $\alpha$ for the universal objects over $X$ determined by the map $X \to \cat{Twist}(S)$. } If $X$ is separated, 
then $(\widehat{C}/X, \ca L(\alpha))$ is an almost twistable family. 
\end{proposition}

\begin{proof}
Note that 
$${\mathcal{J}}_{\widehat{C}/X} = {\mathcal{J}}_{C_X/X}\, ,$$
\edits{since the curves differ by the insertion of rational bubbles. }
Let $U = \cat{Twist}^\ul 0(S,\ca L) \hra X$, so $\ca L(\alpha)$ defines a monomorphism 
$$\edits{U \to {\mathcal{J}}_{C_X/X}\, ,}$$
which we will show to be proper by the valuative criterion.

Let $T$ be a trait with generic point $\eta$, and fix a map 
$$t\colon T \to {\mathcal{J}}_{C_X/X}$$
such that $\eta$ lands in the image of $U$. The map $t$ corresponds to a line bundle $\ca F'$ on 
$C_T/T$ of multidegree $\ul 0$. Since $\ca F'_\eta$ lies in the image of $\cat{Twist}^\ul 0(S,\ca L)$, there exist an sPL function $\alpha$ on $C_\eta$ and an isomorphism $$\psi\colon \ca L_\eta(\alpha) \to \ca F'.$$
By Lemma \ref{lem:extend_iso}, we can extend the sPL function $\alpha$ and isomorphism $\psi$ over the whole of $T$, defining an object 
$$(T/S, C_T \isom C_T, \alpha)$$ of $\cat{Twist}^\ul 0(S,\ca L)$. The line bundle $\ca L(\alpha)$ induces a map $t'\colon T \to \ca J_{C_T/T}$, which agrees with $t$ at $\eta$, and hence agrees with $t$ by the separatedness of $X$. 
\end{proof}

\subsection{Proof of Theorem \ref{111ooo}}

To prove Theorem \ref{111ooo}, 
we will apply Propositions \ref{lem:invariance_7_b} and \ref{prop:almost_twistable_condition}  under the
following specialization of the geometry:
\begin{enumerate}
\item[$\bullet$] $C/S$ is the universal curve $\mathcal{C} \to \Mbar_{g,n}$
over the moduli space of stable curves,

\item[$\bullet$] $\mathcal{L}$ is the line bundle of total degree 0
on $\mathcal{C}/ \Mbar_{g,n}$
defined by
$$\ca L = (\omega_C^\log)^{\otimes k}\left(- \sum_{i=1}^n a_i x_i\right)\, ,$$
\item[$\bullet$]
$\SSS = \Mbar_{g,A}^\theta\stackrel{\rho}{\longrightarrow} \Mbar_{g,n}$ 
for a small nondegenerate stability condition $\theta$,  
\item[$\bullet$]
$\widehat{\mathcal{L}}= \mathcal{L}^{\theta}$ is the universal line bundle on the universal quasi-stable curve
$$\CCC= \mathcal{C}^\theta \to \Mbar_{g,A}^\theta\, ,$$
\item[$\bullet$] $\alpha=\alpha^\theta$ is the universal sPL function 
vanishing on $x_1$ on
$\mathcal{C}^\theta$ and
satisfying $$\ca L^\theta = \ca L( \alpha^\theta)\, .$$
\end{enumerate}



\begin{proposition}
\label{small_nond_implies_a_twistable}
For  a  small nondegenerate  stability condition $\theta$, 
the line bundle 
$\mathcal{L}^\theta$ on $\mathcal{C}^\theta \to \oM_{g,A}^\theta$ is almost twistable.  
\end{proposition}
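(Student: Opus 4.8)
The plan is to obtain the statement as an instance of Proposition~\ref{prop:almost_twistable_condition}, applied in the universal situation over $S=\Mbar_{g,n}$ with $\ca L=(\omega_C^{\log})^{\otimes k}(-\sum_{i=1}^n a_i x_i)$. Let $X \hra \cat{Twist}(S)$ be the open substack of tuples $(T/S,\widehat C \to C_T,\alpha)$ for which $\ca L_T(\alpha)$ is $\theta$-stable. By Definition~\ref{def:S_theta} this is precisely $\Mbar_{g,A}^\theta$: the normalization ``$\alpha$ vanishes on $x_1$'' only pins down a canonical representative of $\alpha$ and identifies the universal twisted bundle $\ca L(\alpha)$ on the universal quasi-stable curve with $\mathcal L^\theta$ on $\mathcal C^\theta$. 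Thus the conclusion of Proposition~\ref{prop:almost_twistable_condition} for this $X$ is exactly the almost twistability of $(\mathcal C^\theta/\Mbar_{g,A}^\theta,\mathcal L^\theta)$, and it remains only to verify the three hypotheses of that proposition: that $X$ is open in $\cat{Twist}(S)$, that it contains $\cat{Twist}^{\ul 0}(S,\ca L)$ as a dense open, and that it is separated.

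Openness is immediate, since $\theta$-stability is cut out by the finitely many strict inequalities of Definition~\ref{defss} on the multidegree of $\ca L_T(\alpha)$, a locally constant invariant along strata. For the inclusion $\cat{Twist}^{\ul 0}(S,\ca L)\subseteq X$, note that an object of $\cat{Twist}^{\ul 0}(S,\ca L)$ has trivial quasi-stable model $\widehat C \isom C_T$ and twisted bundle $\ca L_T(\alpha)$ of multidegree $\ul 0$; this is exactly where smallness of $\theta$ enters, as smallness (Section~\ref{sec:stability_conditions}) states that multidegree-$\ul 0$ bundles are $\theta$-semistable, and nondegeneracy upgrades semistability to stability. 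Hence such objects lie in $X$.

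For density, both $\cat{Twist}^{\ul 0}(S,\ca L)$ and $X$ restrict over the log-trivial locus $\cM_{g,n}\subseteq\Mbar_{g,n}$ to $\cM_{g,n}$ itself: over a nonsingular curve the dual graph is a single vertex with no edges, so there are no nontrivial quasi-stable models and no nonconstant PL functions, while $\ca L$ has total degree $0$, hence multidegree $\ul 0$, and is (vacuously) $\theta$-stable. Since $\rho\colon\Mbar_{g,A}^\theta\to\Mbar_{g,n}$ is a log modification of the log smooth stack $\Mbar_{g,n}$ (Theorem~\ref{thm:is_subdivis}) it is birational, so $\rho^{-1}(\cM_{g,n})$ is dense in $X$; being contained in $\cat{Twist}^{\ul 0}(S,\ca L)$, density follows. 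Separatedness is again supplied by Theorem~\ref{thm:is_subdivis}: $\rho$ is a log modification, hence proper, over the separated stack $\Mbar_{g,n}$, so $X=\Mbar_{g,A}^\theta$ is separated. With all three hypotheses in hand, Proposition~\ref{prop:almost_twistable_condition} yields the claim.

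The step that will require the most care is the identification in the first paragraph: reconciling the free choice of additive constant for $\alpha$ in $\cat{Twist}(S)$ with the $x_1$-rigidification built into Definition~\ref{def:S_theta}, so that $X$ together with its universal family is genuinely $(\mathcal C^\theta/\Mbar_{g,A}^\theta,\mathcal L^\theta)$. Conceptually the two nontrivial hypotheses encode the two roles of $\theta$: smallness is precisely what forces $\cat{Twist}^{\ul 0}(S,\ca L)$ into the $\theta$-stable locus, while nondegeneracy — guaranteeing a unique $\theta$-stable twist by Lemma~\ref{Dcomment-I-should-try-to-prove-these-claims} — is what makes separatedness genuine. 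Once the identification is set up correctly, the verification of these two facts is routine.
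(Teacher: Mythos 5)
Your proposal is correct and follows essentially the same route as the paper's proof: identify $\Mbar_{g,A}^\theta$ with the open substack $X\subseteq\cat{Twist}(\Mbar_{g,n})$ where $\ca L(\alpha)$ is $\theta$-stable, check that it contains $\cat{Twist}^{\ul 0}(\Mbar_{g,n},\ca L)$ as a dense open (smallness plus nondegeneracy) and that it is separated (properness of $\rho$ from Theorem \ref{thm:is_subdivis}), then invoke Proposition \ref{prop:almost_twistable_condition}. The only difference is that you spell out the openness, density, and $x_1$-rigidification points which the paper leaves implicit.
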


\begin{proof}

There is a natural open immersion 
$$\oM_{g,A}^\theta \to \cat{Twist}(\oM_{g,n})$$
induced by the subdivision $\ca C^\theta$ and the sPL function $\alpha^\theta$, identifying $\oM_{g,A}^\theta$ with the open substack $X$ of $(\widehat{C}, \alpha) \in \cat{Twist}(\oM_{g,n})$ where $\ca L|_{\widehat{C}}(\alpha)$ is $\theta$-stable.
We must verify the conditions required by  Proposition \ref{prop:almost_twistable_condition} for the open set $X$:

\begin{enumerate}
\item[$\bullet$] Since multidegree $\ul 0$ line bundles on stable models are $\theta$-stable, 
$X$ contains $\cat{Twist}^{\ul 0}(  \Mbar_{g,n}    ,\ca L)$ as a dense open. 
\item[$\bullet$] Separatedness of $X$ follows from Theorem \ref{thm:is_subdivis}: since \edits{$X = \oM_{g,A}^\theta \xrightarrow{\rho} \oM_{g,n}$} is proper,  $X$ is also proper. The key input is the nondegeneracy of $\theta$. 
\end{enumerate}
Since the conditions of Proposition \ref{prop:almost_twistable_condition} are satisfied,
we conclude that $(\mathcal{C}^\theta/ \oM_{g,A}^\theta, \ca L^\theta)$
is an almost twistable family.
\end{proof}

To complete the proof of 
Theorem \ref{111ooo}, the class 
$$
\phi_{\mathcal{L}^\theta} ^* \sf{DR}  =\mathsf{DR}^{\mathsf{op}}_{g,\emptyset, \mathcal{L}^\theta} \in \mathsf{CH}^g_{\mathsf{op}}( \oM_{g,A}^\theta)\, 
$$
must be shown to represent $\mathsf{logDR}_{g,A} \in \mathsf{logCH}^g(\Mbar_{g,n})$.
We apply Proposition \ref{lem:invariance_7_b}
to the family $(\mathcal{C}^\theta/ \oM_{g,A}^\theta, \ca L^\theta)$
to conclude
\begin{equation*}
\phi_{\mathcal{L}^\theta}^*\LogDR = \phi_{\mathcal{L}^\theta} ^* \sf{DR} 
\end{equation*} 
The last step is to prove that 
$$
\phi_{\mathcal{L}^\theta} ^* \sf{logDR} \in \mathsf{CH}^g_{\mathsf{op}}( \oM_{g,A}^\theta)\, 
$$
represents $\mathsf{logDR}_{g,A} \in \mathsf{logCH}^g(\Mbar_{g,n})$.

The claim of the last step is true because of invariance
properties of the logarithmic double ramification
cycle \cite{HolmesSchwarz}:
\begin{enumerate}
\item[(i)]  If $f:C' \to C$ over $S$ is a subdivision of log curves, then
$$\mathsf{logDR}(f^*\ca L) = \mathsf{logDR}(\ca L)\, ,$$
\item[(ii)] If $\beta$ is a sPL function on $C$ then
$$\mathsf{logDR}(\ca L(\beta)) = \mathsf{logDR}(\ca L)\, ,$$
\item[(iii)]
If $f: S' \to S$ is a log modification, then $\mathsf{logCH}^*(S') = \mathsf{logCH}^*(S)$ and
$$\mathsf{logDR}(f^*\ca L) = \mathsf{logDR}(\ca L)\, .$$
\end{enumerate}
Invariances (i) and (ii) are proven
in \cite[Lemma 4.13]{HolmesSchwarz} and
\cite[Theorem 4.18]{HolmesSchwarz}.
Invariance (iii) is by the construction 
of $\mathsf{logDR}$.

\edits{We compute
\begin{equation*}
\begin{split}
    \mathsf{logDR}(\ca L^\theta) & = \mathsf{logDR}(\ca L|_{C^\theta}) \\
    & = \mathsf{logDR}(\ca L|_{C \times_{\Mbar_{g, n}}{\Mbar^\theta_{g, A}}}) \\
    & = \mathsf{logDR}(\ca L) 
\end{split}
\end{equation*}
by Invariances (ii), (i), and (iii) respectively.  }


\section{Piecewise polynomials and strata classes}\label{PPsclass}
\subsection{Overview}
The $\mathbb{Q}$-algebras
of piecewise polynomials and strict piecewise polynomials associated to $\Mbar_{g,n}$ were defined in 
Section \ref{Sect:logtautclasses} 
and used there to define the logarithmic tautological ring
$$\mathsf{logR}^*(\oM_{g,n}) \subset \mathsf{logCH}^*(\oM_{g,n})\, .$$
In order to derive the formulas of Theorem
\ref{222ttt} from Theorem \ref{111ooo},
 an explicit correspondence between the normally decorated strata classes of \cite{Molcho2021-Hodgebundle} and strict piecewise polynomials\footnote{Warning to the reader: the piecewise polynomials of \cite{Molcho2021-Hodgebundle} are the strict piecewise polynomials here.}
 is required. 
The explicit correspondence is
established in Proposition \ref{lem:comparing_graph_sum_and_PP} of Section \ref{sec: pp}.
In Section \ref{sec:classical_DR_in_PP_language}, we rewrite
the formula from \cite{BHPSS}  for the universal double
ramification cycle 
in the language of piecewise polynomials in preparation for the proof of Theorem \ref{222ttt} in 
Section \ref{pprr22}.





\subsection{Relating piecewise polynomials and graph sums}
\label{sec: pp}
\subsubsection{Normally decorated strata classes}
Let $(X,D)$ be a nonsingular algebraic stack equipped with a normal crossings divisor, viewed as a log stack. 
We begin by briefly recalling the definition of \emph{normally decorated strata classes} 
for $(X,D)$
from \cite[\S 5.1]{Molcho2021-Hodgebundle}.

Let $S$ be a codimension $k$ stratum in $X$, and let $B_S$ be the set of branches of $D$ through $S$ (so $|B_S| = k$). Let $$\epsilon: \widetilde{S} \to X$$ be the normalization of the closure of $S$,  let $$p:P \to \widetilde{S}$$ be the $G$-torsor over $\widetilde{S}$ determined by the monodromy group $G$ of Definition \ref{monodef} acting on the set $B_S$, and  let
$$j = \epsilon \circ p: P \to X$$ denote the composition.
Since the map $\epsilon$ is unramified,  we can define the normal bundle $N_\epsilon$ which splits when pulled back to $P$:
$$
p^*N_\epsilon = \oplus_{b \in B_S} N_b\, . 
$$ 
The assignment $b \mapsto c_1(N_b)$ extends to 
a $\mathbb{Q}$-algebra homomorphism
\begin{equation*}
c_1(N)\colon \bb Q[B_S] \to \sf{CH}^*(P)
\end{equation*}
\edits{where $\bb Q[B_S]$ is a polynomial ring in variables indexed by $B_S$}. Given $F \in \bb Q[B_S]$, let $$F(c_1(N))\in \sf{CH}^*(P)$$ be the image under $c_1(N)$. 

\begin{definition}
A \emph{normally decorated strata} class of $(X,D)$ 
is a class of the form 
$$j_*F(c_1(N)) \in \mathsf{CH}^*(X)\, $$
where  $F \in \bb Q[B_S]$ is a polynomial in the branches of $D$ through a stratum $S$.  
\end{definition}

\edits{
\begin{remark}
    Normally decorated strata are defined using the monodromy torsors 
    of \cite{Molcho2021-Hodgebundle}, which are $G$-torsors over the normalizations of the strata of $X$ for the monodromy group $G$. In other words, the monodromy torsors are connected to the automorphism group of the corresponding stacky cone in the \emph{canonical} tropicalization of $(X,D)$. It is possible to construct monodromy torsors that resolve the automorphism group of the stacky cone in an arbitrary tropicalization and develop a more flexible theory of normally decorated strata classes, but
    we will not pursue
    these directions here.
\end{remark}
}

\subsubsection{Piecewise polynomials}
Let $(X,D)$ be a nonsingular algebraic stack equipped with a normal crossings divisor, viewed as a log stack. \edits{For the rest of this section, we will work with the \emph{canonical} tropicalization $X \to \mathcal{A}_X$.} The combinatorial definition of (strict) piecewise polynomials given in Section \ref{Sect:logtautclasses}  for $\Mbar_{g,n}$ carries over unchanged to $(X,D)$. See \cite{HolmesSchwarz, Molcho2021-Hodgebundle, Molcho2021-Case-Study} for 
detailed foundations.
We will describe the piecewise polynomial of $(X,D)$ corresponding to a
normally decorated strata class.


Let $\Sigma_X$ denote the \edits{canonical} fan of $(X,D)$, and let $\sigma$ be the stacky cone corresponding to a stratum $S\subset X$. The stacky cone $\sigma$ has a strict cover by the cone $\tau = \mathbb{R}_{\ge 0}^{B_S}$, in the sense of \cite[Definition 2.5]{Cavalieri2020-Conestack}. In particular, the interior of $\sigma$ is a quotient of the interior of $\tau$ by the monodromy group $G$. Let  $$\mathfrak p\colon \tau \to \sigma\ \ \ \ \text{and} \ \ \ \ \mathfrak j: \tau \to \Sigma_X$$
denote the  quotient and the composition of the quotient with the inclusion. 
The set $\tau(1)$ of rays in $\tau$ is naturally identified with the set $B_S$ of branches, so there
is an identification of  $\bb Q$-algebras
\begin{equation*}
 \sf{sPP}(\tau) = \bb Q[B_S]\, . 
\end{equation*}
The monodromy group $G$ acts naturally and compatibly
on $B_S$, hence on $\tau$, on $\bb Q[B_S]$, and on $\sf{sPP}(\tau)$. 
%
%


%

Pullback of strict piecewise polynomials yields an injection
\begin{equation}\label{eq:sPP_embedding}
\mathfrak{i}: \sf{sPP}(\sigma) \to \sf{sPP}(\tau)\, .
\end{equation}
The image of $\mathfrak{i}$ is contained in the $G$-invariant polynomials and contains those $G$-invariant polynomials 
which vanish on the boundary of $\tau$; equivalently, the 
$G$-invariant polynomials which  are divisible by 
$$\delta_\tau = \prod_{b \in B_S} b\, .$$
We also define a map \edits{in the other direction} 
\begin{equation}\label{pFpF}
\edits{\mathfrak{j}_0}\colon \sf{sPP}(\tau) \to \sf{sPP}(\sigma)\, , \ \ \  F \mapsto \delta_\tau \sum_{g \in G} g^*F.
\end{equation}

\edits{As an illustration, take $\tau = \mathbb{R}_{\geq 0}^{2}$ with an action of $G=\mathbb{Z}/2\mathbb{Z}$ generated by $(x,y) \mapsto (y,x)$. For the projection $\mathfrak{p} : \tau \to \sigma = [\tau / G]$ we have $\sPPoly{\tau}=\mathbb{Q}[x,y]$ and $\sPPoly{\sigma}=\mathbb{Q}[x+y, x\cdot y]$. Then $\mathfrak{i} : \mathbb{Q}[x+y, x\cdot y] \to \mathbb{Q}[x,y]$ is the inclusion and 
$$\mathfrak{j}_0 : \mathbb{Q}[x,y] \to \mathbb{Q}[x+y, x \cdot y], f(x,y) \mapsto xy \cdot (f(x,y)+f(y,x)),$$
so $\mathfrak{j}_0(1) = 2xy$.
Geometrically, this example is related to the punctured Whitney umbrella. }

\subsubsection{Extension}
 Let $\sigma$ be the stacky cone, with strict cover $\tau$, corresponding to a
 codimension $k$ stratum $S\subset X$.
 We describe how to canonically extend the polynomial function $\mathfrak{j}_0 F$ defined by
 \eqref{pFpF}
 from $\sigma$ to the whole fan $\Sigma_X$. 
 We will construct a nontrivial extension of $\mathfrak{j}_0 F$ over cones which contain $\sigma$.
For cones not containing $\sigma$, the extension will be 0.

 Let $\sigma'\supset
 \sigma$ be any stacky cone containing $\sigma$.
 As before, $\sigma'$ is a quotient of a cone 
 $\tau' = \mathbb{R}_{\ge 0}^{k'}$ for $k' \ge k$ with perhaps additional faces identified,
 $$\mathfrak{p}:\tau' \to \sigma'\, .$$
  The map $\mathfrak{p}$ sends some of the $k$-dimensional faces of $\tau'$
 onto $\sigma$ and the rest  onto faces of $\sigma'$ different from $\sigma$.
 
 Let $\mathcal{F}_k$ be the 
 set of the $k$-dimensional faces of the fiber product $\tau \times_{\sigma'} \tau'$. Explicitly, $\mathcal{F}_k$ is the set of pairs $(\widetilde \tau,\gamma)$ where $\widetilde \tau$ is a face of $\tau'$ and $\gamma$ is an isomorphism  fitting into the diagram
\[
\begin{tikzcd}
\widetilde\tau \ar[r,"\subset"] \ar[d,"\gamma"] &\tau' \ar[d,"\mathfrak{p}"] \\
\tau \ar[r] & \, \sigma'.  
\end{tikzcd}
\] 
Because the image of $\tau$ in $\sigma'$ is generically the quotient $\sigma \cong \tau/G$, the group $G$ acts freely on the isomorphisms $$\gamma\colon \widetilde{\tau} \to \tau$$ commuting with the map $\tau \to \sigma'$,  so  $\mathcal{F}_k$ is a disjoint union of $G$-torsors. 


The standard variables of the
polynomials on $\tau'$ 
are in bijective correspondence
to
the set of rays $\tau'(1)$.
Let
$$D^{n}(\tau') 
\subset \sf{sPP}(\tau')$$
be the 
 $\mathbb{Q}$-linear subspace
 spanned by all monomials 
 in at most $n$ different variables.
For example, if $|\tau'(1)|=3$
with variables $x_1,x_2,x_3$, then
$$2x_1^{9}x_2^3+7x_2x_3\in 
D^2(\tau') \ \ \ \text{and} \ \ \   x_1x_2x_3\notin D^2(\tau')\, .$$
Elements in $D^{n}(\tau')$ are determined by their values on all of the $n$-dimensional faces of $\tau'$.

The subspaces $D^n(\tau')$
define an increasing filtration of $\sf{sPP}(\tau')$ as $n$ increases. \edits{Pullback of piecewise polynomials by $\mathfrak{p}$ identifies $\sf{sPP}(\sigma')$ with a subring of $\sf{sPP}(\tau')$. Thus, we can restrict the filtration from $\sf{PP(\tau')}$ to obtain a filtration,} denoted $D^n(\sigma')$, on $\sf{sPP}(\sigma')$.


\begin{proposition}
Let $F \in \sf{sPP}(\tau)$. There is a unique polynomial $$\mathfrak j^{\sigma'}_*F \in D^k(\sigma')$$ which agrees with $\mathfrak{j}_0 F$ on $\sigma$ and is $0$ on all other $k$-dimensional faces of $\sigma'$. 
\end{proposition}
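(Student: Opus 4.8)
The plan is to perform the entire construction on the smooth cover $\tau'$ and then descend. Using the analogue of the injection \eqref{eq:sPP_embedding} for $\sigma'$, namely $\mathfrak{i}\colon \sPPoly{\sigma'} \hra \sPPoly{\tau'}$, together with the identifications $\sPPoly{\tau'} = \bb Q[\tau'(1)]$ and $\sPPoly{\tau} = \bb Q[B_S]$, and the fact that $D^k(\sigma') = \mathfrak{i}^{-1}(D^k(\tau'))$, it suffices to produce a polynomial $\widetilde F \in D^k(\tau')$ which (a) lies in the image of $\mathfrak{i}$, i.e.\ descends to $\sigma'$; (b) restricts to $\gamma^*\mathfrak{i}(\mathfrak{p}_* F)$ on each $k$-dimensional face $\widetilde\tau$ of $\tau'$ lying over $\sigma$ (where $\gamma\colon \widetilde\tau \to \tau$ is an isomorphism over $\sigma'$); and (c) restricts to $0$ on every other $k$-dimensional face of $\tau'$. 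Uniqueness is then immediate from the recalled determinacy statement: an element of $D^k(\tau')$ is fixed by its restrictions to all $k$-dimensional faces, so any $\mathfrak{j}^{\sigma'}_* F \in D^k(\sigma')$ with the prescribed behaviour has the same pullback $\widetilde F$ and hence is unique.

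For existence I would simply sum the prescribed face values. For each $k$-face $\widetilde\tau$ of $\tau'$ lying over $\sigma$, set $p_{\widetilde\tau} = \gamma^*\big(\delta_\tau \sum_{g\in G} g^* F\big)$, which is independent of the choice of $\gamma$ because $\delta_\tau \sum_g g^* F$ is $G$-invariant and the isomorphisms $\gamma$ for a fixed $\widetilde\tau$ form a $G$-torsor (the fibres of $\mathcal{F}_k$); for all other $k$-faces set $p_{\widetilde\tau} = 0$, and define $\widetilde F = \sum_{\widetilde\tau} p_{\widetilde\tau}$. The decisive structural point is that $\delta_\tau = \prod_{b\in B_S} b$ is the product of \emph{all} $k$ ray-variables of $\tau$, so each nonzero $p_{\widetilde\tau}$ is divisible by the product of the $k$ ray-variables of $\widetilde\tau$; every monomial of $p_{\widetilde\tau}$ therefore has support equal to the full ray-set of $\widetilde\tau$. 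Consequently $\widetilde F \in D^k(\tau')$, and for any $k$-face $\widetilde\tau_0$ one has $p_{\widetilde\tau}|_{\widetilde\tau_0} = 0$ unless $\widetilde\tau = \widetilde\tau_0$, whence $\widetilde F|_{\widetilde\tau_0} = p_{\widetilde\tau_0}$, giving (b) and (c).

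The main obstacle is (a), the descent of $\widetilde F$ through $\mathfrak{p}\colon \tau' \to \sigma'$, that is, invariance under the monodromy and the face identifications producing $\sigma'$ from $\tau'$. Here the same full-support property is decisive: since every monomial of each nonzero $p_{\widetilde\tau}$ uses all $k$ rays of $\widetilde\tau$, the restriction of $\widetilde F$ to any face of $\tau'$ of dimension $<k$ vanishes, so every identification among faces of dimension $<k$ is automatically respected. For the remaining identifications it suffices, again by the determinacy of $D^k$-elements by their $k$-face restrictions, to check that the prescribed $k$-face values are compatible; but $p_{\widetilde\tau}$ depends only on the image of $\widetilde\tau$ in $\sigma'$ — it is the common value $\delta_\tau\sum_g g^* F$ (well defined since $\mathcal{F}_k$ is a union of $G$-torsors and $\delta_\tau\sum_g g^* F$ is $G$-invariant) when that image is $\sigma$, and $0$ otherwise. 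Hence $\widetilde F$ descends to an element $\mathfrak{j}^{\sigma'}_* F \in D^k(\sigma')$ agreeing with $\mathfrak{p}_* F$ on $\sigma$ and vanishing on the other $k$-faces. I expect the only genuinely delicate bookkeeping to be this invariance check on the $k$-faces lying over $\sigma$; everything else reduces formally to the divisibility by $\delta_\tau$.
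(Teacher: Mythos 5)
Your construction is correct and is essentially the paper's own proof: you build the same polynomial $\sum_{(\widetilde\tau,\gamma)\in\mathcal{F}_k}\overline{\delta_{\widetilde\tau}\gamma^*F}$ on the cover $\tau'$ (your $p_{\widetilde\tau}$ is exactly the paper's sum of basic extensions, grouped face by face), use divisibility by $\delta_{\widetilde\tau}$ to get membership in $D^k(\tau')$ and the prescribed restrictions to $k$-faces, descend to $\sigma'$ by invariance under all identifications, and obtain uniqueness from the determinacy of $D^k$-elements by their values on $k$-dimensional faces. The only cosmetic difference is that you fold the monodromy ($G'$-invariance) and face-identification checks into a single determinacy argument on $k$-faces, whereas the paper verifies $G'$-invariance directly from the orbit structure of the faces lying over $\sigma$; the mathematical content is the same.
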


\begin{proof}
The uniqueness claim
is immediate since the function is in $D^k$ and its values on all of the $k$-dimensional faces of $\sigma'$ are
specified. 

We will construct the desired function on $\tau'$  and
then argue that it descends to $\sigma'$. For  $(\widetilde \tau,\gamma) \in \mathcal{F}_k$, the polynomial $\delta_{\widetilde \tau}\gamma^*F \in \sf{sPP}(\widetilde \tau)$ vanishes on the boundary of $\widetilde \tau$, and we write $\overline{\delta_{\widetilde \tau}\gamma^*F }\in \sf{sPP}(\tau')$ for the basic{\footnote{The {\em basic extension} of
a polynomial on a face of $\mathbb{R}_{\geq 0}^n$ is by pullback
via the canonical projection $\mathbb{R}^n_{\geq 0} \to \mathbb{R}^m_{\geq 0}$ to the face.}} extension.
Define
$$
\mathfrak{j}^{\sigma'}_* F = \sum_{(\widetilde \tau,\gamma) \in \mathcal{F}_k} \overline{\delta_{\widetilde \tau} \gamma^*F} \in \sf{sPP}(\tau'). 
$$
Certainly, $\mathfrak j^{\sigma'}_*F \in D^k(\tau')$.

We claim that $\mathfrak j^{\sigma'}_*F$ lies in the image of $\sf{sPP}(\sigma') \hookrightarrow \sf{sPP}(\tau')$. 
Let $G'$ be the monodromy group of $\sigma'$ which
acts on $\tau'$. The group $G'$ also 
acts on the set of $k$-dimensional faces of $\tau'$. \edits{Since $\tau' \to \sigma'$ is $G'$-invariant}, if a $k$-dimensional face $\kappa \subset \tau'$ is taken to another face $\kappa'$ by  $g' \in G'$, and the image of $\kappa$ in $\sigma'$ is $\sigma$, then  the image of $\kappa'$ is also $\sigma$. \edits{Furthermore,} $G'$ 
permutes all $k$-dimensional faces that do not surject to $\sigma$. 
Therefore, $\mathfrak{j}^{\sigma'}_*F$ is invariant with respect to the $G'$-action. Furthermore, since 
$\mathfrak{j}^{\sigma'}_*F$
vanishes on the boundary of every $k$-dimensional cone and
is symmetric with respect to any additional identification of faces, 
$\mathfrak{j}^{\sigma'}_*F$ lies
in the image of $\sf{sPP}(\sigma') \hookrightarrow \sf{sPP}(\tau')$. 

Finally, we must check that $\mathfrak j^{\sigma'}_*F$ agrees with $\mathfrak{j}_0 F$ on $\sigma$ and is $0$ on all other $k$-dimensional faces of $\sigma'$. The restriction of the
function $\mathfrak j^{\sigma'}_*F$ to $\sigma$ is determined by the $k$-dimensional faces that surject onto $\sigma$. Choose such a face $\tau_0$ of $\tau'$. For any $(\widetilde{\tau},\gamma) \in \mathcal{F}_k$ with $\widetilde{\tau} \neq \tau_0$, the function  $\overline{\delta_{\widetilde \tau}\gamma^*F}$ vanishes on $\tau_0$ by construction. Thus, since the isomorphisms $\gamma: \tau_0 \to \tau$ in $\mathcal{F}_k$ can be identified with elements of $g \in G$, we have
$$
\mathfrak{j}^{\sigma'}_* F|_{\tau_0} = \sum_{g \in G} \overline{\delta_{\tau}g^*F}  = \mathfrak p_*F\, , 
$$
which is the required agreement.
\end{proof}

%

\begin{definition}
Given $F \in \sf{sPP}(\tau)$, we define $\mathfrak j_*F \in \sf{sPP}(\Sigma_X)$ to be $\mathfrak j_*^{\sigma'}F$ on those stacky cones $\sigma'$ of $\Sigma_X$ 
which contain $\sigma$ as a face  and $0$ on the other stacky cones of $\Sigma_X$. 
\end{definition}



\begin{example} \label{Exa:jpushforward}
For $(X,D) = (\oM_{g,n}, \partial \oM_{g,n})$, consider the cone $\sigma_{\Gamma_0} \subseteq \Sigma_X$ associated to the stable graph $\Gamma_0$ with one vertex of genus $0$ and precisely $g$ loops. Then $\tau=\mathbb{R}_{\geq 0}^g$ carries the strict piecewise polynomial $F=1$.  Then, the 
function{\footnote{The function was explained to us by D. Ranganathan.}} 
$$\mathfrak j_*F \in \sf{sPP}(\Sigma_X)$$
is given as follows:
\begin{itemize}
    \item on cones $\sigma_\Gamma$ where $\Gamma$ has a vertex of positive genus, the function $\mathfrak j_*F$ vanishes (since these do not contain $\sigma_{\Gamma_0}$),
    \item on the other cones $\sigma_\Gamma$, where $\Gamma$ has first Betti number equal to $g$, we have
    $$
    \mathfrak j_*F = g! \cdot \sum_{\substack{T \subseteq \Gamma\\\text{spanning tree}}} \prod_{e \in E(\Gamma) \setminus E(T)} \ell_e\,.
    $$
    Indeed, in this case the set $\mathcal{F}_g$ corresponds to the set of graph morphisms $\Gamma \to \Gamma_0$\edits{, modulo the equivalence relation identifying morphisms which differ by flipping some of the loops (see Example \ref{Exa:Mbar_monodromy_first})}. Every such morphism precisely contracts a spanning tree $T \subseteq \Gamma$, and for a given tree $T$ there are $g!$ such \emph{equivalence classes of} morphisms. 
    \edits{Indeed, these just correspond to the $g!$ bijections from the $g$ edges $f$ of $\Gamma_0$ to the noncontracted edges $e \in E(\Gamma) \setminus E(T)$ of $\Gamma$. }
    \qedhere
\end{itemize}
\end{example}

\subsubsection{Commutation}
The
explicit correspondence  of Section \ref{excor}  
requires the following basic commutation result.

\begin{lemma}\label{lem:compare_decorated_strata_and_sPP}
Let $S$ be a stratum of $X$ of codimension $k$ 
with branch set $B_S$ and 
 monodromy torsor  $$p:P \to \widetilde{S}\, .$$
 Then the following diagram commutes:
\begin{equation}\label{dddiii}
 \begin{tikzcd}
  \bb Q[B_S] \arrow[r, "\mathfrak j_*"]\arrow[d, "c_1(N)"]  & \sf{sPP}(\Sigma_X) \arrow[d, "\Phi"] \\
  \sf{CH}^*(P) \arrow[r, "j_*"] & \sf{CH}^*(X). 
\end{tikzcd}
\end{equation}
%
%
%
%
%
\end{lemma}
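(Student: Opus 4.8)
The plan is to verify the identity $\Phi(\mathfrak j_* F) = j_*(F(c_1(N)))$ directly, reducing it to a local self-intersection computation on a simplicial model and then accounting for the monodromy. All four arrows of \eqref{dddiii} are $\mathbb Q$-linear ($c_1(N)$ is even a ring homomorphism, and $\mathfrak j_*$ is linear by its definition as a sum over $\mathcal F_k$), so it suffices to treat a single monomial $F = \prod_{b \in B_S} b^{m_b} \in \mathbb Q[B_S] = \sPPoly{\tau}$. Moreover, both composites factor through $G$-symmetrization: combinatorially, $\mathfrak j_* F$ depends on $F$ only through $\mathfrak p_* F = \delta_\tau \sum_{g \in G} g^* F$ by \eqref{pFpF}; geometrically, each deck transformation $g$ of the torsor $p\colon P \to \widetilde S$ satisfies $j \circ g = j$ and $g^* N_b = N_{g^{-1}b}$, so $j_* = j_* g^*$ forces $j_*(F(c_1(N)))$ to depend only on $\tfrac{1}{|G|}\sum_g g^* F$. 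Hence I may assume $F$ is $G$-invariant and work with the symmetrized monomial.

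Next I would reduce the left-hand side to the explicit divisor rule. Since $\Phi$ is compatible with subdivisions of $\Sigma_X$ and the associated log modifications, I would choose a simplicial refinement $q\colon \widetilde X \to X$ and compute $\Phi(\mathfrak j_* F) = q_*\,\Phi(q^* \mathfrak j_* F)$. On the simplicial fan the corresponding map $\Phi$ on $\widetilde X$ is the unique multiplicative extension of the assignment sending the piecewise linear function of a ray to the corresponding boundary divisor (steps (i)--(ii) of the construction of $\Phi$ in \eqref{eq:Phi_def}, in analogy with \cite{Payne2006EquivariantChow}). Thus $\Phi(\mathfrak j_* F)$ becomes an explicit $q_*$-pushforward of a monomial in the boundary divisors of $\widetilde X$ indexed by the rays lying over $\sigma$. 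The characterization of $\mathfrak j_* F$ as the unique element of the filtration $D^k$ agreeing with $\mathfrak p_* F = \delta_\tau\sum_g g^* F$ on $\sigma$ and vanishing on the remaining $k$-dimensional faces guarantees that this class is supported on the strict transform of $\overline S$.

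For the right-hand side I would compute $j_*(F(c_1(N)))$ through the factorization $j = \epsilon \circ p$ and the projection formula. After passing to a strict \'etale chart that trivializes the monodromy, $G$ becomes trivial, $P$ is identified with the closed stratum $S = \bigcap_{b} D_b$, and $c_1(N_b) = j^*[D_b]$. Because the branches form a transverse normal-crossings configuration, $j_*(1) = \prod_b [D_b]$ and the projection formula yields $j_*\big(\prod_b c_1(N_b)^{m_b}\big) = \prod_b [D_b]^{m_b + 1}$ locally; this matches $\Phi$ of $\prod_b b^{m_b+1} = \delta_\tau \prod_b b^{m_b} = \mathfrak p_* F$ (in the monodromy-free case), since $\Phi$ sends each ray variable $b$ to $[D_b]$ and is multiplicative. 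Descending via the $G$-action on $\mathbb Q$-Chow then reinstates the symmetrization that was already built into $\mathfrak j_*$.

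The main obstacle is globalizing this local identity, since pushforwards are not local: I must control the contributions of deeper strata and of the stacky/monodromy structure uniformly over all cones $\sigma' \supseteq \sigma$ of $\Sigma_X$. Concretely, the argument hinges on showing that the set $\mathcal F_k$ of $k$-dimensional faces of $\tau \times_{\sigma'} \tau'$ — a disjoint union of $G$-torsors — matches the components of $q^{-1}(\overline S)$ together with the deck group of $p\colon P \to \widetilde S$, so that the combinatorial sum $\sum_{g \in G}$ reproduces exactly the averaging/automorphism factor in $\epsilon_* p_*$, and the factor $\delta_\tau$ reproduces the transverse fundamental class $[\overline S]$. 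Verifying that these combinatorial multiplicities (the $2^g\,g!$-type factors visible in Example \ref{Exa:jpushforward}) coincide with the intersection-theoretic ones across all $\sigma'$ is the technical heart of the proof, and where the careful matching of the face bookkeeping to the excess-intersection geometry must be carried out.
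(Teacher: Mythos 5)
Your symmetrization reduction and your local simple-normal-crossings computation are sound, and they coincide with Step I of the paper's proof ($j_*F(c_1(N)) = \left(\prod_b [D_b]\right)\cdot F([D_b]) = \Phi(\delta_\tau F)$ when the monodromy is trivial). The genuine gap is in how you globalize. You propose to compute $j_*(F(c_1(N)))$ after ``passing to a strict \'etale chart that trivializes the monodromy'' and then to descend ``via the $G$-action on $\mathbb{Q}$-Chow.'' But an equality of classes in $\mathsf{CH}^*(X)$ cannot be verified \'etale-locally on a general log smooth stack $X$: rational equivalence is not an \'etale-local condition, and pullback along the relevant charts (\'etale neighbourhoods of the stratum, which are neither finite nor surjective over $X$) is far from injective on Chow groups. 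For a \emph{finite} \'etale cover, $\mathbb{Q}$-coefficients would rescue injectivity, but the covers trivializing the branch monodromy around $\overline{S}$ are not finite over $X$, and no descent mechanism is supplied. This is exactly the difficulty the paper's proof is organized around: its Step II transports the entire diagram along the smooth map $t_X \colon X \to \mathcal{A}_X$ to the Artin fan --- where the normal bundles and the monodromy torsor are pulled back from $\mathcal{A}_X$, where $\Phi$ is an isomorphism, and where, crucially, the Chow ring \emph{does} satisfy strict \'etale descent because it is identified with the sheaf of strict piecewise polynomials (Lemma \ref{lem:chow_is_sheaf_on_AF}). Only after this reduction is an \'etale-local verification legitimate. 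Your proposal contains no substitute for this step; moreover it conflates two different covers, since the map you call $q$ is a simplicial subdivision in your second paragraph (used to evaluate $\Phi$) but must be an \'etale cover in your last paragraph for ``$q^{-1}(\overline{S})$'' and its components to have anything to do with $\mathcal{F}_k$ --- in the paper, $\mathcal{F}_k$ indexes the components of $P\times_{\mathcal{A}_{\sigma'}}\mathcal{A}_{\tau'}$ for the \'etale cover $\mathcal{A}_{\tau'}\to\mathcal{A}_{\sigma'}$, not anything attached to a subdivision.

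Second, you correctly identify the monodromy bookkeeping --- matching $\mathcal{F}_k$ with the components of the preimage of $P$, with compatible normal bundles and multiplicities --- as the technical heart, but you then defer it rather than carry it out. That step is the actual content of the paper's Step III: the isomorphism $Q = P\times_{\mathcal{A}_{\sigma'}}\mathcal{A}_{\tau'} \cong \coprod_{(\widetilde{\tau},\gamma)\in\mathcal{F}_k} \overline{T}_{\widetilde{\tau}}$, the identification $p_{(\widetilde{\tau},\gamma)}^* N_b = N_{(\widetilde{\tau},\gamma)(b)}$, and a cube diagram whose back-to-front arrows are injective, which is what lets the simple-normal-crossings case imply the stacky one. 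So as written, your argument is an outline whose two essential ingredients --- a valid descent device and the executed face/component matching --- are respectively missing and postponed; with the Artin-fan reduction added and the bookkeeping carried out, it would essentially become the paper's proof.
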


\begin{proof}\leavevmode
The argument is carried out in three steps.

\vspace{10pt}
\noindent \textbf{Step I: The simple normal crossings case. }
\vspace{10pt}

We first treat the case where $(X,D)$ is a scheme with simple normal crossings and $\bar{S}$ is the intersection of the irreducible components $D_b$ corresponding to the branches $b \in B_S$. 
The closure $\bar{S}$ is then normal, the monodromy group $G$ is trivial, and $N_b = \mathcal{O}(D_b)|_{\bar{S}}$. 
The commutation of \eqref{dddiii} is then easily
verified:
\begin{align*}
j_*F(c_1(N))&= j_*j^*F(D_b : b \in B_S) = [\bar S]\cdot  F(D_b : b \in B_S) \\&= \left(\prod_{b \in B_S} D_b \right) \cdot  F(D_b : b \in B_S)  = \Phi(\mathfrak{j}_* F)\,. 
\end{align*}

\vspace{10pt}
\noindent \textbf{Step II: Reduction of
the general case to $X = \ca A_{\sigma'}$ a stacky Artin cone. }
\vspace{10pt}

We can move the issue to the Artin fan $\mathcal{A}_X$ of $X$. There is a Cartesian diagram
\[
\begin{tikzcd}
P \ar[r,"j"] \ar[d,"t_P"] & X \ar[d,"t_X"] \\ \mathcal{P} \ar[r, "\mathcal{A}_j"] & \mathcal{A}_X 
\end{tikzcd}
\]
where $t_X$ is smooth, and $\mathcal{P}$ is the monodromy torsor over the normalization of the closure of the stratum $t_X(S) \subset \mathcal{A}_X$. Furthermore,
$$
N_{\mathcal{P}/\mathcal{A}_X} = \oplus_{b \in B_S} \mathcal{N}_b
$$
and $N_{P/X} = t_P^*N_{\mathcal{P}/\mathcal{A}_X}$ compatibly with the splittings of $N_{P/X}$ and $N_{\mathcal{P}/\mathcal{A}_X}$. Then,
$$
j_*(F(c_1(N)) = j_*(t_P^*F(c_1(\mathcal{N})) = t_X^* \mathcal{A}_{j*}F(c_1(\mathcal{N})). 
$$
So, instead of proving the commutation of \eqref{dddiii},
we may instead prove the commutation of:
\begin{equation*}
 \begin{tikzcd}
  \bb Q[B_S] \arrow[r, "\mathfrak j_*"]\arrow[d, "c_1(N)"]  & \sf{sPP}(\Sigma_X) \arrow[d, "\Phi"] \\
  \sf{CH}^*(\mathcal{P}) \arrow[r, "\mathcal{A}_{j_*}"] & \sf{CH}^*(\mathcal{A}_X). 
\end{tikzcd}
\end{equation*}
Furthermore, for the Artin fan, 
$$
\Phi\colon \mathsf{sPP}(\Sigma_X) \to \mathsf{CH}^*(\mathcal{A}_X) 
$$   
is an isomorphism \cite{Molcho2021-Hodgebundle}, so we
drop $\Phi$ from the notation. 

Because the Chow ring of an Artin fan satisfies \'etale descent (see Lemma \ref{lem:chow_is_sheaf_on_AF} below), we may prove the desired equalities \'etale locally on $X = \mathcal{A}_X$. Since passing to a Zariski open does not affect the monodromy or the normal bundle, we may further assume that $$X = \mathcal{A}_{\sigma'}\, ,$$ 
where $\Sigma_X = \sigma'$  with $\sigma \subset \sigma'$. 

\vspace{10pt}
\noindent \textbf{Step III: Reduction to simple normal crossings. }
\vspace{10pt}

\edits{A worked example for this step of the proof can be found in Remark \ref{rk:proof_example}. }

Choose an \'etale cover $\mathcal{A}_{\tau'}$ of $\mathcal{A}_{\sigma'}$ with $\tau' = \mathbb{R}_{\ge 0}^{k'}$. The diagram commutes for $X = \mathcal{A}_{\tau'}$ by Step I.
To prove commutativity for $X = \ca A_{\sigma'}$, we must
keep track of how passing from $\ca A_{\sigma'}$ to $\ca A_{\tau'}$ affects the monodromy. 

Consider the fiber diagram of stacks
\[
\begin{tikzcd}
Q \ar[r,"h"] \ar[d,"p"] & \mathcal{A}_{\tau'} \ar[d,"q"] \\ P \ar[r,"j"] & \mathcal{A}_{\sigma'}
\end{tikzcd}
\]
where $Q\stackrel{\sim}{=} P \times_{\mathcal{A}_{\sigma'}} \mathcal{A}_{\tau'}$. Let $\widetilde \tau$ be a cone in $\tau'$ that maps isomorphically to $\sigma \subset \sigma'$. Geometrically, such a cone corresponds to a stratum $T_{\widetilde \tau} \subset \mathcal{A}_{\tau'}$ that maps to $S$, or, equivalently, to a connected component of the preimage of $S$ in $\mathcal{A}_{\tau'}$. By  definition, $P$ is a $G$-torsor over the normalization $\widetilde{S}$ of the closure $\overline{S}$ of $S$. Since $q$ is \'etale, the pullback of $\widetilde{S}$ to $\mathcal{A}_{\tau'}$ is the normalization of the union of the closures $\overline T_{\widetilde \tau}$ of the $T_{\widetilde \tau}$. Since $\mathcal{A}_{\tau'}$ is simple normal crossings, the latter is simply the disjoint union of the closures $\overline T_{\widetilde \tau}$. Furthermore, as $\mathcal{A}_{\tau'}$ has no monodromy, the monodromy torsor of each $\overline T_{\widetilde \tau}$ is $\overline T_{\widetilde \tau}$ itself, and the pullback of $P$ to $\mathcal{A}_{\tau'}$ is a trivial $G$-torsor. We conclude that $Q$ is a disjoint union
$$
Q \cong \coprod_{(\widetilde \tau,\gamma) \in \mathcal{F}_k} Q_{(\widetilde \tau,\gamma)} 
$$
with isomorphic connected components $Q_{(\widetilde \tau,\gamma)} \cong \overline T_{\widetilde \tau}$. 

Let $B_{\widetilde \tau}$ be the set of branches of the divisor in $\mathcal{A}_{\tau'}$ cutting out $T_{\widetilde \tau}$. By construction, the group $G$ acts on $B_{\widetilde \tau}$, and the set of $G$-equivariant bijections between $B_{S}$ and $B_{\widetilde \tau}$ is a $G$-torsor, naturally identified with the torsor of pairs $({\widetilde \tau},\gamma) \in \mathcal{F}_k$.  Since $q$ is \'etale, we find that 
$$
p^*N_{P/X} = N_{Q/\mathcal{A}_{\tau'}}\, .
$$
Writing $p_{({\widetilde \tau},\gamma)}$ for the restriction of $p$ to $Q_{({\widetilde \tau},\gamma)}$, we have that for each $b \in B_S$, 
$$
p_{({\widetilde \tau},\gamma)}^*N_{b} = N_{({\widetilde \tau},\gamma)(b)} 
$$
where we have written $({\widetilde \tau},\gamma)(b)$ for the image of $b$ under the bijection $$B_S \to B_{\widetilde \tau}$$
corresponding to $({\widetilde \tau},\gamma)$. Therefore, for any polynomial in $F \in \mathbb{Q}[B_S]$, an element $({\widetilde \tau},\gamma) \in \ca{F}_k$ determines a polynomial $\gamma^*F \in \mathbb{Q}[B_{\widetilde \tau}]$, and the pullback of $j_*F(c_1(N_{P/X}))$ to $\mathcal{A}_{\tau'}$ is the sum
$$
\sum_{({\widetilde \tau},\gamma)\in \mathcal{F}_k} \gamma^*F(c_1(N_{\overline{T}_{\widetilde \tau}/\mathcal{A}_{\tau'}}))\, .
$$
By Step I, the corresponding polynomial on $\mathcal{A}_{\tau'}$ is
$$
\sum_{({\widetilde \tau},\gamma) \in \mathcal{F}_k} \overline{\delta_{{\widetilde \tau}} \gamma^*F} \, ,
$$
which is precisely the polynomial on $\mathcal{A}_{\tau'}$ that descends to $\mathfrak{j}_*F$ on $X$. 

If we denote the projection $Q \to \coprod_{\widetilde \tau} \overline{T}_{\widetilde \tau}$ by $\pi$, the inclusion $\overline{T}_{\widetilde \tau} \to \mathcal{A}_{\tau'}$ by $j_{\widetilde \tau}$, and the homomorphism $\mathbb{Q}[B_{\widetilde \tau}] \to \mathsf{sPP}(\tau')$ by $(\mathfrak{j}_{\widetilde \tau})_*$, we have proven  that the following diagram commutes: 
\[
\begin{tikzcd}[row sep=2.5em]
\mathbb{Q}[B_S] \arrow[rr,"\mathfrak{j}_*"] \arrow[dr,"\sum_{({\widetilde \tau},\gamma)} \gamma^*F"] \arrow[dd,swap,"c_1(N_{P/X})"] &&
\mathsf{sPP}(\sigma') \arrow[dd,swap,"\Phi" near start] \arrow[dr,"q^*"] \\
& \oplus_{\widetilde \tau} \mathbb{Q}[B_{\widetilde \tau}] \arrow[rr,crossing over,"\sum_{\widetilde \tau} (\mathfrak{j}_{\widetilde \tau})_*" near start] &&
  \mathsf{sPP}(\tau') \arrow[dd,"\Phi"] \\
\mathsf{CH}(P) \arrow[rr,"j_*" near end] \arrow[dr,swap,"\pi_*p^*"] && \mathsf{CH}(X) \arrow[dr,swap,"q^*"] \\
& \oplus_{\widetilde \tau} \mathsf{CH}(\overline{T}_{\widetilde \tau}) \arrow[rr,"\sum_{\widetilde \tau} (j_{\widetilde \tau})_*"] \arrow[uu,<-,crossing over,"\oplus_{\widetilde \tau} c_1(N_{\overline{T}_{\widetilde \tau}/\mathcal{A}_{\tau'}})" near end]&& \mathsf{CH}(\mathcal{A}_{\tau'})
\end{tikzcd}
\]
Here, the (direct) sums are taken over the set of cones $\widetilde \tau$ in $\tau'$ that map isomorphically to $\sigma \subset \sigma'$. 
Since we know the commutation for the front face of the cube, and all arrows from the back face to the front face are injective, the commutation
for the back face follows. 
\end{proof}

\begin{lemma}\label{lem:chow_is_sheaf_on_AF}
\edits{Let $\ca A_X$ be an Artin fan, and write $(\ca A_X)_{et}$ for
the small strict \'etale site of $\ca A_X$.}  Then the functor 
\begin{equation*}
    \Chow^*\colon (\ca A_X)_{et}^\op \to \bb Q\cat{Alg}
\end{equation*}
is a sheaf. 
\end{lemma}

\begin{proof}The result is a rephrasing of \cite[Theorem 14]{Molcho2021-Hodgebundle} which shows that the Chow ring of the Artin fan coincides with the algebra of \edits{strict} piecewise polynomials. 
\edits{The point is that this identification also holds for every strict \'etale cover of $\ca A_X$. More precisely, since for every strict \'etale map $U \to \ca A_X$, $U$ is itself an Artin fan, \cite[Theorem 14]{Molcho2021-Hodgebundle} shows that  }
\begin{equation*}
\Chow^*(U)=
\sf{sPP}(U). 
\end{equation*}
\edits{The result is proven in \cite{Molcho2021-Hodgebundle} for the case where $\ca A_X$ is the canonical Artin fan of a variety $X$, but all that is actually used is that the cone stack $\ca A_X$ can be stratified by a finite number of strata, each isomorphic to $B(\mathbb{G}_m^k \rtimes G)$ for nonnegative integers $k$ and finite group $G$. Since $\sf{sPP}$ is by construction a strict \'etale sheaf, the result follows.}
\end{proof}

\begin{remark}\label{rk:proof_example}
\edits{To aid the reader in following the proof above, we work through the later steps in the special case where $X = \Mbar_{1,2}$ and $S = B \bb Z/2$ is the closed stratum corresponding to a cycle of two projective lines. Step II reduces us to the case where $X = \ca A_{\sigma'}$ is the Artin fan 
$$\left[\frac{\Spec \bb C[\bb N^2]}{\Spec \bb C [\bb Z^2] \rtimes \bb Z/2}\right]$$
and $S = \widetilde S$ is the (stacky) closed point of $X$.  We choose 
$$\ca A_{\tau'} = \left[\frac{\Spec \bb C[\bb N^2]}{\Spec \bb C [\bb Z^2] }\right], $$
 so that $P$ is the closed point of $\ca A_{\tau'}$, and $Q$ can be identified with the disjoint union of two copies of $P$:}
 \[
\begin{tikzcd}
Q = B \mathbb{G}_m^2 \sqcup B \mathbb{G}_m^2 \arrow[d] \arrow[rr] &  & {[\mathbb{A}^2 / \mathbb{G}_m^2]} = \mathcal{A}_{\tau'} \arrow[d] \\
P = B \mathbb{G}_m^2 \arrow[r] & B \mathbb{G}_m^2 \rtimes \mathbb{Z}/2 \arrow[r]  & {[\mathbb{A}^2 / \mathbb{G}_m^2\rtimes \mathbb{Z}/2]}= \mathcal{A}_{\sigma'}. 
\end{tikzcd}
\]\edits{We find that $\sigma = \sigma'$ and $\tau = \tau'$ because we study a closed stratum. 
The diagram}
\[
 \begin{tikzcd}
  \bb Q{[B_S]} \arrow[rr, "\mathfrak j_*"]\arrow[d, "c_1(N)"]  & & \sf{sPP}(\Sigma_X) \arrow[d, "\Phi"] \\
  \sf{CH}^*(P) \arrow[r, "p_*"] & \sf{CH}^*(\widetilde S) \arrow[r] & \sf{CH}^*(X). 
\end{tikzcd}
\]
\edits{specializes to}
\[
 \begin{tikzcd}[column sep=4cm]
  \bb Q{[x,y]} \arrow[rr, "{f(x,y)\mapsto xy(f(x,y)+f(y,x))}"]\arrow[d, "\mathrm{id}"]  & & \bb Q{[x,y]}^{\mathbb{Z}/2} \arrow[d, "\mathrm{id}"] \\
  \bb Q{[x,y]} \arrow[r, "{f(x,y) \mapsto f(x,y)+f(y,x)}"] &  \bb Q{[x,y]}^{\mathbb{Z}/2} \arrow[r, "{g(x,y) \mapsto xyg(x,y)}"] & \bb Q{[x,y]}^{\mathbb{Z}/2}
\end{tikzcd}
\]
\end{remark}

\subsubsection{Explicit correspondence}\label{excor}

We return to our main case of interest: strict piecewise polynomials on the stack of log curves $\mathfrak{M}_g$.

\vspace{8pt}
\noindent $\bullet$ Let $\Gamma$ be a graph corresponding to a cone $\sigma_\Gamma$. The rays of $\sigma_\Gamma$ are identified with the edges $E(\Gamma)$ of $\Gamma$. The corresponding stratum \edits{$\mathfrak{M}^\Gamma \subseteq \mathfrak{M}_g$} is given by the immersion
\begin{equation}\label{eq:stratum}
    \edits{\mathfrak{M}^\Gamma = }\prod_{v \in V(\Gamma)}\mathfrak M^{sm}_{g(v), H(v)}/\on{Aut}(\Gamma) \to \mathfrak{M}_g\, ,
\end{equation}
where $H(v)$ is the set of half-edges attached to the vertex $v$, see \cite[Proposition 2.5]{BaeSchmitt}. Define
\begin{equation*}
\mathfrak M_\Gamma = \prod_{v \in \V(\Gamma)} \mathfrak M_{g(v), H(v)}\, . 
\end{equation*}
The normalization of the closure of the stratum is given by 
\begin{equation*}
\mathfrak M_\Gamma/\on{Aut}_\Gamma \to \mathfrak{M}_g\, , 
\end{equation*}
and the universal monodromy torsor is given by
\begin{equation} \label{eqn:monodromy_torsor_Mgamma}
\mathfrak M_\Gamma\edits{/\on{Aut}_\Gamma^\textup{loop}}\to \mathfrak M_\Gamma/\on{Aut}_\Gamma \, , 
\end{equation}
\edits{where as in Example \ref{Exa:Mbar_monodromy_first} the group $\Aut_\Gamma^\textup{loop} \subseteq \Aut_\Gamma$ is the subgroup of automorphisms of $\Gamma$ acting as the identity on $E(\Gamma)$. By the same argument as in Example \ref{Exa:Mbar_monodromy_first}, we see that the monodromy group $G_\Gamma \subseteq \mathrm{Sym}(E(\Gamma))$ of the stratum $\mathfrak{M}^\Gamma$ is the quotient
\[
G_\Gamma = \Aut_\Gamma / \on{Aut}_\Gamma^\textup{loop}\,.
\]
Then \eqref{eqn:monodromy_torsor_Mgamma} is precisely the $G_\Gamma$-monodromy torsor over the normalization of the strata closure for $\mathfrak{M}^\Gamma$.

For the calculations below, it will be more convenient to compose the monodromy torsor \eqref{eqn:monodromy_torsor_Mgamma} with the additional \'etale cover $\mathfrak M_\Gamma \to \mathfrak M_\Gamma/\on{Aut}_\Gamma^\textup{loop}$ since that composition induces} the familiar map
\begin{equation*}\label{eq:j_Gamma}
j_\Gamma\colon \mathfrak M_\Gamma \to \mathfrak{M}_g\, . 
\end{equation*}

\vspace{8pt}
\noindent $\bullet$ 
Let $d\colon E(\Gamma) \to \bb Z_{\ge 0}$. Define a polynomial on the set of edges of $\Gamma$: 
\begin{equation*}
F = \prod_{e \in E(\Gamma)} e^{d_e} \in \bb Q[E(\Gamma)], 
\end{equation*}

\noindent Then, $\mathfrak j_*F$  is the sPP function on $\mathfrak M$ defined as follows: for a stable graph $\Gamma'$, the value on the cone $\sigma_{\Gamma'}$ is given by the formula
$$
\edits{\frac{1}{|\Aut_\Gamma^\textup{loop}|}}\sum_{f: \Gamma' \to \Gamma} \prod_{e \in E(\Gamma)} \ell_{f(e)}^{d_e + 1}\,.
$$
As in Example \ref{Exa:jpushforward}, the graph morphisms $f : \Gamma' \to \Gamma$ precisely correspond to the elements $(\widetilde \tau, \gamma)$ that we sum over in the definition of $\mathfrak j_*^{\sigma_{\Gamma'}}$.
\edits{The factor $\frac{1}{|\Aut_\Gamma^\textup{loop}|}$ arises since again we identify morphisms whose actions on vertices and edges agree. The equivalence classes of this relation are precisely the orbits of the free action of $\Aut_\Gamma^\textup{loop}$ on the set of morphisms $\Gamma' \to \Gamma$.}

The above formula automatically vanishes outside the star of the cone of $\Gamma$, since \edits{for any cone $\sigma_{\Gamma'}$ outside this star, there exist no} graph morphisms $\Gamma' \to \Gamma$ \edits{and thus the above sum is empty}. 
Define the class
\begin{equation*}
\mathsf{m} = \prod_{e = (h, h') \in E(\Gamma)} (-\psi_h - \psi_{h'})^{d_e} \in \sf{CH}^*(\Mbar_\Gamma)\, . 
\end{equation*}
\begin{proposition}
\label{lem:comparing_graph_sum_and_PP}
The following explicit correspondence holds:
$$\edits{\frac{1}{|\Aut_\Gamma^\textup{loop}|} \cdot}j_{\Gamma,*}(\mathsf{m}) = \Phi(\mathfrak j_*F)\,.$$
\end{proposition}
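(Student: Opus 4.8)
The plan is to recognize Proposition~\ref{lem:comparing_graph_sum_and_PP} as a direct instance of the commutation Lemma~\ref{lem:compare_decorated_strata_and_sPP}, applied to $(X,D) = (\mathfrak{M}_g, \partial\mathfrak{M}_g)$, once the abstract data $(B_S, P, j, c_1(N))$ appearing there is matched with the concrete boundary geometry indexed by $\Gamma$. First I would identify the stratum: the cone $\sigma_\Gamma$ corresponds to the stratum $S = S_\Gamma \subset \mathfrak{M}_g$ whose generic point parametrizes curves of topological type $\Gamma$, of codimension $k = |E(\Gamma)|$. The branch set $B_S$ is canonically the edge set $E(\Gamma)$, and the monodromy group $G$ acting on $B_S$ is $\on{Aut}(\Gamma)$. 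Comparing with the discussion preceding the map $j_\Gamma$ above, the normalization of $\overline{S}$ is $\mathfrak{M}_\Gamma/\on{Aut}(\Gamma)$, the monodromy torsor is $p\colon P = \mathfrak{M}_\Gamma \to \mathfrak{M}_\Gamma/\on{Aut}(\Gamma)$, and the composite $j = j_\Gamma\colon \mathfrak{M}_\Gamma \to \mathfrak{M}_g$ is exactly the gluing map. Under the identification $\bb{Q}[B_S] = \bb{Q}[E(\Gamma)]$, the polynomial $F = \prod_{e} e^{d_e}$ of the Proposition is precisely the input to $\mathfrak{j}_*$ in the Lemma, so that $\Phi(\mathfrak{j}_* F)$ is the right-hand side of both statements.

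Next I would compute $c_1(N)(F)$. By construction $c_1(N)\colon \bb{Q}[B_S] \to \sf{CH}^*(P)$ is the $\bb{Q}$-algebra map sending a branch $b \in B_S$ to $c_1(N_b)$, where $p^*N_\epsilon = \oplus_{b} N_b$ is the splitting of the pulled-back normal bundle of $j_\Gamma$. For the branch corresponding to an edge $e = (h,h')$, the summand $N_b$ is the normal line to the associated boundary divisor, i.e.\ the smoothing line of that node, whose first Chern class is the standard self-intersection value $c_1(N_b) = -\psi_h - \psi_{h'}$ on $\mathfrak{M}_\Gamma$. Consequently the map $c_1(N)$ carries $F$ exactly to the class $\mathsf{m}$ of the Proposition:
$$
c_1(N)(F) = \prod_{e = (h,h') \in E(\Gamma)} (-\psi_h - \psi_{h'})^{d_e} = \mathsf{m} \in \sf{CH}^*(\mathfrak{M}_\Gamma)\, .
$$
Applying Lemma~\ref{lem:compare_decorated_strata_and_sPP} to the stratum $S_\Gamma$ then gives $j_*\big(c_1(N)(F)\big) = \Phi(\mathfrak{j}_* F)$, and substituting the two identifications $j = j_\Gamma$ and $c_1(N)(F) = \mathsf{m}$ yields $j_{\Gamma,*}(\mathsf{m}) = \Phi(\mathfrak{j}_* F)$, as claimed.

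The only genuinely geometric input is the normal-bundle computation $c_1(N_b) = -\psi_h - \psi_{h'}$; everything else is bookkeeping. Accordingly, the hard part will not be any computation but rather checking that the four-way identification is consistent and $\on{Aut}(\Gamma)$-equivariant: that an edge $e$ of $\Gamma$, the corresponding node of the universal curve over $\mathfrak{M}_\Gamma$, the branch $b$ of $\partial\mathfrak{M}_g$, and the unordered pair of half-edges $\{h,h'\}$ all match up, so that the assignment $e \mapsto -\psi_h - \psi_{h'}$ descends correctly and commutes with the $\on{Aut}(\Gamma)$-action used to form $\mathfrak{j}_*$. This is the step I would take the most care over, since it is exactly the compatibility that allows the abstract decorated-strata/piecewise-polynomial dictionary of Lemma~\ref{lem:compare_decorated_strata_and_sPP} to specialize to the familiar $\psi$-decorated stratum classes $j_{\Gamma,*}(\mathsf{m})$ on $\mathfrak{M}_g$.
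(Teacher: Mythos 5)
Your proposal is correct and takes essentially the same approach as the paper: the paper's proof likewise applies Lemma~\ref{lem:compare_decorated_strata_and_sPP} to the stratum \eqref{eq:stratum}, identifies $B_S$ with $E(\Gamma)$ and $j$ with $j_\Gamma$, and reduces everything to the equality $\mathsf{m} = F(c_1(N))$ in $\mathsf{CH}^*(\mathfrak{M}_\Gamma)$, which it declares clear. Your explicit identification $c_1(N_b) = -\psi_h - \psi_{h'}$ for the branch attached to an edge $e=(h,h')$ simply spells out that final step.
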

\begin{proof}
We apply Lemma \ref{lem:compare_decorated_strata_and_sPP}
where 
$S\edits{=\mathfrak{M}^\Gamma}$ is the stratum of 
$X = \mathfrak M_g$
defined by \eqref{eq:stratum}.
Then, $B_S$ is naturally in bijection with $E(\Gamma)$. We view $F$ as an element
of the top left corner of \eqref{dddiii}. 
We must show that
$$j_{\Gamma,*}(\mathsf{m}) 
= \edits{|\Aut_\Gamma^\textup{loop}|\cdot} j_*(F(c_1(N)))\, . $$
Since $j\colon P \edits{= \mathfrak M_\Gamma/\on{Aut}_\Gamma^\textup{loop}} \to X = \mathfrak{M}_g$ \edits{fits in a diagram}
\[
\begin{tikzcd}
\mathfrak{M}_\Gamma \arrow[r,"j'"] \arrow[rr, bend right, "j_\Gamma"] & \mathfrak{M}_\Gamma/\on{Aut}_\Gamma^\textup{loop} \arrow[r, "j"] & \mathfrak{M}_g
\end{tikzcd}\,
\]
we need only show
$$\mathsf{m} = \edits{(j')^*} F(c_1(N))\in \mathsf{CH}(\mathfrak M_\Gamma)\, ,$$ 
which is clear. 
\end{proof}



\edits{
\begin{example}
We write the correspondence of Proposition \ref{lem:comparing_graph_sum_and_PP} explicitly in the case of the moduli space $\Mbar_{1,2}$. For more details of the computation (with illustrations), we refer the reader to \cite[Chapter 2]{rosa_phd_thesis}. The strict piecewise polynomials on $\Mbar_{1,2}$ can be identified with the subring of the polynomial ring 
\begin{equation*}
    \bb Q[x, y, z]
\end{equation*}
consisting of those polynomials which are symmetric under swapping $x$ and $y$. Here, $z$ is the strict piecewise linear function associated to the boundary divisor with a separating node, and $x$ and $y$ are the two branches of the divisor of curves with a non-separating node at its self-intersection point. Figure \ref{fig:Phi_strata_Mbar12} gives the correspondence between strict piecewise polynomials and fundamental classes of strata closures.
\begin{figure}
    \centering
\begin{tikzpicture}
    \node (T) at (0, 0) {$\Gamma$};
    
    \node[circle, draw] (A1) at (2, 0) {1};
    \draw (A1) -- ++(135:0.5) node[above left] {1};
    \draw (A1) -- ++(45:0.5) node[above right] {2};

    \begin{scope}[xshift=-0.5cm]
    \node[circle, draw] (A2) at (4, 0) {1};
    \node[circle, draw] (A3) at (5, 0) {0};
    \draw (A2) -- (A3);
    \draw (A3) -- ++(45:0.5) node[above right] {1};
    \draw (A3) -- ++(-45:0.5) node[below right] {2};
    \end{scope}
    
    \node[circle, draw] (A4) at (7, 0) {1};
    \draw (A4) to[out=135,in=-135,looseness=8] (A4);
    \draw (A4) -- ++(45:0.5) node[above right] {1};
    \draw (A4) -- ++(-45:0.5) node[below right] {2};

    \begin{scope}[xshift=0.25cm]
    \node[circle, draw] (A5) at (9, 0) {1};
    \node[circle, draw] (A6) at (10, 0) {0};
    \draw (A5) -- (A6);
    \draw (A5) to[out=135,in=-135,looseness=8] (A5);
    \draw (A6) -- ++(45:0.5) node[above right] {1};
    \draw (A6) -- ++(-45:0.5) node[below right] {2};
    \end{scope}
    
    \node[circle, draw] (A7) at (12, 0) {0};
    \node[circle, draw] (A8) at (13, 0) {0};
    \draw (A7) to[out=45,in=135] (A8);
    \draw (A7) to[out=-45,in=-135] (A8);
    \draw (A7) -- ++(135:0.5) node[above left] {1};
    \draw (A8) -- ++(45:0.5) node[above right] {2};
    
    \node (f) at (0, -2) {$f_\Gamma$};
    \node (one) at (2, -2) {$1$};
    \node (z) at (4, -2) {$z$};
    \node (x_plus_y) at (7, -2) {$x + y$};
    \node (xy_z) at (9.6, -2) {$(x + y)z$};
    \node (xy) at (12.5, -2) {$xy$};
\end{tikzpicture}
    \caption{A list of stable graphs $\Gamma$ for $\Mbar_{1,2}$ and the corresponding strict piecewise polynomials $f_\Gamma$ such that $\Phi(f_\Gamma)$ is the fundamental class of the associated stratum closure in $\Mbar_{1,2}$}
    \label{fig:Phi_strata_Mbar12}
\end{figure}
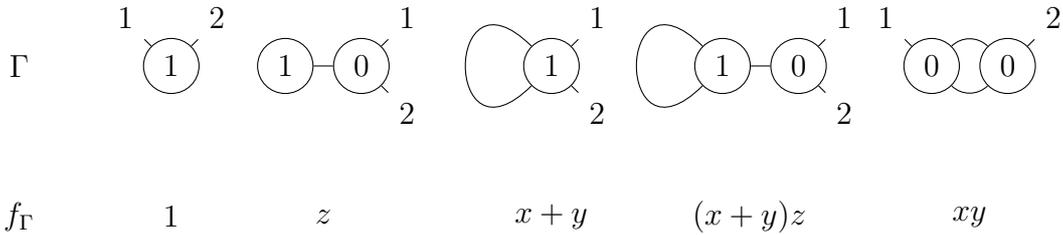
\end{example}}
\subsection{Pixton's formula in terms of piecewise polynomials}

\label{sec:classical_DR_in_PP_language}
\subsubsection{Contributions}
We translate here 
the formula of \cite{BHPSS} for the universal double ramification cycle into the language of piecewise polynomials.
The formula is written in the
Chow cohomology of the universal
Picard stack 
$\Picabs$ of
degree 0 line bundles over the stack of genus $g$ 
curves $\mathfrak{M}_g$.
The result will be used in Section \ref{pprr22} to prove our formula for the logarithmic double ramification cycle. 
 

Let $\edits{\pi\colon}C \to S$ be a log curve of genus $g$, and 
let $\mathcal L$ be a line bundle on $C$
of degree 0. The double ramification cycle $$\DR \in \Chow^g(\Picabs )$$ can be 
naturally expressed
in terms of strict piecewise polynomials on $S$. We require
the following two constructions:
\begin{enumerate}
\item[$\bullet$] Let $\eta^\Picabs = \pi_*(c_1(\ca L)^2) \in \Chow^1(\Picabs )$. 
\item[$\bullet$]
For every positive integer $r$, let $\mathrm{Cont}^r$ be
 the strict piecewise power series
on $S$ defined on the  cone associated to a graph $\Gamma$ of genus $g$ by:
\begin{equation}\label{eq:basic_dr_formula}
\mathrm{Cont}^r_\Gamma = \sum_{w} r^{-h^1(\Gamma)}\prod_{e \in E(\Gamma)} \exp\left(\frac{\overline{w}(\vec e)\overline{w}(\cev e)}{2} \ell_e\right) \ \in\  \bb Q[[\ell_e: e \in E(\Gamma)]]\, .
\end{equation}
The sum runs over {\em admissible weightings} mod $r$: 
flows $w$ with values in $\mathbb{Z}/r\mathbb{Z}$ 
such that
\[
\mathrm{div}(w) = {\mdeg}(\mathcal{L}) \in (\mathbb{Z}/r\mathbb{Z})^{V({\Gamma})}\, .
\]
Inside the exponential, 
$\overline{w}(\vec e)$ and $\overline{w}(\cev e)$
denote the unique representative of $w(\vec e)  \in \mathbb{Z}/r\mathbb{Z}$
and $w(\cev e)  \in \mathbb{Z}/r\mathbb{Z}$
in $\{0, \ldots, r-1\}$.

\end{enumerate}

\begin{remark}\label{rk:poly_as_sPP}
To make sense of the contribution
formula 
\eqref{eq:basic_dr_formula}, we must show that the collection of power series $\mathrm{Cont}^r_\Gamma$ yields a strict piecewise power series on 
the cone stack of $S$. 
The issue is about the compatibility of the formula on
different cones $\Gamma$.

An \'etale specialization of geometric points of $\Picabs $,
$$\phi\colon t \mapsto s\, ,$$
yields a natural contraction map $\Gamma_s \to \Gamma_t$,
an inclusion $E(\Gamma_t) \hookrightarrow E(\Gamma_s)$ of
edges, and an injective map of rings 
\begin{equation*}
    P_\phi\colon \bb Q[[\ell_e \colon e \in E(\Gamma_t)]] \to \bb Q[[\ell_e \colon e \in E(\Gamma_s)]].
\end{equation*}
Suppose for every geometric point $s$ in
$\Picabs$, 
we are given a power series 
$$f^s \in \bb Q[\edits{[}\ell_e \colon e \in E(\Gamma_s)\edits{]}]\, .$$
If, for every \'etale specialization $\phi\colon t \mapsto s$, we have 
\begin{equation}\label{eq:specialization_compatibility}
    P_\phi(f^t) = f^s\big|_{\{\ell_e=0 \, |\,  e\notin E(\Gamma_t)\}}\, ,
\end{equation}
then the collection of power series $\{ f_s \}$ glue to a global strict piecewise power series on the cone stack of $S$. 
Every truncation of the collection of
power series $\{f^s\}$ can be viewed as a 
strictly piecewise polynomial on $\Picabs$.

If the $f^s$ are compatible under graph contractions and graph automorphisms, then the compatibility
\eqref{eq:specialization_compatibility} is satisfied over $\Picabs$.
For the  power series $\mathrm{Cont}^r$ of  \eqref{eq:basic_dr_formula},
these two compatibilities hold \edits{(see \cref{lem:contraction_invariance} below)}.
\end{remark}

\begin{lemma}\label{lem:contraction_invariance}
Formula \eqref{eq:basic_dr_formula} for $\mathrm{Cont}^r$
is compatible with respect to  graph automorphisms and contractions. 
\end{lemma}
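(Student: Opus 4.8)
The statement asks us to verify the two compatibilities demanded in Remark \ref{rk:poly_as_sPP}: that the collection $\{\mathrm{Cont}^r_\Gamma\}$ of \eqref{eq:basic_dr_formula} is invariant under graph automorphisms and behaves correctly under edge contractions in the sense of \eqref{eq:specialisation_compatibility}. The plan is to dispatch the automorphism invariance immediately and then concentrate on the contraction relation, which carries all the content. For automorphisms, an automorphism $\sigma$ of $\Gamma$ permutes $V(\Gamma)$ and $E(\Gamma)$ while preserving $\mdeg(\mathcal{L})$, so it carries admissible weightings $w$ (flows mod $r$ with $\mathrm{div}(w)=\mdeg(\mathcal{L})$) bijectively to admissible weightings $\sigma^*w$ and simultaneously permutes the variables $\ell_e$. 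Since \eqref{eq:basic_dr_formula} is a sum over \emph{all} admissible $w$ of a product indexed by the edges, it is manifestly fixed by the induced action on $\bb Q[[\ell_e]]$, giving the invariance.

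For the contraction compatibility, I would let $\Gamma_s \to \Gamma_t$ contract the edge subset $E_0 = E(\Gamma_s)\setminus E(\Gamma_t)$, so the vertices of $\Gamma_t$ are the connected subgraphs $S_v \subseteq \Gamma_s$ spanned by the $E_0$-edges. Restricting $\mathrm{Cont}^r_{\Gamma_s}$ to $\ell_e = 0$ for $e \in E_0$ replaces each contracted exponential factor by $1$, leaving $r^{-h^1(\Gamma_s)}\sum_{w}\prod_{e\in E(\Gamma_t)}\exp(\tfrac{1}{2}\overline{w}(\vec e)\overline{w}(\cev e)\,\ell_e)$. As this product depends only on the restriction $w' = w|_{E(\Gamma_t)}$, I would group the sum by $w'$, writing it as $r^{-h^1(\Gamma_s)}\sum_{w'}N(w')\prod_{e\in E(\Gamma_t)}\exp(\cdots)$, where $N(w')$ counts the admissible $w$ on $\Gamma_s$ restricting to $w'$.

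The heart of the argument is then to show that each such $w'$ is itself admissible on $\Gamma_t$ and that $N(w')$ is independent of $w'$, equal to $r^{h^1(\Gamma_s)-h^1(\Gamma_t)}$. Admissibility of $w'$ follows by summing $\mathrm{div}(w)=\mdeg(\mathcal{L})$ over the vertices of $\Gamma_s$ lying in a single $S_v$: the internal $E_0$-edges cancel in antisymmetric pairs, and compatibility of the multidegree under contraction gives $\mathrm{div}(w')(v)=\mdeg(\mathcal{L})(v)$ on $\Gamma_t$. For the count, extending a fixed $w'$ over the $E_0$-edges amounts, within each $S_v$, to choosing a $\mathbb{Z}/r\mathbb{Z}$-flow with a prescribed divergence at the internal vertices; the consistency condition that the total divergence vanishes holds by the same summation, so the extensions form a torsor under $H_1(S_v;\mathbb{Z}/r\mathbb{Z})$ and number $r^{h^1(S_v)}$. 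Multiplying over $v$ and using the identity $\sum_v h^1(S_v) = |E_0| - |V(\Gamma_s)| + |V(\Gamma_t)| = h^1(\Gamma_s)-h^1(\Gamma_t)$ yields $N(w')=r^{h^1(\Gamma_s)-h^1(\Gamma_t)}$, which exactly converts the prefactor $r^{-h^1(\Gamma_s)}$ into $r^{-h^1(\Gamma_t)}$ and recovers $\mathrm{Cont}^r_{\Gamma_t}$, as required by \eqref{eq:specialisation_compatibility}.

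The main obstacle is the bookkeeping in this fiber count together with the verification that the Euler-characteristic identity precisely matches the two $r$-power normalizations; this is the same mechanism that underlies the $r$-linearity of Pixton's formula and mirrors the computation in \cite[Appendix]{JPPZ}, so beyond careful combinatorics I expect no genuine difficulty. It remains only to note, at the very end, that these identities hold \emph{before} taking the $r\to 0$ polynomial substitution, hence pass to $\mathrm{Cont}_{(\widehat\Gamma,D,I)}$ as well.
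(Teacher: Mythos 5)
Your proof is correct, but it takes a genuinely different route from the paper's. The paper first observes that a general contraction factors as a composition of single-edge contractions, and then checks compatibility edge by edge in two trivial cases: for a non-loop edge, admissible weightings on $\Gamma$ and $\Gamma'$ are in natural bijection (the value on the contracted edge is forced by the divergence condition at one of its endpoints) and $h^1$ is unchanged; for a loop, each weighting on $\Gamma'$ has exactly $r$ lifts (assign $i$ and $r-i$ to the two half-edges) matching the drop $h^1(\Gamma') = h^1(\Gamma)-1$. You instead handle an arbitrary contraction in one step: you group admissible weightings on $\Gamma_s$ by their restriction $w'$ to the uncontracted edges, prove $w'$ is admissible on $\Gamma_t$ by summing the divergence condition over each contracted subgraph $S_v$ (internal edges cancel antisymmetrically, and multidegrees add under contraction), and count the fiber as a torsor under $\prod_v H_1(S_v;\mathbb{Z}/r\mathbb{Z})$, with the Euler-characteristic identity $\sum_v h^1(S_v) = h^1(\Gamma_s)-h^1(\Gamma_t)$ converting the normalizations. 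The trade-off: the paper's reduction makes the lifting counts immediate but relies on the (easy but unstated in detail) factorization of contractions; your argument is uniform and self-contained but requires the standard facts that, on a connected graph, the mod-$r$ flows with zero divergence form $(\mathbb{Z}/r\mathbb{Z})^{h^1}$ and that every degree-zero mod-$r$ divisor is a divergence — both of which you invoke correctly, though a one-line justification of the surjectivity of $\mathrm{div}$ onto degree-zero divisors would make the torsor nonemptiness airtight. Your automorphism argument coincides with the paper's, and your closing remark about the $r=0$ substitution is harmless but not needed for the lemma as stated, which concerns $\mathrm{Cont}^r$ for fixed $r$.
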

\begin{proof}
The claim for automorphisms is clear by definition (since only the structure of the graph $\Gamma$ is used).

Since a general contraction is a composition of  contractions of 
single edges, we need only study the contraction of a single edge $e$,
$$ \Gamma \to \Gamma'\,.$$
There are two cases:

\begin{enumerate}
\item[(i)] The edge $e$ is not a loop. Weightings on the contracted graph $\Gamma'$ are then naturally in bijection with weightings on $\Gamma$, and 
$h^1(\Gamma') = h^1(\Gamma)$.

\item[(ii)] The edge $e$ is a loop. Then, $h^1(\Gamma') = h^1(\Gamma) - 1$. For a given weighting on $\Gamma'$ there are exactly $r$ lifts to weightings on $\Gamma$ (obtained by assigning weights $i$ and $r-i$ to half edges of $e$ for $0 \le i < r$).  \edits{To specialise $\mathrm{Cont}^r_\Gamma$ to $\Gamma'$, we set $\ell_e = 0$. The result differs from the expression for $\mathrm{Cont}^r_{\Gamma'}$ in two ways: we sum over $r$ additional weightings, and we divide by $r$; these cancel exactly. }
\end{enumerate}
Therefore, in both cases,  compatibility \eqref{eq:specialization_compatibility} holds.
\end{proof}

\begin{lemma}
For truncations of a fixed degree
on quasi-compact opens of $S$, 
$\mathrm{Cont}^r$ is eventually polynomial in $r$. 
\end{lemma}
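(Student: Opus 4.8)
The plan is to reduce the statement, by quasi-compactness, to the eventual polynomiality of a single weighted sum attached to each graph, and then to recognise that sum as exactly the one treated in the appendix to \cite{JPPZ}.

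First I would use that $\mathrm{Cont}^r$ is defined cone by cone through the power series $\mathrm{Cont}^r_\Gamma$ of \eqref{eq:basic_dr_formula}. A quasi-compact open $U \subseteq S$ meets only finitely many strata, so only finitely many dual graphs $\Gamma$ occur over $U$, each carrying a fixed multidegree $\mdeg(\mathcal{L})$. A truncation of fixed degree $N$ is therefore determined by the finitely many truncations of the $\mathrm{Cont}^r_\Gamma$ to degree $\le N$, and it suffices to show that, for each such $\Gamma$ and each monomial $\prod_e \ell_e^{d_e}$ with $\sum_e d_e \le N$, the corresponding coefficient of $\mathrm{Cont}^r_\Gamma$, namely
\[
\Big(\prod_{e\in E(\Gamma)}\frac{1}{2^{d_e}\,d_e!}\Big)\, r^{-h^1(\Gamma)}\sum_{w}\ \prod_{e\in E(\Gamma)}\big(\overline{w}(\vec e)\,\overline{w}(\cev e)\big)^{d_e},
\]
is a polynomial in $r$ for $r$ large, the sum running over admissible weightings $w$ mod $r$. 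As the prefactor is independent of $r$, this is equivalent to the eventual polynomiality of $r^{-h^1(\Gamma)}\Sigma_r$, where $\Sigma_r = \sum_{w}\prod_e \big(\overline{w}(\vec e)\,\overline{w}(\cev e)\big)^{d_e}$.

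Next I would set up the count of weightings. Because $\deg \mathcal{L} = 0$, the divisor $\mdeg(\mathcal{L})$ has total degree $0$, so $\mathrm{div}(w) = \mdeg(\mathcal{L})$ in $(\mathbb{Z}/r\mathbb{Z})^{V(\Gamma)}$ is solvable; fixing one integral flow $w_0$ with $\mathrm{div}(w_0) = \mdeg(\mathcal{L})$, the admissible weightings are exactly the classes $w_0 + \lambda$ with $\lambda \in H_1(\Gamma;\mathbb{Z})/r\,H_1(\Gamma;\mathbb{Z})$, a set of cardinality $r^{h^1(\Gamma)}$ matching the normalising factor. Writing $a_e = \overline{w}(\vec e) \in \{0,\dots,r-1\}$ one has $\overline{w}(\vec e)\,\overline{w}(\cev e) = a_e(r-a_e)$, so $\Sigma_r$ becomes a sum over $\lambda \in H_1(\Gamma;\mathbb{Z})/r\,H_1(\Gamma;\mathbb{Z})$ of a product of the expressions $(a_e(r-a_e))^{d_e}$, which are polynomial in $r$ and in the residues $a_e$.

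Finally, the eventual polynomiality of $r^{-h^1(\Gamma)}\Sigma_r$ is precisely the combinatorial statement established in the appendix to \cite{JPPZ}. Concretely, choosing a spanning tree of $\Gamma$ parametrises the classes $\lambda$ by free residues $b_1,\dots,b_{h^1(\Gamma)} \in \{0,\dots,r-1\}$; the determined tree-edge representatives are then piecewise linear in the $b_i$ relative to the hyperplane arrangement recording where the governing linear forms cross multiples of $r$, so the summand is piecewise polynomial, and summing over each chamber via Faulhaber's formulas produces the desired polynomial in $r$ for $r$ large. The genuine obstacle is exactly the non-polynomial dependence of the representatives $\overline{w}(\vec e)$ on $w$, which is what forces the lattice-point summation of \cite{JPPZ} in place of a naive manipulation; the only property of the divisor used there is that it has total degree $0$, and since this holds for $\mdeg(\mathcal{L})$ their argument applies verbatim.
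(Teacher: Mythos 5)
Your proof is correct and takes essentially the same route as the paper: quasi-compactness is used to restrict attention to the finitely many dual graphs occurring over the open set, and for each fixed graph the eventual polynomiality in $r$ is delegated to the Ehrhart-type lattice-point argument of the appendix to \cite{JPPZ} (the paper cites \cite[Appendix A]{Janda2018Double-ramifica} for the same point). Your extra unpacking of that argument (fixing an integral flow $w_0$, parametrising admissible weightings by $H_1(\Gamma;\mathbb{Z})/r H_1(\Gamma;\mathbb{Z})$, spanning-tree coordinates, chamber decomposition) is accurate but carries no load beyond the citation, which is exactly how the paper proceeds.
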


\begin{proof}
For every graph $\Gamma$, the eventual polynomiality in $r$
of $\mathrm{Cont}^r_\Gamma$ is obtained from the theory of Ehrhart polynomials,
see \cite[Appendix A]{JPPZ}.
Quasi-compactness is require\edits{d} to ensure that only finitely many graphs
are considered.
\end{proof}

We define $\mathrm{Cont}$ to be the $r=0$ value of 
the polynomial determined by $\mathrm{Cont}^r$ for large $r$.
Every fixed degree truncation of
$\mathrm{Cont}$ is 
a piecewise polynomial on the cone stack of $\Picabs$ . 

\subsubsection{The universal \texorpdfstring{$\mathsf{DR}$}{DR} cycle}
\label{Pix-uni}
With these preparations in place, we define
\begin{equation*}
    \mathbf{P}_g = \exp\Big(-\frac{1}{2} \eta^\Picabs\Big) \cdot \Phi(\mathrm{Cont}) \in \sf{CH}^*(\Picabs )\, . 
\end{equation*}
On the other hand, 
a mixed degree operational Chow class $$\mathsf{P}_{g,\emptyset,0} \in \sf{CH}^*(\Picabs)$$ is defined in \cite{BHPSS} whose codimension $g$ part $\mathsf{P}_{g,\emptyset,0}^g$ computes $\DR$.
\begin{theorem}\label{thm:regularDR}
We have the equality
\begin{equation*}
  \mathbf{P}_g=  \mathsf{P}_{g,\emptyset,0} 
    \in \Chow^*(\Picabs)
    \,.
\end{equation*}
Hence, we have
$    \DR= \mathbf{P}^g_g$,
where $\mathbf{P}^g_g$ denotes the codimension $g$ part of 
$\mathbf{P}_g$. 
\end{theorem}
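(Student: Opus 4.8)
The plan is to recognize Theorem \ref{thm:regularDR} as a direct translation of the formula for $\mathsf{P}_{g,\emptyset,0}$ proven in \cite{BHPSS} into the language of strict piecewise polynomials, carried out through the explicit correspondence of Proposition \ref{lem:comparing_graph_sum_and_PP}. First I would recall the shape of Pixton's formula over $\Picabs$. Since the universal Picard stack sits over the stack of unmarked curves $\mathfrak M_g$, there are no legs, so the only Hodge-type contribution is the global class $\eta^\Picabs = \pi_*(c_1(\ca L)^2)$, and $\mathsf{P}_{g,\emptyset,0}$ is the value at $r=0$ (after Ehrhart interpolation in $r$) of
\[
\exp\!\big(-\tfrac12\eta^\Picabs\big)\cdot \sum_{\Gamma,w}\frac{r^{-h^1(\Gamma)}}{|\mathrm{Aut}(\Gamma)|}\, j_{\Gamma,*}\!\prod_{e=(h,h')\in E(\Gamma)}\frac{1-\exp\!\big(-\tfrac12\overline{w}(\vec e)\,\overline{w}(\cev e)\,(\psi_h+\psi_{h'})\big)}{\psi_h+\psi_{h'}}\,.
\]
By the projection formula and the fact that $j_\Gamma^*\eta^\Picabs$ is the vertexwise sum $\sum_v \pi_{v,*}(c_1(\ca L)^2|_{C_v})$, the global factor $\exp(-\tfrac12\eta^\Picabs)$ may be pulled out of the graph sum, matching the prefactor in the definition of $\mathbf P_g$. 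It then remains to identify $\Phi(\mathrm{Cont})$ with the edge-theoretic graph sum.

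The heart of the argument is an edge-by-edge comparison on each cone $\sigma_\Gamma$. Writing $W_e=\tfrac12\overline{w}(\vec e)\,\overline{w}(\cev e)$, the edge factor of $\mathrm{Cont}^r_\Gamma$ expands as $\exp(W_e\ell_e)=\sum_{m\geq 0}\tfrac{W_e^m}{m!}\ell_e^m$. Only monomials with every edge-exponent at least $1$ contribute genuinely on the interior of $\sigma_\Gamma$, and by Proposition \ref{lem:comparing_graph_sum_and_PP} the interior monomial $\prod_e\ell_e^{m_e}$ (all $m_e\geq 1$) maps under $\Phi$ to $j_{\Gamma,*}\prod_e(-\psi_h-\psi_{h'})^{m_e-1}$. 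Summing edgewise gives
\[
\Phi\Big(\sum_{m\geq 1}\tfrac{W_e^m}{m!}\ell_e^m\Big)=j_{\Gamma,*}\,\frac{\exp\!\big(-W_e(\psi_h+\psi_{h'})\big)-1}{-(\psi_h+\psi_{h'})}=j_{\Gamma,*}\,\frac{1-\exp\!\big(-W_e(\psi_h+\psi_{h'})\big)}{\psi_h+\psi_{h'}}\,,
\]
which is precisely the edge factor appearing in Pixton's formula. Taking the product over edges and the sum over admissible weightings $w$ identifies $\Phi(\mathrm{Cont}^r)$ with the edge part of the graph sum for each $r$. Since the assignment $\Phi\circ\mathfrak j_*$ is $\bb Q$-linear and commutes with the polynomial interpolation in $r$, substituting $r=0$ on both sides yields $\Phi(\mathrm{Cont})$ equal to the $r\to 0$ edge part, completing the identification $\mathbf P_g = \mathsf P_{g,\emptyset,0}$ and hence $\DR=\mathbf P_g^g$.

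The step I expect to be most delicate is the bookkeeping of automorphisms and monodromy, because the piecewise-polynomial side carries no explicit $1/|\mathrm{Aut}(\Gamma)|$. Here the map $\mathfrak j_*$ sums over all graph morphisms into $\Gamma$, equivalently over the monodromy torsor $\mathcal F_k$, and the factor $\delta_\tau$ together with this sum must be seen to reproduce exactly the automorphism weighting of the graph sum; this is precisely the content traced through Lemma \ref{lem:compare_decorated_strata_and_sPP}. The subtle cases are self-edges (loops) and parallel edges: I would check that the loop/non-loop contraction analysis underlying Lemma \ref{lem:contraction_invariance}, which governs the $r^{-h^1(\Gamma)}$ normalization, is compatible with the strata-class pushforward, so that the two formulations assign the same rational coefficient to each decorated stratum. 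Once this combinatorial matching is confirmed together with the global Hodge factor identification above, extracting the codimension $g$ part gives $\DR=\mathbf P_g^g$ by the main theorem of \cite{BHPSS}.
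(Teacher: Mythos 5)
Your proposal is correct and follows essentially the same route as the paper: both handle the global $\exp(-\tfrac12\eta^\Picabs)$ prefactor trivially, decompose $\mathrm{Cont}^r$ according to which monomials have every edge variable appearing with positive exponent (one piece per graph $\Gamma$), and then identify each piece with the corresponding graph-sum term via Proposition \ref{lem:comparing_graph_sum_and_PP} together with the substitution $\ell_e^{d}\mapsto(-\psi_{\vec e}-\psi_{\cev e})^{d-1}$, which turns $\exp\bigl(\tfrac{w(\vec e)w(\cev e)}{2}\ell_e\bigr)$ into Pixton's edge factor, before setting $r=0$. The automorphism bookkeeping you flag (the absence of an explicit $1/|\mathrm{Aut}(\Gamma)|$ on the piecewise-polynomial side being compensated by the sum over graph morphisms in $\mathfrak j_*$) is indeed where the paper's appeal to Lemma \ref{lem:compare_decorated_strata_and_sPP} does the work, so your identification of the delicate point matches the paper's structure.
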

\begin{proof}
By the definition of \cite{BHPSS},
$\mathsf{P}_{g,\emptyset,0}$ is given by the coefficient of $r^0$
of\footnote{The half-edges $h,h'$ appearing in the formula from \cite{BHPSS} naturally correspond to the directed edges $\vec e, \cev e$, and under this correspondence, the notions of admissible weightings mod $r$ likewise agree.}
\begin{align*} 
\exp\left(-\frac12 \eta^\Picabs \right) \sum_{
\substack{\Gamma\in \G_{g,0,0} \\
w\in \mathsf{W}_{\Gamma,r}}
}
\frac{r^{-h^1(\Gamma)}}{|\Aut(\Gamma)| }
\;
j_{\Gamma*}\Bigg[&
\prod_{e\in \E(\Gamma)}
\frac{1-\exp\left(-\frac{w(\vec e)w(\cev e)}2(\psi_{\vec e}+\psi_{\cev e})\right)}{\psi_{\vec e} + \psi_{\cev e}} \Bigg]\, . 
\end{align*}
Following the notation of \cite{BHPSS},
the above sum is over the set of all possible graphs
$\G_{g,0,0}$ for the universal Picard stack and over the
set $\mathsf{W}_{\Gamma,r}$
of all possible admissible weightings mod $r$ on $\Gamma$.

It suffices to show, for fixed $r$, that the class
$$\Phi(\mathrm{Cont}^r) =\Phi\left(\, 
\left\{ \sum_{w} r^{-h^1(\Gamma)}\prod_{e} \exp\left(\frac{w(\vec e)w(\cev e)}{2} \ell_e\right)\right\}_\Gamma \, \right)$$
equals the class
\begin{align*}
\sum_{
\substack{\Gamma \in \G_{g,0,0} \\
w\in \mathsf{W}_{\Gamma,r}}
}
\frac{r^{-h^1(\Gamma)}}{|\Aut(\Gamma)| }
\;
j_{\Gamma*}\Bigg[&
\prod_{e\in \E(\Gamma)}
\frac{1-\exp\left(-\frac{w(\vec e)w(\cev e)}2(\psi_{\vec e}+\psi_{\cev e})\right)}{\psi_{\vec e} + \psi_{\cev e}} \Bigg]\, .
\end{align*}

Consider a graph $\Gamma\in \G_{g,0,0}$. 
Let $\mathrm{Cont}^r(\Gamma)$ be the part of $\mathrm{Cont}^r$ containing those monomials in which {\em all} the $\ell_e$ for $e \in E(\Gamma)$ appear with {\em positive} exponent.\footnote{More precisely, for a graph $\Gamma\in \G_{g,0,0}$, we can describe the piecewise polynomial $\mathrm{Cont}^r(\Gamma)$ as follows. On the cone associated to a stable graph $\Gamma'$,
 $\mathrm{Cont}^r(\Gamma)$  is given by those monomial terms of $\mathrm{Cont}^r_{\Gamma'}$
 of \eqref{eq:basic_dr_formula} for which the contraction of all edges of $\Gamma'$ associated to variables $\ell_e$ which \emph{do not} appear in the monomial is a graph isomorphic to $\Gamma$.}
We easily see that
\begin{equation*}
\mathrm{Cont}^r = \sum_{\Gamma\in  \G_{g,0,0}     } \mathrm{Cont}^r(\Gamma)\, . 
\end{equation*}
\edits{Indeed, on the cone associated to the stable graph $\Gamma'$, for each monomial term $c_\alpha \ell^\alpha$ appearing in $\mathrm{Cont}^r$ let $\Gamma$ be the graph obtained from $\Gamma'$ by contracting all edges $e \in E(\Gamma')$ whose variable $\ell_e$ does not appear in the monomial $\ell^\alpha$. Then $c_\alpha \ell^\alpha$ appears in $\mathrm{Cont}^r(\Gamma)$ (and not in any other $\mathrm{Cont}^r(\Gamma'')$), and the above sum just expresses how the monomial terms of $\mathrm{Cont}^r$ are classified according to the possible contracted graphs $\Gamma$.}
We claim that the class $\Phi(\mathrm{Cont}^r(\Gamma))$ is exactly 
equal to 
\begin{align*} 
\sum_{
\substack{w\in \mathsf{W}_{\Gamma,r}}
}
\frac{r^{-h^1(\Gamma)}}{|\Aut(\Gamma)| }
\;
j_{\Gamma*}\Bigg[&
\prod_{e\in \E(\Gamma)}
\frac{1-\exp\left(-\frac{w(\vec e)w(\cev e)}2(\psi_{\vec e}+\psi_{\cev e})\right)}{\psi_{\vec e} + \psi_{\cev e}} \Bigg]\, . 
\end{align*}
The equality
follows from \edits{Proposition} \ref{lem:comparing_graph_sum_and_PP}.
\edits{Indeed, for the power series
\[
F_\Gamma = |\Aut_\Gamma^\textup{loop}| \cdot \sum_{
\substack{w\in \mathsf{W}_{\Gamma,r}}
}
\frac{r^{-h^1(\Gamma)}}{|\Aut(\Gamma)| }
\;
\Bigg[
\prod_{e\in \E(\Gamma)}
\frac{1-\exp\left(-\frac{w(\vec e)w(\cev e)}2e\right)}{e} \Bigg]\in \mathbb{Q}[[E(\Gamma)]] 
\] 
in formal variables $e$ (for $e \in E(\Gamma)$), we can check monomial by monomial that $\mathfrak{j}_* F_\Gamma = \mathrm{Cont}^r(\Gamma)$, in the notation from \cref{lem:comparing_graph_sum_and_PP}, whereas the $\psi$-monomial $\mathsf{m}_\Gamma$ on $\Mbar_\Gamma$ associated to $F_\Gamma$ is precisely the term appearing in the pushforward $j_{\Gamma*}$ above. The intuition here is} 
that replacing $\ell_e^d$ by $(-\psi_{\vec e} - \psi_{\cev e})^{d-1}$ everywhere in the expression $\exp(\frac{w(\vec e)w(\cev e)}{2} \ell_e)$ yields exactly
$$\frac{1-\exp\left(-\frac{w(\vec e)w(\cev e)}2(\psi_{\vec e}+\psi_{\cev e})\right)}{\psi_{\vec e} + \psi_{\cev e}}\, .$$
\edits{The factor $|\Aut_\Gamma^\textup{loop}|$ in the formula of $F_\Gamma$ precisely cancels the opposite factor appearing in Proposition \ref{lem:comparing_graph_sum_and_PP}.}
We see that the language of piecewise polynomials expresses Pixton's
formula more efficiently.
\end{proof}

\section{Theorems \ref{222ttt} and \ref{333ttt}} 
\label{pprr22}


\subsection{Families of curves}
\label{famcur}
Let $S$ be a log smooth log algebraic stack, let $C/S$ be a log curve of genus $g$, and let $\ca L$ be a line bundle on $C$ of degree 0.
We assume, in addition, that we have:
\begin{enumerate}
\item[(i)]
a log modification $\widehat S \to S$, 
\item [(ii)] a quasi-stable model
$\widehat C \to C \times_S \widehat S$,
\item[(iii)] $\alpha \in \ghost_{\widehat C}^\gp(\widehat C)$ a strict piecewise linear function (in the sense of Section \ref{sec:algebraizing_tropical_AJ}). 
\end{enumerate}

For the proofs of Theorems \ref{222ttt} and
\ref{333ttt}, the fundamental geometry is
the following. Recall that $A=(a_1,\ldots, a_n)$ is a vector
of integers summing to $k(2g - 2 + n)$. 
Let $C/S$ be the universal curve $\mathcal{C} \to \Mbar_{g,n}$
over the moduli space of stable curves
with markings $x_1,\ldots,x_n$, 
and let 
\begin{equation*}
    \ca L = (\omega_C^\log)^{\otimes k}\left(- \sum_{i=1}^n a_i x_i\right)\, .
\end{equation*}
The additional data is given by:
\begin{enumerate}
\item[(i)]
$\SSS = \Mbar_{g,A}^\theta\stackrel{\rho}{\longrightarrow} \Mbar_{g,n}$ 
for a small nondegenerate stability condition $\theta$, 
\item[(ii)]
$\CCC= \mathcal{C}^\theta \to \mathcal{C} \times_{\Mbar_{g,n}} \Mbar_{g,A}^\theta$
is the universal quasi-stable curve,
\item[(iii)] $\alpha=\alpha^\theta$ is the universal sPL function on
$\mathcal{C}^\theta$ and
satisfies $\ca L^\theta = \ca L( \alpha^\theta)$.
\end{enumerate}


\subsection{Geometric construction of a logarithmic class on \texorpdfstring{$S$}{S} }
\newcommand{\DRap}{\mathsf{DR}^\mathsf{approx}}
\newcommand{\DRapp}{\mathsf{DR}^\approx}
The line bundle $\LLL=\ca L(\alpha)$ on $\widehat C$ defines a map 
$$\phi_\LLL\colon \widehat S \to \Picabs\, ,$$ and, in turn, a class 
$$\DR(\LLL)= \phi_\LLL^*(\DR) \in \Chow^g(\widehat S) \subset \LogChow^g(S)\, .$$
To emphasise that we are considering 
$\DR(\LLL)$ 
as a logarithmic class on $S$, we use
the notation
\begin{equation*}
\DRapp_S(\widehat S, \LLL) =
\DR(\LLL)
\in \LogChow^g(S)\, , 
\end{equation*}
an approximation to the true logarithmic double ramification cycle $$\LogDR(\ca L) \in \LogChow^g(S)$$
of \cite{HolmesSchwarz}.

By Proposition \ref{lem:invariance_7_b}, a sufficient condition for the equality 
\begin{equation*}
\DRapp_S(\widehat S,\LLL) = \LogDR(\ca L) 
\end{equation*}
is that $(\widehat C/\widehat S,\LLL)$ is almost twistable. 

In Section \ref{formmm}, we give a formula for $\DRapp_S(\widehat S, \LLL)$ which applies regardless of whether the family is almost twistable. 
Applied with Theorem A, the formula yields Theorem \ref{222ttt}. Theorem \ref{333ttt} is obtained
from Pixton's relations in the universal setting
\cite{BHPSS}.

\subsection{The functions \texorpdfstring{$\DRpp$}{Pfrak} and \texorpdfstring{$\DRpl$}{Lfrak}}



We take a (strict) geometric point $\widehat s$ of $\widehat S$ mapping to $s$, a strict geometric point of $S$. Over these points,
we have a graph $\widehat \Gamma$ of $\widehat C$ contracting to a graph $\Gamma$ of $C$, a map of cones $$\on{Hom}(\ghost_{\widehat S, \widehat s}, \bb R_{\ge 0}) = \widehat \sigma \to \sigma_\Gamma = \on{Hom}(\ghost_{S, s}, \bb R_{\ge 0}) \, ,$$ and a sPL function $\alpha$ on $\widehat \Gamma$. 

\begin{remark}\label{rem:univ_version_of_setup}
In the geometric setting of Theorems \ref{222ttt} and \ref{333ttt} (as specified
at the end of Section \ref{famcur}),
$$\sigma_\Gamma = \bb R_{\ge 0}^{E(\Gamma)}\, ,$$
and the choice of the point $\widehat s$ determines a flow $I$. 
The cone $\widehat \sigma$ is then given by 
$$\widehat \sigma = \sigma_{\widehat \Gamma, I}$$
defined in \eqref{eq:tau_cone_def}. The sPL function $\alpha$ is uniquely determined by the conditions:
\begin{enumerate}
\item[$\bullet$] $\alpha(x_1) = 0$,
\item[$\bullet$] the slopes of $\alpha$ are equal to $I$, 
\end{enumerate}
see \eqref{eq:universal_alpha}. In particular,  $\alpha$ here exactly matches \eqref{xsse3}. 
\end{remark}


We resume the general case, continuing to view the sPL function $\alpha \in \ghost_{\widehat C}(\widehat C)$ as a function $$V(\widehat \Gamma) \to \ghost_{\widehat S, \widehat s}^\gp\, ,$$ following Section \ref{sec:algebraizing_tropical_AJ}. Let 
$${\mdeg}(\LLL): V(\widehat{\Gamma}) \to \mathbb{Z}$$
be the multidegree.
We have a length function 
$$\ell\colon E(\widehat \Gamma) \to \ghost_{\widehat S, \widehat s}\, , $$
and seeing elements of $\ghost_{\widehat S, \widehat s}$ as strict piecewise linear functions on $\widehat{\sigma}$, we can define a power series on $\widehat{\sigma}$ by
\begin{equation}\label{eq:f1r}
\mathrm{Cont}^r_{\widehat{\sigma}} = \sum_{w} r^{-h^1(\widehat{\Gamma})}\prod_{e \in E(\widehat{\Gamma})} \exp\left(\frac{\overline{w}(\vec e)\overline{w}(\cev e)}{2} \ell_e\right) \ \in\  \bb Q[[\ell_e: e \in E(\widehat{\Gamma})]]\, .
\end{equation}
The sum runs over {\em admissible weightings} mod $r$: 
flows $w$ with values in $\mathbb{Z}/r\mathbb{Z}$ 
such that
\[
\mathrm{div}(w) =  {\mdeg}(\LLL)\in (\mathbb{Z}/r\mathbb{Z})^{V({\widehat{\Gamma}})}\, .
\]
Inside the exponential, 
$\overline{w}(\vec e), \overline{w}(\cev e) \in \{0, \ldots, r-1\}$
denote the unique representative of 
$w(\vec e), w(\cev e)  \in \mathbb{Z}/r\mathbb{Z}$
respectively.




\begin{definition} [The function $\DRpp$]
As explained in Section \ref{sec:classical_DR_in_PP_language}, the formula \eqref{eq:f1r} is
polynomial in $r$ for sufficiently large $r$. 
The function $\DRpp$, a strict piecewise power series on the cone
complex of $\widehat{S}$, is defined by the
the $r=0$
specialization of \edits{the polynomial in $r$ defined by} formula \eqref{eq:f1r}.
\end{definition}


\begin{definition}[The function $\DRpl$]
Following \eqref{eq:L_formula}, we define a sPL function
$\DRpl$ on the cone stack of $\widehat S$. 
On $\widehat{\sigma}$, the definition is
\begin{equation} \label{exxxx}
\DRpl = \sum_{v \in V(\widehat \Gamma)} 
\mdeg(\ca L \otimes \widehat{\ca L})(v) \cdot 
\alpha(v)\, , 
\end{equation}
where{\footnote{Over $\widehat{S}$, we follow the convention
that $\ca L$ denotes the pullback of $\ca L$ from $C$
to $\widehat{C}$.}} 
$\on{deg}(\ca L \otimes \widehat{\ca L})(v)$ 
is an integer and $\alpha(v) \in \ghost_{\widehat S, \widehat s}^\gp$ is viewed as a linear function on $\widehat\sigma$. We extend the expression
\eqref{exxxx} to a strict piecewise linear function
on the cone stack of $\widehat{S}$ using the method of Remark \ref{rk:poly_as_sPP}. 
\end{definition}

\subsection{The formula}
\label{formmm}
Application of the map $\Phi$ of \eqref{eqn:Phimapintro} to $\DRpp$ and $\DRpl$
yields classes
$$\Phi(\DRpp), \Phi(\DRpl)  \in \LogChow^*(S).$$
Let the logarithmic class
$
\eta  \in \LogChow^1(S)
$
be the image of
$$\eta=\pi_*(c_1(\ca L)^2) \in \Chow^1(\widehat S)$$
under the inclusion
$\Chow^*(\widehat S) \subset \LogChow^*(S)$.

\begin{definition}[The class $\mathbf{P}^{\widehat{S}}_{g,\widehat{\ca L}}\, $] The logarithmic class associated to $(\widehat{C}/\widehat{S}, \widehat{L})$
by Pixton's formula is defined by:
\begin{equation*}
\mathbf{P}^{\widehat{S}}_{g,\widehat{\ca L}}
= \exp\Big(-\frac{1}{2} (\eta + \Phi(\DRpl))\Big)\cdot \Phi(\DRpp) \in \LogChow^*(S)\, . 
\end{equation*}
Let 
$\mathbf{P}^{g,\widehat{S}}_{g,\widehat{\ca L}}$
be the codimension $g$ part of $\mathbf{P}^{\widehat{S}}_{g,\widehat{\ca L}}$ ,
$$
\mathbf{P}^{g,\widehat{S}}_{g,\widehat{\ca L}} \in \LogChow^\edits{g}(S)\, .
$$
\end{definition}

Following Section \ref{Pix-uni},
 the mixed degree operational Chow class $$\mathsf{P}_{g,\emptyset,0} \in \sf{CH}^*(\Picabs)$$ from \cite{BHPSS} has codimension $g$ part $\mathsf{P}_{g,\emptyset,0}^g$ which computes the universal $\DR$
 class.

\begin{theorem}\label{thm:DRapprox_formula}
We have
\begin{equation} \label{eqn:Pgapprox}
    \phi_{\widehat{\ca L}}^*(\mathsf{P}_{g,\emptyset,0}) =
    \mathbf{P}^{\widehat{S}}_{g,\widehat{\ca L}}
    \in \mathsf{logCH}^*(S)\,.
\end{equation}
In particular, we have
\begin{equation} \label{eqn:DRapprox}
\DRapp_S(\widehat S, \LLL) =
\mathbf{P}^{g,\widehat{S}}_{g,\widehat{\ca L}} \in \LogChow^\edits{g}(S)\, .
\end{equation}
\end{theorem}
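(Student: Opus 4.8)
The plan is to reduce \eqref{eqn:Pgapprox} to Theorem \ref{thm:regularDR}, which identifies $\mathsf P_{g,\emptyset,0}$ with $\mathbf P_g = \exp(-\tfrac12\eta^{\Picabs})\cdot\Phi(\mathrm{Cont})$ in $\Chow^*(\Picabs)$. Since $\phi_{\widehat{\ca L}}^*$ is a ring homomorphism commuting with the truncated exponential, pulling back gives
$$\phi_{\widehat{\ca L}}^*(\mathsf P_{g,\emptyset,0}) = \exp\Big(-\tfrac12\,\phi_{\widehat{\ca L}}^*\eta^{\Picabs}\Big)\cdot \phi_{\widehat{\ca L}}^*\Phi(\mathrm{Cont})\, .$$
Comparing with the definition of $\mathbf P^{\widehat{S}}_{g,\widehat{\ca L}}$, it suffices to prove the two identities
$$\phi_{\widehat{\ca L}}^*\Phi(\mathrm{Cont}) = \Phi(\DRpp)\, , \qquad \phi_{\widehat{\ca L}}^*\eta^{\Picabs} = \eta + \Phi(\DRpl) \quad\text{in } \LogChow^*(S)\, .$$
Equation \eqref{eqn:DRapprox} then follows by extracting the codimension $g$ part and recalling $\DRapp_S(\widehat{S},\LLL) = \phi_{\LLL}^*\DR = \phi_{\LLL}^*(\mathbf P_g^g)$.

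For the first identity I would invoke naturality of $\Phi$ with respect to the log map $\phi_{\widehat{\ca L}}$: the induced map of cone stacks $\Sigma_{\widehat{S}}\to\Sigma_{\Picabs}$ records, over a geometric point $\widehat s$ with quasi-stable graph $\widehat\Gamma$, the multidegree $\mdeg(\widehat{\ca L}) = D$ together with the edge-length map $\widehat\ell = \widehat\ell(\ell)$ of Claim 2 (Lemma \ref{Dcomment-I-should-try-to-prove-these-claims}). Because $\Phi$ intertwines pullback of (piecewise) polynomials with Chow pullback, one has $\phi_{\widehat{\ca L}}^*\Phi(\mathrm{Cont}) = \Phi(\mathrm{Trop}(\phi_{\widehat{\ca L}})^*\mathrm{Cont})$. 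On the cone $\sigma_{\widehat\Gamma,I}$ the pulled-back contribution is $\mathrm{Cont}^r_{\widehat\Gamma}$ of \eqref{eq:f1r} after the substitution $\widehat\ell = \widehat\ell(\ell)$ and $r=0$, since the admissible weightings on $\widehat\Gamma$ (those $w$ with $\mathrm{div}(w) = \mdeg(\widehat{\ca L}) = D$) are exactly those summed over in $\DRpp$. This is precisely the defining formula for $\DRpp$.

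The second identity is where the real work lies. Writing $\widehat{\ca L} = \ca L(\alpha) = \ca L\otimes\cO(\alpha)$ and expanding,
$$\phi_{\widehat{\ca L}}^*\eta^{\Picabs} = \pi_*\big(c_1(\ca L(\alpha))^2\big) = \pi_*(c_1(\ca L)^2) + 2\pi_*\big(c_1(\ca L)\,c_1(\cO(\alpha))\big) + \pi_*\big(c_1(\cO(\alpha))^2\big)\, ,$$
where the first term is $\eta$. The key input is the pushforward formula, valid for any line bundle $\ca M$ on $\widehat C$,
$$\pi_*\big(c_1(\cO(\alpha))\cdot c_1(\ca M)\big) = \Phi\Big(\sum_{v\in V(\widehat\Gamma)} \deg(\ca M|_{C_v})\,\alpha(v)\Big)\, ,$$
the relative analogue of the degree-one identity $c_1(\cO(\alpha)) = \Phi(\alpha)$ recorded after Definition \ref{def: associatedbundle}; I would prove it by the projection formula after reducing to the total-space intersections $[\widehat C_v]\cdot[\widehat C_w]$, which push forward to boundary classes $\Phi(\alpha(v))$ with no $\psi$-correction in this codimension. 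Applying it with $\ca M=\ca L$ and $\ca M=\cO(\alpha)$ and using $\mdeg\cO(\alpha) = \mdeg(\widehat{\ca L})-\mdeg(\ca L)$ yields
$$2\pi_*(c_1(\ca L)c_1(\cO(\alpha))) + \pi_*(c_1(\cO(\alpha))^2) = \Phi\Big(\sum_v \mdeg(\ca L\otimes\widehat{\ca L})(v)\,\alpha(v)\Big) = \Phi(\DRpl)\, ,$$
matching \eqref{exxxx}. The main obstacle is establishing this pushforward formula cleanly in the log-smooth generality of Section \ref{famcur} — in particular checking that $\Phi$ commutes with fiberwise integration $\pi_*$ and that the node-intersection combinatorics produce exactly the claimed boundary classes — together with the naturality of $\Phi$ under $\phi_{\widehat{\ca L}}$ used above. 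Once both identities are in hand, combining them gives \eqref{eqn:Pgapprox}, and \eqref{eqn:DRapprox} follows by taking codimension $g$ parts.
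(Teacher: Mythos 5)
Your proposal is correct and is essentially the paper's own proof: both use Theorem \ref{thm:regularDR} to reduce \eqref{eqn:Pgapprox} to the two identities $\phi_{\LLL}^*\Phi(\mathrm{Cont}) = \Phi(\DRpp)$ (which the paper treats as definitional, via the same naturality of $\Phi$ you invoke) and $\phi_{\LLL}^*\eta^{\Picabs} = \eta + \Phi(\DRpl)$, and your pushforward formula for $\pi_*\bigl(c_1(\ca M)\cdot c_1(\cO(\alpha))\bigr)$ is exactly the paper's key identity \eqref{eq:annoying}, proved there by the same reduction to a vertical prime divisor over a prime divisor of $\widehat S$ that you sketch. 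The only cosmetic difference is that the paper applies \eqref{eq:annoying} once with $\ca F = \ca L\otimes\LLL$, whereas you apply it twice (to $\ca L$ and to $\cO(\alpha)$) and recombine the multidegrees.
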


\begin{proof}
On $\widehat{S}$, we have the classes 
\begin{equation*}
    \eta = \pi_*(c_1(\ca L)^2) \;\;\; \text{and}\;\;\; 
    \widehat{\eta} = \pi_*(c_1(\LLL)^2)\, , 
\end{equation*}
which are related by
\begin{equation*}
\begin{split}
        \widehat{\eta} & = \pi_*(c_1(\LLL)^2) \\
        & = \pi_*(c_1(f^*\ca L)^2) + 2\pi_*(c_1(f^*\ca L) \cdot c_1(
    \ca O(\alpha))) + \pi_*(c_1(\ca O(\alpha))^2)\\
     & = \eta + \pi_*(c_1(\ca L \otimes \LLL) \cdot c_1(\ca O(\alpha)))\, . 
    \end{split}
\end{equation*}
Next, for every line bundle  $\ca F$ on $\widehat C$ we claim that
\begin{equation}\label{eq:annoying}
    \pi_*(c_1(\ca F) \cdot c_1(\ca O(\alpha))) = \Phi\Big( \sum_{v \in \widehat V} (\mdeg\ca F)(v)\cdot  \alpha(v)\Big)\, ,  
\end{equation}
To  prove formula \eqref{eq:annoying}, we first reduce to the case where $c_1(\ca O_C(\alpha))$ is a vertical prime divisor $D$ on $\widehat C$ lying over a prime divisor $\pi(D)$ on $\widehat S$.  The left side yields $\pi(D)$ times the degree of $F$ on the generic fiber of $D \to \pi(D)$, which is exactly what is computed by the right side. 

Applying \eqref{eq:annoying} yields
$\widehat{\eta} = \eta + \Phi(\DRpl)$. By Theorem \ref{thm:regularDR}, we have{\footnote{Here, $\eta^\Picabs$ denotes the universal class on $\Picabs$ from Section \ref{sec:classical_DR_in_PP_language}.}}
\begin{align*}
    \phi_{\widehat{\ca L}}^*(\mathsf{P}_{g,\emptyset,0}) &=
\phi_{\LLL}^*\left(\exp\Big(-\frac{1}{2} \eta^\Picabs\Big) \cdot \Phi(\mathrm{Cont})\right)
\\
& = \exp\Big(-\frac{1}{2} \widehat{\eta}\, \Big) \cdot \phi_\LLL^* \Phi(\mathrm{Cont}) \\
& = \exp\Big(   -\frac{1}{2} (\eta + \Phi(\DRpl))  \Big) \cdot \phi_\LLL^* \Phi(\mathrm{Cont}) 
\end{align*}
By definition,  $\DRpp$ 
exactly matches the piecewise power series $\phi_\LLL^* \Phi(\mathrm{Cont})$.
Hence,
 $$\phi_{\widehat{\ca L}}^*(\mathsf{P}_{g,\emptyset,0}) =
\exp\Big(-\frac{1}{2} (\eta + \Phi(\DRpl))\Big)\cdot \Phi(\DRpp) \in \LogChow^*(S)\, ,$$
which proves \eqref{eqn:Pgapprox}. Equation \eqref{eqn:DRapprox}
follows from the codimension $g$ part of \eqref{eqn:Pgapprox}
together with \cite{BHPSS}.
\end{proof}

\subsection{Proofs of Theorems \ref{222ttt} and \ref{333ttt}}
\begin{proof}[Proof of Theorem \ref{222ttt}]
The result follows immediately by specializing Theorem \ref{thm:DRapprox_formula} to the fundamental geometry
specified in Section \ref{famcur}.
By Theorem \ref{111ooo}, the left  side of equation \eqref{eqn:DRapprox} specializes to $\mathsf{logDR}_{g,A}$, and the right side of equation \eqref{eqn:DRapprox} specializes
specializes to 
${\mathbf{P}}_{g,A}^{g,\theta}$. We exactly obtain the formula of
Theorem \ref{222ttt}.
\end{proof}

\begin{proof}[Proof of Theorem \ref{333ttt}]
Part (i) is a consequence of Theorem \ref{222ttt}, since both sides of the equality compute the cycle $\mathsf{logDR}_{g,A}$. Part (ii) follows from the
vanishing
\begin{equation}\label{vannnn}
\mathsf{P}^h_{g,\emptyset,0} = 0 \in
    \mathsf{CH}^h(\Picabs)\ \ \  \text{for} \ h>g
    \end{equation}
    combined with equality \eqref{eqn:Pgapprox} of Theorem \ref{thm:DRapprox_formula}. \edits{The vanishing \eqref{vannnn} follows from the universal DR relations of 
 \cite[Theorem 8]{BHPSS}.}
\end{proof}
  \edits{The proof of \cite[Theorem 8]{BHPSS} uses  a vanishing result for Pixton's double ramification formula with target varieties on moduli spaces of stable maps (which was shown in \cite{Bae} using a strategy that generalizes the earlier proof on the moduli space of stable curves given in \cite{cj}).}

\section{Genus 1 calculations}\label{sec:genus_1}
\subsection{Form of the answer}
\label{ffans}
We apply Theorem \ref{222ttt} here to explicitly compute $\LogDR_{g,A}$ in the case $g=1$. The result, 
presented in Theorem~\ref{thm:g1}, is of the form
\[
\LogDR_{1,A} = \DR_{1,A} + \text{(piecewise linear correction term)} \ \in \mathsf{logCH}^1(\oM_{1,n})\, ,
\]
where the term $\DR_{1,A}$ is the (pullback of the) standard double ramification cycle
in $\mathsf{CH}^1(\oM_{1,n})$.
The formula 
for $\DR_{1,A}$
is written in sPP language  using Theorem~\ref{thm:regularDR}. We have 
\[
\DR_{1,A} = -\frac{1}{2}\eta + \Phi(\mathrm{Cont})\, ,
\]
where $\eta$ is the class from \eqref{eqn:etadefinition} and $\mathrm{Cont}$ is the strict piecewise linear function given by
\begin{equation}\label{eq:DRg1}
\mathrm{Cont}_\Gamma = -\frac{1}{12}\left(\sum_{e\text{ a nonseparating edge}}\ell_e\right)-\frac{1}{2}\left(\sum_{e\text{ a separating edge}}w(e)^2\ell_e\right),
\end{equation}
where $w$ is any flow on $\Gamma$ satisfying $\mathrm{div}(w) = \mdeg_{k,A}$. \edits{Here $\mdeg_{k,A}$ is the multidegree of the line bundle $(\omega_C^{\mathrm{log}})^{\otimes k}(-\sum_i a_i x_i)$ as defined in Section \ref{sasc}. In the second sum, we have one term for each \emph{undirected} separating edge; although evaluating $w(e)$ requires choosing an orientation for $e$, the value of $w(e)^2$ does not depend on this choice because $w(\cev e) = - w(\vec e)$. While there can be different choices for the flow $w$ in general, its value on a separating edge is uniquely determined by $\mathrm{div}(w)$ (as the sum of the degrees on one side of the edge), so the term $w(e)^2$ in \eqref{eq:DRg1} is well-defined.}

\subsection{\texorpdfstring{$\theta$}{theta}-stability}
Let $\theta$ be a small nondegenerate stability condition of type $(1,n)$ and degree 0.
We can completely describe the $\theta$-stable multidegrees $D$. 
A graph $\Gamma$ of genus 1 has cycle number either $0$ or $1$. 
Nothing interesting happens when the cycle number is $0$ (in which case the graph is a tree).
\begin{lemma}\label{lem:0cycle}
Let $\Gamma$ be a tree. Let $\theta$ be a small nondegenerate stability condition of degree $0$. Then, the only $\theta$-stable multidegree $D$ on $\Gamma$ is $D=0$.
\end{lemma}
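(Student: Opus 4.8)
The plan is to exploit the rigidity of the $\theta$-stability inequalities of Definition \ref{defss} on a tree: every edge of a tree is separating, so each subtree $G$ cut off by a single edge has valence $\textup{val}(G)=1$, and the allowed window for $D(G)$ is an open interval of length exactly $1$. First I would record what smallness buys us. Since $\theta$ is small and nondegenerate, the multidegree-$0$ bundle is $\theta$-stable, so applying the strict inequalities of Definition \ref{defss} to such a one-edge cut $G$ gives $\theta(G)-\tfrac12 < 0 < \theta(G)+\tfrac12$, i.e. $|\theta(G)| < \tfrac12$ for every subtree $G$ obtained by deleting a single edge.

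Next I would pin down an arbitrary $\theta$-stable multidegree $D$ (of total degree $0$) on these subtrees. Fix an edge $e$ of $\Gamma$; deleting $e$ disconnects the tree into two subtrees, and let $G$ be one of them, so $\textup{val}(G)=1$. Since $\Gamma$ is a stable tree and thus has no exceptional vertices, the stability inequality for $G$ is strict, so the integer $D(G)=\sum_{v\in G}D(v)$ lies in the open interval $\bigl(\theta(G)-\tfrac12,\ \theta(G)+\tfrac12\bigr)$. By the previous step this interval is contained in $(-1,1)$ and contains $0$; since an open interval of length $1$ contains at most one integer, I conclude $D(G)=0$ for every subtree $G$ cut off by a single edge.

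Finally I would propagate this vanishing to the individual vertex values by a telescoping argument. Rooting $\Gamma$ at an arbitrary vertex $\rho$, each non-root vertex $v$ has a parent edge whose deletion cuts off the subtree $T_v$ of descendants of $v$, so $D(T_v)=0$ for all $v\neq\rho$. Writing $T_v=\{v\}\sqcup\bigsqcup_{c}T_c$ with $c$ ranging over the children of $v$, we get $D(v)=D(T_v)-\sum_c D(T_c)=0$ for every $v\neq\rho$, and then the total-degree-$0$ condition forces $D(\rho)=-\sum_{v\neq\rho}D(v)=0$ as well. Hence $D=0$.

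I do not expect a genuine obstacle: the entire content is the length-$1$ open-interval rigidity coming from valence-$1$ cuts, combined with the bound $|\theta(G)|<\tfrac12$ from smallness. The only points needing care are the elementary observations that every edge of a tree is separating and that the rooted telescoping recovers all vertex values; nondegeneracy enters solely to ensure the inequalities are strict (equivalently, that each $\theta(G)$ is not a half-integer), which is exactly what makes the value of $D(G)$ unambiguous.
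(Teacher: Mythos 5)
Your proof is correct and follows essentially the same route as the paper's: cut the tree along a single edge, note that the valence-$1$ stability window contains exactly one integer, use smallness (plus nondegeneracy) to force that integer to be $0$, and then deduce vanishing at every vertex. The paper compresses the last two steps ("Because $\theta$ is small, this integer must be $0$... We easily conclude"), whereas you spell out the bound $|\theta(G)|<\tfrac12$ and the rooted telescoping, but the substance is identical.
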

\begin{proof}
Let $e$ be any edge of $\Gamma$ which splits $\Gamma$ into subgraphs $\Gamma_1$ and $\Gamma_2$. Because there is only one edge connecting $\Gamma_1$ with $\Gamma_2$, there is only one integer satisfying the $\theta$-stability inequality for $D(\Gamma_1)$. Because $\theta$ is small, this integer must be $0$. Therefore, $D$ has total degree $0$ on each of the subgraphs $\Gamma_1$ and $\Gamma_2$. We easily conclude that $D$ has degree $0$ on every vertex.
\end{proof}

The situation is slightly more complicated for a graph $\Gamma$ with cycle number $1$. Such a graph has a unique cycle $C_\Gamma$ along with trees glued to the vertices of $C_\Gamma$.
\begin{lemma}\label{lem:1cycle}
Let $\Gamma$ be a connected graph with cycle number $1$ and cycle $C_\Gamma$. Let $\theta$ be a small nondegenerate stability condition of degree $0$. Let $D$ be a $\theta$-stable multidegree $D$ on $\Gamma$. Then, $D(v) = 0$ for all $v$ not on the cycle $C_\Gamma$. Moreover, as one goes around the cycle, the nonzero values of $D(v)$ alternate between $1$ and $-1$.
\end{lemma}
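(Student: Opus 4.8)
The plan is to translate $\theta$-stability into explicit numerical inequalities and then run two separate local analyses: one on the trees hanging off the cycle, and one along the cycle itself. The starting observation is that smallness of $\theta$ implies, for every proper subgraph $\underline C \subsetneq \Gamma$, the bound
\[
|\theta(\underline C)| < \frac{E(\underline C, \underline C^c)}{2}\, ,
\]
since multidegree $0$ being $\theta$-stable says precisely that $0$ lies strictly between $\theta(\underline C) \pm E(\underline C,\underline C^c)/2$. I would record this at the outset, as it makes all the subsequent interval estimates uniform. Note also that since $\Gamma$ is a stable graph with no exceptional vertices, every stability inequality for the $\theta$-stable multidegree $D$ is strict.

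For the off-cycle vertices, I would root each hanging tree at its point of attachment to $C_\Gamma$ and, for an off-cycle vertex $v$, let $T_v$ denote the subtree consisting of $v$ together with all vertices farther from the cycle. The single edge joining $v$ to its parent is a bridge, so $\underline C = T_v$ has $E(T_v, T_v^c) = 1$; the stability inequality confines $D(T_v)$ to the open interval $(\theta(T_v) - \tfrac12,\, \theta(T_v) + \tfrac12)$, which by smallness lies inside $(-1,1)$ and hence contains only the integer $0$. Thus $D(T_v) = 0$ for every off-cycle $v$. Writing $D(T_v) = D(v) + \sum_c D(T_c)$ as a sum over the children $c$ of $v$ (all themselves off-cycle, so each $D(T_c) = 0$) gives $D(v) = 0$. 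This is essentially the argument of Lemma \ref{lem:0cycle} applied to each bridge.

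For the cycle I would exploit arcs. Label the cycle vertices $v_1, \dots, v_m$ cyclically; an arc $\underline C = \{v_i, \dots, v_j\}$ of consecutive cycle vertices, together with their hanging trees, meets its complement in exactly the two boundary cycle edges, so $E(\underline C, \underline C^c) = 2$. Smallness then forces $D(\underline C)$ into $(\theta(\underline C) - 1,\, \theta(\underline C)+1) \subseteq (-2,2)$, i.e. $D(\underline C) \in \{-1,0,1\}$, for every proper arc. Taking $\underline C = \{v_i\}$ (whose hanging tree contributes $0$ by the previous step) gives $D(v_i) \in \{-1,0,1\}$, while the total degree satisfies $\sum_i D(v_i) = 0$. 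Finally, suppose two cyclically consecutive nonzero entries $D(v_i)$ and $D(v_{i'})$ had the same sign, say both $+1$, with all intervening entries equal to $0$. Then the arc $\underline C$ running from $v_i$ to $v_{i'}$ through those zeros satisfies $D(\underline C) = 2$; since the full cycle has degree $0$, this arc is necessarily a proper subgraph, so $D(\underline C) \in \{-1,0,1\}$, a contradiction. Hence consecutive nonzero values alternate in sign, which is the assertion.

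I expect the only delicate point to be the bookkeeping around degenerate shapes of the cycle: a single vertex carrying a loop ($m=1$), or the two-vertex ``banana'' ($m=2$) where the two cycle edges are parallel, where one must verify that the edge counts $E(\underline C, \underline C^c) = 1$ and $=2$ above are computed correctly and that the arcs used stay proper. In the loop case the alternation claim is vacuous, the single cycle vertex having degree $0$ by the total-degree constraint; in the banana case an arc consisting of a single vertex already crosses both parallel edges, giving $E=2$, so the uniform analysis still applies once these incidences are checked.
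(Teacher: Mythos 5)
Your proof is correct and follows essentially the same route as the paper's: bridges force $D$ to vanish on the hanging trees (the Lemma~\ref{lem:0cycle} argument), and arcs of consecutive cycle vertices together with their trees meet the complement in exactly two edges, so smallness confines $D(\underline C)$ to $\{-1,0,1\}$, from which alternation follows. You merely make explicit what the paper leaves implicit — the interval bound $|\theta(\underline C)| < E(\underline C,\underline C^c)/2$, the contradiction deducing alternation from the arc bound, and the degenerate loop/banana cases.
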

\begin{proof}
The first claim follows by the same argument as for Lemma \ref{lem:0cycle}. For the 
second claim, it suffices to show that, for any subset of consecutive vertices around the cycle $C_\Gamma$, the sum of their degrees is in the set $\{-1,0,1\}$. 

Consider the subgraph $\Gamma_1$ consisting of consecutive vertices around $C_\Gamma$ together with the trees glued at those vertices. Because there are only two edges connecting $\Gamma_1$ with its complement, there are only two integers satisfying the $\theta$-stability inequality for $D(\Gamma_1)$. Because $\theta$ is small, one of these integers must be $0$, and then the other is $\pm 1$. Therefore, $D(\Gamma_1)\in \{-1,0,1\}$, and since $D$ has degree $0$ outside the cycle, we obtain the desired property.
\end{proof}

The Lemmas \ref{lem:0cycle} and \ref{lem:1cycle}  apply not only to a genus $1$ stable graph $\Gamma$, but to any quasi-stable modification $\widehat{\Gamma}$ \edits{(we extend the stability condition to the quasi-stable model as explained after Definition \ref{def:stabilitycondition}). }

\subsection{The formula}
The computation of $\mathsf{logDR}$ is particularly simple in genus $1$  because the function $\DRpp$ 
of \eqref{eqn:logDRmixed} in degree $1$ is just the first term in \eqref{eq:DRg1}:  the strict piecewise linear function
\[
-\frac{1}{12}\left(\sum_{e\text{ a nonseparating edge}}\ell_e\right)\, .
\]
The interesting part
of formula \eqref{eqn:logDRmixed} in genus 1 is $\DRpl$, which
we will compute explicitly.

We begin with a general result (which holds in any genus) rewriting 
$\DRpl$
 in a more convenient form. 

\begin{lemma}\label{lem:drpl2}
Let $(\widehat{\Gamma}, D, I)$ be the data giving a cone $\sigma_{\widehat{\Gamma},I}\in\widetilde{\Sigma}^{\theta}_{\Gamma}$.
Let $J$ be any flow on $\widehat{\Gamma}$ satisfying $\mathrm{div}(J) = D$, and let $I_0 = I+J$ (so $\mathrm{div}(I_0) = \mdeg_{k,A}$). Then on $\sigma_{\widehat{\Gamma},I}$, we have
\[
\DRpl = \sum_{e\in E(\widehat{\Gamma})}(I_0(\vec e)^2-J(\vec e)^2)\cdot\widehat{\ell}_e\big|_{\widehat{\ell} = \widehat{\ell}(\ell)}\, ,
\]
where \edits{$\widehat{\ell}(\ell)$ is the unique collection of edge lengths of the subdivision $\widehat \Gamma$ of $\Gamma$ (depending on the lengths $\ell$ of $\Gamma$) such that there exists a strict piecewise linear function on $\widehat \Gamma$ with slopes given by the flow $I$ (see \cref{Dcomment-I-should-try-to-prove-these-claims}), and where} the sum runs over the unoriented edges of $\widehat{\Gamma}$: 
any orientation of $e$ can be chosen (the summand
is invariant).
\end{lemma}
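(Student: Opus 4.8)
The plan is to recognise the claimed identity as a discrete integration-by-parts (summation-by-parts) statement on the graph $\widehat{\Gamma}$, and to push the definition of $\DRpl$ from \eqref{eq:L_formula} through it. Recall that on the cone $\tau_{\widehat{\Gamma},I}$ the flow $I$ is realized as the slopes of the genuine PL function $\alpha$ of \eqref{xsse3}, so that for every oriented edge $\vec e$ running from its tail $u$ to its head $v$ one has the slope relation $\alpha(v) - \alpha(u) = I(\vec e)\,\widehat{\ell}_e$. First I would record the observation that $\mdeg_{k,A} = \mathrm{div}(I_0)$ and $D = \mathrm{div}(J)$, so that $D + \mdeg_{k,A} = \mathrm{div}(I_0 + J)$, where $I_0 + J = I + 2J$ since $I_0 = I + J$.

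The key step is the general identity: for any flow $K$ on $\widehat{\Gamma}$ and the PL function $\alpha$ with slopes $I$,
\[
\sum_{v \in V(\widehat{\Gamma})} \mathrm{div}(K)(v)\cdot \alpha(v) = \sum_{e \in E(\widehat{\Gamma})} K(\vec e)\, I(\vec e)\,\widehat{\ell}_e\, ,
\]
where on the right any orientation of each unoriented edge may be chosen, the summand being invariant because both $K$ and $I$ reverse sign under $\vec e \mapsto \cev e$. I would prove this by reorganizing the left-hand side as a sum over oriented edges via $\mathrm{div}(K)(v) = \sum_{\vec e \to v} K(\vec e)$, so that $\sum_v \mathrm{div}(K)(v)\alpha(v) = \sum_{\vec e} K(\vec e)\,\alpha(\mathrm{head}(\vec e))$. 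Grouping the two orientations $\vec e, \cev e$ of a single edge $e$ and using $K(\cev e) = -K(\vec e)$ together with the slope relation collapses the edge contribution to $K(\vec e)(\alpha(v) - \alpha(u)) = K(\vec e)\,I(\vec e)\,\widehat{\ell}_e$; equivalently this is the pairing $\langle K, I\rangle_{\widehat{\ell}}$ of \eqref{gg79}, the prefactor $\tfrac12$ absorbing the double count over orientations.

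Applying this identity with $K = I_0 + J = I + 2J$, so that $\mathrm{div}(K) = D + \mdeg_{k,A}$, the definition \eqref{eq:L_formula} of $\DRpl$ becomes
\[
\DRpl = \sum_{e \in E(\widehat{\Gamma})} (I + 2J)(\vec e)\, I(\vec e)\,\widehat{\ell}_e\big|_{\widehat{\ell} = \widehat{\ell}(\ell)}\, .
\]
The final algebraic step is the elementary factorization $I_0(\vec e)^2 - J(\vec e)^2 = (I+J)(\vec e)^2 - J(\vec e)^2 = I(\vec e)^2 + 2 I(\vec e) J(\vec e) = I(\vec e)\,(I + 2J)(\vec e)$, which identifies the summand above with $I_0(\vec e)^2 - J(\vec e)^2$ and yields the stated formula after the substitution $\widehat{\ell} = \widehat{\ell}(\ell)$.

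There is no serious obstacle here; the statement is essentially a formal computation. The only points demanding care are the bookkeeping of edge orientations and the factor $\tfrac12$ in the summation-by-parts step, and the verification that $\alpha$ genuinely has slopes $I$ (valid precisely on $\tau_{\widehat{\Gamma},I}$, where the cycle conditions $\langle\gamma, I\rangle_{\widehat{\ell}} = 0$ make $\alpha$ well defined and path-independent), so that the identity may be applied before transporting along the isomorphism $\tau_{\widehat{\Gamma},I} \xrightarrow{\sim} \sigma_{\widehat{\Gamma},I}$ of Lemma \ref{Dcomment-I-should-try-to-prove-these-claims}.
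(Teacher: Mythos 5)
Your proof is correct and follows essentially the same route as the paper's: both decompose $D + \mdeg_{k,A} = \mathrm{div}(I_0 + J)$, convert the vertex sum into an edge sum using the antisymmetry of flows and the slope relation $\alpha(v)-\alpha(u) = I(\vec e)\,\widehat{\ell}_e$, and conclude with the factorization $(I_0+J)(I_0-J) = I_0^2 - J^2$. The only cosmetic difference is that you package the middle steps as a stand-alone summation-by-parts identity for a general flow $K$, whereas the paper carries out the identical computation inline.
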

\begin{proof}
We use the same notation as in the definition of $\DRpl$ in \eqref{xsse3} and \eqref{eq:L_formula}, but we omit the final change of variables $|_{\widehat{\ell} = \widehat{\ell}(\ell)}$ for brevity of notation. We have
\begin{align*}
\DRpl &= \sum_{v \in V(\widehat{\Gamma})} (D + \mdeg_{k,A})(v)\alpha(v)\\
&= \sum_{v, \vec e \to v}(I_0(\vec e)+J(\vec e))\alpha(v)\\
&= \sum_{e = (v,v') \in E(\widehat{\Gamma})}\Big((I_0(v\to v')+J(v\to v'))\alpha(v')
 + (I_0(v'\to v)+J(v'\to v))\alpha(v)\Big)
\\ 
&= \sum_{e = (v,v') \in E(\widehat{\Gamma})}(I_0(v\to v')+J(v\to v'))(\alpha(v')-\alpha(v))\\
&= \sum_{e \in E(\widehat{\Gamma})}(I_0(\vec e)+J(\vec e))I(\vec e)\widehat{\ell}_e\\
&= \sum_{e \in E(\widehat{\Gamma})}(I_0(\vec e)+J(\vec e))(I_0(\vec e)-J(\vec e))\widehat{\ell}_e
\end{align*}
as desired. In the last two lines, any orientation of $e$ can be chosen (the summand
is invariant).
\end{proof}

Let $\Gamma$ be a stable graph of genus 1, and let $(\widehat{\Gamma}, D, I)$ be the data giving a cone $$\sigma_{\widehat{\Gamma},I}\in\widetilde{\Sigma}^{\theta}_{\Gamma}\, .$$
We will compute $\DRpl$ on $\sigma_{\widehat{\Gamma},I}$. If $\Gamma$ is a tree, then
$D=0$ and $\widehat{\Gamma}=\Gamma$
by
Lemma~\ref{lem:0cycle}. Nothing interesting is happening: there is a unique flow $I$ with $\mathrm{div}(I) = \mdeg_{k,A}$, and the resulting function
$\DRpl$ simply contributes part of the regular double ramification cycle formula. 

Assume now that $\Gamma$ has cycle number $1$ with cycle $C_\Gamma$. We fix an orientation on $C_\Gamma$, and let $C$ be the flow on $\Gamma$ (or on $\widehat{\Gamma}$) with value $1$ around that cycle and $0$ elsewhere.
In order to apply Lemma~\ref{lem:drpl2},  we must pick a flow $J$ with $\mathrm{div}(J) = D$. By Lemma~\ref{lem:1cycle}, $D$ must have a very specific form, and we can find such
{\footnote{If $D\ne 0$, it is easy to see that such a $J$ is actually unique.}}
a $J$ supported on $C_\Gamma$ that only takes on values $0$ and $1$ (when read in the chosen orientation around the cycle).
We then have
\begin{align*}
\DRpl &= \sum_{e\in E(\widehat{\Gamma})}(I_0(e)^2-J(e)^2)\cdot\widehat{\ell}_e\big|_{\widehat{\ell} = \widehat{\ell}(\ell)}\\
&= \sum_{e\in E(\widehat{\Gamma})}(I_0(e)^2-J(e)C(e))\cdot\widehat{\ell}_e\big|_{\widehat{\ell} = \widehat{\ell}(\ell)}\\
&= \sum_{e\in E(\widehat{\Gamma})}(I_0(e)^2-I_0(e)C(e))\cdot\widehat{\ell}_e\big|_{\widehat{\ell} = \widehat{\ell}(\ell)}\\
&= \sum_{e\in E(\Gamma)}(I_0(e)^2-I_0(e)C(e))\cdot\ell_e\, ,
\end{align*}
where the first equality is Lemma~\ref{lem:drpl2}, the second equality follows from $J$ only taking on values $0$ and $1$ (when oriented consistently with $C$ around the cycle), the third equality follows from the relation between $I = I_0-J$ and the variable substitution $\widehat{\ell} = \widehat{\ell}(\ell)$, and the fourth equality follows because $I_0$ and $C$ are constant along the edge subdivided by $\widehat{\Gamma}$.

This final expression only depends on $I_0$. Since $I_0 = I+J$ and $$C(e)^2 \ge C(e)J(e) \ge 0$$ for all $e$, we have the inequalities
\begin{equation*}
\sum_{e\in E(\widehat{\Gamma})}I_0(e)C(e)\widehat{\ell}_e \ge \sum_{e\in E(\widehat{\Gamma})}I(e)C(e)\widehat{\ell}_e \ge \sum_{e\in E(\widehat{\Gamma})}(I_0(e)-C(e))C(e)\widehat{\ell}_e\, .
\end{equation*}
The middle term here vanishes after the variable substitution $\widehat{\ell} = \widehat{\ell}(\ell)$, while the first and third terms can be simplified because $I_0$ and $C$ are constant along the edge subdivided by $\widehat{\Gamma}$. If we let $F = I_0$ denote the flow on $\Gamma$, we obtain the inequalities
\begin{equation}\label{eq:g1coarse}
\sum_{e\in E(\Gamma)}F(e)C(e)\ell_e \ge 0 \ge \sum_{e\in E(\Gamma)}(F(e)-C(e))C(e)\ell_e\, .
\end{equation}
But for general $\ell_e$, exactly one flow $F$ with $\mathrm{div}(F) = \mdeg_{k,A}$ satisfies these inequalities (since the set of such flows is an arithmetic progression with difference $C$). Therefore the inequalities \eqref{eq:g1coarse} describe a (coarser) subdivision where $\DRpl$ can be defined. We summarize what we have computed in the following result.

\begin{proposition}\label{prop:g1drpl}
Let $\Gamma$ be a graph with cycle number $1$. Let $C$ be a flow on $\Gamma$ with value $1$ around the unique cycle and $0$ elsewhere. Then
\[
\DRpl = \sum_{e\in E(\Gamma)}(F(e)^2-F(e)C(e))\cdot\ell_e,
\]
where $F$ is any flow on $\Gamma$ satisfying $\mathrm{div}(F) = \mdeg_{k,A}$ and the inequalities \eqref{eq:g1coarse}.
\end{proposition}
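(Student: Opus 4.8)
The plan is to assemble the cone-by-cone computation carried out just above the statement into a single global piecewise-linear function on $\sigma_\Gamma$, and then to pin down exactly which flow $F$ to use. First I would fix an interior cone $\sigma_{\widehat\Gamma,I}\in\widetilde\Sigma^\theta_\Gamma$ with data $(\widehat\Gamma,D,I)$ and invoke Lemma~\ref{lem:drpl2}. By Lemma~\ref{lem:1cycle} the $\theta$-stable divisor $D$ is supported on the cycle with alternating values, so I may choose the auxiliary flow $J$ with $\mathrm{div}(J)=D$ to be supported on $C_\Gamma$ and valued in $\{0,1\}$ when read with the chosen orientation. Setting $I_0=I+J$ (so $\mathrm{div}(I_0)=\mdeg_{k,A}$), the chain of equalities preceding the statement gives, on $\sigma_{\widehat\Gamma,I}$,
\[
\DRpl = \sum_{e\in E(\Gamma)}\big(I_0(e)^2-I_0(e)C(e)\big)\,\ell_e\,,
\]
where the replacement of $J(e)^2$ by $J(e)C(e)$ uses $J\in\{0,1\}$ oriented with $C$, the replacement of $J(e)C(e)$ by $I_0(e)C(e)$ uses that $\sum_e I(e)C(e)\widehat\ell_e=\langle C,I\rangle_{\widehat\ell}$ vanishes on $\tau_{\widehat\Gamma,I}$ (since $C\in H_1(\widehat\Gamma)$), and the descent from $E(\widehat\Gamma)$ to $E(\Gamma)$ uses that $I_0$ and $C$ are constant on the two halves of a subdivided edge while $\widehat\ell_{e_1}+\widehat\ell_{e_2}=\ell_e$.

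Next I would identify the flow $F$ of the statement with $I_0$ and check that the inequalities \eqref{eq:g1coarse} hold throughout $\sigma_{\widehat\Gamma,I}$. These are precisely the bounds $\sum_e I_0(e)C(e)\widehat\ell_e\ge 0\ge\sum_e(I_0(e)-C(e))C(e)\widehat\ell_e$ recorded before the statement, which follow from the pointwise estimates $0\le J(e)C(e)\le C(e)^2$ together with the vanishing of the middle term $\langle C,I\rangle_{\widehat\ell}$. After the substitution $\widehat\ell=\widehat\ell(\ell)$ and the collapse of subdivided edges, these become exactly \eqref{eq:g1coarse} with $F=I_0$.

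The crux, and the step I expect to require the most care, is \emph{well-definedness}: I must show that \eqref{eq:g1coarse} singles out a unique $F$, so that the right-hand side of the proposition is an unambiguous piecewise-linear function matching the value computed on each $\sigma_{\widehat\Gamma,I}$. The key input is that $\Gamma$ has cycle number $1$, whence $H_1(\Gamma)=\mathbb{Z}\cdot C$; thus any two flows with divisor $\mdeg_{k,A}$ differ by an integer multiple of $C$, and $\{F:\mathrm{div}(F)=\mdeg_{k,A}\}$ is an arithmetic progression $\{F_0+mC\}_{m\in\mathbb{Z}}$. Writing $a=\sum_e F_0(e)C(e)\ell_e$ and $b=\sum_{e\in C_\Gamma}\ell_e>0$ for $\ell$ in the interior of $\sigma_\Gamma$, the two inequalities in \eqref{eq:g1coarse} read $a+mb\ge 0$ and $a+(m-1)b\le 0$, that is $-a/b\le m\le 1-a/b$; this length-one interval contains a unique integer $m$ whenever $a/b\notin\mathbb{Z}$, i.e. for generic $\ell$. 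Hence a unique $F$ is selected, and the loci cut out by \eqref{eq:g1coarse} for the various admissible $F$ cover $\sigma_\Gamma$ and coarsen $\widetilde\Sigma^\theta_\Gamma$, so the formula defines $\DRpl$ globally; on the walls, where two candidates are allowed, continuity of $\DRpl$ forces the two values to agree, completing the argument.
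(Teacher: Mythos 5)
Your proposal is correct and follows essentially the same route as the paper: the paper's proof is exactly the chain of identities via Lemma \ref{lem:drpl2} with $J$ supported on $C_\Gamma$ valued in $\{0,1\}$, the vanishing of $\langle C, I\rangle_{\widehat\ell}$ on $\tau_{\widehat\Gamma,I}$, the collapse of subdivided edges, and the observation that \eqref{eq:g1coarse} selects a unique $F$ in the arithmetic progression $\{F_0+mC\}$ for generic $\ell$. Your only addition is to make the uniqueness and wall-agreement step fully explicit (the length-one interval $[-a/b,\,1-a/b]$ and continuity across walls), which the paper leaves as a one-line remark.
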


By putting the pieces of the formula
for $\mathsf{logDR}$ together, we obtain a simple description of the piecewise linear function giving the difference between $\mathsf{logDR}$ and standard $\mathsf{DR}$ in genus $1$.
\begin{theorem}\label{thm:g1}
In genus 1, we have $$\LogDR_{1,A} = \DR_{1,A} - \frac{1}{2} \Phi(\DRpl')\, ,$$ where $\DRpl'$ is the piecewise linear function defined as follows: $\DRpl'$ is zero on cones coming from a stable tree $\Gamma$, and otherwise $\DRpl'$ is given by
\[
\DRpl' = \sum_{e\in E(C_\Gamma)}(F(e)^2-F(e)C(e))\cdot\ell_e
\]
(for $C$ and $F$ as in Proposition~\ref{prop:g1drpl}).
\end{theorem}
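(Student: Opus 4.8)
The statement of Theorem~\ref{thm:g1} is essentially a clean repackaging of the general formula of Theorem~\ref{222ttt} in the genus~1 case, so my plan is to assemble the pieces already proven in Section~\ref{sec:genus_1} rather than to prove anything genuinely new. Recall from \eqref{eqn:logDRmixed} that $\mathbf{P}^\theta_{g,A} = \exp\bigl(-\tfrac12(\eta + \Phi(\DRpl))\bigr)\cdot\Phi(\DRpp)$, and that by Theorem~\ref{222ttt} the codimension~$1$ part of this class computes $\LogDR_{1,A}$. The first step is to extract the degree~$1$ part of $\mathbf{P}^\theta_{1,A}$. Since $\exp(x) = 1 + x + \tfrac12 x^2 + \cdots$ and we only need the codimension~$1$ piece, the exponential contributes $1 - \tfrac12(\eta + \Phi(\DRpl))$ in the relevant degrees, and $\Phi(\DRpp)$ contributes its degrees $0$ and $1$ parts. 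I would first observe, as the text already notes, that in genus~$1$ the power series $\DRpp$ is simply the piecewise linear function $-\tfrac{1}{12}\bigl(\sum_{e\text{ non-sep}}\ell_e\bigr)$, i.e.\ its degree~$0$ part is $1$ and its degree~$1$ part is this piecewise linear function. (This is the genus~$1$ simplification of the contribution formula: $h^1(\widehat\Gamma)\le 1$, and the admissible weighting sum collapses to the standard $-\tfrac{1}{12}$ coefficient on nonseparating edges.)

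\textbf{Assembling the degree~1 part.} Multiplying out, the codimension~$1$ part of $\mathbf{P}^\theta_{1,A}$ is
\[
\Bigl[\Phi(\DRpp)\Bigr]_1 \;-\;\tfrac12\eta\;-\;\tfrac12\Phi(\DRpl),
\]
where $[\,\cdot\,]_1$ denotes the degree~$1$ part and I have used that $[\Phi(\DRpp)]_0 = 1$. The next step is to recognize the combination $[\Phi(\DRpp)]_1 - \tfrac12\eta$ as exactly the standard (pulled-back) double ramification cycle $\DR_{1,A}$. This is precisely what Theorem~\ref{thm:regularDR} gives after rewriting in sPP language: as displayed at the start of Section~\ref{ffans}, $\DR_{1,A} = -\tfrac12\eta + \Phi(\mathrm{Cont})$, and in genus~1 the degree~$1$ part of $\mathrm{Cont}$ splits into a nonseparating-edge contribution (which is exactly the degree~$1$ part of $\DRpp$) plus a separating-edge contribution given by \eqref{eq:DRg1}. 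I must therefore check that the separating-edge terms in $\mathrm{Cont}$ are already correctly absorbed; since over a stable tree $\DRpl$ vanishes (as the text observes, $D=0$, $\widehat\Gamma=\Gamma$, and the unique flow $I$ merely reproduces the regular DR formula), the separating contributions live entirely inside $\DR_{1,A}$ and do not interfere with the correction term.

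\textbf{The correction term.} It remains to identify $-\tfrac12\Phi(\DRpl)$ with $-\tfrac12\Phi(\DRpl')$, i.e.\ to show $\DRpl$ and the piecewise linear function $\DRpl'$ of the theorem agree as elements of $\mathsf{logR}^*$. This is where Proposition~\ref{prop:g1drpl} does all the work: it computes $\DRpl$ on each cone $\sigma_{\widehat\Gamma,I}$ to be $\sum_{e\in E(\Gamma)}(F(e)^2 - F(e)C(e))\ell_e$ for the appropriate flow $F$ satisfying the inequalities \eqref{eq:g1coarse}. On tree cones $D=0$ forces the cycle-free analysis, so $\DRpl$ contributes nothing beyond the regular DR formula, matching the stipulation that $\DRpl'$ vanishes on stable trees; on cycle-number-$1$ cones the proposition's formula is literally the displayed formula for $\DRpl'$, except that the sum there is restricted to edges of the cycle $C_\Gamma$. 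I should confirm these two sums agree, which is immediate: for an edge $e$ not on the cycle, $C(e)=0$ and (by Lemma~\ref{lem:1cycle} the off-cycle degrees vanish, forcing) $F(e)=0$ on such separating edges, so $F(e)^2 - F(e)C(e)=0$ and the off-cycle terms drop out. Thus the two piecewise linear functions coincide.

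\textbf{Main obstacle.} The only genuinely delicate point is the bookkeeping in the second step: making sure that the separating-edge part of the genus~1 DR formula \eqref{eq:DRg1} is correctly split off into $\DR_{1,A}$ and not double-counted in the correction $\DRpl$. Concretely, I must verify that the flow $F$ appearing in Proposition~\ref{prop:g1drpl} and the flow $w$ appearing in \eqref{eq:DRg1} are consistent on separating edges, and that the inequalities \eqref{eq:g1coarse} selecting $F$ are compatible with the subdivision on which $\DR_{1,A}$ is pulled back. Everything else is a mechanical consequence of expanding the exponential and invoking Proposition~\ref{prop:g1drpl} and Theorem~\ref{thm:regularDR}, so the proof should be short once this compatibility is pinned down.
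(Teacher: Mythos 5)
Your overall route (expand the exponential of \eqref{eqn:logDRmixed} in degree $1$ and compare with the standard $\DR$ formula via Theorem \ref{thm:regularDR} and Proposition \ref{prop:g1drpl}) is the same as the paper's, and your final formula is correct, but two of your intermediate claims are false; your argument reaches the right answer only because the two errors cancel. First, $\DRpl$ does \emph{not} vanish on cones of stable trees, and consequently $[\Phi(\DRpp)]_1 - \tfrac{1}{2}\eta \neq \DR_{1,A}$. On a tree cone $D=0$ and $\widehat\Gamma=\Gamma$, and Lemma \ref{lem:drpl2} (with $J=0$) gives $\DRpl = \sum_e I(e)^2\ell_e$ for the unique flow $I$ with $\mathrm{div}(I)=\mdeg_{k,A}$; this is precisely how $-\tfrac{1}{2}\Phi(\DRpl)$ reproduces the separating-edge term of \eqref{eq:DRg1}. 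It is $\DRpl'$, not $\DRpl$, that is \emph{defined} to be zero on tree cones. Since $[\DRpp]_1$ is only the non-separating term, $[\Phi(\DRpp)]_1-\tfrac{1}{2}\eta$ is missing exactly the separating-edge contribution of $\DR_{1,A}$.

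Second, your identification $\DRpl=\DRpl'$ rests on the claim that $F(e)=0$ on off-cycle (separating) edges ``by Lemma \ref{lem:1cycle}''. That lemma constrains the $\theta$-stable multidegree $D$, which does vanish off the cycle; but $F$ satisfies $\mathrm{div}(F)=\mdeg_{k,A}$, not $\mathrm{div}(F)=D$, so on a separating edge $F(e)$ equals $\pm$ the total of $\mdeg_{k,A}$ over the tree component cut off by $e$, which is generically nonzero. Hence $\DRpl-\DRpl'=\sum_{e\text{ sep}}F(e)^2\ell_e\neq 0$ in general. The correct bookkeeping --- and the paper's actual proof --- is that this difference equals the separating-edge term of \eqref{eq:DRg1} (on separating edges $F(e)^2=w(e)^2$, both flows having divisor $\mdeg_{k,A}$), so the separating part of $-\tfrac{1}{2}\Phi(\DRpl)$ is absorbed into $\DR_{1,A}$ and the remainder is $-\tfrac{1}{2}\Phi(\DRpl')$. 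Your two mistakes (dropping the separating term from $\DR_{1,A}$, and dropping the separating part of $\DRpl$) are equal and opposite, which is why the statement still comes out right; the compatibility you flagged as the ``main obstacle'' is exactly where the proof must do its work, and the resolution you offer for it is the wrong one.
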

\begin{proof}
In genus $1$ and degree 1, the $\mathsf{logDR}$ formula \eqref{eqn:logDRmixed} yields
\[
\LogDR_{1,A} = -\frac{1}{2}\eta + \Phi([\DRpp]_{\text{deg $1$}}) - \frac{1}{2}\Phi(\DRpl).
\]
We compare the above with the standard 
$\mathsf{DR}$ formula discussed in Section \ref{ffans}. The $\eta$ terms are identical, and we have already observed that the degree $1$ part of $\DRpp$ agrees with the nonseparating edge term in \eqref{eq:DRg1}. Meanwhile, the part of $-\frac{1}{2}\Phi(\DRpl)$ coming from separating edges in Proposition~\ref{prop:g1drpl} agrees with the separating edge term in \eqref{eq:DRg1}. An edge in $\Gamma$ is nonseparating if and only if it is in the cycle $C_\Gamma$, so removing those terms from $\DRpl$ leaves precisely $\DRpl'$.
\end{proof}

\begin{remark} \label{55kk5}
After a little combinatorial manipulation, we obtain  an alternative description of the piecewise linear function $\DRpl'$. Suppose $F$ is a flow on $\Gamma$ satisfying $\mathrm{div}(F) = \mdeg_{k,A}$. Define a piecewise linear function on $\sigma_\Gamma$ by
\[
m_F = \min\left(\sum_{\substack{e\in E(\Gamma)\\F(e)C(e) > 0}}F(e)C(e)\ell_e,\sum_{\substack{e\in E(\Gamma)\\F(e)C(e) < 0}}-F(e)C(e)\ell_e\right).
\]
The function $m_F$ is zero if $F$ is not acyclic \edits{(where not being acyclic means that we can find a cycle $\gamma$ of directed edges in $\Gamma$ such that $F(\vec e)\geq 0$ for all $\vec e$ in $\gamma$ and with at least one of the inequalities being strict)}.  It is then easily checked that on $\sigma_\Gamma$ we have
\[
\DRpl' = 2\sum_Fm_F\, ,
\]
where the sum runs over all acyclic $F$.
\end{remark}

\begin{remark}\label{ff5tq}
The logarithmic genus $1$ computation can
be used to compute an interesting class in $\mathsf{CH}^2(\Mbar_{1,n})$ as follows. Take two  genus $1$ cycles with the same $n$,
$$\LogDR_{1,A}\, ,\,  \LogDR_{1,B}\in \mathsf{logCH^1(\oM_{1,n}})\, .$$
 Individually, the cycles push forward to the 
 standard cycles $$\DR_{1,A}, \DR_{1,B}\in \mathsf{CH}^1(\oM_{1,n})\,$$ but the pushforward of their product, $$\pi_*(\LogDR_{1,A}\cdot\LogDR_{1,B}) \in \mathsf{CH}^2(\Mbar_{1,n})\,,$$
 is {\em not} equal to the product of the standard 
 $\mathsf{DR}$ cycles. These {\em double-double ramification cycles} are discussed further in 
 Section \ref{sagesage}.
 Here, we simply sketch how to compute them in genus $1$ using the above formulas for $\mathsf{logDR}$.

By Theorem~\ref{thm:g1} and a straightforward argument
for the vanishing of the cross terms, we have
\[
\pi_*(\LogDR_{1,A}\cdot\LogDR_{1,B}) = \DR_{1,A}\cdot\DR_{1,B} + \frac{1}{4}\pi_*(\Phi(\DRpl'_A\cdot \DRpl'_B))\, .
\]
By Remark \ref{55kk5}, this second term is defined on a cone $\sigma_\Gamma$ by the polynomial
\[
\pi_*\left(\sum_{F_A,F_B}m_{F_A}m_{F_B}\right),
\]
where $F_A,F_B$ run over acyclic flows balancing $A$ and $B$ respectively and $\pi_*$ is the pushforward from piecewise polynomials on a subdivision of a cone to polynomials on that cone. After working out the algebra of the pushforward and translating into standard tautological class notation, we obtain 
\begin{multline*}
\pi_*(\LogDR_{1,A}\cdot\LogDR_{1,B}) = \DR_{1,A}\cdot\DR_{1,B} \\
- \sum_{\Gamma\text{ double edge graph}}\frac{1}{2}j_{\Gamma*}\left(\frac{A_\Gamma^2B_\Gamma^2-A_\Gamma^2-B_\Gamma^2+\mathsf{gcd}(A_\Gamma,B_\Gamma)^2}{12}\right),
\end{multline*}
where the sum runs over the isomorphism classes of stable graphs $\Gamma$ with two vertices and a double edge connecting them and $A_\Gamma,B_\Gamma$ are the absolute values of the degrees of $\mdeg_{k,A}$, $\mdeg_{k,B}$ respectively on one of the vertices (it does not matter which one since the total degree is $0$).
\end{remark}


\section{Computational tools and higher double ramification cycles}

\label{sagesage1}

\subsection{A Sage implementation of $\mathsf{logDR}$} \label{sagesage}



A Sage package \href{https://gitlab.com/modulispaces/admcycles/-/tree/master/admcycles/logtaut}{\texttt{logtaut}} for computations with logarithmic double ramification cycles is being developed as part of the \href{https://gitlab.com/modulispaces/admcycles}{\texttt{admcycles}} package \cite{admcycles}. The package is able to compute/manipulate
the following mathematical structures discussed in our paper:
\begin{itemize}
    \item the vector space of stability conditions $\theta$ of type $(g,n)$ and degree $d$ (with a description of a natural basis),
    \item cone stacks and their (strict) piecewise polynomial functions (together with natural operations such as pushforwards under subdivision maps),
    \item the particular cone stack $\Sigma_{\oM_{g,n}}$, the subdivision induced by a vector $A \in \mathbb{Z}^n$ together with a nondegenerate stability condition $\theta$ of degree $|A|$, and the natural map \[\mathsf{sPP}(\Sigma_{\oM_{g,n}}) \to \mathsf{R}^*(\oM_{g,n})\, \]
    discussed in Section \ref{sec: pp},
    \item the piecewise polynomial functions $\DRpl$ and $\DRpp$ of equation \eqref{eqn:logDRmixed} determining the logarithmic double ramification cycle via Theorem \ref{222ttt}.
\end{itemize}

As an example of a calculation in \texttt{logtaut}, we can write an explicit relation in $\mathsf{logR}^2(\oM_{1,3})$ obtained by Theorem \ref{333ttt} (ii). 
Concretely, we can combine the vanishing from Theorem \ref{333ttt}
with the classical vanishing of the formula $\mathsf P_{g,A}^h$ for the double ramification cycle in degree $h$ higher than $g$.\footnote{The formula $\mathsf P_{g,A}^h$ was first presented in \cite{JPPZ} and the claimed vanishing was proven in \cite{cj}.} Taking the difference, we obtain
\[
\Delta = {\mathbf{P}}_{1,(2,-1,-1)}^{2,\theta} - \mathsf P_{1,(2,-1,-1)}^{2} = 0 \in \mathsf{logR}^2(\oM_{1,3})\,,
\]
for a small and nondegenerate stability condition $\theta.$\footnote{The choice of the stability condition $\theta$ does not affect the relation $\Delta$ in this case.} After expressing $\mathsf P_{1,(2,-1,-1)}^{2}$ in terms of piecewise polynomials via Theorem \ref{thm:regularDR}, the advantage of the difference $\Delta$ is that the piecewise polynomial parts of ${\mathbf{P}}_{1,(2,-1,-1)}^{2,\theta}$ and $\mathsf P_{1,(2,-1,-1)}^{2}$ cancel on all but two maximal cones in $\Sigma_{\oM_{1,3}}$. With \texttt{logtaut}, we then compute that $\Delta$ is the codimension $2$ part of the mixed-degree class
\begin{equation}
    \left(1 + 2\psi_1 + \frac{1}{2} (\psi_2 + \psi_3) \right) \cdot \Phi(\mathfrak{P})
\end{equation}
for the piecewise polynomial function $\mathfrak{P}$ on $\Sigma_{\oM_{1,3}}$ illustrated in Figure \ref{fig:logRrelation}. 

\begin{figure}[htb]
    \centering
\[
\begin{tikzpicture}[baseline=0pt, vertex/.style={circle,draw,font=\Large,scale=0.5, thick}]
\node[vertex] (A) at (-1.5,0) {0};
\node[vertex] (B) at (0,0) {0};
\node[vertex] (C) at (1.5,0) {0};
\draw[thick] (A) to[bend left] node[midway, above]{$\ell_1$} (B);
\draw[thick] (A) to[bend right] node[midway, below]{$\ell_2$} (B);
\draw[thick] (B) to node[midway, above]{$\ell_3$} (C);
\draw[thick] (A) to (-2, 0) node[left] {$1$};
\draw[thick] (C) to (1.7, 0.5) node[right] {$2$};
\draw[thick] (C) to (1.7, -0.5) node[right] {$3$};

\begin{scope}[shift={(0.75,0)}]
\node[vertex] (A) at (-1.5,-3) {0};
\node[vertex] (B) at (0,-2) {0};
\node[vertex] (C) at (0,-4) {0};
\draw[thick] (A) to node[midway, above]{$\ell_1$} (B);
\draw[thick] (A) to node[midway, below]{$\ell_2$} (C);
\draw[thick] (B) to node[midway, right]{$\ell_4$} (C);
\draw[thick] (A) to (-2, -3) node[left] {$1$};
\draw[thick] (B) to (0.5, -2) node[right] {$2$};
\draw[thick] (C) to (0.5, -4) node[right] {$3$};
\end{scope}

\begin{scope}[shift={(8,-1)}]
\draw[thick] (-2,0) node[left] {$\ell_1=1$} -- (2,0) node[right] {$\ell_2=1$} -- (0,3.464) node[above] {$\ell_3=1$} -- (-2,0) -- (0,-3.464) node[below] {$\ell_4=1$} -- (2,0) ;
\draw[thick] (0,3.464) -- (0,-3.464);
\draw (-2.3,2.7) node[text width=2cm] {$\frac{1}{2}\ell_1 \ell_2 + \frac{1}{2}\ell_2^2$\\$+ 2 \ell_2 \ell_3 - \ell_2$} ;
\draw (2.3,2.7) node[text width=2cm] {$\frac{1}{2}\ell_1 \ell_2 + \frac{1}{2}\ell_1^2$\\$+ 2 \ell_1 \ell_3 - \ell_1$} ;
\draw (-2.3,-2.7) node[text width=2cm] {$\frac{1}{2}\ell_1 \ell_2 + \frac{1}{2}\ell_2^2$\\$- \ell_2$} ;
\draw (2.3,-2.7) node[text width=2cm] {$\frac{1}{2}\ell_1 \ell_2 + \frac{1}{2}\ell_1^2$\\$- \ell_1$} ;
\end{scope}
\end{tikzpicture}
\]
    \caption{The piecewise polynomial $\mathfrak{P}$, which is nonvanishing on precisely two (neighboring) maximal cones of $\Sigma_{\oM_{1,3}}$. We illustrate the stable graphs associated to these cones and the slice through them corresponding to all edge lengths summing to $1$. The cones are subdivided at the ray $\ell_1=\ell_2=1, \ell_3=\ell_4=0$, and we give the piecewise polynomial $\mathfrak{P}$ on the resulting four chambers.}
    \label{fig:logRrelation}
\end{figure}
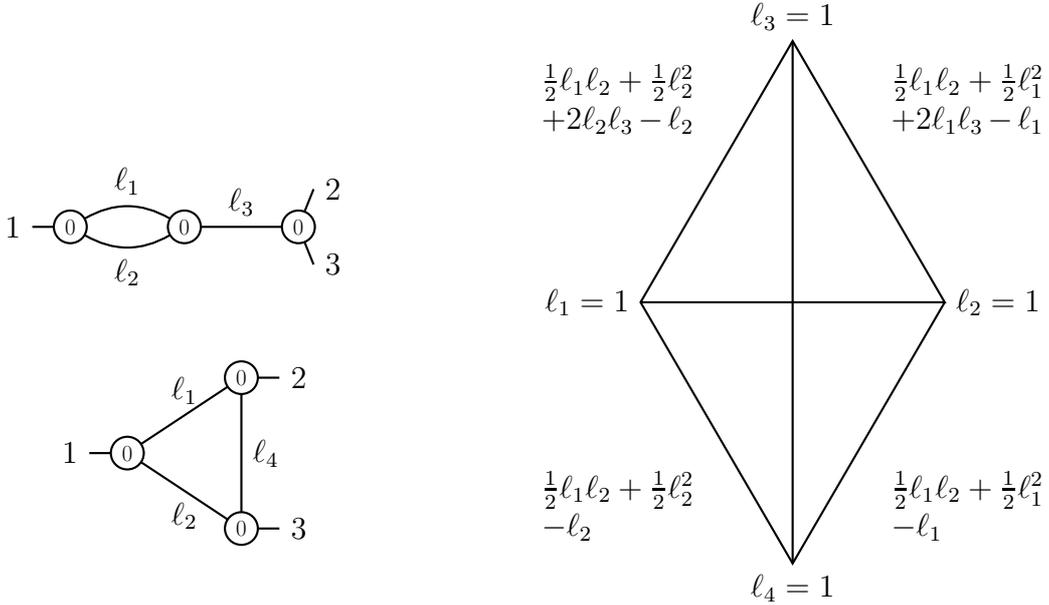
Using the methods of Section \ref{sec: pp}, we can translate the relation 
$\Delta=0$
into the language of normally decorated strata classes: consider the graphs
\[
\Gamma = 
\begin{tikzpicture}[baseline=-0.1 cm, vertex/.style={circle,draw,font=\Large,scale=0.5, thick}]
\node[vertex] (A) at (-1.5,0) {0};
\node[vertex] (B) at (0,0) {0};
\draw[thick] (A) to[bend left] (B);
\draw[thick] (A) to[bend right] (B);
\draw[thick] (A) to (-2, 0) node[left] {$1$};
\draw[thick] (B) to (0.4, 0.4) node[right] {$2$};
\draw[thick] (B) to (0.4, -0.4) node[right] {$3$};
\end{tikzpicture}
\quad,  \quad 
\delta_{2,3} = 
\begin{tikzpicture}[baseline=-0.1 cm, vertex/.style={circle,draw,font=\Large,scale=0.5, thick}]
\node[vertex] (A) at (-1.5,0) {1};
\node[vertex] (B) at (0,0) {0};
\draw[thick] (A) to (B);
\draw[thick] (A) to (-2, 0) node[left] {$1$};
\draw[thick] (B) to (0.4, 0.4) node[right] {$2$};
\draw[thick] (B) to (0.4, -0.4) node[right] {$3$};
\end{tikzpicture} \ .
\]
Let $\pi: \widehat{\mathcal{M}} \to \oM_{1,3}$ be the blow-up of the stratum associated to $\Gamma$, and denote by $E_\Gamma$ the exceptional divisor of $\pi$. After expressing $\Phi(\mathfrak P)$ in terms of normally decorated strata classes, we obtain
\begin{equation}
    \Delta \, = \, \frac{1}{2} \cdot \pi^* \left[\Gamma \right] - (2 \psi_1 + \frac{1}{2}(\psi_2 + \psi_3) - 2 \delta_{23}) \cdot E_\Gamma + \frac{1}{2} E_\Gamma^2\, \in\,  \mathsf{CH}^2(\widehat{\cM})\,. 
\end{equation}
It is interesting to note that the coefficient $(2 \psi_1 + \frac{1}{2}(\psi_2 + \psi_3) - 2 \delta_{23})$ appearing here is precisely the compact type part of the formula $\mathsf P_{1,(2,-1,-1)}^1$, giving the pullback of the theta divisor under the Abel-Jacobi map associated to $(2,-1,-1)$.

\edits{Due to the complexity of the objects involved (such as the subdivisions of fans and their rings of piecewise polynomials), the calculations in \texttt{logtaut} are mostly feasible for low genus and numbers of markings (around $2g+n \leq 7$). With future optimization, further cases could be covered.}

\subsection{Double-double ramification cycles} \label{sagesage3}
One of the initial motivations for the study of $\mathsf{logDR}$ was that of understanding the {\em double-double ramification cycle} which arises in the virtual localization formula
for the Gromov-Witten theory of log toric surfaces \cite{Graber-inpreparation} as a vertex term (replacing the
quadratic Hodge integrals in the localization formula for the Gromov-Witten theory
of toric surfaces \cite{GraberPandharipandeLoc}).
\edits{The geometry of these higher double ramification cycles plays a central role 
in the study of stable maps to log toric targets \cite{Dhruv,RanganathanKumaran}.}
Using the formula of Theorem \ref{222ttt} and the Sage package \texttt{logtaut}, we can now compute these cycles in moderate genus (limited by computing capacity).

To motivate the higher double ramification cycles,
we begin by recalling that the natural formula
\begin{equation*}
    \mathsf{DR}_{g, A} \cdot \mathsf{DR}_{g, B} \stackrel{?}{=} \mathsf{DR}_{g, A+B}\cdot \mathsf{DR}_{g, B}
\end{equation*}
holds on the locus of curves of compact type, but fails on $\Mbar_{g,n}$. However, 
by a result of \cite{Holmes2017Multiplicativit},  the analogue for $\mathsf{logDR}$ is always true:
\begin{equation}\label{eq:logDR_mult}
    \mathsf{logDR}_{g, A} \cdot \mathsf{logDR}_{g, B} = \mathsf{logDR}_{g, A+B}\cdot \mathsf{logDR}_{g, B} \in \mathsf{logCH}^{2g}(\Mbar_{g,n})\, ,
\end{equation}
which 
is a special case of a $\on{GL}_2(\bb Z)$-invariance property for
$\mathsf{logDR}$. 
The {\em double-double ramification cycle} is defined by 
\begin{equation*}
    \mathsf{DDR}_{g, A, B} = \pi_*\left(\mathsf{logDR}_{g, A} \cdot \mathsf{logDR}_{g, B}\right) \in \mathsf{CH}^{2g}(\Mbar_{g,n}), 
\end{equation*}
and may be viewed as a ``corrected" version of the product $\mathsf{DR}_{g, A} \cdot \mathsf{DR}_{g, B}$. Higher
double ramification cycles are defined by pushforwards of products of more
logarithmic double ramification cycles.

The double-double ramification cycle is trivial in genus $0$:
$$\mathsf{DDR}_{0,A,B} = 1 \in \mathsf{logCH}^{0}(\Mbar_{0,n})\, .$$
A full calculation in genus 1 of $\mathsf{DDR}_{1,A,B}$ was presented in
Remark \ref{ff5tq} using  the formula for $\mathsf{logDR}$ of Theorem \ref{222ttt}.
That $\mathsf{DDR}_{g,A,B}$ and the higher analogues are tautological classes on
the moduli spaces of curves for all $g$
follows also from results of \cite{HolmesSchwarz,Molcho2021-Case-Study}, but the formula of Theorem \ref{222ttt} 
provides the only known effective method of calculation.
The package \texttt{logtaut} produces
explicit formulas for $\mathsf{DDR}_{g, A, B}$ in the tautological ring.

We illustrate below the double-double cycle in
 case $g=2$, $n=3$, $A = [3,-3,0]$, and $B = [0,3,-3]$. Since the class $$\mathsf{DDR}_{2, A, B} \in \mathsf{CH}^{4}(\Mbar_{2,3})$$ is invariant under permuting the markings (as can be seen by applying \eqref{eq:logDR_mult}), we can express the answer via \texttt{logtaut}
as a sum over graphs\footnote{Following the conventions of \texttt{admcycles}, in the formula for $\mathsf{DDR}_{2, A, B}$,
each stable graph $\Gamma$ represents the sum of $3!=6$ pushforwards of the fundamental class under gluing maps (associated to the $6$ ways of adding labels to the $3$ legs of the graph).} without specifying the ordering of the legs: 

\vspace{-25pt}
\tikz{\coordinate (A) at (0,0); \coordinate (B) at (1,0); \coordinate (C) at (0.6,0.5);\coordinate (D) at (2,0);\coordinate (E) at (3,0);\coordinate (F) at (4,0)}
\tikzset{baseline=0, label distance=-3mm}
\def\NC{\draw (0,0.25) circle(0.25);}
\def\NL{\draw plot [smooth,tension=1.5] coordinates {(0,0) (-0.2,0.5) (-0.5,0.2) (0,0)};}
\def\NR{\draw plot [smooth,tension=1.5] coordinates {(0,0) (0.2,0.5) (0.5,0.2) (0,0)};}
\def\NN{\NL\NR}
\def\NNN{\NN \begin{scope}[rotate=180] \NR \end{scope}}
\def\NNNN{\NN \begin{scope}[rotate=180] \NN \end{scope}}
\def\NRS{\begin{scope}[shift={(B)}] \NR \end{scope}}
\def\NRD{\begin{scope}[rotate around={-90:(B)}] \NRS \end{scope}}
\def\DE{\draw plot [smooth,tension=1] coordinates {(0,0) (0.5,0.15) (1,0)}; \draw plot [smooth,tension=1.5] coordinates {(0,0) (0.5,-0.15) (1,0)};}
\def\DES{\begin{scope}[shift={(B)}] \DE \end{scope}}
\def\TE{\DE \draw (A) -- (B);}
\def\QE{\DE \draw plot [smooth,tension=1] coordinates {(A) (0.5,0.05) (B)}; \draw plot [smooth,tension=1.5] coordinates {(A) (0.5,-0.05) (B)};}
\def\T{\draw (0.2,0) -- (C) -- (B) -- (0.2,0);}
\def\TT{\draw (C) -- (B) -- (0.2,0); \draw plot [smooth,tension=1] coordinates {(0.2,0) (0.3,0.3) (C)}; \draw plot [smooth,tension=1] coordinates {(0.2,0) (0.5,0.2) (C)};}
\newcommand{\nn}[3]{\draw (#1)++(#2:3mm) node[fill=white,fill opacity=.85,inner sep=0mm,text=black,text opacity=1] {$\substack{\psi^#3}$};}
\renewcommand{\gg}[2]{\fill (#2) circle(1.3mm) node {\color{white}$\substack #1$};}
\newcommand{\oneleg}[1]{\draw (#1) -- + (0, -0.3);} 
\newcommand{\twolegs}[1]{\draw (#1) -- + (-0.2, -0.3);\draw (#1) -- + (0.2, -0.3);}
\newcommand{\threelegs}[1]{\draw (#1) -- + (0, -0.3);\draw (#1) -- + (-0.2, -0.3);\draw (#1) -- + (0.2, -0.3);}

\newcommand{\selfloop}[1]{\draw (#1) + (0,0.25) circle(0.25);}
\newcommand{\SE}[2]{\draw (#1) -- (#2);}
\newcommand{\BE}{\draw plot [smooth,tension=1] coordinates {(0,0) (1,0.45) (2,0)};}
\newcommand{\LBE}{\draw plot [smooth,tension=1] coordinates {(0,0) (1.5,0.6) (3,0)};}
\def\DEtwo{\draw plot [smooth,tension=1] coordinates {(1,0) (1.5,0.15) (2,0)}; \draw plot [smooth,tension=1.5] coordinates {(1,0) (1.5,-0.15) (2,0)};}

\begin{align*}
& \mathsf{DDR}_{2, A, B} = \frac{93}{640} \tikz{\threelegs{A} \BE \SE{A}{B}; \DEtwo;\gg{0}{A} \gg{0}{B} \gg{0}{D}}
-\frac{87}{64} \tikz{\twolegs{A} \oneleg{D} \BE \SE{A}{B}; \DEtwo;\gg{0}{A} \gg{0}{B} \gg{0}{D}}
+ \frac{183}{160} \tikz{\twolegs{A} \oneleg{B}\DE \SE{B}{D};\draw (2,0.25) circle(0.25);\gg{0}{A} \gg{0}{B} \gg{0}{D}}
\\
&-\frac{49}{160} \tikz{\oneleg{A} \oneleg{B} \oneleg{D}\BE \SE{A}{B}; \DEtwo;\gg{0}{A} \gg{0}{B} \gg{0}{D}}
+ \frac{27}{320} \tikz{\oneleg{A} \twolegs{D} \DE; \DEtwo;\gg{0}{A} \gg{0}{B} \gg{0}{D}}
+ \frac{213}{640} \tikz{\oneleg{A} \oneleg{B} \oneleg{D}\DE; \DEtwo;\gg{0}{A} \gg{0}{B} \gg{0}{D}}
\\
&+ \frac{711}{640} \tikz{\oneleg{A} \oneleg{B} \oneleg{D}\SE{A}{B}\SE{B}{D};\draw (2,0.25) circle(0.25);\draw (0,0.25) circle(0.25);\gg{0}{A} \gg{0}{B} \gg{0}{D}}
-\frac{93}{640} \tikz{\threelegs{B}\SE{A}{B}\SE{B}{D};\draw (2,0.25) circle(0.25);\draw (0,0.25) circle(0.25);\gg{0}{A} \gg{0}{B} \gg{0}{D}}
+ \frac{321}{1280} \tikz{ \twolegs{B}\oneleg{D}\SE{A}{B}\SE{B}{D};\draw (2,0.25) circle(0.25);\draw (0,0.25) circle(0.25);\gg{0}{A} \gg{0}{B} \gg{0}{D}}
\\
&+ \frac{9}{256} \tikz{\twolegs{D}\oneleg{B}\SE{A}{B}\SE{B}{D};\draw (2,0.25) circle(0.25);\draw (0,0.25) circle(0.25);\gg{0}{A} \gg{0}{B} \gg{0}{D}}
-\frac{549}{20} \tikz{\twolegs{A}\oneleg{D}\DE \SE{B}{D};\SE{D}{E};\gg{0}{A} \gg{0}{B} \gg{0}{D} \gg{1}{E}}
+ \frac{243}{20} \tikz{\oneleg{A} \oneleg{B} \oneleg{D}\SE{A}{B} \SE{B}{D};\SE{D}{E};\LBE \gg{0}{A} \gg{0}{B} \gg{0}{D} \gg{1}{E}}
\\
&+ \frac{7569}{160} \tikz{\oneleg{A} \oneleg{B} \oneleg{D}\selfloop{A}\SE{A}{B} \SE{B}{D};\SE{D}{E} \gg{0}{A} \gg{0}{B} \gg{0}{D} \gg{1}{E}}
+ \frac{639}{32} \tikz{\twolegs{B} \oneleg{D}\selfloop{A}\SE{A}{B} \SE{B}{D};\SE{D}{E} \gg{0}{A} \gg{0}{B} \gg{0}{D} \gg{1}{E}}
+ \frac{1251}{160} \tikz{ \oneleg{B} \twolegs{D}\selfloop{A}\SE{A}{B} \SE{B}{D};\SE{D}{E} \gg{0}{A} \gg{0}{B} \gg{0}{D} \gg{1}{E}}
\\
&-\frac{693}{160} \tikz{\oneleg{E} \oneleg{B} \oneleg{D}\selfloop{A}\SE{A}{B} \SE{B}{D};\SE{D}{E} \gg{0}{A} \gg{0}{B} \gg{0}{D} \gg{1}{E}}
+ \frac{6561}{20} \tikz{\oneleg{E} \oneleg{B} \oneleg{D}\SE{A}{B} \SE{B}{D};\SE{D}{E} \SE{E}{F} \gg{1}{A} \gg{0}{B} \gg{0}{D} \gg{0}{E} \gg{1}{F}}. 
\end{align*}

\vspace{5pt}
Two constructions of tautological relations in $\mathsf{logR}^*(\Mbar_{g,n})$
were given in 
Theorem \ref{333ttt}.
The 
$\on{GL}_2(\bb Z)$-invariance property for
the double-double ramification cycle 
yields a third construction:  apply Theorem \ref{222ttt} to
all four terms of \eqref{eq:logDR_mult}.

\bibliographystyle{abbrv} 
\bibliography{prebib.bib}

\vspace{8pt}

\noindent Mathematisch Instituut, Universiteit Leiden\\
\noindent holmesdst@math.leidenuniv.nl

\vspace{8pt}

\noindent Departement Mathematik, ETH Z\"urich\\
\noindent samouil.molcho@math.ethz.ch

\vspace{8pt}

\noindent Departement Mathematik, ETH Z\"urich\\
\noindent rahul@math.ethz.ch

\vspace{8pt}

\noindent Department of Mathematics, University of Michigan\\
\noindent pixton@umich.edu

\vspace{8pt}

\noindent \edits{Departement Mathematik, ETH Z\"urich}\\
\noindent johannes.schmitt@math.\edits{ethz}.ch

\end{document}